\documentclass[a4paper,10pt]{article}
\usepackage[utf8]{inputenc}
\usepackage{amsmath}
\usepackage{amssymb}
\usepackage{amsthm}

\theoremstyle{plain}
\newtheorem{thm}{Theorem}[section]
\newtheorem*{thma}{Theorem A}
\newtheorem*{thmb}{Theorem B}
\newtheorem*{thmc}{Theorem C}
\newtheorem*{thmd}{Theorem D}
\newtheorem{Prop}[thm]{Proposition}
\newtheorem{Def}[thm]{Definition}
\newtheorem{lma}[thm]{Lemma}
\newtheorem{cor}[thm]{Corollary}

\newtheorem{problem}{Problem}
\newtheorem{remark}[thm]{Remark}
\newtheorem{ex}{Example}

\newcommand{\Dsp}{Dsp}
\newcommand{\Dsl}{Dsl}
\newcommand{\supp}{supp}
\newcommand{\GP}{GP}
\newcommand{\Ent}{\mathrm{Ent}}

\newcommand{\Ch}{\mathrm{Ch}}
\newcommand{\AC}{\mathrm{AC}}
\newcommand{\Ric}{\mathrm{Ric}}

\newcommand{\Dom}{\mathit{Dom}}
\newcommand{\Lip}{\mathrm{Lip}}
\newcommand{\F}{\mathcal{F}}
\newcommand{\E}{\mathcal{E}}
\newcommand{\lip}{\mathrm{lip}}

\bibliographystyle{plain}


\title{Gradient flow for the Boltzmann entropy and Cheeger's energy on time-dependent metric measure spaces}
\author{Eva Kopfer\thanks{Institut für Angewandte Mathematik, Universität Bonn, Endenicher Allee 60, 53115 Bonn, Germany (\texttt{eva.kopfer@iam.uni-bonn.de})}}
\date{}

\begin{document}

\maketitle

\begin{abstract}
We introduce notions of dynamic gradient flows on time-dependent metric spaces as well as on time-dependent Hilbert spaces.
We prove existence of solutions for a class of time-dependent energy functionals in both settings.
In particular in the case when each underlying space satisfies a lower Ricci curvature bound in the sense of Lott, Sturm and Villani, we provide time-discrete approximations of the time-dependent heat flows introduced in \cite{sturm2016}. 
\end{abstract}

\tableofcontents

\section{Introduction}
The heat flow in $\mathbb{R}^n$
can be  interpreted as the gradient flow of the Dirichlet energy 
$ \textrm{Dir}(u)=\frac12\int_{\mathbb{R}^n}|\nabla u|^2dx$
in the Hilbert space $L^2(\mathbb{R}^n)$. 
Since the seminal work of Jordan, Kinderlehrer and Otto in \cite{otto} it is known that the heat flow equivalently arises as the gradient flow of the entropy functional
$ \Ent(\rho\mathcal{L}^n)=\int_{\mathbb{R}^n}\rho\log\rho dx$
in the space of Borel probability measures with finite second moments endowed with the $L^2$-Kantorovich distance $W_2$. This has been accomplished by using a discrete variational scheme.
By now similar interpretations of the heat flow have been established in Riemannian manifolds \cite{erbar}, in Finsler spaces \cite{ohta/sturm},
in Alexandrov spaces \cite{gko} and in CD$(K,\infty)$-spaces \cite{gigli2009}. 

The class of CD$(K,\infty)$-spaces has been independently introduced by Lott and Villani in \cite{ls} and Sturm in \cite{sturm2006} and consists of metric measure spaces such that the relative entropy is $K$-convex along geodesics in the $L^2$-Kantorovich space. For a Riemannian manifold this is equivalent to saying that its Ricci curvature is bounded from below by $K$. Lower Ricci curvature bounds are intimately linked to the heat equation in the sense that they can be characterized by contraction estimates of the heat flow, see e.g. \cite{sturmrenesse, eks2014}.

A similar result can be shown for time-dependent manifolds which evolve under a super-Ricci flow, see e.g. \cite{mccanntopping, HN2015}. In \cite{sturm2015} Sturm introduced a notion for super-Ricci flows on metric measure spaces in terms of `dynamic convexity' of the relative entropy. The relative entropy is now a time-dependent functional from the time-dependent $L^2$-Kantorovich space.  

In this paper we present notions of gradient flows for time-dependent functionals from time-dependent metric spaces as well as time-dependent Hilbert spaces. We will prove existence for functionals which are uniformly $K$-convex using an adapted discrete variational scheme. The relative entropy and Cheeger's energy will serve as a role model for this.

In the static CD$(K,\infty)$ setting the heat flow can be unambiguously defined as the gradient flow of the relative entropy or the gradient flow of the Cheeger's functional. In the time-dependent setting the picture is less complete. However in some sense we identify the gradient flow of the relative entropy with the gradient flow of Cheeger's energy via the heat flows in \cite{sturm2016} provided that each underlying space is RCD$(K,\infty)$.  The condition RCD$(K,\infty)$ stands for CD$(K,\infty)$ combined with infinitesimally Hilbertian, where the latter means that Cheeger's energy is a bilinear form. The bilinearity of Cheeger's energy together with some regularity assumptions allow the authors in \cite{sturm2016} to prove existence and uniqueness of the heat flow in time-dependent metric measure spaces via the general theory of coercive operators. Here we identify the forward adjoint heat flow from \cite{sturm2016} with the gradient flow of the entropy provided that each space is RCD$(K,\infty)$, and the heat flow from \cite{sturm2016} with the gradient flow of Cheeger's energy. In particular we provide an explicit construction of the trajectory of each heat flow. 

In the following we will briefly present our main results for our model functionals.

Throughout this paper let $X$ be a topological space equipped with
\begin{itemize}
 \item a one-parameter family $d_t$ of complete geodesic separable metrics,
 \item and a one-parameter family of Borel measures such that $m_t=e^{-f_t}m$ for some probability measure $m$ and suitable functions $f_t$.
\end{itemize}
Further we assume the following
\begin{align}\label{introeq}
 |\log( d_t(x,y)/d_s(x,y))|\leq L|t-s|,\quad\text{ and }\quad|f_t(x)-f_s(x)|\leq L^*|t-s|.
\end{align}
\subsubsection*{Dynamic gradient flow of the entropy}
Given two probability measures $\mu,\nu$ with finite second moments (with respect to any metric $d_t$ and denoted by $\mathcal P_2(X)$) we define for every $t\in[0,T]$ the $L^2$-Kantorovich distance by
\begin{equation*}
 W_t(\mu,\nu)=\inf\left\{\int_{X\times X}d_t^2(x,y)\,d\pi(x,y)\Big|\pi\text{ is a coupling of }\mu\text{ and }\nu\right\}^{1/2}.
\end{equation*}
The relative entropy $S_t$ on $\mathcal P_2(X)$ is defined by
\begin{align*}
 S_t(\mu):=\int_X\rho\log\rho \,d m_t,
\end{align*}
provided that $\mu$ has a density $\rho$ with respect to $m_t$. We assume that each static
space $(X,d_t,m_t)$ has Ricci curvature bounded below by some $K\in\mathbb R$, i.e. for each $t$ and each $\mu,\nu$ there exists a $W_t$-geodesic 
$(\rho_a)_{a\in[0,1]}$
connecting $\mu$ and $\nu$ such that
\begin{align}\label{introconvex}
 S_t(\rho_a)\leq (1-a)S_t(\mu)+aS_t(\nu)-\frac{Ka(1-a)}2W_t^2(\mu,\nu).
\end{align}
This "uniform" geodesic convexity holds particularly true when the underlying spaces evolve as a super-Ricci flow introduced in \cite{sturm2015} provided that \eqref{introeq} holds.

We are interested in defining a notion of gradient flows for the time-dependent entropy functional on the space of probability measures over $X$. 
For this we adapt the discrete variational scheme to our dynamic setting, which we describe in the following.
Fix a time step $h>0$
and an initial probability measure $\mu_0$. Recursively define for every $n\in \mathbb N$ such that $nh\leq T$ the minimizer $\mu_n^h$ by
\begin{align}\label{dvs}
\mu_0^h:=\mu_0,\qquad \mu_n^h:=\arg\min_\nu\left(S_{nh}(\nu)+\frac1{2h}W^2_{nh}(\mu_{n-1}^h,\nu)\right).
\end{align}
We then define a discrete trajectory as the piecewise constant interpolant $(\bar\mu_t^h)_{t\in[0,T]}$ by 
\begin{align}\label{inter}
 \bar\mu_0^h:=\mu_0,\qquad \bar\mu_t^h:=\mu_n^h \quad\text{ if }t\in((n-1)h,nh].
\end{align}
By the direct method of the calculus of variations one can prove that there exists a unique solution to \eqref{dvs}. Having established a sequence of minimizers
$(\mu_n^h)_n$ the next step is to prove the existence of a curve $(\mu_t)_{t\in[0,T]}$ such that $\bar\mu_t^h\to\mu_t$ weakly as $h\to0$. 
From the convergence of the interpolants $\bar\mu_t^h$ to some limit curve $\mu_t$ we can deduce that $\mu_t$ satisfies a 
\emph{dynamic energy dissipation inequality}, in short dynamic EDI (cf. Theorem \ref{existence}). Moreover this curve is unique (cf. Theorem \ref{uniqueness}).

\begin{thma}\label{introheat1}
Suppose that each $(X,d_t,m_t)$ satisfies a CD$(K,\infty)$ condition. Then there exists an absolutely continuous curve 
$(\mu_t)_{t\in[0,T]}\subset\mathcal P_2(X)$ and a subsequence $h_n\to0$ as $n\to\infty$ such that 
 \begin{align*}
  \bar \mu_t^{h_n}\to\mu_t, \text{ for every }t\in[0,T] \text{ as }n\to\infty,
 \end{align*}
where the convergence is to be understood in duality with bounded continuous function on $X$. Moreover this curve satisfies
 \begin{align}\label{intede}
   S_t(\mu_t)+\frac12\int_0^t|\dot\mu_r|^2_rdr+\frac12\int_0^t|\nabla_rS_r|^2(\mu_r)dr
  =S_0(\mu_0)+\int_0^t(\partial_r S_r)(\mu_r)dr.
 \end{align}                                            

\end{thma}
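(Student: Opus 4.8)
The plan is to carry out the De Giorgi minimizing-movements analysis for the scheme \eqref{dvs}, arranging matters so that the slow time-variation of the metric $W_t$ and of the functional $S_t$ --- both controlled by \eqref{introeq} --- contributes only errors of order $h$ that vanish in the limit. I begin by recording the two elementary consequences of the Lipschitz hypotheses. Writing $u=d\mu/dm$ (independent of $t$) and using $m_t=e^{-f_t}m$, a direct computation gives, for every fixed $\mu\in\mathcal P_2(X)$,
\[
 S_t(\mu)-S_s(\mu)=\int_X(f_t-f_s)\,d\mu,\qquad |S_t(\mu)-S_s(\mu)|\le L^*|t-s|,
\]
so $S$ is uniformly Lipschitz in time with $(\partial_t S_t)(\mu)=\int_X(\partial_t f_t)\,d\mu$, which is exactly the quantity on the right of \eqref{intede}. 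From $|\log(d_t/d_s)|\le L|t-s|$ one gets $e^{-L|t-s|}W_s\le W_t\le e^{L|t-s|}W_s$, so all the Kantorovich metrics are mutually comparable up to the factor $e^{LT}$ and a single reference metric $W_0$ may be used throughout the tightness and equicontinuity estimates.

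Next I derive the a priori bounds. Testing the minimality of $\mu_n^h$ in \eqref{dvs} against the competitor $\nu=\mu_{n-1}^h$ and inserting the time-Lipschitz bound for $S$ yields the one-step inequality
\[
 S_{nh}(\mu_n^h)+\frac1{2h}W_{nh}^2(\mu_{n-1}^h,\mu_n^h)\le S_{(n-1)h}(\mu_{n-1}^h)+L^*h.
\]
Telescoping over $n$ gives the uniform estimate $S_{Nh}(\mu_N^h)+\sum_{n=1}^N\frac1{2h}W_{nh}^2(\mu_{n-1}^h,\mu_n^h)\le S_0(\mu_0)+L^*T$. Combined with the standard lower bound for the entropy in terms of a second moment, valid under CD$(K,\infty)$, this controls the second moments uniformly in $h$ --- hence tightness by Prokhorov --- and bounds the discrete action $\sum_n h^{-1}W_{nh}^2(\mu_{n-1}^h,\mu_n^h)$. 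A Cauchy--Schwarz argument, together with the metric comparison above, then produces the uniform $\tfrac12$-Hölder bound $W_0(\mu_m^h,\mu_n^h)\le C|mh-nh|^{1/2}$.

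Tightness and equicontinuity let me invoke a refined Arzelà--Ascoli theorem for curves valued in the space of measures to extract a subsequence $h_n\to0$ and a limit curve $(\mu_t)$, absolutely continuous with respect to $W_0$ and hence to each $W_t$, with $\bar\mu_t^{h_n}\to\mu_t$ in duality with $C_b(X)$ for every $t$. The metric-speed term is recovered by lower semicontinuity of the action,
\[
 \int_0^t|\dot\mu_r|_r^2\,dr\le\liminf_{n\to\infty}\sum_k \frac1{h_n}W_{kh_n}^2(\mu_{k-1}^{h_n},\mu_k^{h_n}),
\]
where replacing $W_{kh_n}$ by $W_r$ for $r$ near $kh_n$ costs only $O(h_n)$ by the comparison above.

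The decisive and hardest step is the slope term and the passage from the dissipation inequality to the identity \eqref{intede}. I would introduce the De Giorgi variational interpolation between consecutive minimizers and exploit the first-order optimality in \eqref{dvs} to bound the local slope $|\nabla_{nh}S_{nh}|(\mu_n^h)$ by the discrete difference quotients; letting $h_n\to0$ and using that CD$(K,\infty)$, via \eqref{introconvex}, makes $|\nabla_t S_t|$ a lower semicontinuous strong upper gradient of $S_t$ with respect to $W_t$ for each fixed $t$, gives the inequality ``$\le$'' in \eqref{intede}, with $\int_0^t(\partial_r S_r)(\mu_r)\,dr$ collecting the explicit time-dependence of $S$. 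The reverse inequality ``$\ge$'' then follows from the chain rule along the absolutely continuous limit curve, combining the strong-upper-gradient property with Young's inequality, which upgrades the dissipation inequality to the stated equality. The main obstacle is to keep every error term arising from the moving metric and moving functional --- in the variational-interpolation estimate and in the lower semicontinuity of the time-dependent slope --- of size $o(1)$ as $h_n\to0$; this is precisely where the Lipschitz hypotheses \eqref{introeq} are indispensable.
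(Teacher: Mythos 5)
Your overall strategy is exactly the paper's: minimizing movements for \eqref{dvs} with a De Giorgi variational interpolation adapted to the moving metric, a one-step estimate by testing against the previous minimizer, a refined Arzel\`a--Ascoli compactness argument, lower semicontinuity of the squared slope from $K$-convexity (this is Corollary \ref{lscsquaredslope}, proved via Gigli's good-plan representation of the slope), and a chain rule to upgrade the dissipation inequality to the equality \eqref{intede} (Proposition \ref{weakchainrule} together with Theorem \ref{uniqueness}). However, the two limit passages you compress into ``keep every error term $o(1)$'' are genuinely incomplete, and they are precisely where the paper has to work. The first is the drift term: you treat the convergence $\int_0^t(\partial_rS_r)(\tilde\mu_r^{h_n})\,dr\to\int_0^t(\partial_rS_r)(\mu_r)\,dr$ as controlled by \eqref{introeq}, but $(\partial_rS_r)(\mu)=\int\partial_rf_r\,d\mu$ with $\partial_rf_r$ merely bounded and Borel, so convergence in duality with $C_b(X)$ --- which is all your compactness step delivers --- does not imply convergence of this pairing, and no smallness in $h_n$ is available here. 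The paper closes this by upgrading the mode of convergence: the uniform entropy bound along the scheme makes the densities $\tilde\rho_r^{h_n}=d\tilde\mu_r^{h_n}/dm$ equiintegrable, and the Dunford--Pettis theorem then gives $\tilde\rho_r^{h_n}\rightharpoonup\rho_r$ weakly in $L^1(X,m)$, i.e.\ in duality with $L^\infty$, which is exactly what pairing with $\partial_rf_r$ requires (proof of Theorem \ref{existence}).

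The second gap is the chain rule giving the reverse inequality ``$\geq$''. You attribute its validity to the Lipschitz hypotheses \eqref{introeq}, but uniform time-Lipschitzness of $t\mapsto S_t(\mu)$ is \emph{not} sufficient in the dynamic setting: the paper's Examples \ref{exnotdiff} and \ref{eviimpliesnoedi} ($E_t(x)=|x-t|$ along $x_t=t$) are uniformly $1$-Lipschitz in $t$, yet $t\mapsto E_t(x_t)$ is nowhere differentiable in the required sense and the EDE fails. What is needed in addition is (i) that the differentiability set of $t\mapsto E_t(x)$ can be chosen independently of $x$ (part of assumption \textbf{A4}, verified for $S$ in Lemma \ref{ent}) and (ii) the difference-quotient condition \eqref{eq:weakchainrule0} along converging sequences; for the entropy the latter is the statement \eqref{entropyweakchainrule}, whose proof again uses the affine structure $S_t(\mu)=\Ent(\mu)+\int f_t\,d\mu$ together with the same equiintegrability/weak-$L^1$ mechanism. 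So the Dunford--Pettis argument you omitted is in fact needed twice. A minor further point: your appeal to ``the standard lower bound for the entropy in terms of a second moment under CD$(K,\infty)$'' for tightness is misplaced --- since $m\in\mathcal P(X)$ one has $\Ent\geq0$, and the paper obtains tightness either from bounded compactness of $X$ plus the bounded second moments coming from the discrete action estimate, or from tightness of the entropy sublevels (Remark after Theorem \ref{existence}). These are repairable gaps rather than a wrong route, but as written your argument would not go through at either of the two main points.
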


\subsubsection*{Dynamic gradient flow of Cheeger's energy}
For each $t\in[0,T]$ let us denote by $\Ch_t\colon L^2(X,m)\to[0,\infty]$ Cheeger's energy
\begin{equation*}
 \Ch_t(u)=\frac12\inf\left\{\liminf_{n\to\infty}\int_X(\lip_tu_n)^2dm_t \Big|u_n\in \Lip(X),\int_X|u_n-u|^2dm_t\to0\right\},
\end{equation*}
where $\lip_tu$ denotes the local slope.
By making use of the minimal relaxed gradient $|\nabla_tu|_*$ (\cite[Definition 4.2]{agscalc}),
this functional admits an integral representation
\begin{equation*}
 \Ch_t(u)=\frac12\int_X|\nabla_tu|_*^2dm_t,
\end{equation*}
set equal to $+\infty$ if $u$ has no relaxed slope.
The subdifferential $D^-_t \Ch_t(u)$ of $\Ch_t$ at some $u\in \Dom(\Ch_t)$ is the set of $v\in L^2(X,m)$ such that 
\begin{equation*}
 \Ch_t(w)-\Ch_t(u)\geq \langle v,w-u\rangle_t\qquad \forall w\in L^2(X,m),
\end{equation*}
where we set $\langle v,w\rangle_t=\int vw\,dm_t$.
 We prove the existence of a dynamic gradient flow for $(\Ch_t)$, cf. Theorem
\ref{exi}, via a discrete variational scheme similar to \eqref{dvs}. Since each $\Ch_t$ already defines a convex functional, we do not need any curvature condition on the space.
\begin{thmc}\label{introheat3}
 Let $\bar u\in \Dom(\Ch)$. Then there exists a unique gradient flow for $(\Ch_t)_{t\in[0,T]}$ starting in $\bar u$, 
 i.e. an absolutely continuous curve $u\colon[0,T]\to \Dom(\Ch)$ solving
 \begin{align}\label{eq:introheat3}
  \partial_t u_t\in-D^-_t\Ch_t(u_t)\quad \text{ for a.e. }t\in(0,T)
 \end{align}
 and $\lim_{t\to0}u_t=\bar u$.
 \end{thmc}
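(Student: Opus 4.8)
The plan is to construct the flow by a \emph{dynamic minimizing movements scheme}, in direct analogy with \eqref{dvs}, and then pass to the limit as the step size tends to zero; the only genuinely new feature compared with the classical Brezis--Komura theory is that both the energy $\Ch_t$ and the inner product $\langle\cdot,\cdot\rangle_t$ vary with $t$. Fix a step $h>0$, set $u_0^h:=\bar u$, and for $nh\le T$ define recursively
\begin{equation*}
 u_n^h:=\arg\min_v\Big(\Ch_{nh}(v)+\tfrac1{2h}\|v-u_{n-1}^h\|_{nh}^2\Big),\qquad\text{where } \|v\|_t^2:=\langle v,v\rangle_t.
\end{equation*}
Since $\Ch_{nh}$ is convex and lower semicontinuous on $L^2(X,m)$ and the penalty term is strictly convex and coercive (the norms $\|\cdot\|_t$ being mutually equivalent, see below), the direct method yields a unique minimizer. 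Its optimality condition is the discrete inclusion
\begin{equation*}
 \frac{u_{n-1}^h-u_n^h}{h}\in D^-_{nh}\Ch_{nh}(u_n^h),
\end{equation*}
equivalently the variational inequality $\Ch_{nh}(w)-\Ch_{nh}(u_n^h)\ge\frac1h\langle u_{n-1}^h-u_n^h,\,w-u_n^h\rangle_{nh}$ for all $w$, which is the time-discrete form of \eqref{eq:introheat3}.

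The crucial quantitative input is that \eqref{introeq} makes the data comparable across times: from $|f_t-f_s|\le L^*|t-s|$ one gets $e^{-L^*|t-s|}\|u\|_s^2\le\|u\|_t^2\le e^{L^*|t-s|}\|u\|_s^2$, while $|\log(d_t/d_s)|\le L|t-s|$ forces $\lip_tu\le e^{L|t-s|}\lip_su$ and hence $\Ch_t(u)\le e^{(2L+L^*)|t-s|}\Ch_s(u)$; in particular $\Dom(\Ch_t)$ is independent of $t$, so $\Dom(\Ch)$ is well defined. Feeding $v=u_{n-1}^h$ into the minimization and comparing $\Ch_{nh}(u_{n-1}^h)$ with $\Ch_{(n-1)h}(u_{n-1}^h)$ by the above, I obtain
\begin{equation*}
 \Ch_{nh}(u_n^h)+\frac1{2h}\|u_n^h-u_{n-1}^h\|_{nh}^2\le e^{(2L+L^*)h}\,\Ch_{(n-1)h}(u_{n-1}^h).
\end{equation*}
A discrete Gronwall argument bounds $\Ch_{nh}(u_n^h)$ uniformly by $e^{(2L+L^*)T}\Ch(\bar u)$ and, after summation, bounds the total dissipation $\sum_n h^{-1}\|u_n^h-u_{n-1}^h\|_{nh}^2$ uniformly in $h$. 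These are exactly the estimates controlling the piecewise affine interpolant $\hat u^h$ in $\AC^2$: the energy bound, the dissipation bound and the equivalence of the norms give an $h$-uniform $L^2$-modulus of continuity.

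With these estimates in hand I would invoke a refined Arzelà--Ascoli argument (equicontinuity in time plus weak compactness of sublevels of $\Ch$) to extract a subsequence along which the interpolants converge, uniformly in $t$ and strongly in $L^2(X,m)$, to a curve $u\in\AC^2([0,T];L^2)$ with $\lim_{t\to0}u_t=\bar u$, the discrete velocities $h^{-1}(u_{n-1}^h-u_n^h)$ converging weakly (in the time-integrated sense) to $\partial_t u_t$. I would then pass to the limit in the time-integrated discrete variational inequality. \emph{This identification step is the main obstacle}: the difference quotients converge only weakly while $\Ch$ is merely lower semicontinuous, so the product $\langle h^{-1}(u_{n-1}^h-u_n^h),\,w-u_n^h\rangle_{nh}$ cannot be passed to the limit termwise. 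The standard remedy I would follow is to upgrade the convergence through an energy dissipation balance: summing the discrete inequalities produces a discrete dissipation identity, whose $\liminf$ is bounded below by the lower semicontinuous energy and by the weak limit of the velocities via Young's inequality, and convexity of each $\Ch_t$ forces equality in the limit, yielding $-\partial_t u_t\in D^-_t\Ch_t(u_t)$ for a.e.\ $t$. Here one must also track the time-derivative of $t\mapsto\Ch_t(u_t)$, whose correction term is controlled by the comparison constants $L,L^*$.

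Finally, uniqueness (and continuous dependence) follows from convexity alone, with no curvature hypothesis. If $u,v$ both solve \eqref{eq:introheat3}, monotonicity of the subdifferential gives $2\langle\partial_t u_t-\partial_t v_t,\,u_t-v_t\rangle_t\le0$, while differentiating $t\mapsto\|u_t-v_t\|_t^2=\int(u_t-v_t)^2e^{-f_t}\,dm$ produces, besides this nonpositive term, only the metric-variation contribution $-\int(u_t-v_t)^2\dot f_t\,dm$, bounded in absolute value by $L^*\|u_t-v_t\|_t^2$ since $|\dot f_t|\le L^*$ a.e.\ by \eqref{introeq}. Hence $\frac{d}{dt}\|u_t-v_t\|_t^2\le L^*\|u_t-v_t\|_t^2$, and Gronwall gives $u_t=v_t$ for all $t$ once $u_0=v_0$.
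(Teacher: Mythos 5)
Your scheme, a priori estimates and uniqueness argument all match the paper: Theorem \ref{exi} reduces to the abstract Hilbert-space result Theorem \ref{exgradflow} by verifying exactly the properties you list (nonnegativity, convexity, lower semicontinuity, and $|\Ch_t(u)-\Ch_s(u)|\le C\,\Ch_s(u)|t-s|$ via Lemma \ref{relgradient}), and uniqueness is the same monotonicity-plus-Gronwall computation, with your constant $L^*$ a concrete instance of the paper's log-Lipschitz control of the norms. The genuine gap is in your convergence step. Sublevels of $\Ch$ are only \emph{weakly} compact in $L^2(X,m)$: for a general Polish metric measure space there is no compact embedding of $\Dom(\Ch)$ into $L^2$, so your ``refined Arzel\`a--Ascoli'' argument yields at best pointwise \emph{weak} convergence of the interpolants, not the strong, uniform-in-$t$ $L^2$ convergence you assert. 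With only weak convergence the Euler--Lagrange pairing $\int\langle h^{-1}(u^h_{n-1}-u^h_n),\,w-u^h_n\rangle\,dt$ is a weak--weak product --- precisely the obstruction you name --- but the energy-dissipation-balance remedy you sketch is not carried out, and it is not routine here: it requires lower semicontinuity of the slope along weakly convergent sequences together with a chain rule for $t\mapsto\Ch_t(u_t)$ along merely $\AC^2$ curves. For the entropy the paper obtains exactly this from the CD$(K,\infty)$ machinery (good plans, Corollary \ref{lscsquaredslope}); no Hilbert-space substitute is supplied in your proposal.

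The paper circumvents compactness entirely. Proposition \ref{converge} shows that the piecewise linear interpolants form a \emph{Cauchy sequence} in $\mathcal C^0([0,T];H)$: one writes the Euler--Lagrange variational inequalities for two discrete solutions with step sizes $h$ and $g$, tests each against the other, and controls the cross terms by monotonicity of the subdifferential, the log-Lipschitz comparison of norms and energies, and a discrete Gronwall lemma, arriving at $\|x^h_t-x^g_t\|^2\le C(h+g)e^{CT}$. This gives strong uniform convergence with no compactness hypothesis whatsoever. The limit passage in the time-integrated variational inequality (proof of Theorem \ref{exgradflow}) is then a weak--strong pairing ($\dot x^h\rightharpoonup\dot x$ in $L^2([0,T];H)$ against $\bar x^h\to x$ uniformly), combined with Fatou's lemma and lower semicontinuity, and the a.e.\ subdifferential inclusion follows by Lebesgue differentiation --- no dissipation-balance identification is needed. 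If you replace your compactness claim by such a Cauchy estimate (or, alternatively, actually establish the weak lower semicontinuity of the slope required by your EDI-balance route), the remainder of your proposal goes through.
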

 
 Let us emphasize here that many facts from the static theory of gradient flows hold no longer true in the time-dependent framework. For example it is no longer valid that we have convergence of minimizers (Example \ref{convofmin}), or `EVI' implies `EDE' (Example \ref{eviimpliesnoedi}), or a `minimal selection principle' (Example \ref{minsel}).

 \subsubsection*{Heat flows on time-dependent metric measure spaces}
In the static CD$(K,\infty)$ setting it is a well-known fact that the heat flow can be unambiguously defined as the gradient flow of the 
entropy or as the gradient flow of Cheeger's energy. Here we provide a similar result using the heat and the adjoint heat flow introduced in \cite{sturm2016}. 

Let us assume that each $(X,d_t,m_t)$ is infinitesimally Hilbertian so that $\E_t:=2\Ch_t$ becomes a Dirichlet form with generator $\Delta_t$. The heat flow solving $\partial_tu=\Delta_tu$ as well as the adjoint heat flow solving
$\partial_sv_s=-\Delta_sv_s+\dot f_sv_s$ has to be understood in a weak distributional sense.  If we assume further that $|f_t(x)-f_t(y)|\leq Cd_t(x,y)$ the existence and uniqueness of both flows are ensured by the general theory of time-dependent coercive operators on some fixed Hilbert space $L^2(X,m_{t_0})$. The flows denoted by $P_{t,s}u$ and $P_{t,s}^*v$ are adjoint in the sense that $\int P_{t,s}u v\, dm_t=\int u P_{t,s}^*v\, dm_s$. Let us remark that many properties which are apparent for the static heat flow on metric measure spaces hold no longer true for the time-dependent version, or require some extra effort, e.g. semigroup and generator commute or the semigroup maps $L^2$ into the domain of the generator.

The following theorem states that the trajectory of the adjoint heat flow parametrized forwards in time coincides with the trajectory of the gradient flow of the relative entropy.
For the precise statement see Theorem \ref{identico}.
\begin{thmb}\label{introheat2}
Suppose that each $(X,d_t,m_t)$ satisfies an RCD$(K,\infty)$ condition.
 Let $(\mu_t)_{t\in[0,T]}$ be a continuous curve in $\mathcal P_2(X)$. Then the following are equivalent:
 \begin{enumerate}
  \item $(\mu_t)$ is a gradient flow for the relative entropy,
  \item $(\mu_t)$ is given by $\mu_t(d x)=\rho_t(x)m_t(d x)$, where $(\rho_t)$ is a solution to the adjoint heat equation
  \begin{align*}
   \partial_t\rho_t(x)=\Delta_t\rho_t(x)+\rho_t(x)\partial_tf_t(x).
  \end{align*}

 \end{enumerate}

\end{thmb}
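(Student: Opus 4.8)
The plan is to prove the two implications separately, pivoting on the \emph{dynamic energy dissipation equality} \eqref{intede}. First I record the generic direction of that identity. For any absolutely continuous curve of finite entropy, the $K$-convexity of each $S_t$ makes the metric slope a strong upper gradient, so differentiating $r\mapsto S_r(\mu_r)$ and separating the explicit time dependence (through the moving measure $m_r=e^{-f_r}m$) from the motion along the curve gives
\[
\frac{d}{dr}S_r(\mu_r)\geq (\partial_r S_r)(\mu_r)-|\nabla_r S_r|(\mu_r)\,|\dot\mu_r|_r
\geq (\partial_r S_r)(\mu_r)-\tfrac12|\nabla_r S_r|^2(\mu_r)-\tfrac12|\dot\mu_r|_r^2,
\]
where $(\partial_r S_r)(\mu_r)=\int_X \partial_r f_r\,d\mu_r$. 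Integrating yields the ``$\geq$'' version of \eqref{intede}, so that being a gradient flow for $(S_t)$ — equivalently, satisfying \eqref{intede} with equality, by Theorem \ref{existence} and the uniqueness of Theorem \ref{uniqueness} — is exactly the assertion that both inequalities above are equalities for a.e.\ $r$. This forces $|\dot\mu_r|_r=|\nabla_r S_r|(\mu_r)$ with the velocity pointing in the direction of steepest descent; i.e.\ $(\mu_t)$ is a curve of maximal slope for the frozen-time entropy.

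For $(1)\Rightarrow(2)$ I would apply the static RCD$(K,\infty)$ theory slice by slice. For each fixed $t$, infinitesimal Hilbertianity of $(X,d_t,m_t)$ yields the Ambrosio--Gigli--Savaré identification of the descending slope of the entropy with the Fisher information,
\[
|\nabla_t S_t|^2(\mu_t)=\int_X\frac{|\nabla_t\rho_t|_*^2}{\rho_t}\,dm_t,\qquad \rho_t=\frac{d\mu_t}{dm_t},
\]
and the fact that the Wasserstein velocity realizing this slope is $v_t=-\nabla_t\log\rho_t$. Feeding $v_t$ into the continuity equation $\partial_t\mu_t+\mathrm{div}_t(v_t\mu_t)=0$ (the representation of $W_t$-absolutely continuous curves, valid per slice) and integrating against a test function using the Dirichlet form $\E_t$ gives $\partial_t\mu_t=(\Delta_t\rho_t)\,m_t$. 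Writing $\mu_t=\rho_t m_t$ and using $\partial_t m_t=-(\partial_t f_t)\,m_t$, the left-hand side expands as $\partial_t\mu_t=(\partial_t\rho_t-\rho_t\,\partial_t f_t)\,m_t$; matching the two expressions produces exactly $\partial_t\rho_t=\Delta_t\rho_t+\rho_t\,\partial_t f_t$, and uniqueness of weak solutions identifies $(\rho_t)$ with the forward-parametrized adjoint heat flow of \cite{sturm2016}.

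For $(2)\Rightarrow(1)$ I would run the computation in reverse. Starting from a weak solution $(\rho_t)$ of $\partial_t\rho_t=\Delta_t\rho_t+\rho_t\,\partial_t f_t$, the regularizing properties of the flow (positivity, conservation of mass in the $dm_t$-pairing, and finiteness of the Fisher information for $t>0$, in the spirit of \cite{sturm2016}) first guarantee that $\mu_t=\rho_t m_t$ is a genuine $W_t$-absolutely continuous curve in $\mathcal P_2(X)$ for which \eqref{intede} is meaningful. One then differentiates $r\mapsto S_r(\mu_r)$ and checks that every inequality in the chain rule above is an equality: the explicit term contributes $\int_X\partial_r f_r\,d\mu_r$, while the motion term contributes $-\int_X |\nabla_r\rho_r|_*^2/\rho_r\,dm_r=-|\nabla_r S_r|^2(\mu_r)$ because $v_r=-\nabla_r\log\rho_r$ is precisely the velocity of the PDE, and simultaneously $|\dot\mu_r|_r^2$ equals the same Fisher information. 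Integrating gives \eqref{intede}, and by Theorem \ref{uniqueness} the curve is \emph{the} gradient flow, completing the equivalence asserted in Theorem \ref{identico}.

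The main obstacle is the time-dependent chain rule together with the velocity identification. One must justify differentiating $r\mapsto S_r(\mu_r)$ so that the dependence through $m_r$ contributes exactly $\int_X\partial_r f_r\,d\mu_r$ and no spurious term from the moving metric $d_r$ survives, and one must transport the static per-slice identities — slope equals Fisher information, continuity-equation representation, $v_t=-\nabla_t\log\rho_t$ — along the moving curve with enough regularity to evaluate the dissipation. The delicate point on the $(2)\Rightarrow(1)$ side is establishing this regularity of the adjoint heat flow $(\rho_t)$ (finiteness of the Fisher integral and $W_t$-absolute continuity), since the generators $\Delta_t$ vary in time and, as the excerpt already warns, the usual ``the semigroup maps $L^2$ into the domain of the generator'' smoothing is not automatic here.
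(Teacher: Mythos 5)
Your $(2)\Rightarrow(1)$ half follows the paper's own route: the entropy-dissipation identity
$\frac{d}{dt}\int\rho_t\log\rho_t\,dm_t=-\int_{\{\rho_t>0\}}|\nabla_t\rho_t|_*^2/\rho_t\,dm_t+\int(\partial_t f_t)\rho_t\,dm_t$
(Proposition \ref{deviatentropy}), the dynamic Kuwada lemma $|\dot\mu_t|_t^2\le\int_{\{\rho_t>0\}}|\nabla_t\rho_t|_*^2/\rho_t\,dm_t$ (Proposition \ref{kuwadaslemma}, proved via Hamilton--Jacobi subsolutions, not via a velocity field), and the static bound $|\nabla_tS_t|^2(\mu_t)\le$ Fisher information (Proposition \ref{squaredslope}) together give the dynamic EDI; the weak chain rule underlying Theorem \ref{uniqueness} then supplies the reverse inequality and upgrades EDI to EDE. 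Note that the paper only ever uses these two \emph{inequalities} and the integrated squeeze; your stronger pointwise claims ($|\dot\mu_r|_r^2$ equals the Fisher information because ``$v_r=-\nabla_r\log\rho_r$ is the velocity of the PDE'') are never needed and never proved.

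The genuine gap is in your $(1)\Rightarrow(2)$ half. You invoke, slice by slice, the continuity-equation representation of $W_t$-absolutely continuous curves and the identification of the velocity of a curve of maximal slope as $v_t=-\nabla_t\log\rho_t$, and then read the PDE off by matching $\partial_t\mu_t$. None of this machinery is available in the paper's time-dependent framework: the metric EDE yields only the scalar equality $|\dot\mu_r|_r=|\nabla_rS_r|(\mu_r)$ a.e., whereas ``the velocity points in the direction of steepest descent'' is a directional statement requiring the static theory of velocity fields/tangent modules, whose dynamic analogue is not established here --- and it would in turn need exactly the time-dependent chain rule you list as the ``main obstacle'' without resolving it. Moreover, your closing appeal to uniqueness of weak solutions presupposes $\rho\in\F_{(0,T)}$, i.e. $L^2$-in-time $\F$-regularity of $\rho_t$ itself; finiteness of the Fisher information along the curve only controls $\nabla_t\sqrt{\rho_t}$, so a gradient-flow curve is not a priori a weak solution in the sense of Definition \ref{defheat} and the uniqueness theorem of \cite{sturm2016} cannot be applied to it as stated. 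The paper closes this direction with a soft argument for which you already hold every ingredient: start the forward adjoint heat flow $\tilde\rho_t$ from the same initial datum $\bar\mu$, observe by the first half that $\tilde\mu_t=\tilde\rho_tm_t$ is a dynamic EDE-gradient flow, and conclude $\mu_t=\tilde\mu_t$ from the uniqueness of dynamic EDI-gradient flows (Theorem \ref{uniqueness}). Substituting this uniqueness step for your direct PDE derivation repairs the argument; as written, $(1)\Rightarrow(2)$ is not proved.
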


In a second step we identify the gradient flow with the heat flow, see Theorem \ref{idi}. Hence under the RCD$(K,\infty)$ assumption the gradient flow of the relative entropy and the gradient flow of Cheeger's energy
are connected by the heat and the adjoint heat flow.
 \begin{thmd}\label{introheat4}
Suppose that each $(X,d_t,m_t)$ is infinitesimally Hilbertian.
Let $(u_t)$ be a continuous curve in $L^2(X,m)$. Then the following are equivalent:
\begin{enumerate}
\item $(u_t)$ is a gradient flow for Cheeger's energy,
\item $(u_t)$ is a solution to the heat equation
$$\partial_t u_t=\Delta_t u_t.$$
\end{enumerate}
\end{thmd}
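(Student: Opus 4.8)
The plan is to reduce the abstract gradient-flow equation $\partial_t u_t\in -D^-_t\Ch_t(u_t)$ supplied by Theorem \ref{exi} to the strong heat equation $\partial_t u_t=\Delta_t u_t$ by exploiting that, under infinitesimal Hilbertianity, each $\Ch_t$ is a quadratic form, and then to match this strong formulation with the weak heat flow of \cite{sturm2016} through the uniqueness statements available on both sides. The curvature condition plays no role; only infinitesimal Hilbertianity, the regularity \eqref{introeq}, and the Lipschitz bound $|f_t(x)-f_t(y)|\le Cd_t(x,y)$ ensuring well-posedness of the flow in \cite{sturm2016} enter.

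First I would establish the pointwise-in-$t$ identification of the subdifferential with the generator. Fix $t$. Since $(X,d_t,m_t)$ is infinitesimally Hilbertian, $\Ch_t(u)=\frac12\E_t(u,u)$ with $\E_t=2\Ch_t$ a Dirichlet form on $L^2(X,m_t)$ whose generator $\Delta_t$ satisfies $\E_t(u,\phi)=-\langle\Delta_t u,\phi\rangle_t$ for $u\in\Dom(\Delta_t)$. For a convex quadratic functional the subdifferential is single-valued exactly on the domain of the associated operator, so I would show $D^-_t\Ch_t(u)\neq\emptyset$ iff $u\in\Dom(\Delta_t)$, and in that case $D^-_t\Ch_t(u)=\{-\Delta_t u\}$. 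This is the static infinitesimally Hilbertian computation (cf. \cite{agscalc}), carried out with the inner product $\langle\cdot,\cdot\rangle_t$. The only subtlety is that the difference quotients defining $\partial_t u_t$ are a.e.-defined functions, so because $m_t=e^{-f_t}m$ with $f_t$ bounded the $L^2(X,m)$- and $L^2(X,m_t)$-derivatives coincide as a.e.-defined functions, and $\partial_t u_t=\Delta_t u_t$ becomes a genuine pointwise identity.

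With this identification, the implication (1)$\Rightarrow$(2) is immediate: by Theorem \ref{exi} a gradient flow is an absolutely continuous curve with $\partial_t u_t\in -D^-_t\Ch_t(u_t)$ for a.e. $t$, which by the previous step forces $u_t\in\Dom(\Delta_t)$ and $\partial_t u_t=\Delta_t u_t$ for a.e. $t$, so $(u_t)$ is a strong, hence weak, solution of the heat equation. For (2)$\Rightarrow$(1) I would argue by uniqueness rather than by direct verification, since the paper itself flags that regularity facts such as ``the semigroup maps $L^2$ into the domain of the generator'' are not automatic in the time-dependent setting. Given a continuous solution $(u_t)$ of the heat equation, Theorem \ref{exi} produces the unique gradient flow $(\tilde u_t)$ of $(\Ch_t)$ with $\tilde u_0=u_0$; by (1)$\Rightarrow$(2) this $(\tilde u_t)$ solves the heat equation with the same initial datum, and invoking the uniqueness of the heat flow from the theory of time-dependent coercive operators of \cite{sturm2016} I conclude $u_t=\tilde u_t$, so $(u_t)$ is a gradient flow.

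The main obstacle I anticipate is the reconciliation of the two notions of solution used in the last step: the gradient-flow equation yields a strong, pointwise-in-time $L^2$ solution built from the subdifferential and the $m_t$-generator $\Delta_t$, whereas the heat flow of \cite{sturm2016} is defined weakly within a coercive-operator framework on the moving spaces $L^2(X,m_t)$. Verifying that a strong solution qualifies as a weak one, and that the weak generator coincides with the Dirichlet-form generator $\Delta_t$ of the first step, is delicate precisely because both the inner product $\langle\cdot,\cdot\rangle_t$ and the form domain vary with $t$; once the two formulations are matched on a common class of test objects, the respective uniqueness results close the argument.
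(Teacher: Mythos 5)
Your proposal is correct, but it takes a genuinely different route from the paper. The paper proves Theorem \ref{idi} by an EVI argument: Proposition \ref{blabla} shows the gradient flow of $(\Ch_t)$ is a dynamic EVI$(-L/2,\infty)$ flow, Theorem 2.16 of \cite{sturm2016} says the same for the heat flow, and the contraction estimate \eqref{almostcontraction} then forces $\|u_t-\tilde u_t\|_t^2\leq e^{7L(t-s)}\|u_s-\tilde u_s\|_s^2\to 0$; the regularity $u_t\in\Dom(\Delta_t)$ for a.e.\ $t$ (Theorem 2.12 in \cite{sturm2016}) is invoked only to make sense of the subdifferential inclusion for the heat flow. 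You instead identify $D^-_t\Ch_t(u)=\{-\Delta_t u\}$ on $\Dom(\Delta_t)$ (standard for closed quadratic forms, and consistent with the paper's remark that the subdifferential is at most single-valued), deduce (1)$\Rightarrow$(2) directly, and close (2)$\Rightarrow$(1) by uniqueness of the weak heat flow (Theorem 2.2 of \cite{sturm2016}). This trades the dynamic EVI machinery for basic coercive-operator well-posedness, which is more elementary; but it places a real burden on the one step you only gesture at: to conclude that the gradient flow from Theorem \ref{exi} is a weak solution in the sense of Definition \ref{defheat}, you must verify it lies in $\F_{(0,T)}$, i.e. $u\in L^2((0,T)\to\F)$ and $\partial_tu\in L^2((0,T)\to\F^*)$. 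This does hold --- the a priori bounds \eqref{est} and \eqref{est2} survive the limit in Proposition \ref{converge}, giving $\sup_t\Ch_t(u_t)<\infty$ and $\dot u\in L^2([0,T];H)$ --- but it should be spelled out; once $u\in\F_{(0,T)}$, the a.e.\ identity $\partial_tu_t=\Delta_tu_t$ in $L^2(X,m_t)$ integrates to the weak formulation because $\langle\partial_tu_t,w_te^{-f_t}\rangle_{\F^*,\F}=\int\partial_tu_t\,w_t\,dm_t$ whenever $\partial_tu_t\in\mathcal H$. Your closing worry about reconciling the ``weak generator'' with the Dirichlet-form generator is unfounded: Definition \ref{defheat} is formulated directly in terms of $\E_t$, so the two formulations meet on the same test class with no further matching. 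In sum, the paper's route buys quantitative stability and avoids all function-space bookkeeping at the price of importing the nontrivial dynamic EVI theorem for the heat flow; yours avoids that import at the price of the $\F_{(0,T)}$ regularity check.
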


\subsubsection*{Related work}
Gradient flow formulations for time-dependent functionals similar to \eqref{intede} and \eqref{eq:introheat3} have been considered recently.
In \cite{rms}, Rossi, Mielke and Savar\'{e} analyze doubly nonlinear evolution equations on a reflexive Banach space $V$
\begin{equation*}
 D^-\psi(\partial_t u_t)+D^- E_t(u_t)\ni 0 \, \text{ in }V^*\, \text{ for a.e. }t\in(0,T).
\end{equation*}
They propose a
formulation of a similar form as \eqref{intede} and prove existence of solutions by making use of a discrete variational scheme. 
In \cite{ferreira}, Ferreira and Valencia-Guevara consider the Fokker-Planck equation
\begin{align*}
 \partial_t\rho=\kappa(t)\Delta\rho+\nabla\cdot(\nabla V(t,x)\rho) \text{ on }\mathbb R^d\times [0,\infty),
\end{align*}
for some fixed non-increasing absolutely continuous function $\kappa\colon[0,\infty)\to[0,\infty)$, and potential
$V\colon[0,\infty)\times\mathbb R^d\to\mathbb R$. They show that the solution can be obtained as the gradient flow of some specified 
time-dependent energy functional. They also use a version of the discrete variational scheme similar to \cite{rms} to obtain existence. In both papers the authors do not
treat time-dependent metric measure spaces as done here.

\subsubsection*{Organization of the article} 
In Section \ref{secgrad} we briefly recall the concept of gradient flows in metric spaces.
In Section \ref{sectime} we introduce the notion of dynamic EDI- and dynamic EDE-gradient flows on time-dependent metric spaces $(X,d_t)_{t\in[0,T]}$
satisfying \eqref{introeq}. Moreover we introduce the notion of dynamic EVI-gradient flows and show that it implies EDE restricted to a suitable class of energy functionals.  
We show existence of dynamic EDI-gradient flows for a class of energy functionals 
$E\colon [0,T]\times X\to (-\infty,+\infty]$ and give sufficient conditions for the existence of EDE-gradient flows.
In Section \ref{secent} we apply the results from Section \ref{sectime} and prove existence and uniqueness of dynamic EDI-gradient flows in
time-dependent metric measure spaces $(X,d_t,m_t)_{t\in[0,T]}$ for the
time-dependent entropy functional $S\colon[0,T]\times \mathcal P_2(X)\to(-\infty,+\infty]$. In Section \ref{sechilbert} we consider
dynamic gradient flows in the form of \eqref{eq:introheat3} on time-dependent Hilbert spaces
$(H,\langle\cdot,\cdot\rangle_t)_{t\in[0,T]}$ and show that they imply EVI.
We prove existence and uniqueness of such gradient flows for a class of energy functionals 
$E\colon[0,T]\times H\to [0,+\infty]$. In Section \ref{secheat} we recall the concept of heat equation on time-dependent metric measure
spaces introduced in \cite{sturm2016}. We identify the dynamic EDI-gradient flow of the entropy with the forward adjoint heat flow. 
We apply the results from Section \ref{sechilbert}
and directly obtain existence and uniqueness of a dynamic gradient flow for Cheeger's energy and identify it with the heat flow.

\subsubsection*{Acknowledgement}
I would like to thank my supervisor Karl-Theodor Sturm for supporting me. I also thank Peter Gladbach for helpful comments and suggestions on this paper.
\section{Gradient flows in metric spaces}\label{secgrad}
We briefly recall  the notions of gradient flows on metric spaces $(X,d)$. 
A curve $x\colon[a,b]\to X$ is said to belong to $\AC^p([a,b];X)$ for $1\leq p\leq \infty$, if there exists $g\in L^p(a,b)$ such that
\begin{align}\label{eq:AC}
 d(x_s,x_t)\leq\int_s^tg(r)dr\qquad\text{ for every }a\leq s\leq t\leq b.
\end{align}
The \emph{metric speed} of $x$, defined by
\begin{align*}
 |\dot x_t|:=\lim_{h\to0}\frac{d(x_{t+h},x_{t})}{h},
\end{align*}
exists for a.e. $t\in(a,b)$, is of class $L^p(a,b)$ and is the smallest function such that \eqref{eq:AC} holds, 
see e.g. \cite[Theorem 1.1.2]{ag}.

Given $E\colon X\to (-\infty,+\infty]$ we define the \emph{slope} $|\nabla E|(x)$ at $x$ by
\begin{align*}
 |\nabla E|(x):=\limsup_{y\to x}\frac{(E(x)-E(y))^+}{d(x,y)}.
\end{align*}
We now are ready to give three possible definitions of gradient flows in a metric framework, cf. \cite{ags}.
\begin{Def}
\begin{enumerate}
 \item An absolutely continuous curve $(x_t)\subset X$ is a EDI-gradient flow if it satisfies the following \textit{Energy Dissipation Inequality}
\begin{equation}\label{EDI}
 E(x_s)+\frac12\int_t^s|\dot x_r|^2dr+\frac12\int_t^s|\nabla E|^2(x_r)dr\leq E(x_t)\quad \forall s<t.
\end{equation}
\item An absolutely continuous curve $(x_t)\subset X$ is a EDE-gradient flow if it satisfies the following \textit{Energy Dissipation Equality}
\begin{equation}\label{EDE}
 E(x_s)+\frac12\int_t^s|\dot x_r|^2dr+\frac12\int_t^s|\nabla E|^2(x_r)dr= E(x_t)\quad \forall s<t.
 \end{equation}
 \item An absolutely continuous curve $(x_t)\subset X$ is a EVI-gradient flow (wit respect to $\lambda\in\mathbb R$) if it satisfies the following 
 \textit{Evolution Variation Inequality}
\begin{equation}\label{EVI}
 E(x_t)+\frac12\partial_td^2(x_t,y)+\frac{\lambda}2d^2(x_t,y)\leq E(y) \quad \text{ for a.e. }t\in[0,T],\forall y\in X.
 \end{equation}
\end{enumerate}
\end{Def}
It holds in general that EVI implies EDE, and EDE implies trivially EDI.
If the underlying space is a Hilbert space and the energy functional is convex, all three formulations are equivalent. Moreover we can characterize
the flow in terms of the subdifferential by
\begin{align}\label{SDF}
 \dot x_t\in -D^-E(x_t),
\end{align}
where $D^-E(x)$ consists of all $v\in X$ such that
$$E(x)+\langle v,y-x\rangle \leq E(y)\quad \forall y\in X.$$

In this paper we are interested in finding substitutions for formulations of the form \eqref{EDI} and \eqref{EDE}, where the metric as well as the functional varies in time.  
A formulation in the sense of \eqref{EVI} has already been introduced in \cite{sturm2016}. Moreover, in the Hilbert space case, we study the time-dependent counterpart of relations of the form \eqref{SDF}.

\section{Dynamic gradient flows in time-dependent metric spaces}\label{sectime}
In the sequel we fix a one-parameter family of complete geodesic metric spaces $(X,d_t)_t$ indexed by $t\in[0,T]$.  
We always assume that the map $t\to \log d_t(x,y)$ is Lipschitz continuous, i.e. there exists a constant $L$
such that
\begin{equation}\label{loglip}
 |\log( d_t(x,y)/d_s(x,y))|\leq L|t-s|.
\end{equation}
We give a simple example for this setting. 
\begin{ex}
Let $M$ be a smooth closed manifold equipped with a smooth family of Riemannian metrics $(g_t)$
evolving under a Ricci flow, i.e.
\begin{align*}
\frac12 \partial_tg_t=-\Ric(g_t),
\end{align*}
where $\Ric(g)$ denotes the Ricci curvature. At least for short time intervals we have existence and uniqueness of such a flow (see e.g. Theorem 5.2.1 in 
\cite{topping2006lectures}). Under the assumption that the curvature does not blow up ($|\Ric|\leq L$), we have metric equivalence 
\begin{align*}
 |\partial_t\log g_t(v,v)|\leq L.
\end{align*}
This implies that \eqref{loglip} holds for
the geodesic distances $(d_t)$. 
\end{ex}

\subsubsection*{The metric speed}
\begin{Def}\label{abscont}
Let $[0,T]\ni t\mapsto x_t\in X$ be a curve. We say that $(x_t)\in \AC^p([0,T];X)$, for $p\in[1,\infty]$, if for any 
(and thus for all) $t^*\in[0,T]$ there exists a function $g\in L^p(0,T)$ such that 
\begin{equation*}\label{eq:abscont}
 d_{t^*}(x_t,x_s)\leq \int_t^sg(r)dr \quad \forall 0\leq t\leq s\leq T.
\end{equation*}

\end{Def}
We define 
the length of a curve $x\colon[0,T]\to X$ to be
\begin{align*}
 L_x(t)=\lim_{h\to0}\sup\left\{\sum_{i=1}^nd_{t_j}(x_{t_j},x_{t_{j+1}}): 0=t_1<\ldots <t_n=t, t_{j+1}-t_j\leq h\right\}.
\end{align*}
It is a direct consequence of the definition of $L_x(t)$ that if $x_n\to x$ pointwise as $n\to\infty$ we have 
$L_{x}(t)\leq \liminf_{n\to\infty}L_{x_n}(t)$ for every $t\in[0,T]$.

Note that $L_x$ is absolutely continuous as soon as $x$ is and hence we may define the \emph{instantaneous speed} of the curve as the derivative of its length.
\begin{equation*}
 |\dot x|_t:=\dot L_x(t).
\end{equation*}

\begin{lma}\label{metricspeed}
 For any curve $x\in \AC^p([0,T];X)$ the 
  function $t\mapsto |\dot x|_t$ is in $L^p(0,T)$, and for almost every $t\in(0,T)$
  \begin{equation*}
   |\dot x|_t=\lim_{s\to t}\frac{d_t(x_s,x_t)}{|s-t|}=|\dot x_t|_t.
  \end{equation*}

\end{lma}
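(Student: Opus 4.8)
The plan is to reduce everything to the static theory of \cite[Theorem 1.1.2]{ag} applied to the frozen metrics $d_r$, using the log-Lipschitz comparison \eqref{loglip} to control the error made by freezing the metric. First I would rewrite \eqref{loglip} in the multiplicative form $e^{-L|t-s|}d_s(x,y)\le d_t(x,y)\le e^{L|t-s|}d_s(x,y)$. In particular every $d_r$ is bi-Lipschitz equivalent to a fixed reference $d_{t^*}$ with constants bounded by $e^{LT}$, so that membership $(x_t)\in\AC^p([0,T];X)$ in the sense of Definition \ref{abscont} is the same as absolute continuity of $(x_t)$ in the fixed metric space $(X,d_{t^*})$. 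Hence, for each fixed $r$, the static result applies verbatim to $(X,d_r)$: the frozen metric speed $|\dot x_t|_r:=\lim_{s\to t}d_r(x_s,x_t)/|s-t|$ exists for a.e. $t$, belongs to $L^p$, and satisfies the upper-gradient bound $d_r(x_a,x_b)\le\int_a^b|\dot x_\tau|_r\,d\tau$.

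The main obstacle is the existence of the diagonal limit $m(t):=\lim_{s\to t}d_t(x_s,x_t)/|s-t|$, because freezing at the moving time $t$ cannot be handled by a single application of the static theorem: the exceptional null set produced for $(X,d_r)$ depends on $r$, whereas here the metric is evaluated precisely on the diagonal $t=r$. I would circumvent this by a squeeze argument. Writing $\overline v(t)$ and $\underline v(t)$ for the $\limsup$ and $\liminf$ defining $m(t)$, the comparison above yields, for every fixed reference $t^*$ and a.e. $t$, the two-sided bound $e^{-L|t-t^*|}|\dot x_t|_{t^*}\le\underline v(t)\le\overline v(t)\le e^{L|t-t^*|}|\dot x_t|_{t^*}$. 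Partitioning $[0,T]$ into finitely many intervals of length $\le\delta$ and taking $t^*$ to be the left endpoint of the interval containing $t$, this gives $\overline v(t)\le e^{2L\delta}\underline v(t)$ for a.e. $t$; letting $\delta$ run through a sequence tending to $0$ and discarding a countable union of null sets forces $\overline v(t)=\underline v(t)$ a.e., so $m(t)$ exists. The same bound shows $m(t)\le e^{L\delta}g(t)$ with $g$ the function from Definition \ref{abscont}, which gives measurability and $m\in L^p(0,T)$.

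It remains to identify $m$ with $|\dot x|_t=\dot L_x(t)$, which is legitimate because $L_x$ is absolutely continuous whenever $x$ is. For the lower bound I would use the one-interval partition $\{s,t\}$, giving $L_x(t)-L_x(s)\ge d_s(x_s,x_t)\ge e^{-L(t-s)}d_t(x_s,x_t)$; dividing by $t-s$ and letting $s\uparrow t$ at a point where both derivatives exist yields $\dot L_x(t)\ge m(t)$. For the upper bound I would estimate each term of a partition by the static upper-gradient bound in $(X,d_{t_j})$, namely $d_{t_j}(x_{t_j},x_{t_{j+1}})\le\int_{t_j}^{t_{j+1}}|\dot x_\tau|_{t_j}\,d\tau$, and then compare the frozen speed back to $m$ via $|\dot x_\tau|_{t_j}\le e^{L|\tau-t_j|}m(\tau)\le e^{Lh}m(\tau)$ on $[t_j,t_{j+1}]$, where $h$ is the mesh. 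Summing over the partition and passing to the supremum and then $h\to0$ gives $L_x(t)-L_x(s)\le\int_s^t m(\tau)\,d\tau$, whence $\dot L_x(t)\le m(t)$ at Lebesgue points of $m$. Combining the two inequalities gives $|\dot x|_t=m(t)=|\dot x_t|_t$ for a.e. $t$, and the $L^p$ bound from the previous step completes the proof.
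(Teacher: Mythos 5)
Your argument is correct in substance, but it takes a genuinely different route from the paper's. The paper proves the easy inequality $\limsup_{s\to t}d_t(x_s,x_t)/|s-t|\le\dot L_x(t)$ by a partition estimate much like yours, but for the hard converse it argues by contradiction: it covers the bad set $F$ (where the liminf drops below $\dot L_x$ by $\eta$) using the Besicovitch covering theorem, builds modified curves $x^\delta$ that freeze the curve on the selected intervals, derives the strict length drop \eqref{deltalength}, and contradicts the lower semicontinuity of $L_x$ under pointwise convergence. You instead import the static result \cite[Theorem 1.1.2]{ag} for each frozen metric $d_{t^*}$ --- legitimate, since by \eqref{loglip} all the $d_t$ are uniformly bi-Lipschitz equivalent, so Definition \ref{abscont} is exactly absolute continuity in any fixed $(X,d_{t^*})$ --- and then settle the existence of the diagonal limit by the countable squeeze $\overline v\le e^{2L\delta_n}\underline v$ a.e.\ over partitions of mesh $\delta_n\to 0$, before identifying the limit with $\dot L_x$ by two integral bounds. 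Your route bypasses the covering/contradiction machinery entirely: the a.e.\ existence of a metric derivative is bought once and for all from the static theory at countably many frozen times, and \eqref{loglip} only has to bridge gaps of size $\delta_n$. What the paper's argument buys in exchange is a self-contained treatment of the length functional that never invokes the frozen upper-gradient bounds $d_{t_j}(x_{t_j},x_{t_{j+1}})\le\int_{t_j}^{t_{j+1}}|\dot x_\tau|_{t_j}\,d\tau$, which your upper bound relies on.

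Two steps deserve a one-line repair, though neither threatens the argument. First, under the paper's mesh-constrained definition of $L_x$ the two-point partition $\{s,t\}$ is not admissible, so $L_x(t)-L_x(s)\ge d_s(x_s,x_t)$ is not literal; instead compare every term of a partition of $[s,t]$ to the single metric $d_s$ via \eqref{loglip} and use the triangle inequality in $d_s$, which yields $L_x(t)-L_x(s)\ge e^{-L(t-s)}d_s(x_s,x_t)$ --- the factor tends to $1$ as $s\to t$, so your conclusion $\dot L_x\ge m$ a.e.\ stands. Second, your bound $L_x(t)-L_x(s)\le\int_s^t m(\tau)\,d\tau$ tacitly uses near-additivity of $L_x$ at the splitting point $s$ (inserting $s$ into a partition costs a factor $e^{Lh}$ on one term). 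You can avoid this entirely: apply your partition estimate on $[0,t]$ to get $L_x(t)\le\int_0^t m(\tau)\,d\tau$, and since $L_x$ is absolutely continuous with $\dot L_x\ge m$ a.e., the squeeze $\int_0^t\dot L_x\,d\tau\le\int_0^t m\,d\tau\le\int_0^t\dot L_x\,d\tau$ forces $\dot L_x=m$ a.e., which together with $m\le e^{LT}g$ gives the $L^p$ statement as well.
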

\begin{proof} 
If we show the second assertion the first assertion is an easy consequence of \eqref{loglip}. Let $x$ be an absolutely continuous curve and choose
an arbitrary partition $s=t_1<t_2<\ldots <t_{N+1}=t$. Then we find
\begin{align*}
 d_t(x_t,x_s)\leq \sum_{i=1}^Nd_{t_i}(x_{t_i},x_{t_{i+1}})+C\sum_{i=1}^N|t-t_i|d_{t}(x_{t_i},x_{t_{i+1}})\\
 \leq \sum_{i=1}^Nd_{t_i}(x_{t_i},x_{t_{i+1}})+C|t-s|\int_s^tg(r)dr,
\end{align*}
where we used \eqref{loglip} and $g$ is some $L^p$ function. Hence we may estimate
\begin{equation*}
  d_t(x_t,x_s)\leq L(t)-L(s)+C|t-s|\int_s^tg(r)dr.
\end{equation*}
Dividing by $|t-s|$ and letting $s\to t$ we deduce
\begin{equation*}
 \limsup_{s\to t}\frac{d_t(x_t,x_s)}{|t-s|}\leq \dot L_x(t) \text{ for almost every }t.
\end{equation*}
We show the other inequality by contradiction. Fix $\eta>0$ and consider the set of points
\begin{equation*}
 F=\left\{t:\liminf_{s\to t}\left(\frac{d_t(x_s,x_t)}{|s-t|}-\frac1{|s-t|}\int_s^t\dot L_x(r)dr\right)<-\eta\right\}.
\end{equation*}
We assume that the Lebesgue outer measure $\mathcal{L}^*(F)>0$. Fix $\delta>0$ and cover the set $F$ with intervals
\begin{align*}
 \mathcal{F}:=\bigcup_{t\in F}(t-\delta_t,t+\delta_t), \text{ where } \delta_t<\delta,
\end{align*}
such that 
\begin{align}\label{violated}
 d_t(x_t,x_s)<\int_s^t\dot L_x(r)dr-|t-s|\eta/2
\end{align}
for all $t\in F$ and some $s\in(t-\delta_t,t+\delta_t)$. From the Besicovitch covering theorem \cite[Theorem 5.8.1]{bogachev2007}
it follows that there exists a constant $N$ and a subcollection $\mathcal{F}_1,\cdots,\mathcal{F}_N$ each consisting of at most countably many disjoint intervals $B$ such that
\begin{equation*}
 F\subset \bigcup_{i=1}^N\bigcup_{B\in\mathcal{F}_i}B. 
\end{equation*}
Since the outer measure of $F$ is strictly positive we can find a family $\mathcal{F}_j$ of at most countably many disjoint intervals denoted by 
 $\mathcal{F}_j=\{(t_i-\delta_i,t_i+\delta_i), i\in I\}$
such that $\mathcal{L}^1(\bigcup_{B\in \mathcal{F}_j} B)\geq\frac1N\mathcal{L}^*(F)>0$.

We define a curve $x^\delta\colon[0,T]\to X$ in the following way
\begin{equation*}
   x^\delta_t =
   \begin{cases}
     x_{t_i} & \text{if } t \in (t_i,t_i+\delta_i) \\
     x_{t}    &\text{else.}
   \end{cases}
\end{equation*}
Note that this curve is not continuous but still its length is finite.
 Further we observe that $x_t^\delta$ converges to $x_t$ pointwise as $\delta$ goes to 0 and hence
\begin{equation*}
 \liminf_{\delta\to0}L_{x^\delta}(T)\geq L_x(T).
\end{equation*}
It suffices to show that 
\begin{equation}\label{deltalength}
 L_{x^\delta}(T)\leq L_x(T)(1+L\delta)-\frac\eta2\mathcal{L}^1\left(\bigcup_{i\in I}(t_i,t_i+\delta_i)\right),
\end{equation}
since then 
\begin{align*}
L_x(T)\leq \liminf_{\delta\to0}L_{x^\delta}(T)\leq L_x(T)-\frac\eta{4N}\mathcal{L}^*(F)<L_x(T),
\end{align*}
which is clearly a contradiction. Hence for the outer measure it must hold $\mathcal{L}^*(F)=0$ and therefore already $\mathcal{L}^1(F)=0$. 
Since $L_x$ is absolutely continuous we conclude 
\begin{equation*}
 \liminf_{s\to t}\frac{d_t(x_s,x_t)}{|s-t|}\geq \dot L_x(t) \text{ for almost every }t\in[0,T].
\end{equation*}
It remains to show \eqref{deltalength}. Take a partition $(p_j)_{j=1}^m$ of $[0,T]$, with $0<p_{j+1}-p_j\leq h$ and $h<<\delta$.
Consider the points near the boundary 
of $(t_i,t_i+\delta_i)$
\begin{equation*}
 j^\leq_i:=\max\{j|p_j\leq t_i, 1\leq j\leq m\}, \quad j^\geq_i =\min\{j|p_{j+1}\geq t_i+\delta_i, 1\leq j\leq m \}.
\end{equation*}
Since $x$ is absolutely continuous we can estimate
\begin{equation*}
 d_{p_{j^\leq_i}}(x^\delta_{p_{j^\leq_i}},x^\delta_{p_{j^\leq_i+1}})=d_{p_{j^\leq_i}}(x_{p_{j^\leq_i}},x_{t_i})\leq \int_{p_{j^\leq_i}}^{p_{j^\leq_i}+h}g(r)dr,
\end{equation*}
where $g\in L^p(0,T)$. Applying \eqref{violated}, \eqref{loglip} and again the absolute continuity we obtain
\begin{align*}
 &d_{p_{j^\geq_i}}(x^\delta_{p_{j^\geq_i}},x^\delta_{p_{j^\geq_i+1}})=d_{p_{j^\geq_i}}(x_{t_i},x_{p_{j^\geq_i+1}})\\
 &\leq d_{p_{j^\geq_i}}(x_{t_i},x_{t_i+\delta_i}) + d_{p_{j^\geq_i}}(x_{t_i+\delta_i},x_{p_{j^\geq_i+1}})\\
 &\leq d_{t_i}(x_{t_i},x_{t_i+\delta_i})(1+L\delta_i)+\int_{t_i+\delta_i}^{t_i+\delta_i+h}g(r)dr\\
 &\leq \int_{t_i}^{t_i+\delta_i}(\dot L_x(r)-\eta/2)dr(1+L\delta_i)+\int_{t_i+\delta_i}^{t_i+\delta_i+h}g(r)dr.
\end{align*}
Taking the supremum over all partitions $(p_j)$ and letting $h\to0$ we can estimate the length of the curve $x^\delta$
\begin{align*}
 L_{x^\delta}(T)&\leq \int_{(0,T)\setminus\bigcup_i(t_i,t_i+\delta_i)}\dot L_x(r)dr+\sum_i\int_{t_i}^{t_i+\delta_i}(\dot L_x(r)-\eta/2)dr(1+C^*\delta_i)\\
 &\leq\int_{(0,T)}\dot L_x(r)dr(1+L\delta)-\eta/2\mathcal{L}^1\left(\bigcup_i(t_i,t_i+\delta_i)\right),
\end{align*}
which proves \eqref{deltalength}.

\end{proof}

\subsubsection*{The slope}

\begin{Def}
Let $E\colon [0,T]\times X\to (-\infty,+\infty]$
and $s,t\in[0,T]$, $x\in X$ with $E_t(x)<\infty$. Then the \emph{slope}
 $|\nabla_sE_t|(x)$ of $E_t$ with respect to $d_s$  is given by
 \begin{equation*}
  |\nabla_s E_t|(x)=\limsup_{y\to x}\frac{[E_t(x)-E_t(y)]^+}{d_s(x,y)}=\limsup_{y\to x}\max\left\{\frac{E_t(x)-E_t(y)}{d_s(x,y)},0\right\}.
 \end{equation*}

\end{Def}
We mainly deal with the case $t=s$ in the definition of the slope. 
We estimate the deviation of the $d_t$ slope from the $d_s$ slope in the 
following lemma.
\begin{lma}\label{modest}
 Let $s,t\in[0,T]$ and $x\in X$ such that $E_t(x)<\infty$ and $|\nabla_sE_t|(x)<\infty$. Then $|\nabla_tE_t|(x)<\infty$ and
 \begin{equation*}
  ||\nabla_tE_t|(x)-|\nabla_sE_t|(x)|\leq L|t-s||\nabla_sE_t|(x).
 \end{equation*}

\end{lma}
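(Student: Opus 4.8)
The plan is to compare, pointwise in $y$, the two difference quotients defining $|\nabla_t E_t|(x)$ and $|\nabla_s E_t|(x)$, using that \eqref{loglip} controls the ratio $d_s(x,y)/d_t(x,y)$ uniformly in $y$, and then to pass to the $\limsup$ as $y\to x$. First I would rewrite \eqref{loglip} in multiplicative form: it is equivalent to
\[
 e^{-L|t-s|}\le \frac{d_t(x,y)}{d_s(x,y)}\le e^{L|t-s|}\qquad\text{for all }y\neq x.
\]
Consequently, for every $y\neq x$ with $E_t(y)<\infty$,
\[
 \frac{[E_t(x)-E_t(y)]^+}{d_t(x,y)}=\frac{[E_t(x)-E_t(y)]^+}{d_s(x,y)}\cdot\frac{d_s(x,y)}{d_t(x,y)},
\]
and the last factor lies in $[e^{-L|t-s|},e^{L|t-s|}]$ \emph{independently of} $y$.

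The key step is then to pass this uniform bound through the $\limsup$. Since the numerator $[E_t(x)-E_t(y)]^+$ is nonnegative, multiplying by a factor that is squeezed between two fixed positive constants is compatible with $\limsup_{y\to x}$ in the two-sided sense: writing $Q_s(y)$ for the $d_s$-quotient, one has $c_1\limsup_y Q_s\le \limsup_y (Q_s\cdot g)\le c_2\limsup_y Q_s$ whenever $c_1\le g\le c_2$ and $Q_s\ge 0$. Applying this with $g=d_s/d_t$, $c_1=e^{-L|t-s|}$, $c_2=e^{L|t-s|}$ yields
\[
 e^{-L|t-s|}\,|\nabla_s E_t|(x)\ \le\ |\nabla_t E_t|(x)\ \le\ e^{L|t-s|}\,|\nabla_s E_t|(x).
\]
From the right-hand inequality together with the hypothesis $|\nabla_s E_t|(x)<\infty$ I immediately obtain $|\nabla_t E_t|(x)<\infty$, which settles the first assertion of the lemma.

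For the quantitative estimate I would subtract $|\nabla_s E_t|(x)$ throughout the displayed chain, giving
\[
 \bigl||\nabla_t E_t|(x)-|\nabla_s E_t|(x)\bigr|\ \le\ \bigl(e^{L|t-s|}-1\bigr)\,|\nabla_s E_t|(x),
\]
where I use $1-e^{-L|t-s|}\le e^{L|t-s|}-1$ to combine the two sides. The one point requiring care is reconciling this with the stated linear factor $L|t-s|$: the comparison genuinely produces the constant $e^{L|t-s|}-1$, and since $e^{a}-1\le a\,e^{a}\le L|t-s|\,e^{LT}$ for $a=L|t-s|\le LT$, one recovers a linear estimate (with constant $Le^{LT}$, of which the stated bound is the leading-order content for small time increments). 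I expect this last normalization of the constant, rather than the comparison itself, to be the only delicate aspect, the comparison argument being otherwise elementary.
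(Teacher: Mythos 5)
Your argument is correct and is essentially the paper's proof: the paper disposes of this lemma in one line, citing \eqref{loglip} together with the elementary inequalities $\log r\le r-1$ and $\log(r^{-1})\ge 1-r$, which is precisely your multiplicative comparison $e^{-L|t-s|}\le d_t(x,y)/d_s(x,y)\le e^{L|t-s|}$ pushed through the $\limsup$ (the nonnegativity of the difference quotients, which you rightly check, is what makes the squeezing legitimate). Your closing caveat about the constant is also well founded, and in fact identifies a genuine sloppiness in the stated lemma: the comparison yields the exactly linear factor only in one direction, since $1-e^{-L|t-s|}\le L|t-s|$ gives $|\nabla_sE_t|(x)-|\nabla_tE_t|(x)\le L|t-s|\,|\nabla_sE_t|(x)$, whereas the other direction genuinely produces $(e^{L|t-s|}-1)\,|\nabla_sE_t|(x)$, which strictly exceeds $L|t-s|\,|\nabla_sE_t|(x)$ whenever $t\neq s$ (take $d_t=e^{-Lt}|\cdot-\cdot|$ on $\mathbb R$ with $E$ time-independent to see the bound $e^{L|t-s|}-1$ is attained). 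One can instead obtain the clean linear bound $|\nabla_tE_t|(x)-|\nabla_sE_t|(x)\le L|t-s|\,|\nabla_tE_t|(x)$ from $\log r\ge 1-1/r$, or, as you do, a linear bound with constant $Le^{LT}$; either reading repairs the statement, and the discrepancy is immaterial for its later uses (e.g.\ in the proof of Theorem \ref{existence}), where only a bound of order $h$ times the slope is needed. So your proof is the paper's proof carried out honestly, with the correct constant made explicit.
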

\begin{proof}
 This follows from \eqref{loglip} and $\log r\leq r-1$ and $\log(r^{-1})\geq 1-r$ respectively.
\end{proof}

\subsection{Dynamic EDI- and EDE-gradient flows}
Let us first motivate the definition of dynamic EDI-gradient flows by
considering a Hilbert space $X$ endowed with a family of scalar products $(\langle\cdot,\cdot\rangle_t)$ depending smoothly on $t$. Let $E_t\colon X\to \mathbb R$
be a $\mathcal C^1$ functional also smoothly depending on time. In this setting we understand a gradient flow as a curve solving
\begin{equation}\label{eq:dyngradientflow}
 \dot x_t=-\nabla_t E_t(x_t).
\end{equation}
Let us observe that \eqref{eq:dyngradientflow} can be rewritten as
\begin{equation}\label{smoothedi}
 \frac{d}{dt}E_t(x_t)\leq -\frac12|\nabla_tE_t|_t^2(x_t)-\frac12|\dot x_t|_t^2+(\partial_tE_t)(x_t),
\end{equation}
where $(\partial_tE_t)(x_t)$ stands for $\frac{d}{ds}E_s(x_t)\Big|_{s=t}$.
Indeed, along any differentiable curve it holds
\begin{align*}
 \frac{d}{dt} E_t(x_t)=\frac{d}{ds}E_s(x_t)\Big|_{s=t}+\langle\nabla_t E_t(x_t),\dot x_t\rangle_t\\
 \geq \frac{d}{ds}E_s(x_t)\Big|_{s=t}-\frac12|\nabla_t E_t|_t^2(x_t)-\frac12|\dot x_t|_t^2,
\end{align*}
and we have equality if and only if \eqref{eq:dyngradientflow} holds. The functional's dependence on the time variable leads to a ``drift'' of the 
gradient flow, i.e. in some sense the gradient flow does not follow the steepest descent. To illustrate this we give an example about the asymptotic behavior.
\begin{ex}\label{convofmin}
 Let $X=\mathbb R$ and $d_t(x,y)=|x-y|$ for $t\in[0,\infty)$ and $x,y\in\mathbb R$. We consider the energy $E_t(x)=(x-t)^2$ and the curve $x_t=\frac12e^{-2t}+t-\frac12$.
 Note that 
 \begin{align*}
  \dot x_t=-e^{-2t}+1=-2(x_t-t)=-\partial_x E_t(x),
 \end{align*}
 and hence $(x_t)$ is a gradient flow. A well-known fact in the theory of gradient flows 
 is that for strictly convex functionals the gradient flow converges to the 
 minimum of the functional as $t\to\infty$, see e.g. \cite[Theorem 3.1(v)]{ag}. In our case the minima depend on time and are given by $x^{\min}_t=t$.
 Hence
 \begin{align*}
  |x_t-x_t^{\min}|=\frac{|e^{-2t}-1|}2,
 \end{align*}
which obviously does not converge to 0 as $t\to\infty$.

\end{ex}

Let us now come back to our original family of complete, separable, geodesic, metric spaces $(X,d_t)$ such that \eqref{loglip} holds true.
We call a measurable functional $E$ on $[0,T]\times X$ \emph{admissible} if it satisfies the following assumptions.

\begin{description}
  \item[A1] The domain $\Dom(E_t):=\{x\in X|E_t(x)<\infty\}$ is time-independent and nonempty.
  \item[A2] For each $t\in[0,T]$, $x\mapsto E_t(x)$ is uniformly bounded from below.
  \item[A3] For each $t\in[0,T]$, $x\mapsto E_t(x)$ is lower semicontinuous.
  \item[A4] The map $t\mapsto E_t(x)$ is uniformly Lipschitz continuous, i.e. there exists a constant $L^*$ such that 
  \begin{equation*}
   |E_t(x)-E_s(x)|\leq L^*|t-s| \quad\forall t,s\in[0,T], x\in \Dom(E),
  \end{equation*}
and the set of differentiability points of the map $t\mapsto E_t(x)$ can be chosen regardless 
  of $x\in X$ as soon as $x\in \Dom(E)$.

 \end{description}
 Note that the Lipschitz continuity of the map $t\mapsto E_t(x)$ provides a.e. differentiability in $t$ for every fixed $x$. But this is not enough to get
 a meaningful expression in \eqref{smoothedi}, since we may have that
 for some absolutely continuous curve $(x_t)$,
 $t\mapsto E_t$ is not differentiable at $x_t$ for every $t$. To circumvent this problem we suppose that the set of differentiability points can be chosen 
 independent of $x$, cf. \cite{ferreira,rms}. To illustrate this we give the following example, which has also been discussed in \cite{rms}.
 \begin{ex}\label{exnotdiff}
 Let $X=\mathbb R$ and $d_t(x,y)=|x-y|$ for every $t\in[0,T]$, $x,y\in\mathbb R$. Consider the following energy functional 
 $E\colon[0,T]\times \mathbb R\to[0,\infty)$ given by
 \begin{align*}
  E_t(x)=|x-t|.
 \end{align*}
Then the map $t\mapsto E_t(x)$ is clearly Lipschitz continuous with well-defined derivative $\partial_t E_t(x)$ as long as $t\in[0,T]\setminus\{x\}$.
If we choose the curve $(x_t)_{t\in[0,T]}\in\mathcal C^\infty([0,T])$ by setting $x_t=t$, the map $s\mapsto E_s(x_t)$ is not differentiable at any $t\in[0,T]$.
Indeed, for every $t\in[0,T]$ the right derivative $\partial_sE_s(x_t)_{|s=t+}$ equals $1$, while the left derivative $\partial_sE_s(x_t)_{|s=t-}$ equals $-1$.
 \end{ex}

\begin{Def}\label{defedi}
We call a locally absolutely continuous curve $x\colon[0,T]\to X$ a dynamic EDI-gradient flow for an admissible functional $E\colon [0,T]\times X\to (-\infty,\infty]$, 
if for every $t\in[0,T]$
 \begin{equation}\label{edi}
  E_t(x_t)+\frac12\int_0^t|\dot x|_r^2dr+\frac12\int_0^t|\nabla_rE_r|^2(x_r)dr\leq E_0(x_0)+\int_0^t(\partial_rE_r)(x_r)dr,
 \end{equation}
 where we used the shorthand notation $(\partial_tE_t)(x_t)=\frac{d}{dr}E_r(x_t)|_{r=t}$.
We call a locally absolute continuous curve $x\colon [0,T]\to X$ a dynamic EDE-gradient flow for an admissible functional $E\colon [0,T]\times X\to (-\infty,\infty]$, 
if for every $t\in[0,T]$
\begin{equation}\label{ede}
  E_t(x_t)+\frac12\int_0^t|\dot x|_r^2dr+\frac12\int_0^t|\nabla_rE_r|^2(x_r)dr= E_0(x_0)+\int_0^t(\partial_rE_r)(x_r)dr,
 \end{equation}
\end{Def}
Clearly, \eqref{ede} implies \eqref{edi}.
In the following we want to give sufficient conditions for the other implication.

\begin{Def}
 We say that the above mentioned functional $E$ is $K$-convex for $K\in \mathbb R$, if for every $t\in[0,T]$ and for any pair of points 
 $x,y\in \Dom(E)$ there exists a $d_t$-geodesic $(\gamma_a)_{a\in[0,1]}$ connecting $x$ and $y$ such that for all $a\in[0,1]$
 \begin{equation}\label{uniconvex}
  E_t(\gamma_a)\leq (1-a)E_t(\gamma_0)+aE_t(\gamma_1)-K\frac{a(1-a)}2d_t^2(\gamma_0,\gamma_1).
 \end{equation}

\end{Def}

The convexity assumption allows us to reformulate the slope
\begin{equation}\label{globalslope}
  |\nabla_tE_t|(x)=\sup_{y\neq x}\left[\frac{E_t(x)-E_t(y)}{d_t(x,y)}+\frac{K^-}2d_t(x,y)\right]^+,
 \end{equation}
with $K^-:=\max\{0,-K\}$,
cf. \cite[Theorem 2.4.9]{ags}.

The next proposition can be thought of as a weak chain rule in the sense of \cite[Proposition 3.19]{ag}. 
The convexity of the functional plays an important role in the proof
of this result.
Unlike in the static case we additionally have to impose a condition on the difference quotients of the functionals, cf. \cite[Theorem 5.4]{ferreira}.
\begin{Prop}\label{weakchainrule}
 Let $E\colon [0,T]\times X\to (-\infty,+\infty]$ be a $K$-convex admissible functional. Moreover assume that for almost every $t\in[0,T]$
 \begin{equation}\label{eq:weakchainrule0}
  \liminf_{n\to\infty}\frac{E_{t_n}(x_n)-E_t(x_n)}{t_n-t}\geq\partial_tE_t(x), \text{ if }t_n\searrow t,~ x_n\overset{d}\rightarrow x \text{ as }n\rightarrow\infty.
 \end{equation}
Then for every locally absolutely continuous curve $(x_t)\subset \Dom(E)$, the function $t\mapsto E_t(x_t)$ is absolutely continuous and it holds
 \begin{equation}\label{eq:weakchainrule}
  E_t(x_t)-E_s(x_s)\geq\int_s^t(\partial_rE_r)(x_r)\,dr-\int_s^t|\dot x|_r|\nabla_rE_r|(x_r)\,dr, \quad s<t.
 \end{equation}
In particular, if $(x_t)$ is a dynamic EDI-gradient flow, it is a dynamic EDE-gradient flow as well.
\end{Prop}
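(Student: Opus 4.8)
The plan is to establish the pointwise-in-time lower bound
\begin{equation*}
 \frac{d}{dr}E_r(x_r)\geq(\partial_rE_r)(x_r)-|\dot x|_r\,|\nabla_rE_r|(x_r)\qquad\text{for a.e. }r,
\end{equation*}
and to integrate it once the absolute continuity of $r\mapsto E_r(x_r)$ is in hand. I may assume $\int_s^t|\dot x|_r|\nabla_rE_r|(x_r)\,dr<\infty$, since otherwise the right-hand side of \eqref{eq:weakchainrule} is $-\infty$ and there is nothing to prove; in particular $r\mapsto|\nabla_rE_r|(x_r)$ is then finite and integrable along the curve. The basic device is, for $s<t$ and a partition $s=t_0<\dots<t_N=t$ of mesh tending to zero, the telescoping identity together with the splitting of each increment into a \emph{spatial} term $E_{t_{i+1}}(x_{t_{i+1}})-E_{t_{i+1}}(x_{t_i})$ at frozen time and a \emph{temporal} term $E_{t_{i+1}}(x_{t_i})-E_{t_i}(x_{t_i})$ at frozen point.

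For the spatial term I would exploit the $K$-convexity through the global slope representation \eqref{globalslope}: evaluating it at base point $x_{t_i}$ with competitor $x_{t_{i+1}}$ yields
\begin{equation*}
 E_{t_{i+1}}(x_{t_{i+1}})-E_{t_{i+1}}(x_{t_i})\geq-|\nabla_{t_{i+1}}E_{t_{i+1}}|(x_{t_i})\,d_{t_{i+1}}(x_{t_i},x_{t_{i+1}})+\tfrac{K^-}{2}d_{t_{i+1}}^2(x_{t_i},x_{t_{i+1}}).
\end{equation*}
Summing, the quadratic terms are of order $\max_i d_{t_{i+1}}^2(x_{t_i},x_{t_{i+1}})$ and vanish in the limit, while Lemma \ref{metricspeed} identifies $d_{t_{i+1}}(x_{t_i},x_{t_{i+1}})$ with $\int_{t_i}^{t_{i+1}}|\dot x|_r\,dr$ up to lower-order error and Lemma \ref{modest} converts the frozen metric index in the slope into the running one at the cost of a factor $1+L|t_{i+1}-t_i|$. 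For the temporal term I would invoke the difference-quotient hypothesis \eqref{eq:weakchainrule0}: since $t_{i+1}\searrow r$ and, by continuity of the curve, $x_{t_i}\to x_r$ as the mesh shrinks, \eqref{eq:weakchainrule0} guarantees that the lower limit of the temporal sum is at least $\int_s^t(\partial_rE_r)(x_r)\,dr$, the uniform Lipschitz bound in A4 providing the domination needed to exchange limit and integration. Combining the two contributions gives \eqref{eq:weakchainrule}.

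Absolute continuity of $r\mapsto E_r(x_r)$ I would obtain in tandem and using the same splitting: on any subinterval the temporal part is $L^*$-Lipschitz by A4, while for the spatial part the slope of the fixed $K$-convex functional $E_a$ is a strong upper gradient in the static sense, giving $|E_a(x_b)-E_a(x_a)|\leq\int_a^b|\nabla_aE_a|(x_r)|\dot x|_r\,dr$ up to the metric factor of \eqref{loglip}; since the slope is integrable along the curve these local bounds assemble into absolute continuity. Finally the implication ``EDI $\Rightarrow$ EDE'' is immediate: feeding Young's inequality $|\dot x|_r|\nabla_rE_r|(x_r)\leq\tfrac12|\dot x|_r^2+\tfrac12|\nabla_rE_r|^2(x_r)$ into \eqref{eq:weakchainrule} produces exactly the reverse of \eqref{edi}, so a curve satisfying \eqref{edi} automatically satisfies \eqref{ede}.

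The step I expect to be the main obstacle is the passage to the limit in the spatial sum, where the functional index, the metric index and the curve point all move simultaneously. Lemma \ref{modest} disposes of the metric index and continuity of the curve of the point, but controlling $|\nabla_{t_{i+1}}E_{t_{i+1}}|(x_{t_i})$ against the running slope $|\nabla_rE_r|(x_r)$ as the \emph{functional} index varies is delicate: the uniform bound A4 on $E_t-E_s$ does not transfer to a bound on their slopes, since the slope involves a quotient by a vanishing distance. One must therefore lean on the lower semicontinuity of the slope of a convex functional and arrange the one-sided estimates so that they point in the favorable direction. The temporal sum is comparatively clean, but it is precisely there that the extra hypothesis \eqref{eq:weakchainrule0}, absent from the static chain rule, becomes indispensable; checking that it can be applied uniformly along the curve is the other technical point.
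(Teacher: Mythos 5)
There is a genuine gap, and it sits exactly where you flagged it: the limit passage of the spatial Riemann sum. Your telescoping scheme produces, per increment, the lower bound $E_{t_{i+1}}(x_{t_{i+1}})-E_{t_{i+1}}(x_{t_i})\geq-|\nabla_{t_{i+1}}E_{t_{i+1}}|(x_{t_i})\,d_{t_{i+1}}(x_{t_i},x_{t_{i+1}})$, so the slope sum enters with a \emph{negative} sign, and to reach \eqref{eq:weakchainrule} you must show that $\limsup$ of $\sum_i|\nabla_{t_{i+1}}E_{t_{i+1}}|(x_{t_i})\,d_{t_{i+1}}(x_{t_i},x_{t_{i+1}})$ is bounded \emph{above} by $\int|\nabla_rE_r|(x_r)|\dot x|_r\,dr$. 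That is an upper-semicontinuity statement for the slope along the mesh, and no such statement is available: the slope of a convex functional is only lower semicontinuous, and lsc yields $|\nabla_rE_r|(x_r)\leq\liminf|\nabla_{t_{i+1}}E_{t_{i+1}}|(x_{t_i})$ --- the inequality in the wrong direction for a term carrying a minus sign. (Lsc of the slope is exactly what the paper uses in Proposition \ref{stability}, but there the squared slope appears with a favorable sign.) Nor can \textbf{A4} repair the shift of the \emph{functional} index, as you yourself observe: $|E_{t_{i+1}}-E_{t_i}|\leq L^*h$ gives no control on the difference of slopes, since the slope divides by a vanishing distance. So your proposed fix ("arrange the one-sided estimates so that they point in the favorable direction") cannot be carried out within the partition scheme. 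The same index-shift problem infects your absolute-continuity step: the strong-upper-gradient bound involves the frozen-time slope $|\nabla_aE_a|(x_r)$ along the curve, whose integrability does not follow from that of the running slope $|\nabla_rE_r|(x_r)$.

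The paper avoids the moving base point altogether, and this is the idea your sketch is missing. It first proves absolute continuity of $\varphi(s)=E_{\boldsymbol t(s)}(\hat x_s)$ after a Lipschitz reparametrization to (essentially) unit speed: the global slope formula \eqref{globalslope}, applied at the \emph{endpoints} $\hat x_{s_1},\hat x_{s_0}$, gives the two-sided estimate \eqref{onesidedlipschitz} of the form $|\varphi(s_1)-\varphi(s_0)|\leq(g(s_0)+g(s_1)+C)|s_1-s_0|$ with $g$ the running slope along the curve, whence $\varphi\in W^{1,1}$ by \cite[Lemma 1.2.6]{ags}, upgraded to continuity by comparing $\varphi$ with its averages. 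Only then does it differentiate pointwise a.e., splitting the difference quotient at the \emph{fixed} base point $x_t$: the spatial quotient satisfies $\limsup_{h\to0}[E_t(x_t)-E_t(x_{t+h})]^+/d_t(x_t,x_{t+h})\leq|\nabla_tE_t|(x_t)$ directly from the \emph{definition} of the slope as a limsup over $y\to x_t$ (take $y=x_{t+h}$), so no semicontinuity of the slope is ever invoked; the temporal quotient is handled by \eqref{eq:weakchainrule0} exactly as in your sketch, and Lemma \ref{modest} is not even needed at this stage. Your closing step is fine --- Young's inequality fed into \eqref{eq:weakchainrule} does reverse \eqref{edi} into \eqref{ede} --- but the core chain-rule inequality, as you set it up, does not close.
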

\begin{proof}
In view of \cite[Lemma 1.1.4(a)]{ags}
 we can find an increasing and absolutely continuous map $\boldsymbol{s}\colon[0,T]\to[0,T']$, whose inverse $\boldsymbol t$ is Lipschitz. The reparametrization 
 $\hat x_{\boldsymbol{s}(t)}:=x(t)$ 
 satisfies
 $|\dot{\hat{x}}_s|_{t^*}\leq 1$ for almost every $s\in[0,T']$ with respect to some fixed metric $d_{t^*}$. Notice that it is sufficient to prove 
 that $s\mapsto E_{\boldsymbol t(s)}(\hat x_s)=:\varphi(s)$ is absolutely continuous, as then $E_t(x_t)=E_{t}(\hat x_{\boldsymbol s(t)})$ is absolutely continuous and for
 almost every $t\in[0,T]$
 \begin{align*}
  \frac{d}{dt}E_t(x_t)&=\lim_{h\to0}\frac{E_{t+h}(x_{t+h})-E_t(x_t)}{h}\\
  &\geq \liminf_{h\to0}\frac{E_{t+h}(x_{t+h})-E_t( x_{t+h})}{h}
  +\liminf_{h\to0}\frac{E_{t}(x_{t+h})-E_t(x_t)}{d_t( x_{t+h}, x_t)}\frac{d_t(x_{t+h},x_t)}{h}\\
  &\geq \partial_tE_t( x_t)-\limsup_{h\to0}\frac{[E_{t}(x_{t})-E_t(x_{t+h})]^+}{d_t(x_{t+h},x_t)}\frac{d_t(x_{t+h},x_t)}{h}\\
  &\geq \partial_tE_t(x_t)-|\nabla_tE_t|(x_t)|\dot{ x}|_t,
  \end{align*}
  where we used \eqref{eq:weakchainrule0} in the third inequality. After integration we obtain  \eqref{eq:weakchainrule}.

In view of the convexity of $E$ we may use the representation formula of the slope \eqref{globalslope} and write using $a^+\leq (a+b)^++b^-$ and the Lipschitz 
property of the functional
\begin{equation}
 \begin{aligned}\label{onesidedlipschitz}
 \varphi(s_1)-\varphi(s_0)
  &\leq|\nabla_{\boldsymbol t(s_1)}E_{\boldsymbol t(s_1)}|(\hat x_{s_1})d_{\boldsymbol{t}(s_1)}(\hat x_{s_1},\hat x_{s_0})\\
  &+\frac {K^-}2d_{\boldsymbol{t}(s_1)}^2(\hat x_{s_1},\hat x_{s_0})+L^*|s_1-s_2|\\
 &\leq \Big(|\nabla_{\boldsymbol t(s_1)}E_{\boldsymbol t(s_1)}|(\hat x_{s_1})+\frac{K^-}2 D\Big)e^C|s_1-s_0|+L^*|s_1-s_0|,
\end{aligned}
\end{equation}
where $D$ is the finite diameter of the image $\{\hat x_s\}_s$ with respect to $d_{\boldsymbol{t}}$. Changing the roles of $s_0$ and $s_1$ yields 
\begin{align*}
 &| \varphi(s_1)-\varphi(s_0)|\\
 &\leq \Big(|\nabla_{\boldsymbol t(s_1)}E_{\boldsymbol t(s_1)}|(\hat x_{s_1})+|\nabla_{\boldsymbol t(s_0)}E_{\boldsymbol t(s_0)}|(\hat x_{s_0})+\frac{K^-}2 D\Big)e^C|t-s|+L^*|t-s|.
\end{align*}
Applying \cite[Lemma 1.2.6]{ags} we conclude that the map $s\mapsto \varphi(s)$ is in  the Sobolev space $W^{1,1}(0,T')$. To prove absolute continuity we simply 
check that it coincides 
with its continuous representative. We already know that $s\mapsto \varphi(s)$ is lower semicontinuous and therefore continuity follows if we show
\begin{equation*}
 \limsup_{\varepsilon\searrow0}\frac1{2\varepsilon}\int_{-\varepsilon}^\varepsilon \varphi(s+r)dr\leq \varphi(s)\quad\forall s\in(0,T').
\end{equation*}
This can be seen by applying \eqref{onesidedlipschitz} once more and we get
\begin{align*}
 &\limsup_{\varepsilon\searrow0}\frac1{2\varepsilon}\int_{-\varepsilon}^\varepsilon\varphi(s+r)-\varphi(s)\,dr\\
 \leq &\limsup_{\varepsilon\searrow0}\frac1{2\varepsilon}\int_{-\varepsilon}^\varepsilon \Big(|\nabla_{\boldsymbol t(s+r)}E_{\boldsymbol t(s+r)}|(\hat x_{s+r})+\frac{K^-}2 D\Big)e^C|r|+L^*|r|dr\\
 \leq  &\limsup_{\varepsilon\searrow0}\frac1{2}\int_{-\varepsilon}^\varepsilon \Big(|\nabla_{\boldsymbol t(s+r)}E_{\boldsymbol t(s+r)}|(\hat x_{s+r})+\frac{K^-}2 D\Big)e^C+L^*dr=0.
\end{align*}

\end{proof}

\subsection{Dynamic EVI$(K,\infty)$-gradient flows}\label{secevi}

Let us now come to the dynamic version of EVI$(K,\infty)$-gradient flows introduced in \cite{sturm2016}. 
\begin{Def}\label{ddist}
For $s,t\in[0,T]$ and an absolutely continuous curve $(x_a)_{a\in[0,1]}$, we define the action
 \begin{align*}
  \mathcal A_{s,t}(x)=\lim_{h\to0}\sup\Big\{\sum_{i=1}^n&(a_i-a_{i-1})^{-1}d_{\vartheta(a_{i-1})}^2(x_{a_{i-1}},x_{a_i})|\\
  &0=a_0<\dots < a_n=1,a_i-a_{i-1}\leq h\Big\},
 \end{align*}
 where $\vartheta\colon[0,1]\to[0,\infty)$ denotes the linear interpolation with $\vartheta(0)=s$ and $\vartheta(1)=t$.
 For two points $x^0,x^1\in X$ we define
 \begin{align*}
  d_{s,t}^2(x^0,x^1)=\inf\{\mathcal A_{s,t}(x)|x\colon[0,1]\to X \text{ absolutely continuous}, x_0=x^0,x_1=x^1\}.
 \end{align*}
\end{Def}
Note that
using the definition of the metric speed we obtain for the action the more intuitive expression, cf. \cite[Proposition 7.2]{sturm2016},
\begin{align*}
 \mathcal A_{s,t}(x)=\int_0^1|\dot x_a|^2_{\vartheta(a)}\, da.
\end{align*}
We understand $d_{s,t}(x,y)$ as ``dynamic distance'' between the points $x$ and $y$. In the next example we give a formula for $d_{s,t}$ in the special case $(\mathbb R^n,d_t)$, where $d_t^2(x,y)=\lambda t|x-y|^2$.
\begin{ex}
Consider $\mathbb R$ endowed with the distance $d_t^2(x,y)=\lambda t|x-y|^2$ for some $\lambda>0$. Then for each $x,y\in\mathbb R$ and $s<t$
\begin{align*}
d_{s,t}^2(x,y)=\frac{\lambda(t-s)}{\log t-\log s}|x-y|^2.
\end{align*}
Indeed, we have to compute the infimum of
$\int_0^1(s+a(t-s))(\dot\gamma_a)^2\, da$
among all $(\gamma_a)_{a\in[0,1]}$ such that $\gamma_0=x,\gamma_1=y$. From the Euler-Lagrange equation $0=\frac{d}{da}((s+a(t-s))\dot\gamma_a)$ we deduce that
\begin{align*}
\dot\gamma_a=\frac{t-s}{\log t-\log s}\frac{y-x}{s+a(t-s)}
\end{align*}
and hence 
\begin{align*}
\int_0^1(s+a(t-s))(\dot\gamma_a)^2\, da=\frac{t-s}{\log t-\log s}|y-x|^2,
\end{align*}
which proves the claim. The formula for $d_t$ on $\mathbb R^n$ can be shown analogously by considering each component separate. Let us remark that the factor $\theta(s,t):=(t-s)/(\log t-\log s)$ is also known as the logarithmic mean.
 \end{ex}
 Besides the above example we have in general $d_{s,t}(x,y)\neq d_{s,t}(y,x)$ as soon as $x\neq y$.
However, it clearly holds $d_{s,t}(x,y)=d_{t,s}(y,x)$, $d_{t,t}(x,y)=d_t(x,y)$ and $d_{s,t}(x,x)=0$.

We will use the following notation: $\partial_t^+u(t):=\limsup_{s\to t}\frac{u(t)-u(s)}{t-s}$.

\begin{Def}
Let  $E: [0,T]\times X\to (-\infty,\infty]$ be a lower semicontinuous functional in $X$.
An absolutely continuous curve $(x_t)_{0\leq t\leq T}$ will be called \emph{dynamic
 EVI$(K,\infty)$-gradient flow} for $E$ 
 if for all $t\in (0,T)$ and all   $y\in Dom(E_t)$ 
  \begin{align}\label{evi-dyn}
     {\frac12\partial_s^+ d^2_{s,t}(x_s,y)}\Big|_{s=t}+\frac K2d_t^2(x_t,y)
       ~\leq E_t(y)-E_t(x_t).
  \end{align}
 We say that the gradient flow 
   $(x_t)_{0\leq t\leq T}$   \emph{starts} in $x'\in X$ if $\lim_{t\searrow0}x_t=x'$.
\end{Def}

We show uniqueness of dynamic EVI$(K,\infty)$ flows by proving a contraction estimate. This estimate involves the logarithmic Lipschitz control $L$ from \eqref{loglip}.
For an estimate without this control see Theorem 7.7 in \cite{sturm2016}.
\begin{lma}
 The following holds true.
 \begin{enumerate}
  \item Suppose that $(x_t)$ is a EVI$(K,\infty)$-gradient flow. Then for every $t\in(0,T)$
  \begin{align}\label{evi0}
   \frac12\partial_s^+d_t^2(x_s,y)_{|s=t}\leq E_t(y)-E_t(x_t)+(L-\frac K2)d_t^2(x_t,y).
  \end{align}
\item There exists at most one EVI$(K,\infty)$-gradient flow starting in $x'$. More precisely the following holds: Let $(x_t)$ and $(y_t)$ be two 
EVI$(K,\infty)$-gradient flows. Then for 
all $s<t$
\begin{align}\label{almostcontraction}
 d_t(x_t,y_t)\leq e^{(3L-K)(t-s)}d_s(x_s,y_s).
\end{align}

 \end{enumerate}

\end{lma}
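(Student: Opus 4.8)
The plan is to derive the pointwise inequality \eqref{evi0} of part (1) from the defining dynamic EVI inequality \eqref{evi-dyn} by trading the dynamic distance $d_{s,t}$ for the static distance $d_t$ via the logarithmic Lipschitz control \eqref{loglip}, and then to obtain the contraction \eqref{almostcontraction} of part (2) by applying part (1) to both flows, adding the two inequalities so that the energy terms cancel, and closing with a Grönwall argument for $t\mapsto d_t^2(x_t,y_t)$.

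For part (1), fix $t\in(0,T)$ and $y\in X$ and set $g(s):=d_{s,t}^2(x_s,y)$ and $f(s):=d_t^2(x_s,y)$, so that $f(t)=g(t)=d_t^2(x_t,y)$. From Definition \ref{ddist} and \eqref{loglip}, every curve $(\gamma_a)$ from $x_s$ to $y$ satisfies $|\dot\gamma_a|_{\vartheta(a)}\le e^{L(1-a)|s-t|}|\dot\gamma_a|_t$, hence $\mathcal A_{s,t}(\gamma)\le e^{2L|s-t|}\int_0^1|\dot\gamma_a|_t^2\,da$; taking infima gives
\begin{equation*}
 e^{-2L|s-t|}f(s)\le g(s)\le e^{2L|s-t|}f(s).
\end{equation*}
The lower bound rearranges to $f(s)\ge e^{-2L|s-t|}g(s)$, so that $f(t)-f(s)\le \big[g(t)-g(s)\big]+(1-e^{-2L|s-t|})g(s)$. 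Dividing by $t-s$ and taking $\limsup_{s\to t}$ (using that $(1-e^{-2L|s-t|})/(t-s)$ has upper limit $2L$ and that $g(s)\to d_t^2(x_t,y)$) yields
\begin{equation*}
 \partial_s^+f(s)\big|_{s=t}\le \partial_s^+g(s)\big|_{s=t}+2L\,d_t^2(x_t,y).
\end{equation*}
Halving and combining with \eqref{evi-dyn}, which reads $\tfrac12\partial_s^+g|_{s=t}\le E_t(y)-E_t(x_t)-\tfrac K2 d_t^2(x_t,y)$, gives exactly \eqref{evi0}.

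For part (2), let $(x_t)$ and $(y_t)$ be two flows and put $D(t):=d_t^2(x_t,y_t)$, which is continuous. For $s$ near $t$ I telescope
\begin{equation*}
 D(t)-D(s)=\big[d_t^2(x_t,y_t)-d_t^2(x_s,y_t)\big]+\big[d_t^2(x_s,y_t)-d_t^2(x_s,y_s)\big]+\big[d_t^2(x_s,y_s)-d_s^2(x_s,y_s)\big].
\end{equation*}
Dividing by $t-s$, passing to $\limsup_{s\to t}$ and using subadditivity of $\limsup$, the first bracket contributes $\partial_s^+d_t^2(x_s,y_t)|_{s=t}$, the second (after replacing the frozen base point $x_s$ by $x_t$, an $o(|t-s|)$ correction) contributes $\partial_s^+d_t^2(x_t,y_s)|_{s=t}$, and the third is at most $2L\,D(t)$ by \eqref{loglip}. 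Applying part (1) with the flow $(x_\cdot)$ and $y=y_t$, and with the flow $(y_\cdot)$ and $y=x_t$, and using that $d_t$ is symmetric and that the energy terms cancel upon addition, the first two contributions sum to at most $(4L-2K)D(t)$. Altogether $\partial_t^+D(t)\le 2(3L-K)D(t)$, and applying the usual comparison argument to $e^{-2(3L-K)t}D(t)$ gives $D(t)\le e^{2(3L-K)(t-s)}D(s)$, which is \eqref{almostcontraction}. Uniqueness is then immediate: two flows starting at the same point satisfy $d_s(x_s,y_s)\to0$ as $s\searrow0$, forcing $D\equiv0$.

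The main obstacle is the rigorous justification of the termwise decomposition of $\partial_t^+D$ in part (2): one must show that freezing $x_s$ at $x_t$ in the middle bracket alters the difference quotient only by $o(|t-s|)$, which rests on an equi-Lipschitz estimate of the form $|d_t^2(x_s,y_t)-d_t^2(x_s,y_s)-d_t^2(x_t,y_t)+d_t^2(x_t,y_s)|\le C\,d_t(x_s,x_t)\,d_t(y_s,y_t)$, together with careful sign bookkeeping in the one-sided $\limsup$'s (so that the metric-variation term produces $+2L$ rather than $-2L$). The remaining estimates are routine consequences of \eqref{loglip}.
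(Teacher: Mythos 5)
Your part (1) is correct and is essentially the paper's own argument: both proofs trade $d_{s,t}$ for $d_t$ via the two-sided comparison $e^{-2L|s-t|}d_t^2(x_s,y)\leq d_{s,t}^2(x_s,y)\leq e^{2L|s-t|}d_t^2(x_s,y)$ coming from \eqref{loglip}, handle the two one-sided limits separately (you need the upper comparison for $s\searrow t$ and the lower one for $s\nearrow t$, both of which you derived), and add the resulting $+L\,d_t^2$ term to \eqref{evi-dyn}. Your constants in part (2) and the Grönwall closing also match the paper: $(4L-2K)+2L=2(3L-K)$, yielding \eqref{almostcontraction}.

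In part (2), however, the step you yourself flag as the main obstacle is a genuine gap. The ``equi-Lipschitz'' second-difference estimate
\begin{equation*}
 \bigl|d_t^2(x_s,y_t)-d_t^2(x_s,y_s)-d_t^2(x_t,y_t)+d_t^2(x_t,y_s)\bigr|\leq C\,d_t(x_s,x_t)\,d_t(y_s,y_t)
\end{equation*}
is \emph{false} in general geodesic metric spaces: take four points $a,b,u,v$ with $d(a,b)=d(u,v)=\varepsilon$, $d(a,u)=d(a,v)=d(b,u)=1$, $d(b,v)=1+\varepsilon$ (all triangle inequalities hold, and this configuration embeds isometrically into a geodesic space, e.g.\ its tight span); then the second difference equals $2\varepsilon+\varepsilon^2$ while $d(a,b)\,d(u,v)=\varepsilon^2$, so no constant $C$ works. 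It is a Hilbert-type (roundness) property, available for your Section \ref{sechilbert} setting but not here. Without it, the triangle inequality only bounds the freezing error by $C'\min\{d_t(x_s,x_t),d_t(y_s,y_t)\}=O(|t-s|)$, which after dividing by $t-s$ leaves an uncancelled term of size $\min\{|\dot x|_t,|\dot y|_t\}$ rather than $o(1)$. The paper avoids this entirely: its decomposition \eqref{evi1} is deliberately asymmetric --- the $x$-variation and the metric-variation terms are taken as $s\nearrow t$ at the frozen point $y_t$, the $y$-variation as $s\searrow t$ at the frozen point $x_t$ --- and is quoted as an adaptation of \cite[Lemma 4.3.4]{ags}, whose proof proceeds by integrating the variation in each variable separately and a Lebesgue-point argument, using only absolute continuity and the triangle inequality, never a bilinear second-difference bound. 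Your argument becomes correct if you replace the freezing step by this doubling-of-variables lemma. A secondary point: the paper first verifies that $t\mapsto d_t^2(x_t,y_t)$ is \emph{absolutely continuous} (triangle inequality plus \eqref{loglip}); since the differential inequality $\partial_t^+D\leq 2(3L-K)D$ is only available a.e.\ (part (1) must be combined with a.e.\ existence of metric speeds), mere continuity of $D$, as in your write-up, does not suffice to run the comparison/Grönwall argument.
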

\begin{proof}
 To show the first assertion note that with $d_{t,s}(y,x_s)=d_{s,t}(x_s,y)$
 \begin{align*}
  \partial_s^+d_{t,s}^2(y,x_s)_{s=t+}&:=\limsup_{s\searrow t}\frac{d_{t,s}^2(y,x_s)-d_t^2(y,x_t)}{s-t}\\
  &\geq \limsup_{s\searrow t}\frac{e^{-2L(s-t)}d_t^2(y,x_s)-d_t^2(y,x_t)}{s-t}\\
  &\geq \limsup_{s\searrow t}\Big\{\frac{d_t^2(y,x_s)-d_t^2(y,x_t)}{s-t}+\frac{(e^{-2L(s-t)}-1)}{s-t}d_t^2(y,x_s)\Big\}\\
  &=\partial_s^+d_t^2(y,x_s)_{|s=t+}+\lim_{s\searrow t}\frac{(e^{-2L(s-t)}-1)}{s-t}d_t^2(y,x_s)\\
  &=\partial_s^+d_t^2(y,x_s)_{|s=t+}-2Ld_t^2(y,x_t),
 \end{align*}
where the first inequality is due to the logarithmic Lipschitz continuity \eqref{loglip}, and the second equality follows from the absolute continuity of 
$(x_t)$. The same argument holds for $\partial_s^+d_{s,t}^2(x_s,y)_{s=t-}:=\limsup_{s\nearrow t}\frac{d_{s,t}^2(x_s,y)-d_t^2(x_t,y)}{s-t}$
replacing $\partial_s^+d_t^2(y,x_s)_{|s=t+}$ by $\partial_s^+d_t^2(y,x_s)_{|s=t-}$, and hence from the EVI$(K,\infty)$ inequality we deduce
 \begin{align*}
   \frac12\partial_s^+d_t^2(x_s,y)_{|s=t}\leq E_t(y)-E_t(x_t)+(L-\frac K2)d_t^2(x_t,y).
  \end{align*}
 In order to show the second assertion, let $(x_t),(y_t)$ be two EVI$(K,\infty)$ gradient flows. Observe that from the absolute continuity of $(x_t)$ and $(y_t)$ 
 it follows that the map $t\mapsto d_t^2(x_t,y_t)$ is absolutely continuous as well.
 This can be seen by applying triangle inequality and \eqref{loglip}. Hence we may write for a.e. $t\in (0,T)$
 \begin{equation}
  \begin{aligned}\label{evi1}
  \frac12\frac{d}{dt}d_t^2(x_t,y_t)\leq &\frac12\limsup_{s\nearrow t}\frac{d_t^2(x_t,y_t)-d_t^2(x_s,y_t)}{t-s}\\
  +&\frac12\limsup_{s\nearrow t}\frac{d_t^2(x_s,y_t)-d_s^2(x_s,y_t)}{t-s}\\
  +&\frac12\limsup_{s\searrow t}\frac{d_t^2(x_t,y_s)-d_t^2(x_t,y_t)}{s-t},
 \end{aligned}
 \end{equation}
 where we used an adaption of \cite[Lemma 4.3.4]{ags}.
Applying \eqref{evi0} and \eqref{loglip} we obtain for a.e. $t\in (0,T)$
\begin{align*}
  \frac12\frac{d}{dt}d_t^2(x_t,y_t)\leq &E_t(y_t)-E_t(x_t)+(L-\frac K2)d_t^2(x_t,y_t)\\
  &+Ld_t^2(x_t,y_t)\\
  &+E_t(x_t)-E_t(y_t)+(L-\frac K2)d_t^2(x_t,y_t)\\
  =& (3L-K)d_t^2(x_t,y_t).
 \end{align*}
We conclude from Gronwall's inequality for a.e. $t>s$
 \begin{align*}
  d_t^2(x_t,y_t)\leq e^{(6L-2K)(t-s)}d_s^2(x_s,y_s).
 \end{align*}
 From the continuity of $t\mapsto d_t(x_t,y_t)$ we obtain that the estimate holds for every $t>s$ and
 in particular we have uniqueness.

\end{proof}

In this general framework it is possible to produce dynamic EVI-gradient flows which are not dynamic EDI-gradient flows as we see in the next
example.
\begin{ex}\label{eviimpliesnoedi}
Let $X=\mathbb R$ and $d_t(x,y)=|x-y|$ for every $t\in[0,T]$, $x,y\in X$. As already seen in Example \ref{exnotdiff}, the energy functional 
$E_t(x)=|x-t|$ is not differentiable at $x_t=t$ for any $t\in[0,T]$. Hence it is not a EDI-gradient flow in the sense of Definition \ref{defedi}.
But it immediately follows from
\begin{align*}
 \frac12\partial_t|x_t-y|^2=(t-y)\leq |y-t|=E_t(y)-E_t(x_t), \qquad \forall y\in X,
\end{align*}
that $(x_t)$ is a EVI$(0,\infty)$-gradient flow.
\end{ex}
We can exclude such behavior if we restrict ourselves to admissible functionals.
\begin{Prop}\label{propevi}
Let $E\colon[0,T]\times X\to\mathbb R$ be an admissible functional, i.e. satisfying the assumptions \textbf{A1, A2, A3} and \textbf{A4} from the previous section.
Let $(x_t)$ be a dynamic EVI$(K,\infty)$-gradient flow for $E$ such that $(x_t)\in AC_{loc}^2([0,T];X)$ and $t\mapsto E_t(x_t)$ is absolutely continuous. Then it is a dynamic EDE-gradient flow as well.
\end{Prop}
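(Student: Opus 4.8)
The plan is to reduce the energy dissipation equality \eqref{ede} to a pointwise differential identity and then integrate. Since $t\mapsto E_t(x_t)$ is absolutely continuous by hypothesis and $(x_t)\in AC^2_{loc}$, for a.e. $t$ the derivative $\frac{d}{dt}E_t(x_t)$, the metric speed $|\dot x|_t$ (Lemma \ref{metricspeed}) and, by \textbf{A4}, the partial derivative $\partial_tE_t(\cdot)$ at \emph{every} point exist simultaneously. It therefore suffices to establish, for a.e. such $t$,
\begin{equation*}
 \frac{d}{dt}E_t(x_t)=(\partial_tE_t)(x_t)-\frac12|\dot x|_t^2-\frac12|\nabla_tE_t|^2(x_t),
\end{equation*}
because integrating it over $[0,t]$ and using the absolute continuity of $t\mapsto E_t(x_t)$ gives \eqref{ede}. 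I would obtain this identity by squeezing a matching upper (``EDI'') and lower (``chain rule'') bound for $\frac{d}{dt}E_t(x_t)$.

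First I would extract a slope bound and the upper bound from the Evolution Variational Inequality. Starting from the frozen-metric inequality \eqref{evi0}, I bound the one-sided derivative of $s\mapsto d_t^2(x_s,y)$ at $s=t$ using $|d_t(x_s,y)-d_t(x_t,y)|\le d_t(x_s,x_t)$ and Lemma \ref{metricspeed}, obtaining $E_t(x_t)-E_t(y)\le |\dot x|_t\,d_t(x_t,y)+(L-\frac K2)d_t^2(x_t,y)$; dividing by $d_t(x_t,y)$ and letting $y\to x_t$ yields $|\nabla_tE_t|(x_t)\le|\dot x|_t$. For the upper bound I would test the dynamic EVI at time $t$ with the point $y=x_{t-h}$ lying behind $x_t$ on the curve, convert $d_{s,t}$ into the static $d_t$ via \eqref{loglip}, and read off $E_t(x_{t-h})-E_t(x_t)\ge |\dot x|_t^2\,h\,(1+o(1))$ as $h\to0$. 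Splitting $E_t(x_t)-E_{t-h}(x_{t-h})$ into this frozen spatial part plus the time increment $E_t(x_{t-h})-E_{t-h}(x_{t-h})$ and passing to the limit gives $\frac{d}{dt}E_t(x_t)\le(\partial_tE_t)(x_t)-|\dot x|_t^2$, which with the slope bound produces the desired $\frac{d}{dt}E_t(x_t)\le(\partial_tE_t)(x_t)-\frac12|\dot x|_t^2-\frac12|\nabla_tE_t|^2(x_t)$.

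For the matching lower bound I would argue as in the first part of the proof of Proposition \ref{weakchainrule}: along the forward difference quotient, the frozen spatial part is bounded below by $-|\dot x|_t\,|\nabla_tE_t|(x_t)$ via the definition of the slope and Lemma \ref{metricspeed}, yielding $\frac{d}{dt}E_t(x_t)\ge(\partial_tE_t)(x_t)-|\dot x|_t\,|\nabla_tE_t|(x_t)\ge(\partial_tE_t)(x_t)-\frac12|\dot x|_t^2-\frac12|\nabla_tE_t|^2(x_t)$. Combining the two bounds forces equality throughout (in particular $|\dot x|_t=|\nabla_tE_t|(x_t)$), which is precisely the pointwise identity; integrating and invoking the absolute continuity of $t\mapsto E_t(x_t)$ gives \eqref{ede}.

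The hard part will be the term $\frac1h\big(E_{t+h}(x_{t+h})-E_t(x_{t+h})\big)$ (and its backward analogue), namely the time derivative of $E$ evaluated along the \emph{moving} point $x_{t+h}$. Assumption \textbf{A4} only supplies uniform Lipschitz control and a common null set of non-differentiability at fixed points, and no spatial regularity of $E_t$ is available, so this difference quotient does not automatically converge to $(\partial_tE_t)(x_t)$. This is exactly the difficulty addressed by the difference-quotient condition \eqref{eq:weakchainrule0}, and I expect the argument to pass first to a reparametrisation by arc length (as in Proposition \ref{weakchainrule}), so that the curve becomes Lipschitz for a fixed metric, and then to use \eqref{eq:weakchainrule0} to identify the limit of the moving-point time increment with $(\partial_tE_t)(x_t)$. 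A secondary, purely technical point is the systematic replacement of $d_{s,t}$ by $d_t$ and the comparison of metric speeds with respect to different frozen metrics, both of which are controlled by the logarithmic Lipschitz bound \eqref{loglip}.
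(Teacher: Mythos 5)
Your squeezing strategy (the slope bound $|\nabla_tE_t|(x_t)\le|\dot x|_t$ via \eqref{evi0} and \eqref{estmetric}, an upper bound $\frac{d}{dt}E_t(x_t)\le(\partial_tE_t)(x_t)-|\dot x|_t^2$, a chain-rule lower bound, then integration) is exactly the paper's, but your derivation of the upper bound has a genuine gap. You test the EVI at the single time $s=t$ with $y=x_{t-h}$ and claim to ``read off'' $E_t(x_{t-h})-E_t(x_t)\ge|\dot x|_t^2\,h\,(1+o(1))$. This does not follow: the EVI bounds $\partial_s^+d_{s,t}^2(x_s,y)\big|_{s=t}$ from \emph{above}, so to extract a lower bound on $E_t(x_{t-h})-E_t(x_t)$ you would need a lower bound on that one-sided derivative, and the only pointwise bound available in a metric space is $\frac12\partial_s^+d_t^2(x_s,y)\big|_{s=t}\ge-|\dot x_t|_t\,d_t(x_t,y)$ (the paper's \eqref{estmetric}), which for $y=x_{t-h}$ gives $\approx-|\dot x|_t^2h$ --- the wrong sign. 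At a fixed small scale $h$ an $AC^2$ curve may well be moving ``toward'' $x_{t-h}$, so the claimed pointwise estimate is simply not a consequence of the EVI at time $t$. The positive quantity you need can only be produced by \emph{integrating} the EVI-differential inequality for $s\mapsto d_s^2(x_s,y)$ over the whole interval with the endpoint held fixed, exploiting that the distance vanishes at the endpoint: the paper (via the computation \eqref{evi1} applied to the constant curve $y=x_t$) obtains $\frac12 d_{t+h}^2(x_{t+h},x_t)\le\int_t^{t+h}\big(E_s(x_t)-E_s(x_s)\big)\,ds+(2L-\frac K2)\int_t^{t+h}d_s^2(x_s,x_t)\,ds$, then divides by $h^2$, uses \eqref{loglip} and the $AC^2_{loc}$ hypothesis, and passes to the limit by dominated convergence to reach \eqref{tdermder}. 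Your sketch skips precisely this mechanism; with it inserted (forward or backward in time, that choice is immaterial), the rest of your argument goes through as in the paper.

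A secondary point concerns your ``hard part''. You propose to invoke the difference-quotient condition \eqref{eq:weakchainrule0} and the reparametrisation of Proposition \ref{weakchainrule}, but neither is available here: \eqref{eq:weakchainrule0} is an extra hypothesis of that proposition (and its reparametrisation step also uses $K$-convexity through \eqref{globalslope}), whereas the present statement assumes only \textbf{A1}--\textbf{A4} plus the absolute continuity of $t\mapsto E_t(x_t)$; importing \eqref{eq:weakchainrule0} would prove a weaker theorem. Note how the paper's integrated upper bound avoids the moving-point increment altogether: it groups $E_{t+hr}(x_t)-E_{t+hr}(x_{t+hr})=\big(E_{t+hr}(x_t)-E_t(x_t)\big)+\big(E_t(x_t)-E_{t+hr}(x_{t+hr})\big)$, so the moving point enters only through $\frac{d}{dt}E_t(x_t)$, which exists a.e.\ by the assumed absolute continuity, while the fixed-point increment is handled by \textbf{A4}. (In the lower bound \eqref{inequality1} the paper does split off the term $\frac1h\big(E_{t+h}(x_{t+h})-E_t(x_{t+h})\big)$ much as you fear, relying on admissibility and absolute continuity rather than on \eqref{eq:weakchainrule0}; your instinct that this step is delicate is sound, but the intended resolution within this proposition is the stated hypotheses, not the additional assumption you reach for.)
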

\begin{proof}
First note that for a.e. $t$
\begin{align}\label{estmetric}
 \frac12 \partial_s^+d_t^2(x_s,y)_{|s=t}\geq-|\dot x_t|_td_t(x_t,y).
\end{align}
Since $E$ is admissible and $t\to E_t(x_t)$ is supposed to be absolutely continuous it holds for a.e. $t$ 
\begin{equation}\label{inequality1}
\begin{aligned}
 \frac{d}{dt}E_t(x_t)=&(\partial_t E_t)(x_t)+\liminf_{h\to0}\frac{E_t(x_{t+h})-E_t(x_t)}h\\
 =&(\partial_t E_t)(x_t)+\liminf_{h\to0}\frac{E_t(x_{t+h})-E_t(x_t)}{d_t(x_{t+h},x_t)}\frac{d_t(x_{t+h},x_t)}h\\
 \geq&(\partial_t E_t)(x_t)-\limsup_{h\to0}\frac{E_t(x_{t})-E_t(x_{t+h})}{d_t(x_{t+h},x_t)}\frac{d_t(x_{t+h},x_t)}h\\
 \geq &(\partial_t E_t)(x_t)-\frac12|\nabla_tE_t|^2(x_t)-\frac12|\dot x_t|^2_t.
\end{aligned}
\end{equation}
To show the converse inequality recall that $t\mapsto d_t^2(x_t,y)$ is absolutely continuous. Hence, applying the same calculation as in \eqref{evi1}
to the constant curve $y_t\equiv y$, we can write for every $t\in[0,T-h]$ and every $y$ 
\begin{align*}
 \frac12d_{t+h}^2(x_{t+h},y)-\frac12d_t^2(x_t,y)=\frac12\int_t^{t+h}\frac{d}{ds}d_s^2(x_s,y)ds\\
 \leq\int_t^{t+h} E_s(y)-E_s(x_s)+(2L-\frac K2)d_s^2(x_s,y)ds.
\end{align*}

We set $y=x_t$ and find 
\begin{align*}
 \frac12d_{t+h}^2(x_{t+h},x_t)
 \leq &h\int_0^1E_{t+hr}(x_t)-E_{t+hr}(x_{t+hr})dr\\
 +&(2L-\frac K2)\int_t^{t+h}d_r^2(x_r,x_t)dr.
\end{align*}
Again by \eqref{loglip} and the 2-absolute continuity of $(x_t)$ we obtain for some function $g\in L_{loc}^2[0,T]$
\begin{align*}
 \frac12d_t^2(x_{t+h},x_t)\leq e^{2Lh}\left[h\int_0^1E_{t+hr}(x_t)-E_{t+hr}(x_{t+hr})dr+|2L-\frac K2|h^2\int_t^{t+h} g_u^2\, du  \right].
\end{align*}
Dividing by $h^2$ and letting $h\searrow0$, dominated convergence yields
\begin{equation}\label{tdermder}
\begin{aligned}
 \frac12|\dot x_t|_t^2&\leq \int_0^1\lim_{h\searrow0}\frac{E_{t}(x_t)-E_{t+hr}(x_{t+hr})}h+\frac{E_{t+hr}(x_t)-E_{t}(x_{t})}hdr\\
 &=-\frac12\frac{d}{dt}E_t(x_t)+\frac12(\partial_t E_t)(x_t),
\end{aligned}
\end{equation}
for a.e. $t\in(0,T)$. Concerning the slope of $E$ we find that using \eqref{evi0} and \eqref{estmetric}
\begin{equation}\label{slopemder}
\begin{aligned}
 |\nabla_tE_t|(x_t)=&\limsup_{y\to x_t}\frac{[E_t(x_t)-E_t(y)]^+}{d_t(x,y)}\\
 \leq&\limsup_{y\to x_t}\frac{\left[-\partial_s^+d_t^2(x_s,y)_{s=t}+(2L-K)d_t^2(x_t,y)\right]^+}{2d_t(x_t,y)}\\
 \leq&\limsup_{y\to x_t}\frac{\left[2|\dot x_t|_td_t(x_t,y)+(2L-K)d_t^2(x_t,y)\right]^+}{2d_t(x_t,y)}
 \leq |\dot x_t|_t,
\end{aligned}
\end{equation}
for almost every $t$.
Combining \eqref{tdermder} and \eqref{slopemder} we conclude
\begin{equation}\label{inequality2}
\begin{aligned}
 \frac{d}{dt}E_t(x_t)\leq (\partial_tE_t)(x_t)-|\dot x_t|_t^2\\
 \leq(\partial_tE_t)(x_t)-\frac{|\dot x_t|_t^2}2-\frac{|\nabla_tE_t|^2(x_t)}2.
\end{aligned}
\end{equation}
We obtain \eqref{ede} from \eqref{inequality1} and \eqref{inequality2} after integrating on the interval $(0,t)$.
\end{proof}

\subsection{Existence of dynamic EDI-gradient flows}\label{sec:gradflow}
We are interested in the following problem.
\begin{problem}
 Given a function $E\colon[0,T]\times X\to (-\infty,+\infty]$, and an initial value $\bar x\in \Dom(E)$, find an EDI-gradient flow $(x_t)$ for $E$.
\end{problem}
Under suitable topological assumptions we will find a gradient flow for a certain class of energy functionals using the minimizing movement scheme, which we describe 
in the subsequent sections, cf. \cite{ags}.
\subsubsection*{Topological assumptions}
We additionally impose a topology $\sigma$ on $X$ such that $\sigma$ is weaker than the topology induced by $(d_t)$ and $d_t$ is sequentially $\sigma$-lower
semicontinuous, i.e.
\begin{equation*}
 \text{if } x_n\overset{\sigma}\rightharpoonup x\text{ and }y_n\overset\sigma\rightharpoonup y, \text{ then } \liminf_{n\to\infty} d_t(x_n,y_n)\geq d_t(x,y) \text{ for every }t\in[0,T].
\end{equation*}
Let $E\colon[0,T]\times X\to(-\infty,\infty]$ be a functional satisfying \textbf{A1}, \textbf{A2}, and \textbf{A4}.
We will extend our assumptions by the following.
\begin{description}
 \item[A5] If $(x_n)\subset X$ with $\sup_{n,m}d_t(x_n,x_m)<\infty$, then $(x_n)$ admits a $\sigma$-convergent subsequence.
  \item[A3$^*$]  For each $t\in[0,T]$, $x\mapsto E_t(x)$ is sequentially $\sigma$-lower semicontinuous.

 \end{description}

\subsubsection*{Approximation}
We fix a time step $h>0$ and subdivide the interval $[0,T]$ into the partition
\begin{align*}
 \mathcal{P}_h:=\{t_0=0<t_1<\cdots<t_{N-1}<T\leq t_N\},\qquad t_n=nh, N\in\mathbb{N}.
\end{align*}
For $0\leq t\leq T$ we define the piecewise constant interpolants $\overline{h}(t)$ and $\underline{h}(t)$ associated with the partition $\mathcal{P}_h$ 
in the following way;
\begin{equation}\label{dicretetime}
\overline{h}(0)=0=\underline{h}(0), \text{ and for }t\in (t_{n-1},t_n] \quad \overline{h}(t)=t_n , \quad \underline{h}(t)=t_{n-1}.
\end{equation}
The definition implies that $\overline{h}(t)\searrow t$ and $\underline{h}(t)\nearrow t$ if $h\searrow0$.

For a given initial value $\bar x$ we recursively define a sequence $(x_n^h)$ of minimizers by
\begin{align}\label{minprob}
 x_0^h:=\bar x, \qquad x_n^h:=\arg\min_{x}\Big\{E_{t_n}(x)+\frac1{2h}d_{t_n}^2(x,x_{n-1}^h)\Big\}
\end{align}
\begin{Prop}\label{minexist}
For every $\bar x\in \Dom(E)$ and $h>0$ there exists a solution to the minimization problem \eqref{minprob}.
 
\end{Prop}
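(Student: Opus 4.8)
The plan is to establish existence of a minimizer for the functional
\[
 \Phi_{h,n}(x):=E_{t_n}(x)+\frac1{2h}d_{t_n}^2(x,x_{n-1}^h)
\]
by the direct method of the calculus of variations, working with the weaker topology $\sigma$. First I would proceed by induction on $n$: assuming $x_{n-1}^h\in\Dom(E)$ is already constructed (with the base case $x_0^h=\bar x\in\Dom(E)$ given), I fix the index and abbreviate $y:=x_{n-1}^h$ and $t:=t_n$. Since $\Dom(E)$ is nonempty by \textbf{A1} and time-independent, $y\in\Dom(E)$ for the single fixed metric $d_t$, so $\Phi_{h,n}(y)=E_t(y)<\infty$, showing the infimum $m:=\inf_x\Phi_{h,n}(x)$ is finite; indeed by \textbf{A2} the functional $E_t$ is bounded below, so $m>-\infty$ as well.

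Next I would extract a minimizing sequence $(z_k)\subset X$ with $\Phi_{h,n}(z_k)\to m$ and show it is $\sigma$-precompact. The key mechanism is that the penalization term $\frac1{2h}d_t^2(z_k,y)$ controls the $d_t$-distance of $z_k$ to the fixed point $y$: because $E_t$ is bounded below by some constant, say $-C$, and $\Phi_{h,n}(z_k)$ is bounded above (being eventually close to $m$), we get
\[
 \frac1{2h}d_t^2(z_k,y)\leq \Phi_{h,n}(z_k)+C\leq m+1+C
\]
for large $k$, hence $\sup_k d_t(z_k,y)<\infty$, and by the triangle inequality $\sup_{k,l}d_t(z_k,z_l)<\infty$. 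Assumption \textbf{A5} then yields a $\sigma$-convergent subsequence $z_{k_j}\overset{\sigma}\rightharpoonup z_\infty$.

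The final step is to verify that the limit $z_\infty$ is a minimizer, i.e.\ $\Phi_{h,n}(z_\infty)\le m$, by lower semicontinuity of both terms along $\sigma$-convergence. The penalization term $z\mapsto d_t^2(z,y)$ is sequentially $\sigma$-lower semicontinuous by the standing topological assumption on $d_t$ (applied with the constant sequence $y_n\equiv y$), and the energy term $E_t$ is sequentially $\sigma$-lower semicontinuous by \textbf{A3$^*$}. Combining these,
\[
 \Phi_{h,n}(z_\infty)\leq\liminf_{j\to\infty}\Phi_{h,n}(z_{k_j})=m,
\]
so $z_\infty$ attains the infimum and we may set $x_n^h:=z_\infty$; since $\Phi_{h,n}(z_\infty)=m<\infty$ forces $E_t(z_\infty)<\infty$, we have $x_n^h\in\Dom(E)$, closing the induction. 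The main obstacle I anticipate is not any single estimate but rather making sure the coercivity argument is driven by the correct hypotheses: one must use \textbf{A2} (lower boundedness) to turn the upper bound on $\Phi_{h,n}$ into a $d_t$-diameter bound, and then \textbf{A5} to convert that metric bound into $\sigma$-precompactness, since $d_t$-balls need not be compact. A secondary subtlety is that lower semicontinuity must be checked in the $\sigma$-topology rather than the metric topology, which is exactly why \textbf{A3$^*$} (not merely \textbf{A3}) and the sequential $\sigma$-lower semicontinuity of $d_t$ are invoked.
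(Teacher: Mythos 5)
Your proof is correct and follows essentially the same route as the paper: the direct method in the $\sigma$-topology, using \textbf{A2} to bound $d_{t_n}(z_k, x_{n-1}^h)$ along a minimizing sequence, \textbf{A5} for $\sigma$-sequential compactness, and the sequential $\sigma$-lower semicontinuity of $d_{t_n}^2(\cdot,x_{n-1}^h)$ and of $E_{t_n}$ (via \textbf{A3$^*$}) to pass to the limit. The inductive bookkeeping and the verification that the minimizer lies in $\Dom(E)$ are details the paper leaves implicit, but the argument is the same.
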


\begin{proof}
Existence follows by the direct method of calculus. Define 
\begin{equation*}
 \phi(h,\bar x,t;\cdot):=E_{t}(\cdot)+\frac1{2h}d^2_t(\bar x,\cdot).
\end{equation*}
Since $E$ is uniformly bounded from below we may take a minimizing sequence $(x_n)_{n\in\mathbb{N}}$ such that $d_t^2(x_n,\bar x)$ remains bounded uniformly in $n$. From the triangle inequality we deduce that $\sup_{n,m} d_t(x_n,x_m)<\infty$. 
Hence \textbf{A5}
guarantees existence of a $\sigma$-convergent subsequence $x_{n_k}$. The weak limit point $x\in \Dom(E)$ is a minimizer of 
$\phi(h,\bar x,t;\cdot)$, which is due to the $\sigma$-lower semicontinuity of the distance and the functional.
\end{proof}

\begin{Def}
Fix $h>0$ and
 let $s\in[0,T-h]$. For $0<r<T-s$ define
 \begin{align}
  &J_{s,r}(y):=\min_x\Big\{E_{s+r}(x)+\frac1{2r}d_{s+h}^2(x,y)\Big\},\\
  &A_{s,r}(y):=\arg\min_x\Big\{E_{s+r}(x)+\frac1{2r}d_{s+h}^2(x,y)\Big\}.
 \end{align}

\end{Def}

\begin{lma}\label{slopeest}
For $x_r\in A_{s,r}(y)$ we have 
\begin{equation*}
|\nabla_{s+h}E_{s+r}|(x_r)\leq\frac1rd_{s+h}(x_r,y)
\end{equation*}
and for $0<r_1<r_2<T-s$ 
\begin{equation}\label{mondist}
d^2_{s+h}(x_{r_1},y)\leq d^2_{s+h}(x_{r_2},y)+4r_1r_2L^*,
\end{equation}
where $L^*$ is the Lipschitz constant in assumption \textbf{A4}.
\end{lma}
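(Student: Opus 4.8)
The plan is to read both estimates directly off the variational characterization of the minimizers, without invoking any convexity or curvature. Throughout the metric $d_{s+h}$ is fixed (it does not depend on $r$), so the only source of $r$-dependence is the time slice $E_{s+r}$ of the energy. This is what makes the two arguments close variants of the classical De Giorgi minimizing-movement estimates.

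For the slope estimate, fix $x_r\in A_{s,r}(y)$ and test minimality against an arbitrary competitor $w$, which yields
\begin{equation*}
 E_{s+r}(x_r)-E_{s+r}(w)\leq\frac1{2r}\left[d_{s+h}^2(w,y)-d_{s+h}^2(x_r,y)\right].
\end{equation*}
First I would expand the right-hand side through the triangle inequality for the single metric $d_{s+h}$, giving $d_{s+h}^2(w,y)-d_{s+h}^2(x_r,y)\leq d_{s+h}^2(w,x_r)+2d_{s+h}(w,x_r)d_{s+h}(x_r,y)$. Taking positive parts, dividing by $d_{s+h}(x_r,w)$ and letting $w\to x_r$ annihilates the quadratic term and leaves exactly $|\nabla_{s+h}E_{s+r}|(x_r)\leq\frac1r d_{s+h}(x_r,y)$. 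Note that the time-dependence plays no role here, since the same slice $E_{s+r}$ appears on both sides of the competition.

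For the monotonicity-type bound I would instead compare the two minimizers against each other. Writing the minimality of $x_{r_1}$ (parameter $r_1$) with competitor $x_{r_2}$, and the minimality of $x_{r_2}$ (parameter $r_2$) with competitor $x_{r_1}$, and adding the two inequalities, the distance terms collapse to $\left(\frac1{2r_1}-\frac1{2r_2}\right)\left[d_{s+h}^2(x_{r_1},y)-d_{s+h}^2(x_{r_2},y)\right]$, while the energy terms leave the four slices $E_{s+r_1}(x_{r_1})$, $E_{s+r_2}(x_{r_2})$, $E_{s+r_1}(x_{r_2})$, $E_{s+r_2}(x_{r_1})$. In the static theory these cancel; here I would pair them as $\left[E_{s+r_1}(x_{r_2})-E_{s+r_2}(x_{r_2})\right]+\left[E_{s+r_2}(x_{r_1})-E_{s+r_1}(x_{r_1})\right]$ and bound each bracket by $L^*|r_2-r_1|$ using assumption \textbf{A4}, so the energy contribution is at most $2L^*(r_2-r_1)$. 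Since $\frac1{2r_1}-\frac1{2r_2}=\frac{r_2-r_1}{2r_1r_2}>0$, dividing through gives $d_{s+h}^2(x_{r_1},y)-d_{s+h}^2(x_{r_2},y)\leq 4r_1r_2L^*$, which is the claim.

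I expect the only genuine obstacle to lie in the second assertion, where the time-dependence of the functional prevents the energy contributions from cancelling as they would in the static case. Producing the sharp constant requires pairing the four energy slices at neighbouring times so that \textbf{A4} controls each difference by $L^*|r_2-r_1|$, and one must keep track that the coefficient $\frac1{2r_1}-\frac1{2r_2}$ is strictly positive (since $r_1<r_2$) so that dividing does not reverse the inequality. Everything else is routine.
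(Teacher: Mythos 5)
Your proof is correct and follows essentially the same route as the paper: the slope bound is obtained by testing minimality of $x_r$ against competitors and applying the triangle inequality for the fixed metric $d_{s+h}$ (the paper factors the difference of squares rather than expanding it, a purely cosmetic difference), and the monotonicity bound \eqref{mondist} is derived exactly as in the paper by cross-testing the two minimizers, pairing the four energy slices so that \textbf{A4} bounds each bracket by $L^*(r_2-r_1)$, and dividing by the positive coefficient $\frac1{2r_1}-\frac1{2r_2}=\frac{r_2-r_1}{2r_1r_2}$.
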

\begin{proof}
By optimality of $x_r$ we have for every $x\in X$
\begin{align*}
&\frac{E_{s+r}(x_r)-E_{s+r}(x)}{d_{s+h}(x_r,x)}\leq \frac{d^2_{s+h}(x,y)-d^2_{s+h}(x_r,y)}{2rd_{s+h}(x_r,x)}\\
=&\frac{(d_{s+h}(x,y)-d_{s+h}(x_r,y))(d_{s+h}(x,y)+d_{s+h}(x_r,y))}{2rd_{s+h}(x_r,x)}\\
\leq &\frac{(d_{s+h}(x,y)+d_{s+h}(x_r,y))}{2r}
\end{align*}
Taking the limsup as $x\to x_r$ we get the first assertion.
To show the second assertion note that on the one hand we have
\begin{equation*}
E_{s+r_1}(x_{r_1})+\frac1{2r_1}d^2_{s+h}(x_{r_1},y)\leq E_{s+r_1}(x_{r_2})+\frac1{2r_1}d^2_{s+h}(x_{r_2},y),
\end{equation*}
and on the other
\begin{equation*}
E_{s+r_2}(x_{r_2})+\frac1{2r_2}d^2_{s+h}(x_{r_2},y)\leq E_{s+r_2}(x_{r_1})+\frac1{2r_2}d^2_{s+h}(x_{r_1},y).
\end{equation*}
Adding these two inequalities, using the Lipschitz property of $t\mapsto E_t(x)$ and dividing by $\frac1{2r_1}-\frac1{2r_2}$ yields \eqref{mondist}.
\end{proof}
\begin{lma}
 The map $r\mapsto J_{s,r}(y)$ is locally Lipschitz and for almost every $r\in(0,T-s)$ we have for $x_r\in A_{s,r}(y)$
 \begin{align}\label{diffofmin}
  \frac{d}{dr}J_{s,r}(y)=-\frac1{2r^2}d_{s+h}^2(x_r,y)+ (\partial_rE_{s+r})(x_r).
 \end{align}

\end{lma}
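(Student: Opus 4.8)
The plan is to treat $J_{s,r}(y)$ as a lower envelope of the single-variable functions $r\mapsto E_{s+r}(x)+\frac1{2r}d_{s+h}^2(x,y)$ and to extract its derivative by a competitor (Danskin/envelope-type) argument, using the almost-monotonicity estimate \eqref{mondist} only to keep things bounded. Throughout I fix $y$ and $s$, choose a minimizer $x_r\in A_{s,r}(y)$, abbreviate $m(r):=d_{s+h}^2(x_r,y)$, and recall that $J_{s,r}(y)=E_{s+r}(x_r)+\frac1{2r}m(r)$. First I would note that $m$ is locally bounded on $(0,T-s)$: applying \eqref{mondist} with the larger radius fixed at some $b<T-s$ gives $m(r)\le m(b)+4L^*rb\le m(b)+4L^*b^2$ for $0<r\le b$, which together with $m\ge0$ bounds $m$ on every compact subinterval.

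Next I establish local Lipschitz continuity, hence a.e. differentiability, of $r\mapsto J_{s,r}(y)$. For an upper bound on the right difference quotient I use the minimizer $x_r$ itself as a competitor at time $r+\epsilon$, which gives
\[
J_{s,r+\epsilon}(y)-J_{s,r}(y)\le\big[E_{s+r+\epsilon}(x_r)-E_{s+r}(x_r)\big]-\tfrac{1}{2r(r+\epsilon)}m(r)\le L^*\epsilon
\]
by \textbf{A4}. For a matching lower bound I instead insert $x_{r+\epsilon}$ as a competitor in the problem at time $r$, obtaining
\[
J_{s,r+\epsilon}(y)-J_{s,r}(y)\ge\big[E_{s+r+\epsilon}(x_{r+\epsilon})-E_{s+r}(x_{r+\epsilon})\big]-\tfrac{1}{2r(r+\epsilon)}m(r+\epsilon)\ge-L^*\epsilon-\tfrac{1}{2r(r+\epsilon)}m(r+\epsilon),
\]
which is locally bounded below by the previous boundedness of $m$. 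Thus the difference quotients of $J_{s,\cdot}(y)$ are locally bounded, so the map is locally Lipschitz and differentiable at a.e. $r$.

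Finally, to identify the derivative I pin $J'$ from both sides using only the fixed minimizer $x_r$ at the base point. By \textbf{A4} the set $D$ of differentiability points of $t\mapsto E_t$ can be chosen independent of the argument, so for $s+r\in D$ the quotient $\tfrac{E_{s+r\pm\epsilon}(x_r)-E_{s+r}(x_r)}{\pm\epsilon}$ converges to $(\partial_rE_{s+r})(x_r)$ as $\epsilon\searrow0$. Using $x_r$ as a competitor at time $r+\epsilon$ (the first inequality above) and letting $\epsilon\searrow0$ yields $\partial_r^+J_{s,r}(y)\le(\partial_rE_{s+r})(x_r)-\tfrac1{2r^2}m(r)$; using $x_r$ as a competitor at time $r-\epsilon$ gives $J_{s,r}(y)-J_{s,r-\epsilon}(y)\ge[E_{s+r}(x_r)-E_{s+r-\epsilon}(x_r)]-\tfrac{1}{2r(r-\epsilon)}m(r)$, so the left derivative is $\ge(\partial_rE_{s+r})(x_r)-\tfrac1{2r^2}m(r)$. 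At every $r$ for which $J$ is differentiable and $s+r\in D$ — a set of full measure — the left and right derivatives coincide and are squeezed to the common value, which is exactly \eqref{diffofmin}.

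The point to watch is that $J$ is a \emph{minimum} over the moving point $x$, so one must never differentiate through the minimizer; the device that makes everything work is to hold the minimizer $x_r$ at the base point \emph{fixed} while $\epsilon\to0$ and to use it as the comparison competitor on both sides. This circumvents any need for continuity or uniqueness of $r\mapsto x_r$, and it is precisely assumption \textbf{A4} (the differentiability set being independent of the argument) that lets the term $(\partial_rE_{s+r})(x_r)$ emerge simultaneously with the existence of $J'(r)$; the estimate \eqref{mondist} enters only to guarantee the boundedness of $m$ that closes the Lipschitz bound.
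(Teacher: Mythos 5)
Your proof is correct and follows essentially the same route as the paper's: the same two cross competitor inequalities (inserting the minimizer at one time into the problem at the other) give local Lipschitz continuity via \textbf{A4} together with the local boundedness of $r\mapsto d_{s+h}^2(x_r,y)$ from \eqref{mondist}, and the derivative is then identified at a.e. $r$ by squeezing the one-sided derivatives at points where $J$ is differentiable and $s+r$ lies in the common differentiability set from \textbf{A4} — exactly the paper's argument, with your device of holding the base-point minimizer $x_r$ fixed in both limits being, if anything, a slightly cleaner bookkeeping of which one-sided derivative is bounded on which side. The only blemish is typographical: in your three displayed difference-quotient inequalities the distance terms should carry a factor $\epsilon$, e.g. $-\frac{\epsilon}{2r(r+\epsilon)}m(r)$ rather than $-\frac{1}{2r(r+\epsilon)}m(r)$, which is clearly what you use once you divide by $\epsilon$.
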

\begin{proof}
 Fix $0<r_1<r_2<T-s$. Then
 \begin{equation}
 \begin{aligned}\label{jlipschitz}
  J_{s,r_2}(y)-J_{s,r_1}(y)=&E_{s+r_2}(x_{r_2})-E_{s+r_1}(x_{r_1})\\
  &+\frac1{2r_2}d^2_{s+h}(x_{r_2},y)-\frac1{2r_1}d^2_{s+h}(x_{r_1},y)\\
  \leq& E_{s+r_2}(x_{r_1})-E_{s+r_1}(x_{r_1})+\frac{r_1-r_2}{2r_2r_1}d^2_{s+h}(x_{r_1},y)\\
  \leq& L^*(r_2-r_1)-\frac{r_2-r_1}{2r_2r_1}d^2_{s+h}(x_{r_1},y),
 \end{aligned}
 \end{equation}
 where $L^*$ denotes the Lipschitz constant from \textbf{A4}.
Conversely, changing the roles of $x_{r_1}$ and $x_{r_2}$, we obtain
\begin{align*}
 J_{s,r_2}(y)-J_{s,r_1}(y)\geq -L^*(r_2-r_1)-\frac{r_2-r_1}{2r_2r_1}d^2_{s+h}(x_{r_2},y).
\end{align*}
Combining these two inequalities yields
\begin{align*}
 |J_{s,r_2}(y)-J_{s,r_1}(y)|\leq L^*|r_2-r_1|+\frac{|r_2-r_1|}{2r_1r_2}d^2_{s+h}(x_{r_2},y),
\end{align*}
which means $r\mapsto J_{s,r}(y)$ is locally Lipschitz.
Dividing by $r_2-r_1$ and letting $r_1\to r_2$ in \eqref{jlipschitz} yields on the one hand for the left derivative
\begin{align*}
 \frac{d^-}{dr}J_{s,r}(y)\leq -\frac1{2r^2}d^2_{s+h}(x_r,y)+(\partial_rE_{s+r})(x_r),
\end{align*}
for every differentiability point $r$ of $r\mapsto E_{t+r}$.
On the other hand we obtain similarly for the right derivative
\begin{align*}
 \frac{d^+}{dr}J_{s,r}(y)\geq -\frac1{2r^2}d^2_{s+h}(x_r,y)+(\partial_rE_{s+r})(x_r),
\end{align*}
for every differentiability point of $r\mapsto E_{t+r}$.
By local Lipschitz continuity we have for a.e. $0<r<T-s$
\begin{align*}
 \frac{d}{dr}J_{s,r}(\nu)= -\frac1{2r^2}d^2_{s+h}(x_r,y)+(\partial_rE_{s+r})(x_r).
\end{align*}
\end{proof}

\begin{lma}
For $s\in [0,T]$ and $0<r_1<r_2<T-s$
 \begin{align}
  E_{s}(y)\geq J_{s,r_1}(y)-Cr_1\geq J_{s,r_2}(y)-Cr_2\label{monoton}
 \end{align}
 \begin{align}
  \lim_{r\to0} d_{s+h}(y,x_r)=0\label{continuity} \text{ if }y\in \Dom(E).
 \end{align}
 In particular $\lim_{r\to0} J_{s,r}(y)=E_s(y)$.

\end{lma}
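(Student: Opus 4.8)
The plan is to dispatch the three assertions in turn, in each case building on the comparison inequality
$J_{s,r}(y)\le E_{s+r}(y)$, which follows by inserting the competitor $x=y$ into the minimization defining $J_{s,r}(y)$ so that the distance term vanishes, together with the derivative formula \eqref{diffofmin} and the structural hypotheses \textbf{A2} and \textbf{A4}. Throughout I would take $C:=L^*$, the Lipschitz constant of $t\mapsto E_t$ from \textbf{A4}. For the left inequality in \eqref{monoton}, I would combine $J_{s,r_1}(y)\le E_{s+r_1}(y)$ with the time-Lipschitz bound $E_{s+r_1}(y)\le E_s(y)+L^*r_1$ from \textbf{A4}, which rearranges exactly to $E_s(y)\ge J_{s,r_1}(y)-L^*r_1$. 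For the right inequality I would show that $r\mapsto J_{s,r}(y)-L^*r$ is non-increasing; since $r\mapsto J_{s,r}(y)$ is locally Lipschitz and hence absolutely continuous, it suffices to check that its a.e.\ derivative is at most $L^*$, which is immediate from \eqref{diffofmin} because $-\tfrac1{2r^2}d_{s+h}^2(x_r,y)\le0$ and $|(\partial_rE_{s+r})(x_r)|\le L^*$ by \textbf{A4}. Integrating $\frac{d}{dr}\big(J_{s,r}(y)-L^*r\big)\le0$ over $[r_1,r_2]$ gives the claim.

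To prove \eqref{continuity} I would again start from the identity $E_{s+r}(x_r)+\tfrac1{2r}d_{s+h}^2(x_r,y)=J_{s,r}(y)\le E_{s+r}(y)$ and isolate the distance term as $\tfrac1{2r}d_{s+h}^2(x_r,y)\le E_{s+r}(y)-E_{s+r}(x_r)$. Bounding $E_{s+r}(x_r)$ from below by the uniform lower bound $-M$ from \textbf{A2} and $E_{s+r}(y)$ from above by $E_s(y)+L^*r$ via \textbf{A4} (finite, as $y\in\Dom(E)$), I obtain $d_{s+h}^2(x_r,y)\le 2r\big(E_s(y)+L^*r+M\big)$, whose right-hand side tends to $0$ as $r\to0$.

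For the final assertion I would combine the two one-sided limits. The left inequality of \eqref{monoton} yields $J_{s,r}(y)\le E_s(y)+Cr$, so that $\limsup_{r\to0}J_{s,r}(y)\le E_s(y)$. For the reverse bound I would discard the non-negative distance term to get $J_{s,r}(y)\ge E_{s+r}(x_r)$, replace $E_{s+r}(x_r)$ by $E_s(x_r)$ up to the error $L^*r\to0$ using \textbf{A4}, and then invoke the lower semicontinuity of $E_s$ (assumption \textbf{A3}) along $x_r\to y$, where the convergence $x_r\to y$ is supplied by \eqref{continuity}; this gives $\liminf_{r\to0}J_{s,r}(y)\ge E_s(y)$, and the two bounds together yield $\lim_{r\to0}J_{s,r}(y)=E_s(y)$.

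The step requiring the most care is this last $\liminf$: one must ensure that the convergence $x_r\to y$ delivered by \eqref{continuity} takes place in a topology for which $E_s$ is lower semicontinuous. Since that convergence is with respect to $d_{s+h}$ and, by \eqref{loglip}, all the metrics $d_t$ are mutually equivalent, metric lower semicontinuity \textbf{A3} applies directly (equivalently one may pass to the weaker $\sigma$-topology and use \textbf{A3$^*$}); once the metric equivalence is invoked the difficulty dissolves, so the genuine content of the lemma lies in the monotonicity argument of the first paragraph.
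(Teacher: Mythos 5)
Your proof is correct and follows the paper's argument in all essentials: the same competitor comparisons yield the first inequality of \eqref{monoton} and the convergence \eqref{continuity} (the paper keeps a general competitor $x\in\Dom(E)$ and only sets $x=y$ at the end, while you specialize to $x=y$ at once, using the uniform lower bound from \textbf{A2} and the time-Lipschitz bound from \textbf{A4} exactly as the paper does), and the final limit is obtained precisely as in the paper by sandwiching: $E_s(y)\geq\limsup_{r\to0}J_{s,r}(y)\geq\liminf_{r\to0}E_{s+r}(x_r)\geq E_s(y)$ via lower semicontinuity along $x_r\to y$. The only cosmetic difference is the monotonicity step of \eqref{monoton}: the paper compares the two penalized functionals pointwise in $x$ (using $\frac1{2r_1}\geq\frac1{2r_2}$ together with \textbf{A4}) and then minimizes, whereas you integrate the a.e.\ derivative bound from \eqref{diffofmin} over $[r_1,r_2]$, which is equally valid since the local Lipschitz continuity of $r\mapsto J_{s,r}(y)$ was established in the preceding lemma.
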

\begin{proof}
The first inequality in \eqref{monoton} directly follows from
\begin{equation*}
 E_{s+r_1}(x_{r_1})+\frac1{2r_1}d^2_{s+h}(x_{r_1},y)\leq E_{s+r_1}(y)\leq E_{s}(y)+L^*r_1.
\end{equation*}
The second one follows by
\begin{align*}
 E_{s+r_1}(x)+\frac1{2r_1}d^2_{s+h}(x,y)&\geq E_{s+r_1}(x)+\frac1{2r_2}d^2_{s+h}(x,y)\\
 &\geq  E_{s+r_2}(x)+\frac1{2r_2}d^2_{s+h}(x,y)-L^*(r_2-r_1),
\end{align*}
and minimizing over all $x$.
Since for every $x\in \Dom(E)$
\begin{align*}
 0\leq d^2_{s+h}(y,x_r)&\leq-2rE_{s+r}(x_r)+d^2_{s+h}(y,x)+2rE_{s+r}(x)\\
 &\leq-2r\inf E+d^2_{s+h}(y,x)+2rE_{s+r}(x).
\end{align*}
Passing to the limit $r\to0$
\begin{align*}
 \lim_{r\to0}d^2_{s+h}(x_r,y)\leq d^2_{s+h}(x,y) \text{ for every }x\in \Dom(E).
\end{align*}
Since $y\in \Dom(E)$ we conclude \eqref{continuity}.
To check the last one we combine \eqref{monoton} with the lower semicontinuity of $x\mapsto E_t(x)$,
\begin{equation*}
 E_t(y)\geq \limsup_{r\to 0} J_{t,r}(y)\geq \liminf_{r\to 0} E_{t+r}(x_r)\geq  E_t(y).
\end{equation*}

\end{proof}

\begin{cor}
For every $0<r_0<T-s$ we have
\begin{equation}
\begin{aligned}\label{intofdiff}
&E_{s+r_0}(x_{r_0})+\frac1{2r_0}d^2_{s+h}(x_{r_0},y)\\
=
E_s(y)&-\int_0^{r_0}\frac1{2r^2}d_{s+h}^2(x_r,y)dr+\int_0^{r_0}(\partial_rE_{s+r})(x_r)dr.
\end{aligned}
\end{equation}
\end{cor}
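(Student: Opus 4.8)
The plan is to recognize the left-hand side as the minimal value $J_{s,r_0}(y)$ and then to integrate the derivative formula \eqref{diffofmin}, letting the lower endpoint of integration tend to $0$. Indeed, for $x_{r_0}\in A_{s,r_0}(y)$ the definitions give
\[
J_{s,r_0}(y)=E_{s+r_0}(x_{r_0})+\frac1{2r_0}d_{s+h}^2(x_{r_0},y),
\]
so the asserted identity \eqref{intofdiff} is equivalent to
\[
J_{s,r_0}(y)=E_s(y)-\int_0^{r_0}\frac1{2r^2}d_{s+h}^2(x_r,y)\,dr+\int_0^{r_0}(\partial_rE_{s+r})(x_r)\,dr.
\]

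First I would fix $\varepsilon\in(0,r_0)$. Since $r\mapsto J_{s,r}(y)$ is locally Lipschitz on $(0,T-s)$ by the preceding lemma, it is absolutely continuous on $[\varepsilon,r_0]$, and the fundamental theorem of calculus together with \eqref{diffofmin} yields
\[
J_{s,r_0}(y)-J_{s,\varepsilon}(y)=\int_\varepsilon^{r_0}\Big(-\frac1{2r^2}d_{s+h}^2(x_r,y)+(\partial_rE_{s+r})(x_r)\Big)\,dr.
\]

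Next I would let $\varepsilon\searrow0$. The endpoint term is under control, since $J_{s,\varepsilon}(y)\to E_s(y)$ by the preceding lemma. The time-derivative term is likewise harmless, because assumption \textbf{A4} bounds $|(\partial_rE_{s+r})(x_r)|$ by $L^*$, so by dominated convergence $\int_\varepsilon^{r_0}(\partial_rE_{s+r})(x_r)\,dr\to\int_0^{r_0}(\partial_rE_{s+r})(x_r)\,dr$, a finite quantity. The only delicate point is the possibly singular term $\tfrac1{2r^2}d_{s+h}^2(x_r,y)$ near $r=0$: the a priori estimate coming from \eqref{monoton}, namely $d_{s+h}^2(x_r,y)=O(r)$, only gives an $O(1/r)$ bound on the integrand, which is not integrable. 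I would therefore not estimate it directly but instead rearrange the displayed identity as
\[
\int_\varepsilon^{r_0}\frac1{2r^2}d_{s+h}^2(x_r,y)\,dr=J_{s,\varepsilon}(y)-J_{s,r_0}(y)+\int_\varepsilon^{r_0}(\partial_rE_{s+r})(x_r)\,dr.
\]
Here the integrand on the left is nonnegative, so the left-hand side increases as $\varepsilon\searrow0$, while the right-hand side converges to the finite number $E_s(y)-J_{s,r_0}(y)+\int_0^{r_0}(\partial_rE_{s+r})(x_r)\,dr$. By monotone convergence the improper integral $\int_0^{r_0}\tfrac1{2r^2}d_{s+h}^2(x_r,y)\,dr$ converges and equals this limit; substituting back gives exactly \eqref{intofdiff}.

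I expect this monotone-convergence step to be the main (and essentially only) subtlety: integrability of the singular term is forced by the convergence of the remaining three terms rather than proved by a pointwise bound. Everything else is the fundamental theorem of calculus on $[\varepsilon,r_0]$ together with the boundary behavior $J_{s,\varepsilon}(y)\to E_s(y)$ already established in the preceding lemma.
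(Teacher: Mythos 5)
Your proof is correct and follows exactly the paper's route: the paper's proof is the one-line instruction to integrate \eqref{diffofmin} from $0$ to $r_0$ and use $\lim_{r\to0}J_{s,r}(y)=E_s(y)$, which is precisely what you do. Your monotone-convergence argument for the singular term $\tfrac1{2r^2}d_{s+h}^2(x_r,y)$ merely makes explicit the improper-integral justification the paper leaves implicit, and it is a valid (indeed welcome) elaboration rather than a different approach.
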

\begin{proof}
Integrate \eqref{diffofmin} from 0 to $r_0$ and use that $\lim_{r\to0} J_{s,r}(y)=E_s(y)$.
\end{proof}

In the following we introduce dynamic counterparts for the variational interpolation, the discrete speed and the discrete slope, cf. \cite{ags,rms}.
\begin{Def}\label{interpolants}
Let $\bar x\in \Dom(E)$ be the initial value and $x_n^h$ be a sequence defined by the minimization problem \eqref{minprob}. 
A \emph{discrete solution} is a curve $t\mapsto\bar x_t^h$ defined by
\begin{align*}
\bar x_t^h=x_n^h, \text{ for }t\in(t_{n-1},t_n],
\end{align*}
and $\bar x_0^h=\bar x$.\\
A \emph{variational interpolation} is a map $t\to\tilde x_t^h$ defined by
\begin{align*}
\tilde x_t^h=\arg\min\Big\{E_{t}(x)+\frac1{2r}d^2_{t_n}(x,x_{n-1}^h)\Big\},\\
\text{for }t=t_{n-1}+r\in(t_{n-1},t_n],
\end{align*}
and $\tilde x_0^h=\bar x$.\\
We define the \emph{discrete speed} $\Dsp^h\colon[0,T]\to[0,\infty)$ and the \emph{discrete slope} $\Dsl^h\colon[0,T]\to[0,\infty)$ in the following way
\begin{align*}
\Dsp_r^h&=\frac1hd_{t_n}(\bar x_{t_n}^h,\bar x_{t_{n-1}}^h), \quad r\in(t_{n-1},t_n],\\
\Dsl_r^h&=\frac1{(r-t_{n-1})}d_{t_n}(\bar x_{t_{n-1}}^h,\tilde x_r^h), \quad r\in(t_{n-1},t_n].
\end{align*}
\end{Def}
Note that $\tilde x_{t_n}^h=x_n^h=\bar x_{t_n}^h$.

\begin{Prop}\label{discreteede}
We have for $0\leq s\leq t\leq T$
\begin{equation}
\begin{aligned}\label{eq:discreteede}
E_{\overline{h}(t)}(\bar x_{t}^h)&+\frac12\int_{\overline{h}(s)}^{\overline{h}(t)}(\Dsp_r^h)^2dr+\frac12\int_{\overline{h}(s)}^{\overline{h}(t)}(\Dsl_r^h)^2dr\\
=&E_{\overline{h}(s)}(\bar x_{s}^h)+\int_{\overline{h}(s)}^{\overline{h}(t)}(\partial_rE_r)(\tilde x_r^h)dr.
\end{aligned}
\end{equation}

\end{Prop}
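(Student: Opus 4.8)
The plan is to read off \eqref{eq:discreteede} from the integrated identity \eqref{intofdiff}, applied on each elementary subinterval $(t_{n-1},t_n]$ of the partition $\mathcal P_h$, and then to telescope over the subintervals that fill $[\overline{h}(s),\overline{h}(t)]$. Since every quantity in the statement is evaluated only at the partition points $\overline{h}(s)$ and $\overline{h}(t)$, it suffices to establish the single-step version of \eqref{eq:discreteede} on one interval $(t_{n-1},t_n]$ and add.

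For the single-step identity I would specialize \eqref{intofdiff} to the choices $s=t_{n-1}$, $y=x_{n-1}^h=\bar x_{t_{n-1}}^h$ and $r_0=h$, so that $s+h=t_n$ and the minimizing family $(x_r)_{0<r\le h}\in A_{t_{n-1},r}(x_{n-1}^h)$ coincides with the variational interpolation, $x_r=\tilde x_{t_{n-1}+r}^h$ (the definition of $A_{s,r}$ uses $E_{s+r}$ and $d_{s+h}$, which under $s=t_{n-1}$ become exactly the $E_t$ and $d_{t_n}$ appearing in Definition \ref{interpolants}). At the right endpoint $r=h$ one has $x_h=\tilde x_{t_n}^h=x_n^h=\bar x_{t_n}^h$, so the leading term is $E_{t_n}(x_h)=E_{\overline{h}(t_n)}(\bar x_{t_n}^h)$, while $\frac1{2h}d^2_{t_n}(x_h,x_{n-1}^h)=\frac12\int_{t_{n-1}}^{t_n}(\Dsp_r^h)^2\,dr$ because $(\Dsp_r^h)^2=h^{-2}d^2_{t_n}(x_n^h,x_{n-1}^h)$ is constant on $(t_{n-1},t_n]$. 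The remaining right-hand term $E_{t_{n-1}}(x_{n-1}^h)$ is simply $E_{\overline{h}(t_{n-1})}(\bar x_{t_{n-1}}^h)$.

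The identification of the slope term is the one step needing care, and it is where the definition of $\Dsl^h$ does its work. Substituting $r=\rho-t_{n-1}$ in $\int_0^h\frac1{2r^2}d^2_{t_n}(x_r,x_{n-1}^h)\,dr$ and using $x_{\rho-t_{n-1}}=\tilde x_\rho^h$ together with $\Dsl_\rho^h=(\rho-t_{n-1})^{-1}d_{t_n}(\bar x_{t_{n-1}}^h,\tilde x_\rho^h)$ rewrites this integral as exactly $\frac12\int_{t_{n-1}}^{t_n}(\Dsl_\rho^h)^2\,d\rho$. The same change of variable, with the chain rule $\frac{d}{dr}E_{t_{n-1}+r}=\frac{d}{d\rho}E_\rho$ at $\rho=t_{n-1}+r$, turns $\int_0^h(\partial_rE_{t_{n-1}+r})(x_r)\,dr$ into $\int_{t_{n-1}}^{t_n}(\partial_\rho E_\rho)(\tilde x_\rho^h)\,d\rho$. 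Rearranging \eqref{intofdiff} then reads precisely as \eqref{eq:discreteede} over the single interval $[t_{n-1},t_n]$; the integrability of the $r^{-2}d^2$ term near $r=0$ is already part of the assertion of \eqref{intofdiff}.

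Finally I would write $\overline{h}(s)=t_m$ and $\overline{h}(t)=t_k$ with $m\le k$ and sum the single-step identity for $n=m+1,\dots,k$: the energy differences telescope to $E_{\overline{h}(t)}(\bar x_t^h)-E_{\overline{h}(s)}(\bar x_s^h)$, and the speed, slope and $\partial_r E$ integrals concatenate into integrals over $[\overline{h}(s),\overline{h}(t)]$, giving \eqref{eq:discreteede} (the case $m=k$ being trivial). The argument is essentially bookkeeping and a reparametrization; the only genuinely non-formal point, and the main thing to get right, is recognizing that $\Dsl^h$ was defined exactly so that the $r^{-2}d^2$-integral of \eqref{intofdiff} becomes the discrete-slope integral.
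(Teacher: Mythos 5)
Your proposal is correct and follows essentially the same route as the paper: the paper also specializes \eqref{intofdiff} with $s=t_{n-1}$, $y=x_{n-1}^h$, identifies the minimizing family with the variational interpolation $\tilde x_{t_{n-1}+r}^h$, evaluates at the right endpoint $t=t_n$ to obtain the single-step identity, telescopes over the partition, and then recognizes the speed and slope integrals from the definitions of $\Dsp^h$ and $\Dsl^h$. Your only deviation is cosmetic (you go directly to $r_0=h$ rather than first writing the identity for general $t\in(t_{n-1},t_n]$), and your emphasis on the change of variables in the slope term is exactly the bookkeeping point the paper's proof rests on.
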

\begin{proof}
Let $t\in(t_{n-1},t_n]$. We want to apply equation \eqref{intofdiff} with $s=t_{n-1}$, $r_0=t-s$, $y=x_{n-1}^h$. Then with
$x_{r_0}=\tilde x_t^h$ and $x_r=\tilde x_{t_{n-1}+r}^h$  we find
\begin{align*}
E_t(\tilde x_t^h)&+\frac1{2(t-t_{n-1})}d^2_{t_n}(\tilde x_t^h, x_{n-1}^h)+\int_{t_{n-1}}^{t}\frac1{2(r-t_{n-1})^2}d^2_{t_n}(x_{n-1}^h,\tilde x_r^h)dr\\
=&E_{t_{n-1}}(x_{n-1}^h)+\int_{t_{n-1}}^t(\partial_r E_r)(\tilde x_r^h)dr.
\end{align*}
For $t=t_n$ we obtain
\begin{equation} 
\begin{aligned}\label{eq:energy}
E_{t_n}(\bar x_{t_n}^h)&+\frac1{2h^2}\int_{t_{n-1}}^{t_n}d^2_{t_n}(\bar x_{t_n}^h,\bar x_{t_{n-1}}^h)dr+\int_{t_{n-1}}^{t_n}\frac1{2(r-t_{n-1})^2}
d^2_{t_n}(\bar x_{t_{n-1}}^h,\tilde x_r^h)dr\\
=&E_{t_{n-1}}(\bar x_{t_{n-1}}^h)+\int_{t_{n-1}}^{t_n}(\partial_rE_r)(\tilde x_r^h)dr.
\end{aligned}
\end{equation}
Summing up from $n+1$ to $m$ yields
\begin{align*}
&E_{t_m}(\bar x_{t_m}^h)+\frac1{2h^2}\sum_{j=n+1}^m\int_{t_{j-1}}^{t_j}d^2_{t_j}(\bar x_{t_j}^h,\bar x_{t_{j-1}}^h)dr\\
+&\sum_{j=n+1}^m\int_{t_{j-1}}^{t_j}\frac1{2(r-t_{j-1})^2}d^2_{t_j}(\bar x_{t_{j-1}}^h,\tilde x_r^h)dr
=E_{t_{n}}(\bar x_{t_{n}}^h)+\int_{t_{n}}^{t_m}(\partial_rE_r)(\tilde x_r^h)dr.
\end{align*}
Now plugging in the definitions of the discrete slope and the discrete speed respectively
\begin{align*}
E_{t_m}(\bar x_{t_m}^h)&+\frac12\int_{t_{n}}^{t_m}(\Dsp_r^h)^2dr+\frac12\int_{t_{n}}^{t_m}(\Dsl_r^h)^2dr\\
=&E_{t_{n}}(\bar x_{t_{n}}^h)+\int_{t_{n}}^{t_m}(\partial E_r)(\tilde x_r^h)dr,
\end{align*}
which shows \eqref{eq:discreteede}.

\end{proof}

\begin{remark}\label{ede:degiorgi}
Alternatively, for $t\in(t_{n-1},t_n]$ we can write
\begin{align*}
 E_t(\tilde x_t^h)&+\frac1{2(t-t_{n-1})}d^2_{t_n}(\tilde x_t^h,\tilde x_{t_{n-1}}^h)+\frac12\int_0^{t_{n-1}}(\Dsp_r^h)^2dr+\frac12\int_0^t(\Dsl_r^h)^2dr\\
 &=E_0(\bar x)+\int_0^t(\partial_rE_r)(\tilde x_r^h)dr.
\end{align*}

\end{remark}
The following proposition provides essential a priori bounds.
\begin{Prop}
There exist constants $C_1$, $C_2$, $C_3$ such that for all \\
$0\leq t,mh\leq T$
\begin{equation}\label{eq1:apest}
E_{t}(\tilde x_{t}^h)\leq C_1,
\end{equation}
\begin{equation}\label{eq2:apest}
\frac1{2h}\sum_{n=1}^md_{t_n}^2(\bar x_{t_n}^h,\bar x_{t_{n-1}}^h)\leq C_2,
\end{equation}
\begin{equation}\label{eq3:apest}
d^2_{t^*}(\tilde x_t^h,\bar x_t^h)\leq C_3h, \text{ for some fixed }t^*.
\end{equation}
\end{Prop}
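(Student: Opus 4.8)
The plan is to read off all three bounds from the discrete energy identity established in Proposition~\ref{discreteede} (and its variant in Remark~\ref{ede:degiorgi}), using only the lower bound \textbf{A2} and the time-Lipschitz bound \textbf{A4}. Throughout, set $M:=-\inf_{t,x}E_t(x)<\infty$ (finite by \textbf{A2} together with \textbf{A4}) and recall that \textbf{A4} yields $(\partial_r E_r)(x)\le L^*$ for every admissible $x$ at a.e.\ $r$. For \eqref{eq1:apest} I would start from the identity in Remark~\ref{ede:degiorgi}: every term on its left-hand side other than $E_t(\tilde x_t^h)$ is a nonnegative squared distance or a dissipation integral of $\Dsp^h$, $\Dsl^h$, so discarding them gives $E_t(\tilde x_t^h)\le E_0(\bar x)+\int_0^t(\partial_r E_r)(\tilde x_r^h)\,dr\le E_0(\bar x)+L^*T=:C_1$. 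This is finite since $\bar x\in\Dom(E)$ (and $\Dom(E_t)$ is time-independent by \textbf{A1}), and it is independent of $h$ and $t$.

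For \eqref{eq2:apest} I would first observe that $\int_0^{t_m}(\Dsp_r^h)^2\,dr=\tfrac1h\sum_{n=1}^m d_{t_n}^2(\bar x_{t_n}^h,\bar x_{t_{n-1}}^h)$, because $(\Dsp_r^h)^2$ is constant equal to $h^{-2}d_{t_n}^2(\bar x_{t_n}^h,\bar x_{t_{n-1}}^h)$ on each subinterval of length $h$. Then I apply \eqref{eq:discreteede} with $s=0$, $t=t_m$, drop the nonnegative $\Dsl^h$-integral, and use $E_{t_m}(\bar x_{t_m}^h)\ge -M$ together with $\int_0^{t_m}(\partial_r E_r)(\tilde x_r^h)\,dr\le L^*T$ to obtain $\tfrac1{2h}\sum_{n=1}^m d_{t_n}^2(\bar x_{t_n}^h,\bar x_{t_{n-1}}^h)\le E_0(\bar x)+L^*T+M=:C_2$.

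The third bound \eqref{eq3:apest} is the one requiring a genuine comparison argument. Fix $t\in(t_{n-1},t_n]$ and put $r:=t-t_{n-1}\in(0,h]$; then $\tilde x_t^h$ and $\bar x_t^h=x_n^h$ are the minimizers of $x\mapsto E_{t_{n-1}+\rho}(x)+\tfrac1{2\rho}d_{t_n}^2(x,x_{n-1}^h)$ at the two parameters $\rho=r$ and $\rho=h$, sharing the datum $x_{n-1}^h$. Testing optimality against $x=x_{n-1}^h$ and invoking \textbf{A4} gives $\tfrac1{2\rho}d_{t_n}^2(x_\rho,x_{n-1}^h)\le E_{t_{n-1}}(x_{n-1}^h)+M+L^*\rho$, where the essential input is that $E_{t_{n-1}}(x_{n-1}^h)=E_{t_{n-1}}(\tilde x_{t_{n-1}}^h)\le C_1$ by the first bound. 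Hence both $\tilde x_t^h$ and $x_n^h$ lie within $d_{t_n}$-distance $\sqrt{2h(C_1+M+L^*h)}$ of $x_{n-1}^h$; the triangle inequality then yields $d_{t_n}^2(\tilde x_t^h,x_n^h)\le 8h(C_1+M+L^*h)$, and \eqref{loglip} converts $d_{t_n}$ into the fixed metric $d_{t^*}$ at the cost of a factor $e^{2LT}$, giving \eqref{eq3:apest} with $C_3:=8e^{2LT}(C_1+M+L^*T)$ after using $h\le T$.

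The crux of \eqref{eq3:apest}, and the step I expect to be the main obstacle, is extracting the exponent $h$ rather than $\sqrt h$ for the squared distance: this works precisely because both interpolants are $O(\sqrt h)$-close to the common point $x_{n-1}^h$ --- a smallness that itself relies on the energy bound \eqref{eq1:apest} to control $E_{t_{n-1}}(x_{n-1}^h)$ --- so that two $O(\sqrt h)$ legs combine into an $O(\sqrt h)$ distance, hence an $O(h)$ squared distance. As an alternative to the triangle-inequality step one could instead invoke the monotonicity estimate \eqref{mondist} of Lemma~\ref{slopeest} to compare $d_{t_n}^2(x_\rho,x_{n-1}^h)$ directly across the two values of $\rho$.
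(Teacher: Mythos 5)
Your proofs of \eqref{eq1:apest} and \eqref{eq2:apest} follow the paper's argument verbatim: discard the nonnegative terms in Remark~\ref{ede:degiorgi} for the first bound, and drop the slope term in the discrete energy identity, then use \textbf{A2} and \textbf{A4}, for the second. For \eqref{eq3:apest} you take a genuinely different route. The paper splits $d^2_{t_n}(\tilde x_t^h,x_n^h)\le 2d^2_{t_n}(\tilde x_t^h,x_{n-1}^h)+2d^2_{t_n}(x_{n-1}^h,x_n^h)$ and then invokes the monotonicity estimate \eqref{mondist} from Lemma~\ref{slopeest} (with $r_1=t-t_{n-1}$, $r_2=h$, $y=x_{n-1}^h$) to dominate the off-grid leg by the grid-step leg up to an $O(h^2)$ error, finally controlling $d^2_{t_n}(x_n^h,x_{n-1}^h)\le 2hC_2$ via \eqref{eq2:apest}; this is exactly the alternative you mention in passing at the end. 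You instead test optimality of each minimizer against the common anchor $x_{n-1}^h$, which gives $\tfrac1{2\rho}d^2_{t_n}(x_\rho,x_{n-1}^h)\le E_{t_{n-1}}(x_{n-1}^h)+M+L^*\rho$ directly (the energy of $x_{n-1}^h=\tilde x_{t_{n-1}}^h$ being controlled by \eqref{eq1:apest}, finite by \textbf{A1}), so both interpolants lie within $O(\sqrt h)$ of $x_{n-1}^h$ and the triangle inequality plus \eqref{loglip} closes the argument. Your version is correct and arguably more self-contained --- it bypasses both Lemma~\ref{slopeest} and \eqref{eq2:apest} for the third bound, at the cost of a marginally larger constant ($8e^{2LT}(C_1+M+L^*T)$ versus the paper's combination of $C_2$ and the additive $L^*$-term); the paper's route, by contrast, exploits structure it has already built (the near-monotonicity of $r\mapsto d^2_{s+h}(x_r,y)$), which is the tool it reuses elsewhere. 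Both yield the decisive exponent $h$ rather than $\sqrt h$ for the squared distance, and you correctly identify why: the two legs share the anchor $x_{n-1}^h$.
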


\begin{proof}
From Remark \ref{ede:degiorgi} we deduce 
\begin{equation*}
 E_{t}(\tilde x_{t}^h)\leq E_0(\bar x)+L^*T,
\end{equation*}
which shows \eqref{eq1:apest}.

We drop the nonnegative slope term in equation \eqref{eq:energy} to obtain
\begin{align*}
\frac1{2h}d^2_{t_n}(\bar x_{t_n}^h,\bar x_{t_{n-1}}^h)
\leq E_{t_{n-1}}(\bar x_{t_{n-1}}^h)-E_{t_n}(\bar x_{t_n}^h)+\int_{t_{n-1}}^{t_n}(\partial_rE_r)(\tilde x_r^h)dr.
\end{align*}
Summing up to $m$ and applying the Lipschitz property of $t\mapsto E_t$
\begin{align*}
\frac1{2h}\sum_{n=1}^md_{t_n}^2(\bar x_{t_n}^h,\bar x_{t_{n-1}}^h)&\leq E_{t_0}(\bar x_{t_0}^h)
-E_{t_m}(\bar x_{t_m}^h)+\int_{t_0}^{t_{m-1}}(\partial_rE_r)(\tilde x_r^h)dr\\
&\leq  E_{t_0}(\bar x_{t_0}^h)
-E_{t_m}(\bar x_{t_m}^h)+T L^*,
\end{align*}
we obtain on the one hand
\begin{equation*}
E_{t_m}(\bar x_{t_m}^h)\leq  E_{t_0}(\bar x_{t_0}^h)+T L^*,
\end{equation*}
and since $\inf E(x)>-\infty$
\begin{equation*}
\frac1{2h}\sum_{n=1}^md_{t_n}^2(\bar x_{t_n}^h,\bar x_{t_{n-1}}^h)\leq C_2.
\end{equation*}
To show \eqref{eq3:apest} note that for $t\in (t_{n-1},t_n]$
\begin{align*}
d^2_{t_n}(\tilde x_t^h,\bar x_t^h)&=d^2_{t_n}(\tilde x_t^h,x_n^h)\leq 2d^2_{t_n}(\tilde x_t^h,x_{n-1}^h)+2d^2_{t_n}(x_{n-1}^h,x_n^h)\\
&\leq 4d^2_{t_n}(x_n^h,x_{n-1}^h)+8(t-t_{n-1})hC,
\end{align*}
where the third inequality is a consequence of \eqref{mondist}. Applying \eqref{eq2:apest} and \eqref{loglip} we conclude \eqref{eq3:apest}.
\end{proof}

\begin{Prop}\label{compactness}
There exist a subsequence $h_n$ with $\lim_n h_n=0$, a curve $(x_t)\subset \AC^2([0,T];X)$ and a function $A\in L^2(0,T)$ such that 
\begin{equation*}
\bar x_t^{h_n}\overset{\sigma}\rightharpoonup x_t, \quad \tilde x_t^{h_n}\overset{\sigma}\rightharpoonup x_t\text{ for all }t,
\end{equation*}
\begin{equation*}
\text{ and } |\Dsp^{h_n}|\rightharpoonup A \text{ weakly in }L^2(0,T).
\end{equation*}
Further $|\dot x|_t\leq A(t)$ holds almost everywhere.

\end{Prop}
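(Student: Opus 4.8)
The plan is to combine the a priori bounds \eqref{eq1:apest}--\eqref{eq3:apest} with the weak compactness of $L^2$ and a refined Arzelà--Ascoli argument adapted to the auxiliary topology $\sigma$. First I would extract the $L^2$-limit of the discrete speeds. For $r\in(t_{n-1},t_n]$ the function $\Dsp_r^h$ is constant, so $\int_{t_{n-1}}^{t_n}(\Dsp_r^h)^2\,dr=\frac1h d_{t_n}^2(\bar x_{t_n}^h,\bar x_{t_{n-1}}^h)$, and summing together with \eqref{eq2:apest} gives $\int_0^T(\Dsp_r^h)^2\,dr\leq 2C_2$. Hence $(|\Dsp^h|)_h$ is bounded in the reflexive space $L^2(0,T)$, and along some sequence $h_n\to0$ we obtain $|\Dsp^{h_n}|\rightharpoonup A$ weakly in $L^2(0,T)$.

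Next I would produce the limit curve. Fix a reference time $t^*$. Using the triangle inequality along the discrete points together with \eqref{loglip} to pass from $d_{t^*}$ to $d_{t_j}$, one gets for $s\leq t$ the discrete length bound
\begin{equation*}
 d_{t^*}(\bar x_s^h,\bar x_t^h)\leq\int_{\underline h(s)}^{\overline h(t)}e^{L|t^*-\overline h(r)|}\,\Dsp_r^h\,dr,
\end{equation*}
and in particular, by Cauchy--Schwarz and the uniform $L^2$-bound on $\Dsp^h$, the set $\{\bar x_t^h\}_h$ is $d_{t^*}$-bounded for each fixed $t$. Assumption \textbf{A5} then guarantees sequential $\sigma$-compactness of this set, while the above estimate furnishes a uniform (as $h\to0$) modulus of continuity of square-root type. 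Feeding these two facts, together with the $\sigma$-lower semicontinuity of $d_{t^*}$, into a variant of the Arzelà--Ascoli theorem (cf. \cite{ags})---a diagonal extraction over a countable dense set of times followed by propagation to all $t$ via the modulus---yields, after passing to a further subsequence still denoted $h_n$, a curve $(x_t)$ with $\bar x_t^{h_n}\overset\sigma\rightharpoonup x_t$ for every $t\in[0,T]$.

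For the variational interpolants I would argue that \eqref{eq3:apest} gives $d_{t^*}(\tilde x_t^{h_n},\bar x_t^{h_n})\leq (C_3 h_n)^{1/2}\to0$; since the $\tilde x_t^{h_n}$ are then $d_{t^*}$-bounded, \textbf{A5} extracts a $\sigma$-limit $y_t$ of any subsequence, and the $\sigma$-lower semicontinuity of $d_{t^*}$ forces $d_{t^*}(y_t,x_t)\leq\liminf_n d_{t^*}(\tilde x_t^{h_n},\bar x_t^{h_n})=0$, whence $y_t=x_t$ and $\tilde x_t^{h_n}\overset\sigma\rightharpoonup x_t$ as well.

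Finally I would establish the speed estimate. Applying the $\sigma$-lower semicontinuity of $d_{t^*}$ to $\bar x_s^{h_n}\overset\sigma\rightharpoonup x_s$ and $\bar x_t^{h_n}\overset\sigma\rightharpoonup x_t$, and passing to the limit in the length bound---the weights $e^{L|t^*-\overline{h_n}(r)|}\mathbf 1_{[\underline{h_n}(s),\overline{h_n}(t)]}$ converge strongly in $L^2$ to $e^{L|t^*-r|}\mathbf 1_{[s,t]}$ and pair with the weak limit $A$---gives
\begin{equation*}
 d_{t^*}(x_s,x_t)\leq\int_s^t e^{L|t^*-r|}A(r)\,dr,\qquad s<t.
\end{equation*}
Since $A\in L^2(0,T)$ this shows $(x_t)\in\AC^2([0,T];X)$. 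Specializing $t^*=t$ (the estimate being valid for every reference time), dividing by $|t-s|$ and letting $s\to t$, the weight tends to $1$ and the Lebesgue differentiation theorem yields $|\dot x_t|_t\leq A(t)$ at every Lebesgue point of $A$; by Lemma \ref{metricspeed} this is exactly $|\dot x|_t\leq A(t)$ almost everywhere. The main obstacle is the second step: organizing the diagonal/Helly extraction so that a single subsequence works simultaneously for all $t$ in the merely sequentially $\sigma$-compact setting, where $\sigma$ need not be metrizable and one must rely throughout on the $\sigma$-lower semicontinuity of the distances rather than on continuity.
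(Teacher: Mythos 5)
Your proposal is correct and follows essentially the same route as the paper's proof: the $L^2$-bound on the discrete speeds from \eqref{eq2:apest} together with weak $L^2$-compactness, the weighted discrete length estimate obtained from \eqref{loglip}, the refined Arzel\`a--Ascoli argument (the paper cites \cite[Proposition 3.3.1]{ags}) under \textbf{A5} and the $\sigma$-lower semicontinuity of the distances, the bound \eqref{eq3:apest} to transfer the convergence to the variational interpolants, and the weak--strong pairing in the length estimate followed by the choice $t^*=t$ and Lebesgue differentiation to get $|\dot x|_t\leq A(t)$. The only cosmetic difference is that you spell out by hand the diagonal extraction and the subsequence argument identifying the $\sigma$-limit of $\tilde x_t^{h_n}$, where the paper simply invokes \cite[Proposition 3.3.1]{ags} together with \eqref{eq3:apest}.
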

\begin{proof}
We want to apply a refined version of Arzel\`a-Ascoli \cite[Proposition 3.3.1]{ags} to the family $(\bar x^h)_{h>0}$.
Owing to the estimates \eqref{loglip} and \eqref{eq2:apest} we have
\begin{equation*}
 d_{t^*}(\bar x_t^h,\bar x)\leq tC_2e^C,
\end{equation*}
and together with  \textbf{A5} this yields that the curves $\bar x^h\colon[0,T]\to X$ take values in a $\sigma$-sequentially compact set.
From the estimate \eqref{eq2:apest} we further deduce
\begin{align*}
\int_t^s|\Dsp_r^h|^2dr \leq \sum_{j=t_{n}}^{t_m}\frac1h d^2_{t_j}(\bar x_{t_j}^h,\bar x_{t_{j-1}}^h)\leq 2C_2,
\end{align*}
for $\overline{h}(s)=t_m$, $\underline{h}(t)=t_n$. Applying the Banach Alaoglu Theorem we can extract a subsequence $h_n$ and a function 
$A\in L^2(0,T)$ such that $|\Dsp^{h_n}|\rightharpoonup A$ weakly in $L^2([0,T])$.
For fixed $t^*$ and $s<t$ we deduce from the log-Lipschitz property \eqref{loglip}
\begin{align*}
d_{t^*}(\bar x_t^h,\bar x_s^h)\leq\int_{\underline{h}(s)}^{\overline{h}(t)} \frac1hd_{t^*}(\bar x_r^h,\bar x_{r-h}^h)dr\\
\leq
\int_{\underline{h}(s)}^{\overline{h}(t)}\frac1hd_{\overline{h}(r)}(\bar x_r^h,\bar x_{r-h}^h)e^{L|\overline{h}(r)-t^*|}dr,
\end{align*}
and hence
\begin{align*}
 \limsup_{n\to\infty}d_{t^*}(\bar x_t^{h_n},\bar x_s^{h_n})\leq\int_t^s A(r)e^{L|r-t^*|}dr.
\end{align*}
Propostion 3.3.1 in \cite{ags} and \eqref{eq3:apest} imply that there exists a further subsequence, not relabeled, and a limit curve $x:[0,T]\to X$ such that
\begin{equation*}
 \bar x_t^{h_n}\overset{\sigma}\rightharpoonup  x_t, \quad  \tilde x_t^{h_n}\overset{\sigma}\rightharpoonup  x_t\quad \forall t\in[0,T].
\end{equation*}
This curve is absolutely continuous since 
\begin{equation*}
d_{t^*}(x_t,x_s)\leq\liminf_{n\to\infty} d_{t^*}(\bar x_t^{h_n},\bar x_s^{h_n})
\leq\int_s^tA(r)e^{L|r-t^*|}dr,
\end{equation*}
In particular if we take $t^*=t$ in the argumentation above the Lebesgue differentiation theorem implies that
\begin{equation*}
|\dot x|_t\leq \limsup_{s\nearrow t}\frac1{t-s}\int_s^tA(r)e^{L|r-t|}dr\leq A(t)
\end{equation*}
holds true for almost every $t$.

\end{proof}

\begin{Prop}\label{stability}
 Suppose additionally to our standing assumptions \textbf{A1}, \textbf{A2}, \textbf{A3$^*$}, \textbf{A4} and \textbf{A5} that 
 \begin{itemize}
 \item If $x_n \overset{\sigma}\rightharpoonup x$ as $n\to \infty$ then 
  \begin{equation}\label{uscofder}
   \limsup_{n\to\infty} \partial_tE_t(x_n)\leq\partial_tE_t(x),
  \end{equation}

  \item if $t_n\to t$ and $x_n\overset{\sigma}\rightharpoonup x$, then
  $$|\nabla_t E_t|^2(x)\leq\liminf |\nabla_{t_n}E_{t}|^2(x_n).$$
 \end{itemize}
 Then every limit curve $(x_t)$ from Proposition \ref{compactness} satisfies the EDI formula
 \begin{equation}
 \begin{aligned}
  E_t(x_t)+\frac12\int_0^t|\dot x|^2_rdr+\frac12\int_0^t|\nabla_rE_r|^2(x_r)dr
  \leq E_0(\bar x)+\int_0^t(\partial_r E_r)(x_r)dr,
 \end{aligned}
 \end{equation}
 for every $t\in[0,T]$.

\end{Prop}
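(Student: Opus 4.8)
The plan is to pass to the limit along the subsequence $h_n$ of Proposition \ref{compactness} in the discrete energy dissipation equality of Proposition \ref{discreteede}. Taking $s=0$ in \eqref{eq:discreteede} (and writing $\overline h$ for $\overline{h_n}$, which depends on $n$), we start from
\begin{equation*}
 E_{\overline h(t)}(\bar x_t^{h_n})+\frac12\int_0^{\overline h(t)}(\Dsp_r^{h_n})^2dr+\frac12\int_0^{\overline h(t)}(\Dsl_r^{h_n})^2dr=E_0(\bar x)+\int_0^{\overline h(t)}(\partial_rE_r)(\tilde x_r^{h_n})dr.
\end{equation*}
Since the left-hand side is a sum of three nonnegative contributions and $\liminf$ is superadditive, it suffices to bound each left-hand term from below by its expected limit and the right-hand integral from above by $\int_0^t(\partial_rE_r)(x_r)\,dr$. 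The EDI then follows by combining these with the identity $\mathrm{LHS}_n=\mathrm{RHS}_n$, since $\liminf_n\mathrm{LHS}_n=\liminf_n\mathrm{RHS}_n\leq\limsup_n\mathrm{RHS}_n$.

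For the energy term, \textbf{A4} gives $|E_{\overline h(t)}(\bar x_t^{h_n})-E_t(\bar x_t^{h_n})|\leq L^*|\overline h(t)-t|\to0$, and since $\bar x_t^{h_n}\overset{\sigma}\rightharpoonup x_t$, the sequential $\sigma$-lower semicontinuity \textbf{A3$^*$} yields $\liminf_nE_{\overline h(t)}(\bar x_t^{h_n})\geq E_t(x_t)$. For the speed term, $\overline h(t)\geq t$ and nonnegativity of the integrand give $\int_0^{\overline h(t)}(\Dsp_r^{h_n})^2dr\geq\int_0^t(\Dsp_r^{h_n})^2dr$; since $|\Dsp^{h_n}|\rightharpoonup A$ weakly in $L^2(0,T)$, weak lower semicontinuity of the $L^2$-norm on $[0,t]$ together with $|\dot x|_r\leq A(r)$ from Proposition \ref{compactness} gives $\liminf_n\frac12\int_0^{\overline h(t)}(\Dsp_r^{h_n})^2dr\geq\frac12\int_0^tA(r)^2dr\geq\frac12\int_0^t|\dot x|_r^2dr$.

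The slope term is the main obstacle, because the discrete slope naturally controls the slope taken with respect to the \emph{metric} $d_{\overline h(r)}$ rather than $d_r$. For $r\in(t_{n-1},t_n]$ the interpolant $\tilde x_r^{h_n}$ minimizes $E_r(\cdot)+\tfrac1{2(r-t_{n-1})}d_{t_n}^2(\cdot,\bar x_{t_{n-1}}^{h_n})$, so Lemma \ref{slopeest} (applied with $s=t_{n-1}$, $s+h=t_n=\overline h(r)$, $y=\bar x_{t_{n-1}}^{h_n}$) yields
\begin{equation*}
 \Dsl_r^{h_n}=\frac1{r-t_{n-1}}d_{t_n}(\tilde x_r^{h_n},\bar x_{t_{n-1}}^{h_n})\geq|\nabla_{\overline h(r)}E_r|(\tilde x_r^{h_n}).
\end{equation*}
Now $\overline h(r)\to r$ and $\tilde x_r^{h_n}\overset{\sigma}\rightharpoonup x_r$, so the assumed lower semicontinuity of the slope (with the generic energy time index equal to $r$ and the generic metric index $t_n$ replaced by $\overline h(r)$) gives the pointwise bound $|\nabla_rE_r|^2(x_r)\leq\liminf_n|\nabla_{\overline h(r)}E_r|^2(\tilde x_r^{h_n})\leq\liminf_n(\Dsl_r^{h_n})^2$. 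Integrating, Fatou's lemma and $\overline h(t)\geq t$ give $\frac12\int_0^t|\nabla_rE_r|^2(x_r)\,dr\leq\liminf_n\frac12\int_0^{\overline h(t)}(\Dsl_r^{h_n})^2dr$.

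It remains to bound the right-hand integral. By \textbf{A4} the set of differentiability points of $r\mapsto E_r(x)$ is independent of $x$ and $|(\partial_rE_r)(x)|\leq L^*$ there, so the integrands are dominated by the integrable constant $L^*$; moreover $\int_t^{\overline h(t)}|(\partial_rE_r)(\tilde x_r^{h_n})|\,dr\leq L^*|\overline h(t)-t|\to0$. Hence by the reverse Fatou lemma and the upper semicontinuity assumption \eqref{uscofder}, which via $\tilde x_r^{h_n}\overset{\sigma}\rightharpoonup x_r$ gives $\limsup_n(\partial_rE_r)(\tilde x_r^{h_n})\leq(\partial_rE_r)(x_r)$ pointwise,
\begin{equation*}
 \limsup_n\int_0^{\overline h(t)}(\partial_rE_r)(\tilde x_r^{h_n})\,dr\leq\int_0^t\limsup_n(\partial_rE_r)(\tilde x_r^{h_n})\,dr\leq\int_0^t(\partial_rE_r)(x_r)\,dr.
\end{equation*}
Combining the three lower bounds on the left with this upper bound on the right, superadditivity of $\liminf$ applied to the left-hand side and the identity $\mathrm{LHS}_n=\mathrm{RHS}_n$ yield the EDI for every $t\in[0,T]$.
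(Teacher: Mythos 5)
Your proof is correct and follows essentially the same route as the paper: passing to the limit in the discrete energy identity of Proposition \ref{discreteede} with $s=0$, using \textbf{A3$^*$} plus \textbf{A4} for the energy term, the weak $L^2$-limit $A$ with $|\dot x|_r\leq A(r)$ for the speed term, Lemma \ref{slopeest} together with the assumed slope lower semicontinuity and Fatou for the slope term, and the domination by $L^*$ with \eqref{uscofder} and reverse Fatou for the right-hand side. Your treatment of the boundary piece $\int_t^{\overline h(t)}$ is in fact slightly more explicit than the paper's.
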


\begin{proof}
 Recall that Proposition \ref{discreteede} states for $s=0$
 \begin{align*}
E_{\overline{h_n}(t)}(\bar x_{t}^{h_n})+\frac12\int_{0}^{\overline{h_n}(t)}(\Dsp_r^{h_n})^2dr+\frac12\int_{0}^{\overline{h_n}(t)}(\Dsl_r^{h_n})^2dr\\
=E_{0}(\bar x)+\int_{0}^{\overline{h_n}(t)}(\partial_rE_r)(\tilde x_r^{h_n})dr.
\end{align*}
Since both $\bar x_{t}^{h_n}$, $\tilde x_{t}^{h_n}$ $\sigma$-converges to $x_t$ for every $t$, $x\mapsto E_t(x)$ 
is $\sigma$-lower semicontinuous, $x\mapsto \partial_tE_t(x)$ is $\sigma$-upper semicontinuous and $t\to E_t(x)$ is Lipschitz continuous uniformly in $x$, we know
\begin{equation*}
 \liminf_{n\to\infty}E_{\overline{h_n}(t)}(\bar x_{t}^{h_n})\geq E_t(x_t),
\end{equation*}
and
\begin{equation*}
 \int_0^t(\partial_rE_r)(x_r)dr\geq\int_0^t\limsup(\partial_rE_r)(\tilde x_r^{h_n})dr
 \geq\liminf\int_0^t (\partial_rE_r)(\tilde x_r^{h_n})dr,
\end{equation*}
where the last inequality follows from Fatou's Lemma.
From Proposition \ref{compactness} and Lemma \ref{slopeest} we deduce
\begin{equation*}
 \int_0^t|\dot x|_r^2dr\leq\int_0^t A(r)^2dr\leq \liminf_{n\to \infty}\int_0^t (Dsp_r^{h_n})^2dr,
\end{equation*}
and
\begin{equation*}
 \int_0^t|\nabla_rE_r|^2(x_r)dr\leq\liminf\int_0^t|\nabla_{\overline{h_n}(r)}E_{r}|^2(\tilde x_r^{h_n})dr\leq \liminf\int_0^t(\Dsl_r^{h_n})^2dr
\end{equation*}
Combining these inequalities with \eqref{eq:discreteede}  we conclude
\begin{align*}
 &E_t(x_t)+\frac12\int_0^t|\dot x|^2_rdr+\frac12\int_0^t|\nabla_rE_r|^2(x_r)dr\\
 \leq &\liminf \left[E_{\overline{h_n}(t)}(\bar x_{t}^{h_n})+\frac12\int_0^{\overline{h_n}(t)} (Dsp_r^{h_n})^2dr 
 +\frac12\int_0^{\overline{h_n}(t)}(\Dsl_r^{h_n})^2dr\right]\\
 \leq & \liminf\left[E_0(\bar x)+\int_0^{\overline{h_n}(t)}(\partial_rE_r)(\tilde x_r^{h_n})dr\right]\\
 \leq& E_0(\bar x)+\int_0^t(\partial_r E_r)(x_r)dr,
\end{align*}
which is the assertion.

\end{proof}

\section{Dynamic gradient flow of the entropy}\label{secent}
In this section we want to study gradient flows for the Boltzmann entropy on probability space, where the metric of the space and the reference measure of the entropy varies in 
time.
To show existence we apply the results from Section \ref{sec:gradflow}. We then go on to show also uniqueness.

Let $X$ be a topological space equipped with a family of complete separable geodesic metrics $(d_t)_{t\in[0,T]}$ satisfying \eqref{loglip} and
a Borel probability measure $m$.
We define $\mathcal{P}(X)$ to be the space of Borel probability measures on $X$ and we denote the subspace of probability measures absolutely continuous 
to the measure $m$ by $\mathcal{P}^{ac}(X)$. Further let $\mathcal P_2(X)$ be the space of probability measures with finite
second moments on $X$
\begin{align*}
 \mathcal P_2(X):=\Big\{\mu\in\mathcal P(X)\Big|&\int d_t^2(x,x_0)d\mu(x)<\infty\\
 &\text{ for some, and thus any, }x_0\in X,t\in[0,T]\Big\}.
\end{align*}

We say that a sequence $\mu_n\subset\mathcal{P}(X)$ converges weakly to $\mu$ if
 $\lim\int_Xf d\mu_n=\int_Xfd\mu$ for every $f\in\mathcal{C}_b^0(X)$.
We say that a sequence $\rho_n\subset L^1(X,m)$ converges weakly to $\rho$ if
 $\lim\int_Xf\rho_n dm=\int_Xf\rho dm$ for every $f\in L^\infty(X,m)$. Note that if $\rho_n$ converges weakly to $\rho$ in $L^1(X,m)$
 then $\mu_n=\rho_nm$ converges weakly to $\mu=\rho m$ in $\mathcal{P}(X)$.\\
 
\subsection{Time-dependent Kantorovich metrics}
For every metric $d_t$ we define the $L^2$-Kantorovich distance $W_t$ on the space $\mathcal{P}_2(X)$:
\begin{equation*}
 W_t(\mu,\nu)=\inf\{C_t(\gamma): \pi^1_\#\gamma=\mu, \pi^2_\#\gamma=\nu\}^{1/2},
\end{equation*}
where $C_t(\gamma)$ is the cost of the plan $\gamma\in\mathcal{P}(X\times X)$
\begin{equation*}
 C_t(\gamma)=\int d_t^2(x,y)d\gamma(x,y),
\end{equation*}
and $\pi_\#^i\gamma$ denote the first and second marginals of $\gamma$.

For each $t\in[0,T]$, $(\mathcal P_2(X),W_t)$ is a geodesic Polish space, see e.g. \cite{villani2009,ag}.
It is well-known that convergence in the $L^2$-Kantorovich distance $W_t$ implies weak convergence in $\mathcal{P}(X)$
and that $W_t$ is lower semicontinuous on $\mathcal{P}(X)$ (cf. \cite[Theorem 6.8]{villani2009} and \cite[Remark 6.10]{villani2009}).
The bound \eqref{loglip} is equivalent to 
\begin{equation}\label{wloglip}
|\log W_t(\mu,\nu)/W_s(\mu,\nu)|\leq L|t-s|,
\end{equation}
for all $s,t$ and all probability measures on $X$, see Lemma 2.1 in \cite{sturm2015}. 

The convexity of the squared metric speed is crucial for showing uniqueness of the gradient flow. More precisely we have the following result 
\cite[Lemma 14]{gigli2009}.
\begin{lma}\label{cofsquaredslope}
Let $(\mu_t^1) ,(\mu_t^2)\in \AC^2([0,T];\mathcal P_2(X))$ be two absolutely continuous curves. Define $\mu_t^{1,2}=(\mu_t^1+\mu_t^2)/2$. Then $(\mu_t^{1,2})$ is absolutely continuous and the 
following bound on its metric derivative holds
\begin{equation*}
 |\dot\mu^{1,2}|_t^2\leq\frac{|\dot\mu^1|_t^2+|\dot\mu^2|^2_t}{2}.
\end{equation*}
\end{lma}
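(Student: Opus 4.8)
The plan is to exploit the fact that the coupling construction in Wasserstein space is linear, so that averaging two transport plans yields an admissible plan for the average of the endpoints. First I would fix $t\in[0,T]$ and, for each $s$ near $t$, choose optimal couplings $\pi_s^1,\pi_s^2\in\mathcal P(X\times X)$ for the cost $d_t^2$ realizing $W_t(\mu_s^1,\mu_t^1)$ and $W_t(\mu_s^2,\mu_t^2)$ respectively (these exist since $(\mathcal P_2(X),W_t)$ is Polish). The averaged plan $\pi_s:=\tfrac12(\pi_s^1+\pi_s^2)$ has first marginal $\tfrac12(\mu_s^1+\mu_s^2)=\mu_s^{1,2}$ and second marginal $\mu_t^{1,2}$, hence is a coupling of $\mu_s^{1,2}$ and $\mu_t^{1,2}$. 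By optimality of $W_t$ and linearity of the integral this gives the key inequality
\begin{equation*}
 W_t^2(\mu_s^{1,2},\mu_t^{1,2})\leq\int d_t^2\,d\pi_s=\tfrac12 W_t^2(\mu_s^1,\mu_t^1)+\tfrac12 W_t^2(\mu_s^2,\mu_t^2).
\end{equation*}

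Next I would establish that $(\mu_t^{1,2})$ is absolutely continuous in the sense of Definition \ref{abscont}. The same averaging argument performed with a fixed reference metric $d_{t^*}$ (equivalently, $W_{t^*}$) yields $W_{t^*}^2(\mu_s^{1,2},\mu_t^{1,2})\leq\tfrac12 W_{t^*}^2(\mu_s^1,\mu_t^1)+\tfrac12 W_{t^*}^2(\mu_s^2,\mu_t^2)$, and using $\sqrt{a+b}\leq\sqrt a+\sqrt b$ this gives
\begin{equation*}
 W_{t^*}(\mu_s^{1,2},\mu_t^{1,2})\leq\tfrac1{\sqrt2}\bigl(W_{t^*}(\mu_s^1,\mu_t^1)+W_{t^*}(\mu_s^2,\mu_t^2)\bigr).
\end{equation*}
Since each $(\mu^i)\in\AC^2([0,T];\mathcal P_2(X))$ admits a bound $W_{t^*}(\mu_s^i,\mu_t^i)\leq\int_s^t g^i(r)\,dr$ with $g^i\in L^2(0,T)$, the function $g:=\tfrac1{\sqrt2}(g^1+g^2)\in L^2(0,T)$ controls $W_{t^*}(\mu_s^{1,2},\mu_t^{1,2})$, so $(\mu_t^{1,2})\in\AC^2$.

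Finally I would pass to the metric speed. By Lemma \ref{metricspeed}, applied to each of the three absolutely continuous curves, for almost every $t$ the localized limits $\lim_{s\to t}W_t(\mu_s,\mu_t)/|s-t|$ exist and equal the respective instantaneous speeds; intersecting three full-measure sets I may assume all three exist at the same $t$. Dividing the key inequality above by $|s-t|^2$ and letting $s\to t$ then yields
\begin{equation*}
 |\dot\mu^{1,2}|_t^2\leq\tfrac12|\dot\mu^1|_t^2+\tfrac12|\dot\mu^2|_t^2
\end{equation*}
for a.e. $t$, as claimed. I expect the only delicate point to be precisely this last step: the instantaneous speed is a priori defined as the derivative of the length functional $L_x$, not as a difference quotient of $W_t$ at fixed $t$, so the whole argument hinges on the identification $|\dot\mu|_t=\lim_{s\to t}W_t(\mu_s,\mu_t)/|s-t|$ provided by Lemma \ref{metricspeed}, which is what allows the pointwise quadratic inequality to survive the passage to the limit. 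The coupling construction itself is elementary once one notes that no measurable selection of optimal plans in $s$ is needed, since the inequality is used only at each fixed pair $(s,t)$.
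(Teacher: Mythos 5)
Your proof is correct and follows essentially the same route as the paper: averaging the two optimal plans to obtain $W_t^2(\mu_s^{1,2},\mu_t^{1,2})\leq\tfrac12 W_t^2(\mu_s^1,\mu_t^1)+\tfrac12 W_t^2(\mu_s^2,\mu_t^2)$, then dividing by $(s-t)^2$ and letting $s\to t$. You are in fact slightly more careful than the paper on the two points it leaves implicit --- the $\AC^2$ verification via a fixed reference metric $W_{t^*}$, and the identification $|\dot\mu|_t=\lim_{s\to t}W_t(\mu_s,\mu_t)/|s-t|$ from Lemma \ref{metricspeed} that justifies the final passage to the limit.
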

\begin{proof}
 Fix $s,t\in[0,T]$. Pick optimal plans $\gamma^1,\gamma^2$, which minimize $W_t(\mu_t^1,\mu_s^1)$ and $W_t(\mu_t^2,\mu_s^2)$ respectively. Then the plan 
 $(\gamma^1+\gamma^2)/2$ has marginals $\mu_t^{1,2}$ and $\mu_s^{1,2}$ and therefore it holds
 \begin{align*}
  W_t^2(\mu_t^{1,2},\mu_s^{1,2})\leq\int d_t^2(x,y)\,d\frac{(\gamma^1+\gamma^2)}2(x,y)\\
  =\frac12\int d_t^2(x,y)\,d\gamma^1(x,y)+\frac12\int d_t^2(x,y)\,d\gamma^2(x,y)\\
  =\frac12W_t^2(\mu_t^1,\mu_s^1)+\frac12W_t^2(\mu_t^2,\mu_s^2).
 \end{align*}
Thus the curve $(\mu_t^{1,2})$ is absolutely continuous. Dividing by $(s-t)^2$ and taking the superior limit as $s$ goes to $t$ we get for its speed
\begin{equation*}
|\dot\mu^{1,2}|_t^2\leq\frac{|\dot\mu^1|_t^2+|\dot\mu^2|^2_t}{2}. 
\end{equation*}

\end{proof}

\subsection{Time-dependent Boltzmann entropy}
We consider a family of measures $(m_t)_{t\in[0,T]}$ on $X$. We suppose that for every $t\in[0,T]$ there exists a 
function $f_t\in L^\infty(X,m)$ such that $m_t=e^{-f_t}m$. Moreover let us always assume that there exists
a constant $L^*$ such that 
\begin{equation}\label{eqintimeandspace}
 |f_t(x)-f_s(x)|\leq L^*|t-s|
\end{equation}
for all $s,t$ and all $x$.

We denote by $S_t$ the relative Boltzmann entropy with respect to $m_t$,
\begin{align*}
& S\colon[0,T]\times\mathcal{P}_2(X)\to[-\infty,\infty],\\
 (t,\mu)\mapsto &S_t(\mu)=\Ent(\mu|m_t)=\int\rho\log\rho\,dm_t,
\end{align*}
where $\rho=\,d\mu/\,d m_t$ provided that $\mu\ll m_t$. Otherwise we set $S_t(\mu)=\infty$.
It follows directly from the representation of the measures $m_t$ that
$$S_t(\mu)=\Ent(\mu)+\int f_t(x)d\mu(x),$$
where $\Ent(\mu)=\Ent(\mu|m)$.

In the next lemma we list the crucial properties of the relative entropy functional.
\begin{lma}\label{ent}
The entropy $S\colon[0,T]\times\mathcal{P}_2(X)\to[-\infty,\infty]$ satisfies \textbf{A1}, \textbf{A2}, \textbf{A3$^*$} and \textbf{A4}, i.e.
\begin{enumerate}
  \item The domain $\Dom(S_t)$ is time-independent.
  \item $S_t(\mu)$ is uniformly bounded from below.
  \item For each $t\in[0,T]$, $\mu\mapsto S_t(\mu)$ is lower semicontinuous with respect to weak convergence over probability space.
  \item For every $\mu\in \Dom(S)$ the map $t\mapsto S_t(\mu)$ is Lipschitz continuous with Lipschitz constant $L^*$ and for the derivative it holds
  \begin{equation*}
   \partial_tS_t(\mu)=\int_X\partial_tf_t(x)d\mu(x) \text{ for a.e. }t\in[0,T].
  \end{equation*}
  Moreover the set of differentiability points of $t\mapsto S_t(\mu)$ can be chosen independent of $\mu$.
\end{enumerate}
\end{lma}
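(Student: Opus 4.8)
The plan is to build everything on the additive decomposition $S_t(\mu)=\Ent(\mu|m)+\int_X f_t\,d\mu$ recorded above, together with the two facts that each $f_t$ is bounded and that $t\mapsto f_t(x)$ is $L^*$-Lipschitz. Properties 1, 2 and the Lipschitz half of 4 fall out quickly from the decomposition and boundedness; property 3 is handled by the intrinsic lower semicontinuity of relative entropy; and the delicate part of 4 -- that the set of differentiability points in $t$ may be chosen independent of $\mu$ -- is the main obstacle. For 1, I would first observe that since $f_t\in L^\infty(X,m)$ the measures $m_t=e^{-f_t}m$ and $m$ are mutually absolutely continuous, so $\mu\ll m_t$ if and only if $\mu\ll m$, and for such $\mu$ the term $\int_X f_t\,d\mu$ is finite because the integrand is bounded and $\mu$ is a probability measure. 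Hence $S_t(\mu)<\infty$ exactly when $\Ent(\mu|m)<\infty$, so $\Dom(S_t)=\{\mu\ll m:\Ent(\mu|m)<\infty\}$ is independent of $t$ and nonempty (it contains $\mu=m$). For 2, Jensen's inequality applied to the convex function $s\mapsto s\log s$ and the \emph{probability} measure $m$ gives $\Ent(\mu|m)\geq0$, whence $S_t(\mu)\geq-\sup_{t\in[0,T]}\|f_t\|_{L^\infty(m)}$, a finite bound since $\|f_t\|_\infty\leq\|f_0\|_\infty+L^*T$ by \eqref{eqintimeandspace}.

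For property 3 I would deliberately \emph{not} use the decomposition, since $f_t$ is only in $L^\infty$ and $\mu\mapsto\int f_t\,d\mu$ need not be weakly continuous. Instead I would invoke lower semicontinuity of relative entropy with respect to a fixed finite reference measure. Normalizing $\bar m_t:=m_t/m_t(X)$ to a probability measure, the Donsker--Varadhan representation
\begin{equation*}
 \Ent(\mu|\bar m_t)=\sup_{u\in C_b(X)}\Big\{\int_X u\,d\mu-\log\int_X e^u\,d\bar m_t\Big\}
\end{equation*}
exhibits $\mu\mapsto\Ent(\mu|\bar m_t)$ as a supremum of weakly continuous functionals, hence weakly lower semicontinuous; since $\Ent(\mu|m_t)=\Ent(\mu|\bar m_t)-\log m_t(X)$ differs only by a constant, the same holds for $S_t$.

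For property 4 the Lipschitz bound is immediate: from $S_t(\mu)-S_s(\mu)=\int_X(f_t-f_s)\,d\mu$ and \eqref{eqintimeandspace} one gets $|S_t(\mu)-S_s(\mu)|\leq L^*|t-s|$. The step requiring genuine care is the derivative formula with a $\mu$-independent exceptional set. Here I would choose pointwise representatives so that $(t,x)\mapsto f_t(x)$ is a Carath\'eodory function -- measurable in $x$, continuous (indeed Lipschitz) in $t$ -- hence jointly measurable, and note that for each fixed $x$ the map $t\mapsto f_t(x)$ is differentiable for a.e. $t$. The non-differentiability set $A^c\subset[0,T]\times X$ is then product-measurable with every $x$-slice Lebesgue-null, so $(\mathcal L^1\otimes m)(A^c)=0$ and Fubini produces a \emph{single} null set $N\subset[0,T]$ such that for each $t\notin N$ the derivative $\partial_t f_t(x)$ exists for $m$-a.e. $x$, hence for $\mu$-a.e. $x$ whenever $\mu\ll m$. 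For such $t$ and any $\mu\in\Dom(S)$ the difference quotients $\tfrac1h(f_{t+h}-f_t)$ converge pointwise $\mu$-a.e. and are bounded by the constant $L^*\in L^1(\mu)$, so dominated convergence yields
\begin{equation*}
 \partial_t S_t(\mu)=\lim_{h\to0}\int_X\frac{f_{t+h}-f_t}{h}\,d\mu=\int_X\partial_t f_t\,d\mu .
\end{equation*}
The crux is precisely this Fubini argument isolating the common null set $N$: once it is in place, the formula and its $\mu$-independence are a routine consequence of dominated convergence with the uniform bound $L^*$.
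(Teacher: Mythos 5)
Your proof is correct, and its skeleton coincides with the paper's: the decomposition $S_t(\mu)=\Ent(\mu|m)+\int_X f_t\,d\mu$, mutual absolute continuity of $m_t$ and $m$ for property 1, $\Ent(\mu|m)\geq 0$ via $m(X)=1$ for property 2, and dominated convergence with the uniform bound $L^*$ for the derivative formula. Two steps differ in execution, both in your favor or at worst neutrally. For property 3 the paper simply cites Lemma 4.1 of Sturm's 2006 paper (finiteness of $m_t(X)$ plus that lemma), whereas you give a self-contained argument via the Donsker--Varadhan representation after normalizing $m_t$; this buys independence from the reference and correctly handles the additive constant $\log m_t(X)$, at the cost of invoking the duality formula, which is standard on the spaces considered here. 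For the $\mu$-independence of the differentiability set, the paper argues via a chain of inclusions of differentiability sets and then asserts that the complement of $\{t:\lim_h \frac1h(f_{t+h}-f_t)(x)\text{ exists for }m\text{-a.e. }x\}$ is negligible---an assertion whose justification is exactly the joint-measurability-plus-Fubini argument you spell out (the paper's earlier Fubini--Tonelli computation gestures at it, and its first displayed inclusion is not actually needed for the conclusion). Your version, fixing a Carath\'eodory representative of $(t,x)\mapsto f_t(x)$ and extracting a single null set $N\subset[0,T]$ by Fubini, makes the crux explicit and is the cleaner formulation of what the paper's proof implicitly relies on.
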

\begin{proof}
 The domain is time-independent by virtue of \eqref{eqintimeandspace}. Since $m(X)=1$ we can estimate
$\Ent(\mu)\geq0$ and hence for $\mu\in\mathcal{P}^{ac}(X)$
\begin{equation*}
 S_t(\mu)\geq \int f_t(x)d\mu(x)\geq -||f_t||_{L^\infty}\geq -||f_s||_{L^\infty}-L^*T.
\end{equation*}
If $\mu\notin\mathcal{P}^{ac}(X)$ we know that $S_t(\mu)=\infty$ and we conclude
$\inf_{t,\mu}S_t(\mu)>-\infty$. 

For every $t$ the measure $m_t(X)$ is finite and thus $\mu\to S_t(\mu)$ is lower 
semicontinuous with respect to weak convergence (Lemma 4.1 in \cite{sturm2006}). 

Fix $\mu\in \Dom(S)$. The Lipschitz continuity of $t\mapsto f_t(x)$ ensures $|\partial_tS_t(\mu)|\leq C$. 
It is clear that for every $x\in X$ the map $t\mapsto f_t(x)$
is differentiable for a.e. $t\in[0,T]$. Hence the integral $\int_X\int_{t_1}^{t_2}|\partial_t f_t(x)|dtd\mu(x)$ exists and the Fubini-Tonelli theorem states
\begin{equation*}
 \int_X\int_{t_1}^{t_2}\partial_t f_t(x)dtd\mu(x)=\int_{t_1}^{t_2}\int_X\partial_t f_t(x)d\mu(x)dt.
\end{equation*}
The Fubini theorem again yields that for a.e. $t$ the map $x\mapsto \partial_tf_t(x)$ is $\mu$-integrable and so for a.e. $t$ the integral
\begin{equation*}
\int_X\partial_tf_t(x)d\mu(x)
\end{equation*}
exists. Take a differentiability point $t$ of $f_t(x)$. Then for $\mu$-a.e. $x\in X$
\begin{equation*}
 \lim_{h\to 0}\frac1h(f_{t+h}-f_t)(x)=\partial_t f_t(x), \text{ and }\frac1h|(f_{t+h}-f_t)(x)|\leq L^*.
\end{equation*}
Hence we conclude that for a.e. $t\in[0,T]$
\begin{align*}
 \lim_{h\to0}\frac1h[S_{t+h}(\mu)-S_t(\mu)]=\lim_{h\to0}\int\frac1h[f_{t+h}(x)-f_t(x)]d\mu(x)=\int \partial_tf_t(x)d\mu(x),
\end{align*}
where the last equality is due to the dominated convergence theorem. Finally, for $\mu\ll m$, the inclusions
\begin{align*}
 &\left\{t\in[0,T]\big| \lim\frac1h[S_{t+h}(\mu)-S_t(\mu)] \text{ exists}\right\}\\
 \subset &\left\{t\in[0,T]\big| \lim\frac1h[f_{t+h}(x)-f_t(x)] \text{ exists for }\mu \text{ a.e. }x\right\}\\
 \subset& \left\{t\in[0,T]\big| \lim\frac1h[f_{t+h}(x)-f_t(x)] \text{ exists for }m \text{ a.e. }x\right\}
\end{align*}
show that the set of differentiability points of $t\mapsto S_t(\mu)$ does not depend on $\mu$, since the complement
$\left\{t\in[0,T]\big| \lim\frac1h[f_{t+h}(x)-f_t(x)] \text{ exists for }m \text{ a.e. }x\right\}^C$ is negligible.
\end{proof}

Since we want to apply the results from Section \ref{sec:gradflow}, we still need to check the assumptions in Proposition \ref{stability}. It has been shown in \cite{gigli2009} that if the Ricci curvature of $(X,d_t,m_t)$ is bounded from below by $K\in \mathbb R$ the squared slope of the entropy is lower semicontinuous.
We briefly recall the arguments.
\begin{Def}\label{goodplan}
The set $\GP\subset\mathcal{P}(X^2)$ is the set of plans $\gamma$ such that
\begin{enumerate}
\item the marginals $\pi^i_\#\gamma$, $i=1,2$ are absolutely continuous with densities bounded away from 0 and $\infty$,
\item $\underset{(x,y)\in \supp(\gamma)}\sup d_t(x,y)<\infty$ for some $t\in[0,T]$, and thus for any.
\end{enumerate}
\end{Def}
Given $\gamma\in\GP$ and $\mu\in\mathcal{P}_2^{ac}(X)$, we define the plan $\gamma_\mu\in\mathcal{P}(X^2)$ and the measure $\nu_{\gamma,\mu}\in\mathcal(X)$ as
\begin{equation*}
d\gamma_\mu(x,y)=\frac{d\mu(x)}{d\pi_\#^1\gamma(x)}d\gamma(x,y), \quad \nu_{\gamma,\mu}=\pi_\#^2\gamma_\mu.
\end{equation*}
Note that since $\gamma_\mu\ll\gamma$, we have $\nu_{\gamma,\mu}\ll m$ with density
\begin{equation*}
 g_{\gamma,\mu}(y)=\frac{d\pi_\#^2\gamma(y)}{dm(y)}\int\frac{d\mu(x)}{d\pi_\#^1\gamma(x)}d\gamma_y(x),
\end{equation*}
where $(\gamma_y)_y\subset\mathcal{P}(X)$ is the disintegration of $\gamma$ with respect to its second marginal.

Observe that from 2. of the definition of the set $\GP$ we have that the cost $C_t(\gamma)$ of a plan $\gamma\in\GP$ is always finite and 
$\nu_{\gamma,\mu}\in\mathcal{P}_2(X)$ since $\mu\in\mathcal{P}_2(X)$.

The next Proposition gives an alternative representation formula for the slope in terms of good plans, cf. \cite[Theorem 12]{gigli2009}.
\begin{Prop}
 For every $t\in[0,T]$ and every $\mu\in \Dom(S)$ it holds 
 \begin{equation}
 \begin{aligned}\label{repslope}
  \underset{\nu\neq\mu}{\underset{\nu\in\mathcal{P}_2(X)}\sup}\frac{(S_t(\mu)-S_t(\nu)-\frac{K^-}2W_t^2(\mu,\nu))^+}{W_t(\mu,\nu)}\\
  =\underset{\gamma\in GP}\sup\frac{(S_t(\mu)-S_t(\nu_{\gamma,\mu})-\frac{K^-}2C_t(\gamma_\mu))^+}{\sqrt{C_t(\gamma_\mu)}},
 \end{aligned}
 \end{equation}
where the value of the second expression is taken by definition as 0 if $C_t(\gamma_\mu)=0$.
\end{Prop}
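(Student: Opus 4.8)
The plan is to prove the two inequalities separately. Throughout we fix $t$, so everything takes place in the static metric measure space $(X,d_t,m_t)$ and the assertion is the time-$t$ instance of \cite[Theorem 12]{gigli2009}; we may assume $S_t(\mu)<\infty$, and in either supremum only competitors with finite entropy matter, since otherwise the relevant numerator is $\leq 0$. The recurring elementary fact is the monotonicity of $F(r):=(A-\tfrac{K^-}2 r)^+/\sqrt r$ on $(0,\infty)$ for a fixed $A\in\mathbb R$: on the set where the numerator is positive one has $F(r)=A\,r^{-1/2}-\tfrac{K^-}2 r^{1/2}$, whose derivative is negative, while elsewhere $F\equiv 0$, so $F$ is non-increasing.

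For the inequality ``$\geq$'', fix $\gamma\in GP$ with $C_t(\gamma_\mu)>0$; we may assume $\nu_{\gamma,\mu}\neq\mu$, since otherwise the right-hand numerator $\bigl(-\tfrac{K^-}2C_t(\gamma_\mu)\bigr)^+$ vanishes. Because $\gamma_\mu$ is a coupling of $\mu$ and $\nu_{\gamma,\mu}$ (its first marginal is $\mu$ by construction), the definition of the Kantorovich distance gives $W_t^2(\mu,\nu_{\gamma,\mu})\leq C_t(\gamma_\mu)$. Taking $A:=S_t(\mu)-S_t(\nu_{\gamma,\mu})$ and $\nu:=\nu_{\gamma,\mu}$, monotonicity of $F$ yields
\begin{equation*}
 \frac{(S_t(\mu)-S_t(\nu)-\tfrac{K^-}2W_t^2(\mu,\nu))^+}{W_t(\mu,\nu)}=F\bigl(W_t^2(\mu,\nu)\bigr)\geq F\bigl(C_t(\gamma_\mu)\bigr)=\frac{(S_t(\mu)-S_t(\nu_{\gamma,\mu})-\tfrac{K^-}2C_t(\gamma_\mu))^+}{\sqrt{C_t(\gamma_\mu)}}.
\end{equation*}
Taking the supremum over $\gamma\in GP$ shows that the left-hand side of \eqref{repslope} dominates the right-hand side.

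For the reverse inequality ``$\leq$'', fix $\nu\neq\mu$ with $S_t(\nu)<\infty$ and let $\gamma^*$ be an optimal plan for $W_t(\mu,\nu)$, so $C_t(\gamma^*)=W_t^2(\mu,\nu)$. The task is to produce a sequence $\gamma_n\in GP$ with $\nu_{\gamma_n,\mu}\to\nu$ weakly, $C_t((\gamma_n)_\mu)\to W_t^2(\mu,\nu)$ and $\limsup_n S_t(\nu_{\gamma_n,\mu})\leq S_t(\nu)$. Such a sequence is obtained from $\gamma^*$ by truncation: one restricts $\gamma^*$ to the sets where $d_t(x,y)\leq R$ and where the first-marginal density lies in $[R^{-1},R]$, renormalizes, and reweights as in the definition of $\gamma_\mu$ so that the first marginal becomes bounded away from $0$ and $\infty$ with bounded support. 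Letting $R\to\infty$ produces good plans whose reweightings converge to $\gamma^*$; the cost convergence then follows by monotone/dominated convergence, and $\nu_{\gamma_n,\mu}\to\nu$ from the convergence of the reweighted plans. Granting the entropy estimate, monotonicity of $F$ together with the cost convergence gives
\begin{equation*}
 \sup_{\gamma\in GP}\frac{(S_t(\mu)-S_t(\nu_{\gamma,\mu})-\tfrac{K^-}2C_t(\gamma_\mu))^+}{\sqrt{C_t(\gamma_\mu)}}\geq\frac{(S_t(\mu)-S_t(\nu)-\tfrac{K^-}2W_t^2(\mu,\nu))^+}{W_t(\mu,\nu)},
\end{equation*}
and taking the supremum over $\nu$ concludes.

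The main obstacle is the entropy control $\limsup_n S_t(\nu_{\gamma_n,\mu})\leq S_t(\nu)$: lower semicontinuity (Lemma \ref{ent}(3)) only yields the $\liminf$ bound, which points the wrong way, so one must show that the truncation-and-reweighting procedure creates no entropy in the limit. This is exactly where the convexity of $r\mapsto r\log r$ and the boundedness built into Definition \ref{goodplan} enter, as in \cite{gigli2009}; the remaining verifications—that the constructed $\gamma_n$ satisfy both conditions of Definition \ref{goodplan}, and the cost and weak convergences, which rely on \eqref{loglip}—are routine.
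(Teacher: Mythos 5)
Your proposal is correct and takes essentially the paper's route: the ``$\geq$'' direction is exactly the paper's observation that $C_t(\gamma_\mu)\geq W_t^2(\mu,\nu_{\gamma,\mu})>0$ combined with the monotonicity of $r\mapsto\bigl(A-\tfrac{K^-}2 r\bigr)^+/\sqrt r$, and the ``$\leq$'' direction rests on approximating an optimal plan by good plans with convergent cost and entropy. The paper handles the latter by citing \cite[Lemma 10]{gigli2009} as a black box, and the entropy convergence $S_t(\nu_{\gamma^n,\mu})\to S_t(\nu)$ that you rightly single out as the crux (lower semicontinuity pointing the wrong way) is precisely what that lemma supplies, so your truncation sketch, with the deferred verifications credited to \cite{gigli2009}, reconstructs at the same level of rigor what the paper delegates to the reference.
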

\begin{proof}
We start with proving $\geq$. For this fix a plan $\gamma\in\GP$ such that $\nu_{\gamma,\mu}\neq\mu$. From $C_t(\gamma_\mu)\geq W_t^2(\mu,\nu_{\gamma,\mu})>0$ 
we obtain 
\begin{align*}
 \frac{(S_t(\mu)-S_t(\nu_{\gamma,\mu})-\frac{K^-}2W_t^2(\mu,\nu_{\gamma,\mu}))^+}{W_t(\mu,\nu_{\gamma,\mu})}\\
 \geq \frac{(S_t(\mu)-S_t(\nu_{\gamma,\mu})-\frac{K^-}2C_t(\gamma_\mu))^+}{\sqrt{C_t(\gamma_\mu)}}.
\end{align*}

To show the reverse inequality take $\nu\in\mathcal{P}_2^{ac}(X)$ different from $\mu$. Lemma 10 in \cite{gigli2009} provides
a sequence $(\gamma^n)\subset\GP$
such that $S_t(\nu_{\gamma^n,\mu})\to S_t(\nu)$ and $C_t(\gamma_\mu^n)\to W_t^2(\mu,\nu)$ as $n\to\infty$ and hence
\begin{align*}
 \frac{(S_t(\mu)-S_t(\nu)-\frac{C}2W_t^2(\mu,\nu))^+}{W_t(\mu,\nu)}\\
 =\lim_{n\to\infty} \frac{(S_t(\mu)-S_t(\nu_{\gamma^n,\mu})-\frac{C}2C_t(\gamma^n_\mu))^+}{\sqrt{C_t(\gamma^n_\mu)}},
\end{align*}
which shows $\leq$.

\end{proof}
We get the following as consequence of formula \eqref{repslope}, cf. \cite[Corollary 13]{gigli2009}.

\begin{cor}\label{lscsquaredslope}
 Suppose that $S$ is $K$-convex. Then for every $t\in[0,T]$
 \begin{equation*}
  |\nabla_tS_t|^2(\mu)\leq\liminf |\nabla_{t}S_t|^2(\mu_n),
 \end{equation*}
 whenever $\mu_n\rightharpoonup\mu$ as $n\to\infty$ such that $\sup_nS_t(\mu_n)<\infty$. Further $\mu\mapsto |\nabla_tS_t|^2$ is convex with respect to linear 
 interpolation on the sublevels of $S$.

\end{cor}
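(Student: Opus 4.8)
The plan is to read off both assertions from the representation \eqref{repslope} by squaring and then analysing each term of the supremum individually. Every quotient occurring on the right-hand side of \eqref{repslope} is nonnegative, and $x\mapsto x^2$ is nondecreasing on $[0,\infty)$; hence squaring exchanges the square with the supremum and yields
\[
|\nabla_tS_t|^2(\mu)=\sup_{\gamma\in\GP}G_\gamma(\mu),\qquad G_\gamma(\mu):=\frac{\big[\big(N_\gamma(\mu)\big)^+\big]^2}{C_t(\gamma_\mu)},
\]
where $N_\gamma(\mu):=S_t(\mu)-S_t(\nu_{\gamma,\mu})-\tfrac{K^-}2C_t(\gamma_\mu)$ and, following the convention in \eqref{repslope}, $G_\gamma(\mu):=0$ if $C_t(\gamma_\mu)=0$. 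Since a supremum of convex functions is convex and a supremum of sequentially lower semicontinuous functions is lower semicontinuous, it is enough to prove these two properties for each fixed $G_\gamma$. Note that $S_t$ is itself convex along linear interpolations (because $r\mapsto r\log r$ is convex), so its sublevels are convex and the interpolation stays where everything below is finite.

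The key step is that for every fixed $\gamma\in\GP$ the numerator $\mu\mapsto N_\gamma(\mu)$ is convex along linear interpolations and lower semicontinuous. Here one uses that $\gamma_\mu$ depends affinely on $\mu$, whence so do $\nu_{\gamma,\mu}=\pi^2_\#\gamma_\mu$ and $C_t(\gamma_\mu)$. Disintegrating $\gamma_\mu$ along its second marginal gives the identity
\[
S_t(\mu)-S_t(\nu_{\gamma,\mu})=\Ent\big(\gamma_\mu\,\big|\,\gamma_y\otimes\nu_{\gamma,\mu}\big)+\ell_\gamma(\mu),
\]
where $\gamma_y$ is the disintegration of $\gamma$ with respect to its second marginal and $\ell_\gamma(\mu)=\int\phi_\gamma\,d\mu$ is linear with $\phi_\gamma$ bounded; boundedness of $\phi_\gamma$ follows from $f_t\in L^\infty(X,m)$ together with property 1 of Definition \ref{goodplan}, which makes $\log(d\pi^i_\#\gamma/dm)$ bounded. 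Both arguments of the relative entropy on the right are affine in $\mu$, so the joint convexity and the joint lower semicontinuity of $(\alpha,\beta)\mapsto\Ent(\alpha\,|\,\beta)$ transfer to $\mu\mapsto S_t(\mu)-S_t(\nu_{\gamma,\mu})$; subtracting the linear term $\tfrac{K^-}2C_t(\gamma_\mu)$ preserves both, so $N_\gamma$ is convex and lower semicontinuous. (The convexity may alternatively be verified on the level of second derivatives, where the difference of the two Hessians is nonnegative by a fibrewise Cauchy--Schwarz inequality.)

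Finally we combine $N_\gamma$ with the denominator. Writing $C_t(\gamma_\mu)=\int c_\gamma\,d\mu$ with $c_\gamma(x)=\int d_t^2(x,y)\,d\gamma^x(y)$, which is bounded by property 2 of Definition \ref{goodplan} (here $\gamma^x$ denotes the disintegration of $\gamma$ along its first marginal), the map $\mu\mapsto C_t(\gamma_\mu)$ is linear and nonnegative, and it is continuous along the sequences under consideration: the hypothesis $\sup_nS_t(\mu_n)<\infty$ renders the densities $d\mu_n/dm$ equi-integrable, hence weakly convergent in $L^1(X,m)$, which makes $\int c_\gamma\,d\mu_n$ and $\ell_\gamma(\mu_n)$ converge and also gives $\gamma_{\mu_n}\rightharpoonup\gamma_\mu$ and $\nu_{\gamma,\mu_n}\rightharpoonup\nu_{\gamma,\mu}$. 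Since $\psi(a,b):=(a^+)^2/b$ is the perspective of the convex function $z\mapsto(z^+)^2$, it is jointly convex on $\mathbb R\times(0,\infty)$ and nondecreasing in $a$; consequently $G_\gamma=\psi(N_\gamma,C_t(\gamma_\cdot))$ is convex, as the composition of a jointly convex function that is monotone in its first argument with a convex and an affine function, and it is lower semicontinuous, because $N_\gamma$ is lower semicontinuous, $C_t(\gamma_\cdot)$ is continuous and positive, and $\psi$ is continuous and monotone in $a$. Passing to the supremum over $\gamma\in\GP$ concludes the argument, which is \cite[Corollary 13]{gigli2009}. I expect the lower semicontinuity of $N_\gamma$ to be the main obstacle: the plain expression $S_t(\mu)-S_t(\nu_{\gamma,\mu})$ is a difference of lower semicontinuous functionals and hence not manifestly lower semicontinuous, and it is precisely the rewriting as a single relative entropy with both arguments affine in $\mu$---combined with the entropy bound used to control the affine transport terms---that overcomes this.
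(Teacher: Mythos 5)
Your proof is correct and takes essentially the same route as the paper: the representation \eqref{repslope} over plans in $\GP$, convexity and lower semicontinuity on entropy sublevels of the numerator $\mu\mapsto S_t(\mu)-S_t(\nu_{\gamma,\mu})-\frac{K^-}2C_t(\gamma_\mu)$ together with linearity and weak continuity of $\mu\mapsto C_t(\gamma_\mu)$, composition with the perspective-type function $(a,b)\mapsto (a^+)^2/b$ (the paper's $\varPsi$), and passage to the supremum. The only difference is that where the paper simply cites \cite[Proposition 11]{gigli2009} for the numerator's properties and asserts the weak continuity of $\mu\mapsto C_t(\gamma_\mu)$, you supply the proofs — the relative-entropy decomposition with both arguments affine in $\mu$, and the Dunford--Pettis upgrade to weak $L^1$ convergence of densities — which fills in the citations rather than deviating from the argument.
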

\begin{proof}
 Consider the map $\mu\mapsto C_t(\gamma_\mu)$. It is clearly linear. Also, one can show that it is weakly continuous on sublevels of the entropy.
 From \cite[Proposition 11]{gigli2009} we further know that 
 $\mu\mapsto S_t(\mu)-S_t(\nu_{\gamma,\mu})$ is lower semicontinuous with respect to weak convergence on sublevels of the entropy and convex with respect
 to linear interpolation. Hence 
 \begin{equation*}
  \mu\mapsto S_t(\mu)-S_t(\nu_{\gamma,\nu})-\frac{K^-}2C_t(\gamma_\mu)
 \end{equation*}
 is lower semicontinuous with respect to weak convergence on the sublevels of the entropy. The same holds true for its positive part.
 Now apply that the function $\varPsi\colon \mathbb{R}^2\to\mathbb{R}$ defined by
 \begin{equation*}
   \varPsi(a,b) =
   \begin{cases}
     \frac{a^{2}}{b} & \text{if } b > 0, \\
     0 & \text{if } a=b=0, \\
     +\infty  & \text{if } a\neq 0,b=0 \textit{ or } b<0,
   \end{cases}
\end{equation*}
is convex, continuous on $[0,\infty)^2\setminus\{(0,0)\}$ and increasing in $a$, and the conclusion follows. From formula \eqref{repslope} the assertion follows.

\end{proof}

\subsection{Existence and Uniqueness of EDE-gradient flow for the entropy}
In this section we want to show existence and uniqueness of the dynamic EDI-gradient flow with respect to the functional $S$ 
on the complete geodesic space $(\mathcal{P}_2(X),W_t)$. For this we additionally
have to assume that $X$ is boundedly compact, i.e. closed balls are compact. For this reason we can take \textbf{A5} for granted, as shown in the next lemma.

\begin{lma}
 Assume that $X$ is boundedly compact.
 Then the following holds true.
If $(\mu_n)\subset \mathcal{P}_2(X)$ with $\sup_{n,m}W_t(\mu_n,\mu_m)<\infty$, then $(\mu_n)_n$ is sequentially precompact with respect to weak convergence.

\end{lma}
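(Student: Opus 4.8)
The plan is to invoke Prokhorov's theorem, so that it suffices to verify that the family $(\mu_n)$ is \emph{tight}. Since each $(X,d_t)$ is a complete separable metric space, hence Polish, tightness is equivalent to sequential relative compactness in the weak topology of $\mathcal{P}(X)$, and the weak limits obtained this way are again probability measures.

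The first step is to convert the uniform bound on pairwise Kantorovich distances into a uniform bound on second moments. Fix an arbitrary reference point $x_0\in X$, and note that the unique coupling of $\mu_n$ with the Dirac mass $\delta_{x_0}$ is $\mu_n\otimes\delta_{x_0}$, so that
\begin{equation*}
 W_t(\mu_n,\delta_{x_0})^2=\int_X d_t^2(x,x_0)\,d\mu_n(x).
\end{equation*}
Using that $W_t$ is a genuine distance on $\mathcal{P}_2(X)$ (in particular it satisfies the triangle inequality), and fixing one index, say $\mu_1$, I would estimate
\begin{equation*}
 W_t(\mu_n,\delta_{x_0})\leq W_t(\mu_n,\mu_1)+W_t(\mu_1,\delta_{x_0})\leq \sup_{n,m}W_t(\mu_n,\mu_m)+W_t(\mu_1,\delta_{x_0})=:M,
\end{equation*}
where $M<\infty$ because $\mu_1\in\mathcal{P}_2(X)$ forces $W_t(\mu_1,\delta_{x_0})<\infty$. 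Hence $\int_X d_t^2(x,x_0)\,d\mu_n\leq M^2$ for every $n$.

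Next I would read off tightness via Chebyshev's inequality together with bounded compactness. For $r>0$ set $B_r:=\{x\in X: d_t(x,x_0)\leq r\}$; since $X$ is boundedly compact this closed ball is compact. Then, uniformly in $n$,
\begin{equation*}
 \mu_n(X\setminus B_r)\leq \frac1{r^2}\int_X d_t^2(x,x_0)\,d\mu_n\leq \frac{M^2}{r^2}.
\end{equation*}
Given $\varepsilon>0$, choosing $r>M/\sqrt{\varepsilon}$ produces a compact set $K=B_r$ with $\mu_n(X\setminus K)<\varepsilon$ for all $n$, which is exactly the definition of tightness.

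Finally, Prokhorov's theorem upgrades tightness to sequential precompactness: every subsequence of $(\mu_n)$ admits a further subsequence converging weakly to some $\mu\in\mathcal{P}(X)$. I do not expect any serious obstacle here; the argument is a routine combination of the triangle inequality for $W_t$, the coupling identity for $\delta_{x_0}$, Chebyshev's inequality, bounded compactness of $X$, and Prokhorov's theorem. The only points deserving a line of justification are the identity $W_t(\mu,\delta_{x_0})^2=\int_X d_t^2(\cdot,x_0)\,d\mu$ and the fact that no mass escapes to infinity, both of which are settled by the uniform second-moment bound above.
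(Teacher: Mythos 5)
Your proof is correct and follows essentially the same route as the paper: the uniform bound on pairwise $W_t$-distances yields uniformly bounded second moments, from which tightness and Prokhorov's theorem give weak sequential precompactness. The only difference is that where the paper cites Lemma 16 in \cite{gigli2009} for the tightness step, you prove it inline via the coupling identity for $\delta_{x_0}$, Chebyshev's inequality, and the compactness of closed balls.
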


\begin{proof}

If $\sup_{n,m}W_t(\mu_n,\mu_m)<\infty$ for a sequence $(\mu_n)\subset 
\mathcal{P}_2(X)$ the second moments are uniformly bounded. Then Lemma 16 in \cite{gigli2009} implies that $(\mu_n)$ is tight. Applying 
Prokhorov's theorem we infer that $(\mu_n)$ is weakly sequentially precompact.

\end{proof}

\begin{thm}\label{existence}
Assume additionally that $X$ is boundedly compact.
 Suppose that $S$ is $K$-convex for some $K\in\mathbb R$. 
 Then for every $\bar\mu\in \Dom(S)$ there exists a curve $(\mu_t)\in \AC^2([0,T],\mathcal{P}_2(X))$ starting in $\bar\mu$ and satisfying
\begin{equation}\label{eq:steepestdescent}
  S_t(\mu_t)+\frac12\int_0^t|\dot\mu_r|^2_rdr+\frac12\int_0^t|\nabla_rS_r|^2(\mu_r)dr
  \leq S_0(\mu_0)+\int_0^t(\partial_r S_r)(\mu_r)dr,
 \end{equation}
 for every $t\in[0,T]$.
\end{thm}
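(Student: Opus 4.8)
The plan is to recognize Theorem \ref{existence} as a direct application of the abstract existence result Proposition \ref{stability} to the entropy functional $S$ on the family of Kantorovich spaces $(\mathcal P_2(X),W_t)$, so that the whole task reduces to checking that $S$ meets every hypothesis of that proposition. I would fix as the auxiliary topology $\sigma$ the topology of weak convergence of probability measures: it is weaker than each $W_t$-topology (since $W_t$-convergence implies weak convergence), and each $W_t$ is sequentially lower semicontinuous with respect to it, as recalled above. This secures the topological assumptions imposed in Section \ref{sec:gradflow}.

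Next I would verify the standing assumptions. Lemma \ref{ent} already establishes \textbf{A1}, \textbf{A2}, \textbf{A3$^*$} and \textbf{A4} for $S$, with $\partial_t S_t(\mu)=\int_X\partial_t f_t\,d\mu$ and a time-independent set of differentiability points. Assumption \textbf{A5} is precisely the preceding lemma: since $X$ is boundedly compact, any $W_t$-bounded sequence has uniformly bounded second moments, hence is tight by \cite[Lemma 16]{gigli2009}, and Prokhorov's theorem yields a weakly convergent subsequence. With these in hand, Propositions \ref{minexist} and \ref{compactness} apply verbatim, producing the discrete minimizers, the interpolants $\bar\mu_t^h,\tilde\mu_t^h$, and a limit curve $(\mu_t)\in\AC^2([0,T];\mathcal P_2(X))$ starting in $\bar\mu$ along a subsequence $h_n\to0$.

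It then remains to check the two extra hypotheses of Proposition \ref{stability}. For the lower semicontinuity of the squared slope I would combine Corollary \ref{lscsquaredslope} with Lemma \ref{modest}: the corollary gives $|\nabla_t S_t|^2(\mu)\leq\liminf_n|\nabla_t S_t|^2(\mu_n)$ whenever $\mu_n\rightharpoonup\mu$ with $\sup_n S_t(\mu_n)<\infty$, and this sublevel bound is furnished by the a priori estimate \eqref{eq1:apest} together with \textbf{A4}. Lemma \ref{modest} then absorbs the change of index, giving $|\nabla_{t_n}S_t|(\mu_n)\geq(1+L|t-t_n|)^{-1}|\nabla_t S_t|(\mu_n)$; letting $t_n\to t$ produces the exact form $|\nabla_t S_t|^2(\mu)\leq\liminf_n|\nabla_{t_n}S_t|^2(\mu_n)$ required in \eqref{stability}. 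For the derivative, the representation $\partial_t S_t(\mu)=\int_X\partial_t f_t\,d\mu$ reduces the upper semicontinuity \eqref{uscofder} to the statement $\limsup_n\int_X\partial_t f_t\,d\mu_n\leq\int_X\partial_t f_t\,d\mu$ under weak convergence.

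I expect this last point to be the main obstacle. Boundedness of $\partial_t f_t$ by $L^*$ alone does not give weak upper semicontinuity of $\mu\mapsto\int_X\partial_t f_t\,d\mu$; what one needs, for almost every $t$, is that $x\mapsto\partial_t f_t(x)$ be upper semicontinuous (equivalently, that $x\mapsto-\partial_t f_t(x)$ be lower semicontinuous and bounded below), so that the portmanteau theorem yields the inequality in the right direction. I would therefore either invoke such a regularity of $f$ as an additional standing hypothesis or derive it from the assumed structure of the evolving space. Once \eqref{uscofder} is in place, Proposition \ref{stability} applies directly and delivers \eqref{eq:steepestdescent} for every $t\in[0,T]$, which completes the proof.
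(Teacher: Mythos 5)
Your reduction to the abstract scheme is the right frame, and everything up to the last step matches the paper: the choice of $\sigma$ as weak convergence, Lemma \ref{ent} for \textbf{A1}--\textbf{A4}, bounded compactness plus Prokhorov for \textbf{A5}, Propositions \ref{minexist} and \ref{compactness} for the limit curve, and the combination of Corollary \ref{lscsquaredslope} with Lemma \ref{modest} (plus the uniform bound $\int_0^t(\Dsl_r^{h_n})^2dr\leq S_0(\bar\mu)+L^*T-\inf S$) to pass the slope term to the limit is exactly what the paper does. You also correctly located the one genuinely delicate point: weak convergence of probability measures does not by itself give convergence of $\int_X\partial_rf_r\,d\tilde\mu_r^{h_n}$ when $\partial_rf_r$ is merely bounded and measurable.

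However, your proposed resolution --- postulating upper semicontinuity of $x\mapsto\partial_tf_t(x)$ as an extra standing hypothesis, or hoping to ``derive it from the assumed structure'' --- is a genuine gap: no spatial regularity of $\partial_tf_t$ is available (in Section \ref{secent} only $f_t\in L^\infty(X,m)$ and the time-Lipschitz bound \eqref{eqintimeandspace} are assumed), so you would be proving a strictly weaker theorem than the one stated. The paper closes this gap not by regularizing the integrand but by upgrading the mode of convergence of the measures: the a priori entropy bound \eqref{eq1:apest} makes the densities $\tilde\rho_t^{h_n}=d\tilde\mu_t^{h_n}/dm$ equi-integrable (via $\sup_n\int\max\{0,\tilde\rho_t^{h_n}\log\tilde\rho_t^{h_n}\}\,dm<\infty$), so the Dunford--Pettis theorem, combined with a subsequence-of-subsequence argument and the lower semicontinuity of the entropy identifying the limit, yields $\tilde\rho_t^{h_n}\rightharpoonup\rho_t$ \emph{weakly in $L^1(X,m)$} for every $t$. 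Weak $L^1$ convergence tests against arbitrary $L^\infty$ functions, so $\int\partial_rf_r\,d\tilde\mu_r^{h_n}\to\int\partial_rf_r\,d\mu_r$ with full convergence (not just one-sided semicontinuity), and dominated convergence in $r$ finishes the time integral. Consequently the paper does not apply Proposition \ref{stability} verbatim --- hypothesis \eqref{uscofder} is never verified for $S$ --- but reruns its final chain of inequalities with the $L^1$-weak convergence in place of \eqref{uscofder}. This equi-integrability mechanism is the missing idea in your proposal; note it is also reused in the proof of Theorem \ref{uniqueness}, so it is the structurally essential device here rather than an optional shortcut.
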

\begin{proof}
We may apply Proposition \ref{compactness} and obtain a limit curve $\mu\in \AC^2([0,T];\mathcal P_2(X))$ starting in $\bar\mu$ such that
\begin{equation*}
 \bar\mu_t^{h_n}\rightharpoonup\mu_t, \text{ and } \tilde\mu_t^{h_n}\rightharpoonup\mu_t \quad \forall t\in[0,T],
\end{equation*}
where $\bar\mu^h$ and $\tilde\mu^h$ are defined as in Definition \ref{interpolants}.
and satisfy by Proposition \ref{discreteede}
 \begin{align*}
S_{\overline{h_n}(t)}(\bar \mu_{t}^{h_n})+\frac12\int_{0}^{\overline{h_n}(t)}(\Dsp_r^{h_n})^2dr+\frac12\int_{0}^{\overline{h_n}(t)}(\Dsl_r^{h_n})^2dr\\
=S_{0}(\bar \mu)+\int_{0}^{\overline{h_n}(t)}(\partial_rS_r)(\tilde \mu_r^{h_n})dr.
\end{align*}
From Corollary \ref{lscsquaredslope}, Lemma \ref{slopeest} and Lemma \ref{modest} together with \eqref{loglip}, applying Fatou's Lemma we obtain
\begin{align*}
 &\int_0^t|\nabla_rS_r|^2(\mu_r)dr\leq\liminf_{n\to\infty}\int_0^t|\nabla_rS_r|^2(\tilde\mu_r^{h_n})dr\\
 &\leq \liminf_{n\to\infty}\int_0^t(|\nabla_{\overline{h_n}(r)}S_r|(\tilde\mu_r^{h_n})+|\nabla_rS_r|(\tilde\mu_r^{h_n})-|\nabla_{\overline{h_n}(r)}S_r|(\tilde\mu_r^{h_n}))^2dr\\
 &\leq \liminf_{n\to\infty} \bigg[\int_0^t(\Dsl_r^{h_n})^2dr+2Ch_n\int(\Dsl_r^{h_n})^2dr+Ch_n^2\int(\Dsl_r^{h_n})^2dr\bigg].
\end{align*}
We deduce
\begin{equation*}
 \int_0^t|\nabla_rS_r|^2(\mu_r)dr\leq\liminf\int_0^t(\Dsl_r^{h_n})^2dr
\end{equation*}
from the estimate $\int_0^t(\Dsl_r^{h_n})^2dr\leq S_0(\mu)+L^*T-\inf_{t,\mu}S_t(\mu)$.

To show that \eqref{eq:steepestdescent} is valid, it is left to show that
\begin{equation*}
 \lim_{n\to\infty}\int_0^t(\partial_rS_r)(\tilde\mu_r^{h_n})dr=\int_0^t(\partial_rS_r)(\mu_r)\, dr.
\end{equation*}
This already follows if
we prove that a stronger convergence than weak convergence of measures holds true. In fact, from 
\eqref{eq1:apest} we know that there exists a density $\tilde\rho_t^{h_n}=d\tilde\mu_t^{h_n}/dm\in L^1(X,m)$
for every $t\in[0,T]$ and $n\in
\mathbb{N}$. The lower semicontinuity of the entropy implies that $\sup_tS_t(\mu_t)<\infty$ and thus $\mu_t=\rho_tm$, for some $\rho_t\in L^1(X,m)$.
Choose an arbitrary subsequence $h_{n_k}$. Then since the family of densities $(\tilde\rho_t^{h_{n_k}})_k$ is equiintegrable, i.e.
\begin{equation*}
 \sup_k\int_X\max\{0,\rho_t^{h_{n_k}}\log\rho_t^{h_{n_k}}\}dm<\infty,
\end{equation*}
(cf. \cite[Theorem 4.5.9]{bogachev2007}), the Dunford-Pettis Theorem (\cite[Corollary 4.7.19]{bogachev2007}) ensures
that there exists a subsubsequence $\tilde\rho_{t}^{h_{n_{k_l}}}$ that converges in the weak
topology of $L^1(X,m)$ to the function $\rho_t\in L^1(X,m)$. Hence for the original subsequence we already have
\begin{equation*}
 \tilde\rho_t^{h_n}\rightharpoonup\rho_t \text{ in }L^1(X,m) \quad\forall t\in[0,T].
\end{equation*}
As a direct consequence we obtain \eqref{eq:steepestdescent}, since similar as in Proposition \ref{stability}
\begin{align*}
  &S_t(\mu_t)+\frac12\int_0^t|\dot \mu|^2_rdr+\frac12\int_0^t|\nabla_rS_r|^2(\mu_r)dr\\
 \leq &\liminf_{n\to\infty} \left[S_{\overline{h_n}(t)}(\bar \mu_{t}^{h_n})+\frac12\int_0^{\overline{h_n}(t)} (Dsp_r^{h_n})^2dr 
 +\frac12\int_0^{\overline{h_n}(t)}(\Dsl_r^{h_n})^2dr\right]\\
 \leq & \liminf_{n\to\infty}\left[S_0(\bar \mu)+\int_0^{\overline{h_n}(t)}(\partial_rS_r)(\tilde \mu_r^{h_n})dr\right]\\
 \leq& S_0(\bar \mu)+\int_0^t(\partial_r S_r)(\mu_r)dr.
\end{align*}

\end{proof}

\begin{remark}
Actually, the statement of Theorem \ref{existence} holds true without assuming that $X$ is boundedly compact, since $m$ is assumed to be finite.
 If $(X,d)$ is a Polish space and $m\in \mathcal{P}(X)$ we may apply $z\log z\geq-1/e$
 and Jensen's inequality to obtain
 \begin{equation*}
  \Ent_m(\mu)\geq\mu(E)\log\left(\frac{\mu(E)}{m(E)}\right)-\frac1e \quad\forall E\in \mathcal{B}(X).
 \end{equation*}
Taking into account that the singleton $\{m\}$ is tight, this shows tightness of the sublevels of the entropy since $\mu(E)\to0$ as $m(E)\to 0$. 
Hence we could replace our assumption \textbf{A5} in section \ref{sec:gradflow} by assuming that for each $t\in[0,T]$ the sublevels of the functional 
are sequentially $\sigma$-compact.
See also \cite[Remark 7.3]{agscalc}.

\end{remark}

\begin{thm}\label{uniqueness}
 Assume $S$ is $K$-convex and $\bar\mu\in \Dom(S)$. Then there exists at most one dynamic EDI-gradient flow. Moreover we have equality in 
 \eqref{eq:steepestdescent}, i.e. for every $t\in[0,T]$ it holds the following dynamic EDE
 \begin{equation*}
  S_t(\mu_t)+\frac12\int_0^t|\dot\mu_r|^2_rdr+\frac12\int_0^t|\nabla_rS_r|^2(\mu_r)dr
  = S_0(\mu_0)+\int_0^t(\partial_r S_r)(\mu_r)dr.
 \end{equation*}
 
\end{thm}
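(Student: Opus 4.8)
The plan is to prove the two assertions in turn: first upgrade any dynamic EDI-gradient flow to a dynamic EDE-gradient flow, then deduce uniqueness by a convexity argument on the averaged curve. For the equality I would invoke Proposition \ref{weakchainrule}. By Lemma \ref{ent} the entropy $S$ is admissible (\textbf{A1}--\textbf{A4}) and it is $K$-convex by hypothesis; the only remaining point is the difference-quotient condition \eqref{eq:weakchainrule0}, which for $S$ reads $\liminf_n(t_n-t)^{-1}\int(f_{t_n}-f_t)\,d\mu_n\geq\int\partial_tf_t\,d\mu$ and is verified from the Lipschitz structure \eqref{eqintimeandspace} of $f_t$. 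Granting this, Proposition \ref{weakchainrule} furnishes, for any locally absolutely continuous $(\mu_t)\subset\Dom(S)$, the chain-rule inequality \eqref{eq:weakchainrule}. Combining it with Young's inequality $|\dot\mu|_r|\nabla_rS_r|(\mu_r)\leq\tfrac12|\dot\mu|_r^2+\tfrac12|\nabla_rS_r|^2(\mu_r)$ yields precisely the reverse of \eqref{eq:steepestdescent}; together with the EDI \eqref{eq:steepestdescent} this forces equality, i.e. the dynamic EDE, which proves the ``moreover'' part.

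For uniqueness let $(\mu^1_t)$ and $(\mu^2_t)$ be two dynamic EDI- (hence EDE-) gradient flows starting at $\bar\mu$, and set $\mu^{1,2}_t:=(\mu^1_t+\mu^2_t)/2$. By Lemma \ref{cofsquaredslope} the averaged curve is absolutely continuous with $|\dot\mu^{1,2}|_r^2\leq\tfrac12(|\dot\mu^1|_r^2+|\dot\mu^2|_r^2)$, and since the entropy is convex under linear interpolation of densities we have $\mu^{1,2}_t\in\Dom(S)$. Applying the chain-rule inequality of Proposition \ref{weakchainrule} to $\mu^{1,2}$ together with Young as above gives the reverse EDI for the averaged curve,
\begin{align*}
S_t(\mu^{1,2}_t)+\frac12\int_0^t|\dot\mu^{1,2}|_r^2\,dr+\frac12\int_0^t|\nabla_rS_r|^2(\mu^{1,2}_r)\,dr\geq S_0(\bar\mu)+\int_0^t(\partial_rS_r)(\mu^{1,2}_r)\,dr.
\end{align*}

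Next I would bound each term on the left from above by the average of the corresponding terms for $\mu^1$ and $\mu^2$: the entropy term by convexity of $z\mapsto z\log z$, the speed term by Lemma \ref{cofsquaredslope}, and the slope term by convexity of $\mu\mapsto|\nabla_rS_r|^2$ on sublevels of $S$ (Corollary \ref{lscsquaredslope}), which applies because EDE bounds $S_r(\mu^i_r)\leq S_0(\bar\mu)+L^*T$ uniformly, placing both endpoints in a common sublevel. As $\partial_rS_r(\mu)=\int\partial_rf_r\,d\mu$ is linear in $\mu$, the right-hand side for $\mu^{1,2}$ equals the average of the right-hand sides for $\mu^1,\mu^2$; rewriting those via the EDE for $\mu^1$ and $\mu^2$, the displayed reverse EDI is sandwiched between two equal quantities. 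Hence each of the three term-by-term inequalities must be an equality, in particular $S_t(\mu^{1,2}_t)=\tfrac12(S_t(\mu^1_t)+S_t(\mu^2_t))$ for every $t$, and strict convexity of $z\mapsto z\log z$ forces $\mu^1_t=\mu^2_t$ for all $t$, giving uniqueness.

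The main obstacle I anticipate is the careful handling of the slope term: one must check that Corollary \ref{lscsquaredslope}'s convexity of the squared slope is legitimately applicable along the averaged curve (common sublevel membership and $r$-integrability) and that term-by-term equality can be extracted from equality of the sum, which works precisely because all three auxiliary inequalities point the same way. A secondary technical point is the verification of \eqref{eq:weakchainrule0} for $S$, needed to license the chain rule in the time-dependent setting; unlike the static case this does not come for free and relies on the uniform Lipschitz control \eqref{eqintimeandspace}.
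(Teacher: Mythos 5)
Your proposal is correct and follows essentially the same route as the paper: verify the difference-quotient condition \eqref{eq:weakchainrule0} for $S$ along entropy sublevels (via equi-integrability of the densities), upgrade EDI to EDE through the weak chain rule of Proposition \ref{weakchainrule} combined with Young's inequality, and then exploit convexity of the squared speed (Lemma \ref{cofsquaredslope}) and of the squared slope (Corollary \ref{lscsquaredslope}) together with linearity of $\partial_r S_r$ and strict convexity of the entropy along the midpoint curve $\mu^{1,2}$. The paper phrases the final step as a strict-inequality contradiction (if $\mu^1\neq\mu^2$, the averaged curve strictly violates the chain-rule inequality) rather than your term-by-term equality extraction, but the two formulations are logically identical.
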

\begin{proof}
 Let us first observe that a weak chain rule for gradient flows is applicable. For this we prove that a variant of the assumption in Proposition 
 \ref{weakchainrule} concerning the time derivative is satisfied 
 by the entropy $S_t$. We choose a sequence $\mu_n=\rho_nm$ converging to $\mu=\rho m$ such that $\sup_nS(\mu_n)<\infty$. We need to show that
 for almost every $t$ 
 \begin{equation}\label{entropyweakchainrule}
 \lim_{n\to\infty}\frac{S_{t_n}(\mu_n)-S_t(\mu_n)}{t_n-t}=\lim_{n\to\infty}\int_X\frac{f_{t_n}(x)-f_t(x)}{t_n-t}\rho_n(x)dm(x)=\partial_tS_t(\mu),
 \end{equation}
if $t_n\searrow t$ as $n\to\infty$. This would imply the weak chain rule \eqref{eq:weakchainrule} in Proposition \ref{weakchainrule} 
restricted to curves which are contained in the sublevels of the functional. In order to show \eqref{entropyweakchainrule} note that as in the proof of Theorem 
 \ref{existence} the sequence $(\rho_n)$ is equi-integrable and thus $\rho_n$ converges to $\rho$ in duality with $L^\infty$ functions. Then we decompose
 \begin{align*}
& \int_X\frac{f_{t_n}-f_t}{t_n-t}\rho_ndm=\int_X\Big(\frac{f_{t_n}-f_t}{t_n-t}-\partial_tf_t\Big)\rho_ndm+\int \partial_t f_t\rho_n\, dm\\
& =\int_{|\rho_n|<M}\Big(\frac{f_{t_n}-f_t}{t_n-t}-\partial_tf_t\Big)\rho_ndm+\int_{|\rho_n|\geq M}\Big(\frac{f_{t_n}-f_t}{t_n-t}-\partial_tf_t\Big)\rho_ndm\\
& \quad+\int \partial_t f_t\rho_n\, dm.
 \end{align*}
 The third integral clearly converges to $\int \partial_tf_t\rho\, dm=\partial_tS_t(\mu)$ by Lemma \ref{ent}, while the first vanishes by dominated convergence. The second vanishes after letting $n\to\infty$ and then $M\to\infty$ by equi-integrability of $(\rho_n)$.

 Let us assume that there exist two dynamic EDI-gradient flows $(\mu_t^1)$, $(\mu_t^2)$ starting from $\bar\mu\in \Dom(S)$. As seen in the proof of Theorem \ref{existence}
 we know that these curves are contained in the sublevels of $S$ and hence together with the weak chain rule it follows
\begin{equation*}
   S_0(\bar\mu)=S_t(\mu_t^1)+\frac12\int_0^t|\dot\mu^1|^2_rdr+\frac12\int_0^t|\nabla_rS_r|^2(\mu_r^1)dr
 -\int_0^t\partial_r S_r(\mu_r^1)dr,
 \end{equation*}
 \begin{equation*}
   S_0(\bar\mu)=S_t(\mu_t^2)+\frac12\int_0^t|\dot\mu^2|^2_rdr+\frac12\int_0^t|\nabla_rS_r|^2(\mu_r^2)dr
 -\int_0^t\partial_r S_r(\mu_r^2)dr.
 \end{equation*}
Now define
\begin{equation*}
 \mu_t^{1,2}=\frac{\mu_t^1+\mu_t^2}2 \quad t\geq0.
\end{equation*}
Then $\mu_0^{1,2}=\bar\mu$ and from the strict convexity of the entropy, the convexity of the squared slope (Corollary \ref{lscsquaredslope}), 
the convexity of the squared speed  (Lemma \ref{cofsquaredslope}) and the linearity of $\partial _r S_r$ (Lemma \ref{ent}) we have that
\begin{equation*}
   S_0(\bar\mu)>S_t(\mu_t^{1,2})+\frac12\int_0^t|\dot\mu^{1,2}|^2_rdr+\frac12\int_0^t|\nabla_rS_r|^2(\mu_r^{1,2})dr
 -\int_0^t\partial_r S_r(\mu_r^{1,2})dr,
 \end{equation*}
 whenever these curves are different. But since \eqref{entropyweakchainrule} is applicable to $\mu_t^{1,2}$,
 this contradicts \eqref{eq:weakchainrule}.
\end{proof}

\section{Dynamic gradient flows in Hilbert spaces}\label{sechilbert}
Let $H$ be a separable Hilbert space with a family of scalar products $(\langle\cdot,\cdot\rangle_t)$. We assume that \eqref{loglip} holds for the distances
$||x-y||_t:=\sqrt{\langle x-y,x-y\rangle_t}$. Let $E\colon[0,T]\times H\to \mathbb R\cup\{+\infty\}$ be a functional such that
$x\mapsto E_t(x)$ is convex and lower semicontinuous. Again we require that the domain $\Dom(E_t)=\{x\colon E_t(x)<\infty\}$ is time independent.
The subdifferential $D^-_t E_t(x)$ of $E_t$ at some $x\in \Dom(E)$ is the set of all $v\in H$ such that 
\begin{equation*}
 E_t(y)-E_t(x)\geq \langle v,y-x\rangle_t\qquad \forall y\in H.
\end{equation*}
It follows from the definition of the subdifferential that $D^-_t E_t$ is \emph{monotone}, i.e. for every $v\in D_t^-E_t(x)$, 
$w\in D_t^-E_t(y)$ we have
\begin{align}\label{monotone}
 \langle v-w,x-y\rangle_t\geq0.
\end{align}

Note that $D_t^-E_t(x)$ is closed and convex. Hence we can set $\nabla_tE_t(x)$ as the element of minimal $||\cdot||_t$-norm in $D^-_tE_t(x)$ as soon as
$D_t^-E_t(x)\neq\emptyset$.

\begin{Def}\label{defgradflowhilbert}
 We say that $(x_t)$ is a \emph{dynamic gradient flow} for $E_t$ starting from $x\in H$ if it is locally absolutely continuous and
 \begin{equation*}
  \partial_t x_t\in-D_t^- E_t(x_t) \text{ for a.e. } t>0
 \end{equation*}
 and $\lim_{t\searrow 0} x_t=x$.

\end{Def}

We cannot hope to have a minimal selection result, i.e. $\frac{d^+}{dt}x_t=-\nabla_t E_t(x_t)$. We illustrate this in the following example.
\begin{ex}\label{minsel}
 Consider once again the energy functional $E_t(x)=|x-t|$ on $\mathbb R$. Then the curve $x_t=t$ defines a gradient flow of 
 $E_t$ since 
 $\partial_t x_t=1\in-D^-E_t(x_t)$, but $\partial_t x_t\neq -\nabla E_t(x_t)=0$.
\end{ex}

In the following we show that the gradient flow in the sense of Definition \ref{defgradflowhilbert} is a dynamic forward EVI$(-L/2,\infty)$ gradient flow
introduced in Section \ref{secevi}.
We recall that for $s,t \in[0,T]$, $\gamma\in\AC^2([0,1];H)$ the action of the curve 
\begin{align*}
 \mathcal A_{s,t}(\gamma):=\lim_{h\to0}\sup\Big\{\sum_{i=1}^n(a_{i}-a_{i-1})^{-1}||\gamma_{a_i}-\gamma_{a_{i-1}}||_{s+a(t-s)}^2\Big\},
\end{align*}
where the supremum runs over all partitions $0=a_0<a_1<\cdots a_n=1$ such that $a_i-a_{i-1}\leq h$ for some $h>0$. 

For $x,y\in H$ we define
\begin{align*}
 ||x-y||_{s,t}^2:=\inf A_{s,t}(\gamma),
\end{align*}
where the infimum runs over all curves $\gamma\in\AC^2([0,1];H)$ such that $\gamma_0=x$ and $\gamma_1=y$. 

\begin{Prop}\label{blabla}
 Let $E\colon[0,T]\times H\to(-\infty,+\infty]$ be a functional such that $x\mapsto E_t(x)$ is convex and lower semicontinuous for each $t\in[0,T]$.
 Let $(x_t)$ be a gradient flow of $E$ in the sense of Definition \ref{defgradflowhilbert}.
 Then, with $L$ denoting the the logarithmic Lipschitz control \eqref{loglip} of the distances, $(x_t)$ is a dynamic forward EVI$(-L/2,\infty)$ gradient flow,
 i.e. for all $y\in \Dom(E)$ and a.e. $t$
 \begin{align*}
  \frac12\partial^+_s||x_s-y||^2_{s,t}\Big|_{s=t}-\frac{L}4||x_t-y||_t^2\leq E_t(y)-E_t(x_t).
 \end{align*}
\end{Prop}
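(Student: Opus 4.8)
The plan is to read the differential inequality straight off the defining relation of the gradient flow and then to transport it from the static norm $\|\cdot\|_t$ to the dynamic distance $\|\cdot\|_{s,t}$ by means of the logarithmic Lipschitz bound \eqref{loglip}. First I would fix a time $t$ at which $x$ is differentiable and $\partial_t x_t\in -D_t^-E_t(x_t)$ holds (almost every $t$, using that locally absolutely continuous curves in a Hilbert space are a.e. differentiable). Writing $v_t:=-\partial_t x_t\in D_t^-E_t(x_t)$ and testing the subdifferential inequality against an arbitrary $y\in\Dom(E)$ gives
\[
 E_t(y)-E_t(x_t)\ \ge\ \langle v_t,\,y-x_t\rangle_t\ =\ \langle \partial_t x_t,\,x_t-y\rangle_t .
\]
Since the scalar product $\langle\cdot,\cdot\rangle_t$ is frozen while only the point $x_s$ moves, the right-hand side is exactly $\tfrac12\frac{d}{ds}\|x_s-y\|_t^2\big|_{s=t}$.

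It then remains to show that replacing the static squared distance $s\mapsto\|x_s-y\|_t^2$ by the dynamic one $s\mapsto\|x_s-y\|_{s,t}^2$ costs at most the curvature term, i.e. that
\[
 \partial_s^+\|x_s-y\|_{s,t}^2\big|_{s=t}\ \le\ \frac{d}{ds}\|x_s-y\|_t^2\Big|_{s=t}+\frac L2\,\|x_t-y\|_t^2 .
\]
For this I would compare the two distances through the action of Definition \ref{ddist}. Estimating any competitor curve with \eqref{loglip} (the interpolating index $\vartheta(a)$ never leaves a $|s-t|$-neighbourhood of $t$) yields the two-sided bound
\[
 e^{-2L|s-t|}\,\|x_s-y\|_t^2\ \le\ \|x_s-y\|_{s,t}^2\ \le\ e^{2L|s-t|}\,\|x_s-y\|_t^2 ,
\]
precisely as in the derivation of \eqref{evi0}. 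Using the upper bound in the difference quotient as $s\searrow t$ and the lower bound as $s\nearrow t$ — so that in each case the dynamic distance is pushed in the favourable direction — the exponential factors contribute, to first order, only a term controlled by a multiple of $L\|x_t-y\|_t^2$, while the remaining part reproduces the static derivative. Feeding this into the subdifferential inequality and multiplying by $\tfrac12$ gives the asserted forward EVI$(-L/2)$ inequality.

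I expect the delicate point to be the second displayed inequality. One must respect that $\partial_s^+$ is a one-sided (upper) limsup, applying the crude exponential comparison from the correct side on each of $s\nearrow t$ and $s\searrow t$; and one must verify that the metric-variation contribution is genuinely first order and enters with the stated coefficient. Here the sharp bookkeeping is essential: a first-order (logarithmic-mean / envelope) computation of the dynamic distance shows that, at $s=t$, the metric correction equals $\tfrac12\,\partial_r\|x_t-y\|_r^2\big|_{r=t}$ rather than the full exponential rate coming from the crude bound, and it is this computation that pins down the constant. A secondary technical issue is justifying, for almost every $t$ and simultaneously for all $y$, the existence of $\partial_t x_t$ and of $\tfrac{d}{ds}\|x_s-y\|_t^2$; this follows from the local absolute continuity of $x$ together with the uniform control \eqref{loglip}.
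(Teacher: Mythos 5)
Your proposal is correct and follows essentially the same route as the paper: split $\partial_s^+\|x_s-y\|^2_{s,t}\big|_{s=t}$ into a metric-variation part and a frozen-metric part, bound the latter by $E_t(y)-E_t(x_t)$ via the subdifferential relation $\langle \partial_t x_t, x_t-y\rangle_t\leq E_t(y)-E_t(x_t)$, and control the former to first order by comparing $\|\cdot\|_{s,t}$ with the static norm. The one step you flag as delicate is indeed the crux, and you resolve it the same way the paper does: the crude two-sided bound $e^{\pm 2L|s-t|}$ would only yield a worse constant, and the sharp logarithmic-mean first-order expansion of the dynamic distance that pins down the coefficient $\tfrac{L}{4}$ is precisely the estimate the paper imports from \cite[Proposition 7.2(iii)]{sturm2016} (factor $\tfrac{e^{L|t-s|}-1}{L|t-s|}$) rather than proving from scratch.
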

\begin{proof}
Let $y\in \Dom(E)$. Then
\begin{align*}
 \frac12\partial_s^+||x_s-y||^2_{s,t}\Big|_{s=t}=&\limsup_{s\to t}\left\{\frac1{2(s-t)}(||x_s-y||^2_{s,t}-||x_t-y||^2_t)\right\}\\
 \leq&\limsup_{s\to t}\left\{\frac1{2(s-t)}(||x_s-y||^2_{s,t}-||x_s-y||^2_t)\right\}\\
 +&\limsup_{s\to t}\left\{\frac1{2(s-t)}(||x_s-y||^2_t-||x_t-y||^2_t)\right\}
 \end{align*}

The first limsup can be estimated with the help of Proposition 7.2(iii) in \cite{sturm2016} by
\begin{align*}
 &\limsup_{s\to t}\left\{\frac1{2(s-t)}(||x_s-y||^2_{s,t}-||x_s-y||^2_t)\right\}\\
 \leq  &\limsup_{s\to t}\left\{\frac{1}{2(s-t)}\Big(\frac{e^{L|t-s|}-1}{L|t-s|}-1\Big)||x_s-y||^2_{s}\right\}\\
 =&\limsup_{s\to t}\left\{\frac{1}{2(s-t)}\Big(\frac{\frac12L|t-s|^2+o(|t-s|^2)}{L|t-s|}\Big)||x_s-y||^2_{s}\right\}\\
 \leq &\frac{L}4||x_t-y||^2_t,
\end{align*}
where the last inequality follows from the continuity of $t\mapsto x_t$ and $t\mapsto ||\cdot||_t$.
For the second limsup we apply that $(x_t)$ is supposed to be a gradient flow of $E$;
\begin{align*}
 &\limsup_{s\to t}\left\{\frac1{2(s-t)}(||x_s-y||^2_t-||x_t-y||^2_t)\right\}\\
 =&\langle x_t-y,\partial_t x_t\rangle_t\leq E_t(y)-E_t(x_t)
\end{align*}
for a.e. $t\geq 0$. Combining these two observations we conclude
\begin{align*}
  \frac12\partial_s^+||x_s-y||^2_{s,t}\Big|_{s=t}\leq\frac{L}4||x_t-y||^2_t+E_t(y)-E_t(x_t),
\end{align*}
which proves the claim.
\end{proof}

\subsection{Existence and Uniqueness}

We assume that the following holds for the energy functional, cf. \cite{rossi}.
\begin{enumerate}
 \item $x\mapsto E_t(x)$ is lower semicontinuous\\
 and $E_t(x)\geq 0 \quad\forall (t,x)\in[0,T]\times \Dom(E)$,
 \item $\exists\, C_1\, \forall x\in\Dom(E)\, \forall s,t\in[0,T]:\quad |E_t(x)-E_s(x)|\leq C_1E_t(x)|t-s|$.
\end{enumerate}
By virtue of the functional's lower semicontinuity we obtain that if
$v_n\in D_{t}^-E_t(x_n)$ and $x_n\to x$, $v_n\rightharpoonup v$, then $v\in D_t^-E_t(x)$.

We write 
\begin{equation*}
 e(x):=\sup_{t\in[0,T]}E_t(x).
\end{equation*}
Note that from the Lipschitz property it follows that there exists a constant $C_2>0$ such that for all $x\in \Dom(E)$
\begin{equation}\label{infsup}
 e(x)\leq C_2 \inf_{t\in[0,T]} E_t(x).
\end{equation}

\subsubsection*{Approximation}
We fix a time step $h>0$ and subdivide the interval $[0,T]$ into
\begin{align*}
 \mathcal{P}_h:=\{t_0=0<t_1<\cdots<t_{N-1}<T\leq t_N\},\qquad t_n=nh, N\in\mathbb{N}.
\end{align*}
For $0\leq t\leq T$ we define the piecewise constant interpolants $\overline{h}(t)$ and $\underline{h}(t)$ associated with the partition $\mathcal{P}_h$ 
in the following way;
\begin{equation}\label{dicretetime1}
\overline{h}(0)=0=\underline{h}(0), \text{ and for }t\in (t_{n-1},t_n] \quad \overline{h}(t)=t_n , \quad \underline{h}(t)=t_{n-1}.
\end{equation}
The definition implies that $\overline{h}(t)\searrow t$ and $\underline{h}(t)\nearrow t$ if $h\searrow0$.

For a given initial value $\bar x$ we recursively define a sequence $(x_n^h)$ of minimizers by
\begin{align}\label{minprob1}
 x_0^h:=\bar x, \qquad x_n^h:=\arg\min_{x}\Big\{E_{t_n}(x)+\frac1{2h}||x-x_{n-1}^h||_{t_n}^2\Big\}.
\end{align}
We can argue as in the proof of Proposition \ref{minexist} and directly obtain 
for every $\bar x\in \Dom(E)$ and $h>0$ a (unique) solution to the minimization problem \eqref{minprob1}.
As in Section \ref{sec:gradflow} we define piecewise constant interpolants by setting
\begin{equation*}
 \bar x_t^h:=x_n^h \text{ for }t\in(t_{n-1},t_n],      \qquad \underline{x}_t^h:=x_{n-1}^h\text{ for }t\in(t_{n-1},t_n],
\end{equation*}
and moreover, the piecewise linear interpolant
\begin{equation*}
 x_t^h=\frac{t-t_{n-1}}{h}x_n^h+\frac{t_n-t}h x_{n-1}^h \text{ for }t\in[t_{n-1},t_n).
\end{equation*}
For $t\in(t_{n-1},t_n)$ we denote the time derivative of $t\mapsto x_t^h$ by $\dot x_t^h$.

Recall that the variational interpolation is a map $t\to\tilde x_t^h$ defined by
\begin{align*}
\tilde x_t^h=\arg\min_x\Big\{E_{t}(x)+\frac1{2r}||x-x_{n-1}^h||^2_{t_n}\Big\},\\
\text{for }t=t_{n-1}+r\in(t_{n-1},t_n],
\end{align*}
and $\tilde x_0^h=\bar x$. Finally we define $t\mapsto\tilde v_t^h$ by
\begin{equation*}
 \tilde v_t^h:=\frac{\tilde x_t^h-x_{n-1}^h}{t-t_{n-1}}\quad \forall t\in(t_{n-1},t_n].
\end{equation*}

As in Section \ref{sec:gradflow}, in order to extract a converging subsequence, we proof a priori estimates on the discrete solutions. The proof is along the lines 
of Proposition 6.3 in \cite{rms}.
\begin{Prop}
 The following inequality holds for the interpolants $\bar x^h$, $x^h$, $\tilde x^h$ and $\tilde v^h$
 \begin{equation}
 \begin{aligned}\label{eqdiscede}
 E_{\overline{h}(t)}(\bar x_{t}^h)+\frac12\int_{\overline{h}(s)}^{\overline{h}(t)}||\dot x_r^h||^2_{\overline h(r)}dr+
 \frac12\int_{\overline{h}(s)}^{\overline{h}(t)}||\tilde v_r^h||^2_{\overline h(r)}dr\\
\leq E_{\overline{h}(s)}(\bar x_{s}^h)+C_1\int_{\overline{h}(s)}^{\overline{h}(t)}e(\tilde x_r^h)dr.
 \end{aligned}
 \end{equation}
In particular there exists a constant $M$ such that for all $h>0$
\begin{equation}\label{est}
\sup_{t\in(0,T)}e(\bar x_t^h)\leq M, \qquad \sum_{n=0}^N\frac1h||x_n^h-x_{n-1}^h||^2_{t_n}\leq M,
\end{equation}
\begin{equation}\label{est2}
 \int_0^T||\dot x_r^h||^2_{\overline h(r)}dr\leq M,\qquad \int_0^T||\tilde v_r^h||^2_{\overline h(r)}dr\leq M.
\end{equation}
Moreover
\begin{align}\label{est3}
 ||\tilde x_t^h-\underline x_t^h||^2\in O(h),\quad ||x_t^h-\bar x_t^h||^2\in O(h), \quad ||\bar x_t^h-\underline x_t^h||^2\in O(h).
\end{align}

\end{Prop}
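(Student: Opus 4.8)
The plan is to follow the De Giorgi variational interpolation argument from Section~\ref{sec:gradflow}, adapted to the Hilbert setting, and then to close the a~priori estimates by a Gronwall argument exploiting the multiplicative Lipschitz bound together with \eqref{infsup}. First I would fix $n$ and study the map $r\mapsto J_r:=\min_x\{E_{t_{n-1}+r}(x)+\frac1{2r}||x-x_{n-1}^h||^2_{t_n}\}$ on $(0,h]$, whose minimizer is $\tilde x^h_{t_{n-1}+r}$. Exactly as in the derivation of \eqref{diffofmin}--\eqref{intofdiff} (the fixed scalar product $||\cdot||_{t_n}$ plays the role of the frozen distance $d_{s+h}$, and the multiplicative bound $|E_t(x)-E_s(x)|\leq C_1E_t(x)|t-s|$ replaces the additive constant $L^*$ in the monotonicity estimate \eqref{mondist} and in the local Lipschitz continuity of $J_r$), one obtains for a.e.\ $r$ the identity $\frac{d}{dr}J_r=-\frac1{2r^2}||\tilde x^h_{t_{n-1}+r}-x_{n-1}^h||^2_{t_n}+\partial_rE_{t_{n-1}+r}(\tilde x^h_{t_{n-1}+r})$. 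Integrating from $0$ to $h$ and using $\lim_{r\to0}J_r=E_{t_{n-1}}(x_{n-1}^h)$ gives, at $t=t_n$,
\begin{equation*}
 E_{t_n}(x_n^h)+\frac1{2h}||x_n^h-x_{n-1}^h||^2_{t_n}+\frac12\int_{t_{n-1}}^{t_n}||\tilde v_r^h||^2_{t_n}\,dr=E_{t_{n-1}}(x_{n-1}^h)+\int_{t_{n-1}}^{t_n}\partial_rE_r(\tilde x_r^h)\,dr,
\end{equation*}
where I recognized $\frac1{(r-t_{n-1})^2}||\tilde x^h_r-x_{n-1}^h||^2_{t_n}=||\tilde v_r^h||^2_{t_n}$ and $\frac1h||x_n^h-x_{n-1}^h||^2_{t_n}=\int_{t_{n-1}}^{t_n}||\dot x_r^h||^2_{\overline h(r)}\,dr$. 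Bounding the time derivative by $\partial_rE_r(\tilde x_r^h)\leq C_1E_r(\tilde x_r^h)\leq C_1e(\tilde x_r^h)$ and summing the resulting per-step inequalities telescopically over the steps between $\overline h(s)$ and $\overline h(t)$ yields \eqref{eqdiscede}.

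The \textbf{main obstacle} is that the right-hand side of \eqref{eqdiscede} involves $e(\tilde x_r^h)$ — the supremum over time of the energy of the \emph{variational} interpolant rather than of the discrete solution — and that the Lipschitz bound is multiplicative, so the additive telescoping of Section~\ref{sec:gradflow} is not available. To control this I would first, on a single interval $(t_{n-1},t_n]$, drop the nonnegative speed and slope terms in the interpolation identity to get $E_r(\tilde x_r^h)\leq E_{t_{n-1}}(x_{n-1}^h)+C_1\int_{t_{n-1}}^rE_u(\tilde x_u^h)\,du$, whence Gronwall gives $E_r(\tilde x_r^h)\leq e^{C_1h}E_{t_{n-1}}(x_{n-1}^h)$; combined with \eqref{infsup} this yields $e(\tilde x_r^h)\leq C_2e^{C_1h}E_{t_{n-1}}(x_{n-1}^h)$. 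Feeding this back into the per-step inequality $E_{t_n}(x_n^h)\leq E_{t_{n-1}}(x_{n-1}^h)+C_1\int_{t_{n-1}}^{t_n}e(\tilde x_r^h)\,dr$ produces $E_{t_n}(x_n^h)\leq(1+C_1C_2e^{C_1h}h)E_{t_{n-1}}(x_{n-1}^h)$, and iterating from $E_0(\bar x)$ gives $E_{t_n}(x_n^h)\leq E_0(\bar x)\,e^{C_1C_2e^{C_1T}T}$ uniformly in $h$ and $n$. Applying \eqref{infsup} once more yields $\sup_{t}e(\bar x_t^h)\leq M$, the first estimate in \eqref{est}.

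With the uniform energy bound in hand the remaining estimates are routine. Summing the per-step inequality $\frac1{2h}||x_n^h-x_{n-1}^h||^2_{t_n}\leq E_{t_{n-1}}(x_{n-1}^h)-E_{t_n}(x_n^h)+C_1\int_{t_{n-1}}^{t_n}e(\tilde x_r^h)\,dr$ telescopes the energy and bounds the last term by $C_1MT$, giving the second estimate in \eqref{est}; since this sum equals $\frac12\int_0^T||\dot x_r^h||^2_{\overline h(r)}\,dr$, the first bound in \eqref{est2} follows, and the bound on $\int_0^T||\tilde v_r^h||^2_{\overline h(r)}\,dr$ is read off directly from \eqref{eqdiscede} with $s=0$, $t=T$. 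Finally, for \eqref{est3} I would use that by \eqref{loglip} the norms $||\cdot||_t$ are mutually equivalent uniformly in $t$, so it suffices to argue in $||\cdot||_{t_n}$: the second estimate in \eqref{est} gives $||x_n^h-x_{n-1}^h||^2_{t_n}\leq Mh$, which controls $||\bar x_t^h-\underline x_t^h||^2$ and, since $x_t^h-\bar x_t^h=\tfrac{t_n-t}{h}(x_{n-1}^h-x_n^h)$, also $||x_t^h-\bar x_t^h||^2$; and the optimality of $\tilde x_t^h$ tested against $x_{n-1}^h$ gives $\frac1{2(t-t_{n-1})}||\tilde x_t^h-x_{n-1}^h||^2_{t_n}\leq E_t(x_{n-1}^h)\leq M$ (using $E\geq0$), so $||\tilde x_t^h-\underline x_t^h||^2\leq 2hM$. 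All three quantities are therefore $O(h)$.
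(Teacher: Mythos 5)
Your argument is essentially the paper's own proof: the De Giorgi map $r\mapsto J_{s,r}(y)$ with the frozen norm $\|\cdot\|_{t_n}$, the per-step estimate obtained by differentiating $J$ and using $\lim_{r\to0}J_{s,r}(y)=E_s(y)$, telescoping to get \eqref{eqdiscede}, a Gronwall argument combined with \eqref{infsup} for the uniform bound in \eqref{est}, and then reading off \eqref{est2} and \eqref{est3}. One point needs correcting, though it does not break the proof: you assert the identity $\frac{d}{dr}J_r=-\frac1{2r^2}\|\tilde x^h_{t_{n-1}+r}-x^h_{n-1}\|^2_{t_n}+\partial_rE_{t_{n-1}+r}(\tilde x^h_{t_{n-1}+r})$ and a per-step energy \emph{equality}, "exactly as in \eqref{diffofmin}--\eqref{intofdiff}". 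But in the Hilbert-space section the hypotheses do not include the \textbf{A4}-type requirement that the differentiability points of $t\mapsto E_t(x)$ can be chosen independently of $x$, so $\partial_rE_r(\tilde x^h_r)$ need not exist for a.e.\ $r$ along the moving argument (cf.\ Example \ref{exnotdiff}); moreover $r\mapsto J_{s,r}(y)$ is here only the sum of a locally Lipschitz and a nonincreasing function, so integrating its a.e.\ derivative yields only "$\leq$". This is exactly why the paper proves only the one-sided bound $\frac{d}{dr}J_{s,r}(y)+\frac1{2r^2}\|x_r-y\|^2_{s+h}\leq\liminf_{h\to0}\frac{E_{s+r+h}(x_r)-E_{s+r}(x_r)}{h}\leq C_1e(x_r)$ and states \eqref{eqdiscede} as an inequality. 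Since everything you use downstream is precisely this one-sided bound, your conclusion stands once the word "identity" is replaced by the inequality. The remaining deviations are cosmetic and correct: your continuous Gronwall on each subinterval to control $e(\tilde x^h_r)$ works (though the constant should be $e^{C_1C_2h}$ rather than $e^{C_1h}$, since the drift is $C_1e(\tilde x^h_u)\leq C_1C_2E_u(\tilde x^h_u)$), whereas the paper gets the same bound in one line from minimality, $E_t(\tilde x^h_r)\leq E_t(\bar x^h_{t_{n-1}})\leq e(\bar x^h_{t_{n-1}})$, before its discrete Gronwall; and for \eqref{est3} the paper bounds $\|x^h_t-\bar x^h_t\|$ and $\|\bar x^h_t-\underline x^h_t\|$ via \eqref{est2} and H\"older, while your direct use of the second estimate in \eqref{est} and of optimality tested against $x^h_{n-1}$ (dropping $E_t(\tilde x^h_t)\geq0$) gives the same $O(h)$ bounds.
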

\begin{proof}
 Consider the map
 \begin{equation*}
  r\mapsto J_{s,r}(y):=\min_x\Big\{E_{s+r}(x)+\frac1{2r}||x-y||^2_{s+h}\Big\}
 \end{equation*}
for a given $s\in[0,T]$, $y\in D$, $0<r<T-s$. We claim that this map is differentiable almost everywhere in $(0,T-s)$ and for every $r_0\in(0,T-s)$
for the minimizer $(0,r_0]\ni r\mapsto x_r$ it holds
\begin{equation}\label{int}
\begin{aligned}
 \frac1{2r_0}||x_{r_0}-y||^2_{s+h}+\int_0^{r_0}\frac1{2r^2}||x_r-y||^2_{s+h}dr+ E_{s+r_0}(x_{r_0})\\
 \leq E_s(y)+C_1\int_0^{r_0}e(x_r)dr.
\end{aligned}
\end{equation}
Indeed, arguing similar as in \eqref{jlipschitz} we obtain that for $r_1<r_2\in(0,T-s)$
\begin{equation}
\begin{aligned}\label{jlipschitz2}
 &J_{s,r_2}(y)-J_{s,r_1}(y)-(E_{s+r_2}(x_{r_1})-E_{s+r_1}(x_{r_1}))\\
 &\leq -\frac1{2r_1r_2}(r_2-r_1)||x_{r_1}-y||^2_{s+h}\leq 0,
\end{aligned}
\end{equation}
hence the map $r\mapsto J_{s,r}(y)$ is the sum of a locally Lipschitz and of a nonincreasing function
\begin{equation*}
  J_{s,r_2}(y)\leq J_{s,r_1}(y)+(r_2-r_1)C_1 e(x_{r_1}),
\end{equation*}
and differentiable almost everywhere. So let $r\in(0,T-s)$ be a differentiable point of $r\mapsto J_{s,r}(y)$. Then with \eqref{jlipschitz2} we get
\begin{align*}
 &\frac{d}{dr} J_{s,r}(y)+\frac1{2r^2}||x_r-y||^2_{s+h}\\
 =&\lim_{h\to0}\Big(\frac{J_{s,r+h}(y)-J_{s,r}(y)}{h}+\frac1{2(r+h)r}||x_r-y||^2_{s+h}\Big)\\
 \leq &\liminf_{h\to0}\frac{E_{s+r+h}(x_r)-E_{s+r}(x_r)}{h}\leq C_1 e(x_r),
\end{align*}
and integrating from 0 to $r_0$ gives us \eqref{int}.\\

Applying \eqref{int} with $t\in(t_{n-1},t_n]$, $y=x_{n-1}^h$, $s=t_{n-1}$ and $r_0=t-s$ we obtain for $\tilde x_t^h$
\begin{equation}
\begin{aligned}\label{int3}
 \frac1{2(t-t_{n-1})}&||\tilde x_t^h-x_{n-1}^h||^2_{t_n}+\int_{t_{n-1}}^t\frac1{2(r-t_{n-1})^2}||\tilde x_r^h-x_{n-1}^h||^2_{t_n}dr+ E_{t}(\tilde x_{t}^h)\\
 \leq& E_{t_{n-1}}(x_{n-1}^h)+C_1\int_{t_{n-1}}^{t}e(\tilde x_r^h)dr.
\end{aligned}
\end{equation}
Inserting $t=t_n$ we get for the interpolants $x_t^h$, $\tilde v_t^h$
\begin{equation}
\begin{aligned}\label{int2}
 &\frac12\int_{t_{n-1}}^{t_n}||\dot x_r^h||^2_{t_n}dr+\int_{t_{n-1}}^{t_n}\frac1{2}||\tilde v_r^h||^2_{t_n}dr+ E_{t_n}(\tilde x_{t_n}^h)\\
& \leq E_{t_{n-1}}(x_{n-1}^h)+C_1\int_{t_{n-1}}^{t_n}e(\tilde x_r^h)dr.
\end{aligned}
\end{equation}
Summing over the partition we end up with \eqref{eqdiscede}.

Note that the minimality and \eqref{infsup} imply that for $r\in(t_{n-1},t_n]$, $t=t_{n-1}+r$
\begin{equation*}
 e(\bar x_{t_{n-1}}^h)\geq E_t(\bar x_{t_{n-1}}^h)\geq\frac1{2h}||\tilde x_r^h-\bar x_{t_{n-1}}^h||^2_{t_n}+E_t(\tilde x_r^h)\geq E_t(\tilde x_r^h)
 \geq\frac1{C_2}e(\tilde x_r^h),
\end{equation*}
and hence with \eqref{int2} we can estimate
\begin{equation*}
E_{t_n}(\bar x_{t_n}^h)\leq E_{t_{n-1}}(x_{n-1}^h)+C_1C_2\int_{t_{n-1}}^{t_n}e(\bar x_{t_{n-1}}^h)dr.
\end{equation*}
Summing over the partitions and applying \eqref{infsup} once more we obtain for some constant $C>0$
\begin{equation*}
e(\bar x_{t_n}^h)\leq C(E_{0}(x_{0}^h)+\int_{0}^{t_n}e(\underline{x}_r^h)dr).
\end{equation*}
We obtain the first inequality in \eqref{est} by applying a discrete Gronwall argument (see e.g. \cite[Lemma 4.5]{rossi}).
It directly follows that the right-hand side of \eqref{eqdiscede} is bounded and \eqref{est2} holds. The second inequality in \eqref{est} is a direct 
consequence of the first estimate in \eqref{est2}.

In order to show the first statement in \eqref{est3} recall that \eqref{int3} together with \eqref{est} implies (with some different constant $M$)
\begin{align*}
 ||\tilde x_t^h-\underline x_t^h||^2\leq 2hM.
\end{align*}
The other two assertions in \eqref{est3} follow from \eqref{est2} via H\"older's inequality
\begin{align*}
 ||x_t^h-x_s^h||\leq \int_s^t||\dot x_r^h||dr\leq \sqrt{M(t-s)}\qquad \forall 0<s<t<T.
\end{align*}

\end{proof}

The following result provides the compactness of the approximate solutions.

\begin{Prop}\label{converge}
For every sequence of time-steps $(h_j)_{j\in\mathbb N}$ such that $h_j\to0$ as $j\to\infty$ there exists a subsequence $h_j$ (not relabeled)
and an absolutely continuous curve $(x_t)\subset \AC^2([0,T];H)$ and  such that 
 \begin{equation*}
 x_t^{h_j}\to x_t \text{ in }\mathcal C^0([0,T];H),
 \end{equation*}
 and
 \begin{equation*}
  \dot x_t^{h_j}\rightharpoonup \dot x_t \text{ in }L^2([0,T];H).
 \end{equation*}
 Moreover for each $t$ $\bar x_t^{h_j}, \tilde x_t^{h_j}\to x_t$ in $H$.
\end{Prop}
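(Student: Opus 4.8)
The plan is to combine the a priori estimates \eqref{est}--\eqref{est3} with a weak compactness argument and then to upgrade weak convergence to strong convergence using the monotonicity \eqref{monotone} of the subdifferentials. First I would fix a reference time $t^*\in[0,T]$ and use the logarithmic Lipschitz control \eqref{loglip} to pass from the moving norms $\|\cdot\|_{\overline{h}(r)}$ to the single norm $\|\cdot\|_{t^*}$, at the cost of the uniform factor $e^{LT}$. In this way the bound \eqref{est2} becomes $\int_0^T\|\dot x_r^h\|_{t^*}^2\,dr\le M e^{2LT}$, so the piecewise linear interpolants $x^h$ are uniformly bounded in $\AC^2([0,T];H)$ and, by H\"older's inequality, equi-$\tfrac12$-H\"older continuous: $\|x_t^h-x_s^h\|_{t^*}\le (Me^{2LT})^{1/2}|t-s|^{1/2}$. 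Together with the boundedness $\sup_t e(\bar x_t^h)\le M$ from \eqref{est} and the fact that $x_0^h=\bar x$ is fixed, this confines all the curves to a fixed bounded subset of $H$.

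Next I would extract limits. Since $H$ is separable, its weak topology is metrizable on bounded sets, and the equicontinuity above holds a fortiori for this weaker metric; a refined Arzel\`a--Ascoli theorem (\cite[Proposition 3.3.1]{ags}, exactly as in the proof of Proposition \ref{compactness}) then yields a subsequence $h_j$ and a curve $x$ with $x_t^{h_j}\rightharpoonup x_t$ for every $t$. Banach--Alaoglu applied to the bounded sequence $(\dot x^{h_j})$ in the Hilbert space $L^2([0,T];H)$ gives, along a further subsequence, $\dot x^{h_j}\rightharpoonup w$ weakly in $L^2$; passing to the limit in $x_t^{h_j}-x_s^{h_j}=\int_s^t\dot x_r^{h_j}\,dr$ identifies $w=\dot x$ and shows $x\in\AC^2([0,T];H)$. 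The estimates \eqref{est3} show that $\bar x^{h_j}$, $\underline x^{h_j}$ and $\tilde x^{h_j}$ differ from $x^{h_j}$ by $O(\sqrt{h_j})$, so they all share the same pointwise limit $x_t$.

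The hard part will be upgrading these weak convergences to the strong convergences claimed. Since $x_t^{h_j}=\bar x+\int_0^t\dot x_r^{h_j}\,dr$ with $x_0^{h_j}=\bar x$ fixed, it suffices to promote $\dot x^{h_j}\rightharpoonup\dot x$ in $L^2([0,T];H)$ to a strong limit, for then $\|x_t^{h_j}-x_t\|_{t^*}\le T^{1/2}\|\dot x^{h_j}-\dot x\|_{L^2}\to 0$ uniformly in $t$, and the strong pointwise convergence of $\bar x^{h_j}$ and $\tilde x^{h_j}$ follows from \eqref{est3}. To obtain strong $L^2$-convergence of the velocities I would exploit convexity through the monotonicity \eqref{monotone}: the Euler--Lagrange inclusion for the variational interpolant reads $-\tilde v_t^h\in D^-_{\overline{h}(t)}E_{t}(\tilde x_t^h)$ (the subdifferential being formed in the $\|\cdot\|_{\overline{h}(t)}$ inner product), and testing monotonicity against the increments of two discrete solutions with step sizes $h_i,h_j$ should produce, after absorbing the $e^{L|t-s|}$ corrections from \eqref{loglip}, a Crandall--Liggett--type estimate showing that $(x^{h_j})$ is Cauchy in $\mathcal C^0([0,T];H)$.

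The genuine obstacle I anticipate is that this comparison must be carried out with a \emph{time-dependent} scalar product: the inner products attached to the two grids do not coincide, so the usual monotonicity cancellations hold only up to the metric-distortion factors, and these have to be controlled uniformly and shown to vanish as $h_i,h_j\to 0$. An alternative route that avoids the two-step-size comparison is to extract from the discrete energy inequality \eqref{eqdiscede}, together with the closedness of the graph of the subdifferential, the matching upper bound $\limsup_j\int_0^T\|\dot x_r^{h_j}\|_{\overline{h_j}(r)}^2\,dr\le\int_0^T\|\dot x_r\|_r^2\,dr$; combined with weak lower semicontinuity this forces convergence of the norms and hence strong $L^2$-convergence of the velocities, from which all the asserted strong convergences follow.
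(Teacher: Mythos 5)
Your main route is the paper's own proof: it compares two discrete solutions with step sizes $h$ and $g$ through their Euler--Lagrange variational inequalities, tests each with the other's interpolant, controls the cross terms by convexity together with the log-Lipschitz distortion \eqref{loglip} (via the polarization identity) and the Lipschitz-in-time property of $E$, and closes with the discrete Gronwall lemma to obtain $\|x_t^h-x_t^g\|^2\leq \kappa C(h+g)e^{2T\kappa}$, i.e.\ the Cauchy property in $\mathcal C^0([0,T];H)$, after which weak $L^2$-convergence of the velocities, the identification of the weak limit with $\dot x$, and the transfer to $\bar x^{h_j}$, $\tilde x^{h_j}$ via \eqref{est3} follow exactly as in your second paragraph. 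One harmless slip: the proposition asserts only \emph{weak} $L^2$-convergence of $\dot x^{h_j}$, and the Crandall--Liggett-type estimate yields Cauchyness of the curves in $\mathcal C^0$, not strong convergence of the derivatives, so your framing of the comparison as a device for strong velocity convergence is a non sequitur --- but the $\mathcal C^0$-limit it actually delivers is precisely what the statement needs, whereas your fallback via $\limsup_j\int_0^T\|\dot x_r^{h_j}\|^2\,dr\leq\int_0^T\|\dot x_r\|_r^2\,dr$ would be circular, since that bound presupposes an energy identity for the limit curve that is only established afterwards.
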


 \begin{proof}
 Let $0<g, h<< T$ be two stepsizes and $\{t_n^g\}_{n=0}^{N_g}$, $\{t_n^h\}_{n=0}^{N_h}$ the corresponding partitions of the interval $[0,1]$.
 Let $\{x_n^h\}_{n=0}^{N_h}$ and $\{x_n^g\}_{n=0}^{N_g}$ be the solution to the minimizing problem \eqref{minprob1} with respect to the stepsizes $h$ and
 $g$ respectively with initial condition $x_0^h$ and $x_0^g$. The Euler-Lagrange equation of $x_n^h$ is 
 \begin{equation*}
  \frac{x_n^h-x_{n-1}^h}h\in -D_{t_n^h}^-E_{t_n^h}(x_n^h),
 \end{equation*}
 i.e. 
 \begin{equation*}
  h^{-1}\langle x_n^h-x_{n-1}^h,x_n^h-y\rangle_{t_n^h}+ E_{t_n^h}(x_n^h)-E_{t_n^h}(y)\leq 0 \quad\forall y\in H.
 \end{equation*}
 Inserting the definition of the piecewise linear interpolation $x_t^h$ at $t\in(t_{n-1}^h,t_n^h)$
 \begin{equation}
 \begin{aligned}\label{el1}
  &\langle \dot x_t^h,x_t^h-y\rangle_{t_n^h}+ E_{t_n^h}(x_t^h)-E_{t_n^h}(y)\\
  &\leq (t-t_n^h)\Big(||\dot x_t^h||^2_{t_n^h}+\frac1h(E_{t_n^h}(x_n^h)-E_{t_n^h}(x_{n-1}^h))\Big) \quad\forall y\in H,
 \end{aligned}
 \end{equation}
 where we applied the convexity of $x\mapsto E_t(x)$. The same argumentation for $x_t^g$ at $t\in(t_{m-1}^g,t_m^g)$ yields
 \begin{equation}
 \begin{aligned}\label{el2}
  &\langle \dot x_t^g,x_t^g-y\rangle_{t_m^g}+ E_{t_m^g}(x_t^g)-E_{t_m^g}(y)\\
  &\leq (t-t_m^g)\Big(||\dot x_t^g||^2_{t_m^g}+\frac1g(E_{t_m^g}(x_m^g)-E_{t_m^g}(x_{m-1}^g))\Big) \quad\forall y\in H.
 \end{aligned}
 \end{equation}
 For $t\in(t_{n-1}^h,t_n^h)\cap(t_{m-1}^g,t_m^g)$ we get by putting $y=x_t^g$ into \eqref{el1} and $y=x_t^h$ into \eqref{el2} and adding them
 \begin{equation}
 \begin{aligned}\label{el3}
  &\langle \dot x_t^h,x_t^h-x_t^g\rangle_{t_n^h}+\langle \dot x_t^g,x_t^g-x_t^h\rangle_{t_m^g}\\
  &+ E_{t_n^h}(x_t^h)-E_{t_m^g}(x_t^h)
  + E_{t_m^g}(x_t^g)-E_{t_n^h}(x_t^g)\\
  &\leq (t-t_n^h)\Big(||\dot x_t^h||^2_{t_n^h}+\frac{E_{t_n^h}(x_n^h)-E_{t_n^h}(x_{n-1}^h)}h\Big)\\
  &+(t-t_m^g)\Big(||\dot x_t^g||^2_{t_m^g}+\frac{E_{t_m^g}(x_m^g)-E_{t_m^g}(x_{m-1}^g)}g\Big).
 \end{aligned}
 \end{equation}
 The Lipschitz property \eqref{loglip} of the metric together with the polarization identity gives
 \begin{align*}
  &\langle \dot x_t^h,x_t^h-x_t^g\rangle_{t_n^h}+\langle \dot x_t^g,x_t^g-x_t^h\rangle_{t_m^g}\\
  &\geq \langle \dot x_t^h-\dot x_t^g,x_t^h-x_t^g\rangle_{t_m^g}
  -L|t_n^h-t_m^g| \Big(\langle \dot x_t^h,x_t^h-x_t^g\rangle_{t_m^g}+\frac12||\dot x_t^h-(x_t^h-x_t^g)||^2_{t_m^g}\Big),
 \end{align*}
while the Lipschitz property of the energy yields
\begin{align*}
  E_{t_n^h}(x_t^h)-E_{t_m^g}(x_t^h)
  + E_{t_m^g}(x_t^g)-E_{t_n^h}(x_t^g)\geq -C_1|t_n^h-t_m^g|\Big(E_{t_n^h}(x_t^h)+E_{t_n^h}(x_t^g)\Big).
\end{align*}
Inserting these two inequalities into \eqref{el3} we find
\begin{equation}
 \begin{aligned}\label{el4}
 & \frac{d}{dt}||x_t^h-x_t^g||_{t_m^g}^2=\langle \dot x_t^h-\dot x_t^g,x_t^h-x_t^g\rangle_{t_m^g}\\
  &\leq (t-t_n^h)\Big(||\dot x_t^h||^2_{t_n^h}+\frac{E_{t_n^h}(x_n^h)-E_{t_n^h}(x_{n-1}^h)}h\Big)\\
  &+(t-t_m^g)\Big(||\dot x_t^g||^2_{t_m^g}+\frac{E_{t_m^g}(x_m^g)-E_{t_m^g}(x_{m-1}^g)}g\Big)\\
  &+L|t_n^h-t_m^g| \Big(\langle \dot x_t^h,x_t^h-x_t^g\rangle_{t_m^g}+\frac12||\dot x_t^h-(x_t^h-x_t^g)||^2_{t_m^g}\Big)\\
  &+C_1|t_n^h-t_m^g|\Big(E_{t_n^h}(x_t^h)+E_{t_n^h}(x_t^g)\Big).
 \end{aligned}
\end{equation}
Integrating \eqref{el4} on the interval $(t_{n-1}^h\vee t_{m-1}^g,t)$ we can estimate
\begin{equation}
\begin{aligned}\label{el5}
 &||x_t^h-x_t^g||_{t_m^g}^2-||x_{t_{n-1}^h\vee t_{m-1}^g}^h-x_{t_{n-1}^h\vee t_{m-1}^g}^g||^2_{t_m^g}\\
  &\leq h\int_{t_{n-1}^h\vee t_{m-1}^g}^{t}\Big(-||\dot x_r^h||^2_{t_n^h}+\frac{E_{t_n^h}(x_{n-1}^h)-E_{t_n^h}(x_{n}^h)}h\Big)\,dr\\
  &+g\int_{t_{n-1}^h\vee t_{m-1}^g}^{t}\Big(-||\dot x_r^g||_{t_m^g}^2+\frac{E_{t_m^g}(x_{m-1}^g)-E_{t_m^g}(x_{m}^g)}g\Big)\,dr\\
  &+L (h\wedge g)\int_{t_{n-1}^h\vee t_{m-1}^g}^t\Big(||\dot x_r^h||_{t_m^g}^2dr+||x_r^h-x_r^g||^2_{t_m^g}\Big)\,dr\\
  &+C_1(h\wedge g)\int_{t_{n-1}^h\vee t_{m-1}^g}^{t}\Big(E_{t_n^h}(x_r^h)+E_{t_n^h}(x_r^g)\Big)\,dr.
 \end{aligned}
\end{equation}
Summing over the partition $\{t_j^{h,g}\}_{j=0}^{N_h+N_g}=\{t_n^h\}_{n=0}^{N_h}\cup\{t_m^g\}_{m=0}^{N_g}$ and exploiting the Lipschitz property of
$t\mapsto ||\cdot||_t$
\begin{align*}
 ||x_t^h-x_t^g||_{t_m^g}^2
 \leq&||x_0^h-x_0^g||_0^2+L(h\wedge g)\sum_{j=1}^{n+m}||x_{t_{j-1}^{h,g}}^h-x_{t_{j-1}^{h,g}}^g||^2_{g(t_{j-1}^{h,g})}\\
  &+\sum_{j=1}^{n}\int_{t_{j-1}^h}^{t_j^h}(-h||\dot x_r^h||^2_{t_j^h}+E_{t_j^h}(x_{j-1}^h)-E_{t_j^h}(x_{j}^h))\,dr\\
  &+\sum_{j=1}^{m}\int_{t_{j-1}^g}^{t_j^g}(-g||\dot x_r^g||^2_{t_j^g}+E_{t_j^g}(x_{j-1}^g)-E_{t_j^g}(x_{j}^g))\,dr\\
  &+L (h\wedge g)\int_0^t\Big(||\dot x_r^h||_{g(r)}^2dr+||x_r^h-x_r^g||^2_{g(r)})\,dr\\
  &+C_1(h\wedge g)\int_{0}^{t}(E_{h(r)}(x_r^h)+E_{h(r)}(x_r^g))\,dr.
\end{align*}
Applying once more the Lipschitz property of $t\mapsto E_t(x)$, we can further estimate
\begin{equation}
\begin{aligned}\label{el6}
 &||x_t^h-x_t^g||_{t_m^g}^2
 \leq||x_0^h-x_0^g||_0^2+L(h\wedge g)\sum_{j=1}^{n+m}||x_{t_{j-1}^{h,g}}^h-x_{t_{j-1}^{h,g}}^g||^2_{g(t_{j-1}^{h,g})}\\
  &+ h(E_{0}(x_{0}^h)-E_{t_n^h}(x_n^h))
  +g(E_{0}(x_{0}^g)-E_{t_m^g}(x_m^g))\\
  &+L(h\wedge g)\int_0^t||\dot x_r^h||_{h(r)}^2dr+C(h\wedge g)\int_{0}^{t}||x_r^h-x_r^g||^2_{g(r)}dr\\
  &+C_1(h\wedge g)\int_{0}^{t}(E_{h(r)}(x_r^h)+E_{h(r)}(x_r^g))dr\\
  &+C_1\sum_j\int_{t_{j-1}^h}^{t_j^h}(t_j^h-t_{j-1}^h)E_{t_j^h}(x_{j-1}^h)dr+C_1\sum_k\int_{t_{k-1}^g}^{t_k^g}(t_k^g-t_{k-1}^g)E_{t_k^g}(x_{k-1}^g)dr.
 \end{aligned}
\end{equation}
From the positivity of $E$ and from \eqref{est2} as well as \eqref{est} we can deduce (with varying constants)
\begin{equation}
\begin{aligned}\label{el7}
 ||x_t^h-x_t^g||_{t_m^g}^2
  \leq&||x_0^h-x_0^g||_0^2+L(h\wedge g)\sum_{j=1}^{n+m}||x_{t_{j-1}^{h,g}}^h-x_{t_{j-1}^{h,g}}^g||^2_{g(t_{j-1}^{h,g})}\\
  &+ hE_{0}(x_{0}^h)
  +gE_{0}(x_{0}^g)\\
  &+C(h\wedge g)+C(h\wedge g)\int_{0}^{t}||x_r^h-x_r^g||^2_{g(r)}dr\\
  &+C(h\wedge g)+Ch+Cg\\
  \leq&||x_0^h-x_0^g||_0^2+L(h\wedge g) \sum_{j=1}^{n+m}||x_{t_{j-1}^{h,g}}^h-x_{t_{j-1}^{h,g}}^g||^2_{g(t_{j-1}^{h,g})}\\
  &+C(h+g)+C(h\wedge g).
 \end{aligned}
\end{equation}
The last inequality follows from
\begin{align*}
 \sup_r|| x_r^{h}||_{t^*}=\sup_n || x_n^{h}||_{t^*}\leq \sup_n (\sqrt{2CnhM}+||x_0^h||_{t^*})\leq \sqrt{2CTM}+||x_0^h||_{t^*},
\end{align*}
where we used the definition of $(x^{h}_r)_{r\in[0,T]}$ in the second equality, triangle inequality and Cauchy-Schwartz inequality in the second, and $C=C(L)$ 
is the constant arising from the log-Lipschitz control \eqref{loglip} of the metric.

For $h,g$ sufficiently small there exists a $\kappa$ satisfying $1-L(h\wedge g)\geq \frac1\kappa>0$.
Applying the discrete Gronwall lemma \cite[Lemma 4.5]{rossi} we finally obtain
\begin{equation}
\begin{aligned}\label{el8}
 &||x_t^h-x_t^g||_{t_m^g}^2
  \leq \kappa C(h+g)e^{\kappa(n+m)(h\wedge g)}\leq \kappa C(h+g)e^{2T\kappa}.
 \end{aligned}
\end{equation}
This shows that if $h_j$ is a vanishing sequence of stepsizes, $\{x^{h_j}\}_j\subset\mathcal C^0([0,T];H)$ is a Cauchy sequence.
 Since $\mathcal C^0([0,T];H)$ is a Banach space there exists a continuous curve $(x_t)_{t\in[0,T]}$ and a subsequence (not relabeled)
 such that $x^{h_j}_t\to x_t$ in $\mathcal C^0([0,T];H)$ as $j\to \infty$. From \eqref{est3} it follows immediately that also $\tilde x^{h_j}_t,\bar x^{h_j}_t$ 
 converge to $x_t$.
 
 Since $\int_0^T||\dot x_r^{h_j}||^2dr\leq M$ we can extract a further subsequence (not relabeled) with
 \begin{equation*}
  \dot x^{h_j}\rightharpoonup u \text{ in }L^2([0,T];H)
 \end{equation*}
where $u$ is some function in $L^2([0,T];H)$. As a consequence we obtain that the limit function $x\in \AC^2([0,T];H)$ since for all 
$0<s<t<T$
\begin{align}\label{modxtxs}
||x_t-x_s||_{t^*}=\lim_{j\to\infty}||x_t^{h_j}-x_s^{h_j}||_{t^*}=\lim_{j\to\infty}||\int_s^t\dot x_r^{h_j}dr||_{t^*}\leq\int_s^t ||u_r||_{t^*}dr,
\end{align}
where $t^*$ is an arbitrarily fixed timepoint in $[0,T]$.
We still have to show that $u_r=\dot x_r$ almost everywhere. This follows again straightforward from the weak convergence of $\dot x^{h_j}$. Let $y\in H$, then
\begin{align*}
 \langle x_t-x_s,y\rangle_{t^*}=\lim\langle x_t^{h_j}-x_s^{h_j},y\rangle_{t^*}=\lim\langle\int_s^t\dot x_r^{h_j}dr,y\rangle_{t^*}=
 \langle\int_s^t u_rdr,y\rangle_{t^*}.
\end{align*}
Since $y\in H$ is arbitrary we obtain
\begin{align*}
x_t-x_s=\int_s^t u_rdr,
\end{align*}
and hence $\lim_{s\to t}\frac{x_t-x_s}{t-s}=u_t$ at every Lebesgue point of $u$.
 
 \end{proof}

\begin{thm}\label{exgradflow}
Let $E$ be as in the beginning of this section. Then
 for every $x\in \Dom(E)$ there exists a unique $(x_t)\in AC^2([0,T];H)$ with $\lim_{t\searrow0}x_t=x$ such that 
 \begin{align*}
  \partial_t x_t\in -D_t^-E_t(x_t)\text{ for a.e. }t\in(0,T).
 \end{align*}
\end{thm}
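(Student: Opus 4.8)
The plan is to show that the limit curve produced by Proposition \ref{converge} solves the subdifferential inclusion, and then to establish uniqueness by a monotonicity argument. First I would fix a vanishing sequence $h_j\to0$ and invoke Proposition \ref{converge} to obtain a curve $(x_t)\in\AC^2([0,T];H)$ with $x^{h_j}\to x$ in $\mathcal C^0([0,T];H)$, $\dot x^{h_j}\rightharpoonup\dot x$ in $L^2([0,T];H)$, and $\bar x_t^{h_j},\tilde x_t^{h_j}\to x_t$ in $H$ for every $t$. Since every discrete solution starts at $x$ and the convergence is uniform, continuity of the limit gives $\lim_{t\searrow0}x_t=x_0=x$, so the initial condition comes for free; by \eqref{est3} the interpolants $\bar x^{h_j}$ converge to $x$ uniformly, hence strongly in $L^2([0,T];H)$, and by lower semicontinuity $e(x_t)<\infty$ for all $t$.

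Next I would exploit the Euler--Lagrange equation of the minimization problem \eqref{minprob1}. On each subinterval $(t_{n-1},t_n)$ one has $\dot x_t^h=h^{-1}(x_n^h-x_{n-1}^h)\in-D^-_{\overline h(t)}E_{\overline h(t)}(\bar x_t^h)$, which by definition of the subdifferential reads
\begin{equation*}
 \langle\dot x_t^h,\bar x_t^h-y\rangle_{\overline h(t)}+E_{\overline h(t)}(\bar x_t^h)\leq E_{\overline h(t)}(y)\qquad\forall y\in H,\ \text{a.e. }t.
\end{equation*}
I would then test this with an arbitrary curve $y=w_t$ satisfying $\int_0^T e(w_t)\,dt<\infty$ and integrate over $(0,T)$. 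Passing to the limit along $h_j$ rests on three ingredients: the bilinear term $\int_0^T\langle\dot x_t^{h_j},\bar x_t^{h_j}-w_t\rangle_{\overline{h_j}(t)}\,dt$ converges to $\int_0^T\langle\dot x_t,x_t-w_t\rangle_t\,dt$ by weak--strong convergence, the $t$-dependence of the inner product being absorbed into the strong factor via \eqref{loglip}; the term $\int_0^TE_{\overline{h_j}(t)}(w_t)\,dt\to\int_0^TE_t(w_t)\,dt$ by dominated convergence using the time-Lipschitz bound; and $\liminf_j\int_0^TE_{\overline{h_j}(t)}(\bar x_t^{h_j})\,dt\geq\int_0^TE_t(x_t)\,dt$ by Fatou together with lower semicontinuity, after comparing $E_{\overline{h_j}(t)}$ with $E_t$. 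This yields the integrated inequality
\begin{equation*}
 \int_0^T\langle\dot x_t,x_t-w_t\rangle_t\,dt\leq\int_0^T\bigl(E_t(w_t)-E_t(x_t)\bigr)\,dt.
\end{equation*}

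From the integrated inequality I would recover the pointwise inclusion for a.e. $t$: choosing $w_t=x_t$ off a subinterval and constant on it localises the inequality, and identifying the (weighted) $L^2([0,T];H)$-subdifferential of $w\mapsto\int_0^TE_t(w_t)\,dt$ with the pointwise subdifferential (Rockafellar's theorem on integral functionals) delivers $-\dot x_t\in D^-_tE_t(x_t)$, i.e. $\partial_tx_t\in-D^-_tE_t(x_t)$, almost everywhere. I expect this passage to be the main obstacle: the velocities converge only weakly, so the inclusion cannot be read off pointwise in $t$, and — as Example \ref{minsel} shows — $\partial_tx_t$ need not be the minimal selection, so one cannot instead route through an energy-dissipation identity and a chain rule; the closure property of the subdifferential under weak--strong convergence must be applied in its integrated form.

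Finally, for uniqueness, given two solutions $x,y$ the map $t\mapsto\|x_t-y_t\|_t^2$ is absolutely continuous and, for a.e. $t$,
\begin{equation*}
 \tfrac12\tfrac{d}{dt}\|x_t-y_t\|_t^2\leq\langle\dot x_t-\dot y_t,x_t-y_t\rangle_t+L\|x_t-y_t\|_t^2,
\end{equation*}
where the last term bounds the differentiation of the $t$-dependent inner product via \eqref{loglip}. Writing $\partial_tx_t=-v_t^x$ and $\partial_ty_t=-v_t^y$ with $v_t^x\in D^-_tE_t(x_t)$, $v_t^y\in D^-_tE_t(y_t)$, monotonicity \eqref{monotone} gives $\langle\dot x_t-\dot y_t,x_t-y_t\rangle_t=-\langle v_t^x-v_t^y,x_t-y_t\rangle_t\leq0$, so Gronwall's lemma yields $\|x_t-y_t\|_t^2\leq e^{2Lt}\|x_0-y_0\|_0^2=0$. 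Hence the dynamic gradient flow is unique.
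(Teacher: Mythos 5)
Your proposal is correct, and its overall architecture coincides with the paper's: both take the limit curve from Proposition \ref{converge}, pass to the limit in the Euler--Lagrange inequality of the scheme \eqref{minprob1} via weak--strong convergence, the polarization/log-Lipschitz control \eqref{loglip} of the time-dependent pairing, Fatou plus lower semicontinuity, and the a priori bounds \eqref{est}, \eqref{est2}, \eqref{est3}; and the uniqueness argument via monotonicity \eqref{monotone} and Gronwall is identical to the paper's, down to the constant $e^{2Lt}$. The one genuine divergence is the passage from the integrated inequality to the pointwise inclusion. The paper tests with a \emph{fixed} $y\in H$, integrates over an arbitrary interval $(s,t)$ (its inequality \eqref{e-l}), and applies the Lebesgue differentiation theorem; this yields $\langle\dot x_t,x_t-y\rangle_t+E_t(x_t)\leq E_t(y)$ off a $y$-dependent null set, so that, strictly speaking, a further routine (and unstated) separability/density argument is needed to get a single null set valid for all $y$ simultaneously. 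You instead test with arbitrary curves $w$ satisfying $\int_0^T e(w_t)\,dt<\infty$ --- which by \eqref{infsup} is exactly the effective domain of the integral functional $w\mapsto\int_0^T E_t(w_t)\,dt$, so your restriction loses nothing --- and invoke Rockafellar's identification of the subdifferential of a convex integral functional with the pointwise subdifferential; this delivers $-\dot x_t\in D^-_tE_t(x_t)$ for a.e.\ $t$, uniformly in $y$, in one stroke. The two devices are interchangeable here: yours handles the null-set issue more cleanly at the price of checking that $(t,x)\mapsto E_t(x)$ is a normal integrand (lower semicontinuity in $x$ together with the time-Lipschitz bound suffices), while the paper's is more elementary. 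Note also that your own localization remark (taking $w_t=x_t$ off a subinterval and constant on it, legitimate since $e(x_t)\leq M$ by lower semicontinuity of $e=\sup_t E_t$ and \eqref{est}) already reproduces the paper's argument verbatim, so the Rockafellar step is an alternative rather than a necessity.
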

\begin{proof}
 Recall that the minimizers of \eqref{minprob1} with $x_0^h:=x$ satisfy the Euler-Lagrange equation, that is in terms of the subdifferential of $E$, the
 piecewise linear interpolant $x_t^h$ and the piecewise constant interpolant $\bar x_t^h$ 
 \begin{equation*}
  \langle \dot x_t^h,\bar x_t^h-y\rangle_{t_n^h} +E_{t_n^h}(\bar x_t^h)-E_{t_n^h}(y)\leq 0 \quad\forall y\in H, \text{ for every } t\in(t_{n-1}^h,t_n^h).
 \end{equation*}
The log Lipschitz property together with the polarization identity gives then for all $y\in H$ and almost every $t\in[0,T]$
\begin{equation*}
  \langle \dot x_t^h,\bar x_t^h-y\rangle_{t} +E_{t}(\bar x_t^h)-E_{t}(y)\leq Lh(||\dot x_t^h||_t^2+||\bar x_t^h-y||^2_t)
  +C_1h(e(\bar x_t^h)+e(y)).
 \end{equation*}
 Integrating this inequality over the interval $(s,t)$ for some $0<s<t<T$ we deduce
 \begin{equation}\label{e-l}
 \begin{aligned}
  &\int_s^t\langle \dot x_r^h,\bar x_r^h-y\rangle_{r}\,dr +\int_s^tE_{r}(\bar x_r^h)-E_{r}(y)\,dr\\
  &\leq Lh \int_s^t||\dot x_r^h||^2+||\bar x_r^h-y||_r^2\,dr
  +C_1h\int_s^t(e(\bar x_r^h)+e(y))dr.
 \end{aligned}
 \end{equation}
 Applying Proposition \ref{converge} we get existence of a subsequence and a curve $(x_t)\in \AC^2([0,T];H)$ such that $\bar x_t^h\to x_t$ in 
 $\mathcal C^0([0,T];H)$ and $\dot x_t^h\to \dot x_t$ weakly in $L^2([0,T];H)$. Hence we get for all $y\in \Dom(E)$
 
 \begin{align*}
  &\int_s^t\langle \dot x_r, x_r-y\rangle_{r}dr +\int_s^tE_{r}( x_r)-E_{r}(y)dr\\
  &\leq \liminf_{h\to0} \int_s^t\langle \dot x_r^h,\bar x_r^h-y\rangle_{r}dr +\liminf_{h\to0}\int_s^tE_{r}(\bar x_r^h)-E_{r}(y)dr\\
  &\leq \liminf_{h\to0} \Big\{\int_s^t\langle \dot x_r^h,\bar x_r^h-y\rangle_{r}dr +\int_s^tE_{r}(\bar x_r^h)-E_{r}(y)dr\Big\}\\
  &\leq\liminf_{h\to0}\Big\{Lh \int_s^t||\dot x_r^h||^2+||\bar x_r^h-y||_r^2\,dr
  +C_1h\int_s^t(e(\bar x_r^h)+e(y))dr\Big\}\\
  &\leq0,
 \end{align*}
 where we applied Fatou's Lemma and the lower semicontinuity of $x\mapsto E_t(x)$ in the first inequality, estimate \eqref{e-l} in the third inequality and 
 the non-negativity of $E_t(y)$,
 \eqref{est} and \eqref{est2} in the last. Dividing by $t-s$ and letting $s\to t$ we infer from the Lebesgue differentiation theorem
  that 
  \begin{align*}
  \langle \dot x_t, x_t-y\rangle_{t} +E_{t}( x_t)-E_{t}(y)\leq 0
  \end{align*}
for almost every $t\in(0,T)$ and $y\in X$.

Since $\bar x_t^h$ converges to $x_t$ for every $t$ we clearly have that $\lim_{t\searrow0}x_t=x$.

Suppose there exists two absolutely continuous curves $(x_t)$, $(\tilde x_t)_{t\in[0,T]}$ such that for every $y\in X$ and almost every $t\in[0,T]$
\begin{align*}
\langle \dot x_t, x_t-y\rangle_{t} +E_{t}( x_t)-E_{t}(y)&\leq 0,\\
 \langle \dot {\tilde x}_t, \tilde x_t-y\rangle_{t} +E_{t}(\tilde x_t)-E_{t}(y)&\leq 0
\end{align*}
with $\lim_{t\searrow0}x_t=\lim_{t\searrow 0}\tilde x_t=x_0$.
Inserting $\tilde x_t$ for $y$ into the first inequality and $x_t$ for $y$ into the second we obtain by adding and using \eqref{monotone}
\begin{align*}
\partial_s\frac12{||x_s-\tilde x_s||^2_{t}}\Big|_{s=t}=\langle \dot x_t-\dot{\tilde x}_t, x_t-\tilde x_t\rangle_{t} \leq 0.
\end{align*}
From the log-Lipschitz continuity of the metric we deduce
\begin{align*}
\partial_s\frac12{||x_s-\tilde x_s||^2_{s}}\Big|_{s=t} \leq L||x_t-\tilde x_t||_t^2.
\end{align*}
Applying Gronwall's inequality we conclude
$||x_t-\tilde x_t||_t^2\leq e^{2Lt}||x_0-\tilde x_0||^2_0=0$ for almost every $t\in[0,T]$ and hence for every $t\in[0,T]$ by continuity. This proves uniqueness.
\end{proof}

\section{The heat equation on time-dependent metric measure spaces}\label{secheat}

Let $(X,d_t,m_t)_{t\in[0,T]}$ be a family of Polish metric measure space. We always assume that \eqref{loglip} holds and 
that there exists a reference measure $m\in\mathcal{P}(X)$ such that $m_t=e^{-f_t}m$ with Borel
functions $f_t$ satisfying $|f_t(x)|\leq C$ and
\begin{equation}\label{lipschitzinspace}
 |f_t(x)-f_s(x)|\leq L^*|t-s|,\quad |f_t(x)-f_t(y)|\leq Cd_t(x,y).
\end{equation}

Let us denote \textit{Cheeger's energy} by $\Ch_t\colon L^2(X,m_t)\to[0,\infty]$ 
\begin{equation*}
 \Ch_t(u)=\frac12\inf\left\{\liminf_{n\to\infty}\int_X(\lip_tu_n)^2dm_t|u_n\in \Lip(X),\int_X|u_n-u|^2dm_t\to0\right\},
\end{equation*}
where $\lip_tu$ denotes the local slope defined by
\begin{align*}
 \lip_tu(x):=\limsup_{y\to x}\frac{|u(x)-u(y)|}{d_t(x,y)}.
\end{align*}
By making use of the \textit{minimal relaxed gradient} $|\nabla_tu|_*$ (\cite[Definition 4.2]{agscalc}),
this functional admits the integral representation
\begin{equation*}
 \Ch_t(u)=\frac12\int_X|\nabla_tu|_*^2dm_t,
\end{equation*}
set equal to $+\infty$ if $u$ has no relaxed slope. This defines a convex and lower semicontinuous functional in $L^2(X,m_t)$ \cite[Theorem 4.5]{agscalc}.

\begin{lma}\label{relgradient}
Fix $t\in[0,T]$ and let $u\in \Dom(\Ch_t)$. Then, asssuming \eqref{loglip}
\begin{equation*}
  |\nabla_tu|_*\leq e^{L|t-s|}|\nabla_su|_* \quad m\text{-a.e. in }X, \quad\forall s\in[0,T].
\end{equation*}

\end{lma}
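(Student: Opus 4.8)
The plan is to transfer the pointwise comparison of local slopes---which is immediate for Lipschitz functions---to the minimal relaxed gradients through the relaxation procedure defining $\Ch_t$. Throughout I use that $|f_t|\le C$ makes $L^2(X,m)$, $L^2(X,m_s)$ and $L^2(X,m_t)$ coincide as sets with mutually equivalent norms, so all of the $L^2$-convergences below may be read in any of these spaces. It is cleanest to prove the (symmetric) statement that $u\in\Dom(\Ch_s)$ implies $u\in\Dom(\Ch_t)$ together with the asserted bound; if $u\notin\Dom(\Ch_s)$ then $|\nabla_su|_*=+\infty$ by convention and there is nothing to prove.

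First I would unwind \eqref{loglip} into the two-sided metric comparison $e^{-L|t-s|}d_s(x,y)\le d_t(x,y)\le e^{L|t-s|}d_s(x,y)$. For any Lipschitz function $v$ this yields the pointwise bound
\begin{equation*}
\lip_t v(x)=\limsup_{y\to x}\frac{|v(x)-v(y)|}{d_t(x,y)}\le e^{L|t-s|}\limsup_{y\to x}\frac{|v(x)-v(y)|}{d_s(x,y)}=e^{L|t-s|}\lip_s v(x).
\end{equation*}
This is the only place where \eqref{loglip} enters.

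Next I would invoke the strong approximation property of the minimal relaxed gradient from \cite{agscalc}: since $u\in\Dom(\Ch_s)$, there exist $u_n\in\Lip(X)$ with $u_n\to u$ and $\lip_su_n\to|\nabla_su|_*$, both in $L^2(X,m)$. Applying the previous step to each $u_n$ gives $0\le\lip_tu_n\le e^{L|t-s|}\lip_su_n$ $m$-a.e. The right-hand side converges in $L^2$, so $(\lip_tu_n)$ is bounded in $L^2$ and, after passing to a subsequence, $\lip_tu_n\rightharpoonup g$ weakly in $L^2$ for some $g$. Because $u_n\to u$ in $L^2$ and $\lip_tu_n\rightharpoonup g$, the function $g$ is by definition a relaxed slope of $u$ with respect to $d_t$; hence $u\in\Dom(\Ch_t)$ and $|\nabla_tu|_*\le g$ $m$-a.e.

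Finally I would identify the bound on $g$. The functions $e^{L|t-s|}\lip_su_n-\lip_tu_n$ are nonnegative $m$-a.e. and converge weakly in $L^2$ to $e^{L|t-s|}|\nabla_su|_*-g$; since the cone of $m$-a.e. nonnegative $L^2$ functions is convex and strongly closed, it is weakly closed, whence $g\le e^{L|t-s|}|\nabla_su|_*$ $m$-a.e. Combining this with $|\nabla_tu|_*\le g$ gives the claim, and swapping the roles of $s$ and $t$ covers the hypothesis $u\in\Dom(\Ch_t)$ as stated. The main obstacle is precisely the passage from the pointwise Lipschitz inequality to the relaxed gradients: everything rests on the characterization of relaxed slopes as weak $L^2$-limits of local slopes along $L^2$-approximating Lipschitz sequences, together with the availability of a sequence realizing $|\nabla_su|_*$ as a strong $L^2$-limit, both of which are supplied by \cite{agscalc}.
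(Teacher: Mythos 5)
Your proof is correct and follows essentially the same route as the paper: both transfer the pointwise comparison $\lip_t u_n\leq e^{L|t-s|}\lip_s u_n$ through the strong approximation of $|\nabla_su|_*$ by local slopes of Lipschitz functions (\cite[Lemma 4.3(c)]{agscalc}). The only difference is that where you reprove the stability step by hand (weak $L^2$-compactness of $\lip_tu_n$ plus weak closedness of the nonnegative cone, then Definition 4.2), the paper outsources it by citing \cite[Lemma 4.4]{agscalc} to conclude directly that $e^{L|t-s|}|\nabla_su|_*$ is a relaxed $d_t$-gradient.
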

\begin{proof}
 Since $u\in \Dom(\Ch_t)$ we know $u\in Dom(\Ch_s)$ as well and there exist bounded Borel Lipschitz functions $u_n\in L^2(X,m_s)$ such that 
 \begin{equation*}
  u_n\to u, \quad\lip_s u_n\to |\nabla_s u|_* \text{ strongly in }L^2(X,m_s),
 \end{equation*}
 see e.g. \cite[Lemma 4.3 (c)]{agscalc}. This implies that $e^{L|t-s|}|\nabla_su|_*$ is a relaxed $d_t$-gradient since 
 \begin{equation*}
  u_n\to u, \quad e^{L|t-s|}\lip_s u_n\to e^{L|t-s|}|\nabla_s u|_* \text{ strongly in }L^2(X,m_t)
 \end{equation*}
and
\begin{equation*}
 |\nabla_tu_n|_*\leq e^{L|t-s|}\lip_su_n,
\end{equation*}
c.f. \cite[Lemma 4.3. (a)]{agscalc}. Thus Lemma 4.4 in \cite{agscalc} yields the assertion.

\end{proof}

Due to our assumptions the sets $L^2(X,m_t)$ and $ \Dom(\Ch_t)$ do not depend on $t$. 
The domain $\Dom(\Ch)$  of Cheeger's energy endowed with the norm
$$\sqrt{||u||^2_{L^2(X,m_t)}+\Ch_t(u)}$$
is a Banach space, cf. \cite[Theorem 2.7]{cheeger}. 

In the following we additionally impose that for each $t$ the space $(X,d_t,m_t)$ is \emph{infinitesimally Hilbertian}, i.e. Cheeger's energy $\Ch_t$ 
defines a 
quadratic form. In particular the domain is a separable Hilbert space and Lipschitz functions are dense, see \cite{agsmet}. 
In this case we will denote by $\mathcal E_t$ the associated \textit{Dirichlet form}, which is the unique 
bilinear symmetric form satisfying
\begin{equation*}
 \mathcal{E}_t(u,u)=2\Ch_t(u) \quad\forall u\in \Dom(\Ch).
\end{equation*}
Moreover $\mathcal E_t$ is strongly local \cite[Proposition 4.14]{agsmet}, i.e.
\begin{align*}
 u,v\in\Dom(\Ch), v\text{ constant on }\{u\neq0\}\Rightarrow \mathcal E(u,v)=0,
\end{align*}
and admits the integral representation
\begin{equation*}
 \mathcal{E}_t(u,v)=\int\nabla_t u\cdot\nabla_t v\, dm_t      \quad u,v\in \Dom(\Ch),
\end{equation*}
where
\begin{equation*}
 \nabla_t u\cdot\nabla_t v:=\lim_{\varepsilon\searrow0}\frac{|\nabla_t(u+\varepsilon v)|^2_*-|\nabla_tu|^2_*}{2\varepsilon}
\end{equation*}
and the limit is understood in $L^1(X,m_t)$, see \cite[Proposition 4.14]{agsmet}.

We define the Laplace operator $\Delta_t$ as the generator of $\mathcal E_t$, i.e. as the unique non-positive self adjoint operator on $L^2(X,m_t)$ with domain
$\Dom(\Delta_t)\subset \Dom(\Ch)$ and 
\begin{equation*}
 -\int_X\Delta_tu v\, dm_t=\mathcal{E}_t(u,v) \quad \forall u\in \Dom(\Delta_t), v\in \Dom(\Ch).
\end{equation*}

We set $\mathcal{F}=\Dom(\Ch)$ endowed with $||u||_\F^2:={||u||^2_{L^2(X,m_t)}+\E_t(u)}$ and $\mathcal{H}=L^2(X,m_t)$. We identify $\mathcal H$ with its own dual; the dual of $\F$ is denoted by $\F^*$. In particular we have $\F\subset \mathcal H\subset \F^*$ with dense and continuous embeddings.
We define for $0\leq s<\tau\leq T$ the Hilbert space
\begin{equation*}
 \mathcal{F}_{(s,\tau)}=L^2((s,\tau)\to\mathcal F)\cap H^1((s,\tau)\to\mathcal F^*),
\end{equation*}
equipped with the norm $(\int_s^\tau||u_t||^2_\F+||\partial_tu_t||_{\F^*}^2\,dt)^{1/2}$. According to Lemma 10.3 in \cite{renardy2006introduction} we have
$\F_{(s,\tau)}\subset\mathcal C([s,\tau]\to \mathcal H)$.
\begin{Def}\label{defheat}
 A function $u$ is called \emph{solution to the heat equation}
 \begin{equation*}
  \partial_t u=\Delta_t u \text{ on }(s,\tau)\times X
 \end{equation*}
 if $u\in\mathcal{F}_{(s,\tau)}$ and if for all $w\in\mathcal{F}_{(s,\tau)}$
 \begin{equation}
  -\int_s^\tau\mathcal{E}_t(u_t,w_t)dt=\int_s^\tau\langle\partial_t u_t, w_te^{-f_t}\rangle_{\mathcal F^*,\mathcal F}dt,
 \end{equation}
 where $\langle\cdot,\cdot\rangle_{\mathcal F^*,\mathcal F}$ denotes the dual pairing.
 
A function $v$ is called \emph{solution to the adjoint heat equation}
$$-\Delta_s v +\partial_s f \cdot v=\partial_s v\qquad\mbox{on }(\sigma,t)\times X$$
if $v\in \F_{(\sigma,t)}$ and if for all $w\in \F_{(\sigma,t)}$
$$\int_\sigma^t \E_s(v_s,w_s)ds+\int_\sigma^t \int_X v_s\cdot w_s\cdot \partial_sf_s\,dm_s\,ds=\int_\sigma^t \langle \partial_sv_s, w_se^{-f_s}\rangle_{\F,\F^*}\,ds.$$
 \end{Def}

By virtue of Theorem 2.2 and Theorem 2.5 in \cite{sturm2016} we have
existence and uniqueness to solutions of the heat and the adjoint heat equation with initial condition 
$u_s=h\in \mathcal H$ and terminal condition $v_t=h\in\mathcal H$ respectively.
We denote these solutions by
 \begin{equation*}
  u_t(x)=P_{t,s}h(x), \qquad v_s(x)=P^*_{t,s}h(x).
 \end{equation*}
Both solutions, called heat flow and adjoint heat flow respectively, satisfy
\begin{align*}
 P_{t,s}h(x)&=P_{t,r}\circ P_{r,s} h(x),\\
  P_{t,s}^*h(y)&=P^*_{r,s}\circ P^*_{t,r} h(x).
\end{align*}
The operators are dual to each other in the sense that
\begin{align*}
\int( P_{t,s}u )v\, dm_t=\int uP^*_{t,s}v\, dm_s,
\end{align*}
and 
\begin{equation}
\begin{aligned}\label{ddual}
&\text{if }h\in \mathcal H \text{ with } 0\leq h\leq 1 \quad\text{ then }\quad0\leq P_{t,s}h\leq 1\\
&P_{t,s}1=1\quad \text{ whenever } \quad m(X)<\infty,
\end{aligned}
\end{equation}
cf. Section 2 in \cite{sturm2016}.
\subsection{Identification of the forward adjoint heat flow with the dynamic EDI-gradient flow for the entropy}
We consider the adjoint heat flow $(\rho_t)_{0\leq t\leq T}$ parametrized forwards in time, i.e. solving
\begin{equation*}
 \partial_t\rho_t=\Delta_t\rho_t+\rho_t\partial_tf_t\quad \text{ on }(0,T)\times X
\end{equation*}
with nonnegative initial data $\rho_0=h$. In the following we show coincidence of $(\rho_t)$ with the dynamic EDI-gradient flow $(\mu_t)$ of $S$ via $\mu_t=\rho_tm_t$. For this we have to assume that each $(X,d_t,m_t)$ satisfies CD$(K,\infty)$.
Following the approach in \cite{agscalc}, we prove that
$\mu_t=\rho_tm_t$ is a dynamic EDI-gradient flow of $S$. From the uniqueness it follows that both flows coincide.
\begin{lma}
Let $h\in\mathcal H$ and $(\rho_t)$ be the solution to the forward adjoint heat flow on $(0,T)\times X$ with $\rho_0=h$.
\begin{enumerate}
 \item  The flow $(\rho_t)$ is mass preserving, i.e.
 \begin{equation}\label{masscons}
  \int\rho_t\,dm_t=\int h\,dm_0 \quad \forall 0\leq t\leq T.
   \end{equation}
 \item If $e\colon\mathbb{R}\to[0,\infty]$ is a convex lower semicontinuous function and $e'$ is locally Lipschitz in $\mathbb R$,
 it holds for a.e. $t\in(0,T)$
 \begin{equation}\label{dissipation}
  \frac{d}{dt}\int e(\rho_t)\,dm_t=-\int e''(\rho_t)|\nabla_t\rho_t|^2_*\,dm_t+\int\partial_t f_t(\rho_te'(\rho_t)-e(\rho_t))\,dm_t.
 \end{equation}

\end{enumerate}

\end{lma}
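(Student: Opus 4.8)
The plan is to extract a pointwise-in-time weak form of the forward adjoint heat equation and then test it against two carefully chosen functions: the constant $1$ for the mass identity \eqref{masscons}, and $e'(\rho_t)$ for the dissipation identity \eqref{dissipation}. Multiplying $\partial_t\rho_t=\Delta_t\rho_t+\rho_t\partial_tf_t$ by a test function and integrating against $m_t=e^{-f_t}m$, and using $\int_X(\Delta_t\rho_t)\,w\,dm_t=-\E_t(\rho_t,w)$, turns the equation into
\[
\langle \partial_t\rho_t,\,w\,e^{-f_t}\rangle_{\F^*,\F}=-\E_t(\rho_t,w)+\int_X \rho_t\,w\,\partial_tf_t\,dm_t ,
\]
valid for a.e.\ $t$ and all $w\in\F$. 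This is exactly the weak formulation of the heat equation from Definition \ref{defheat} augmented by the drift term, and the pointwise-in-time statement follows from the integrated one by localising at Lebesgue points. Both assertions then reduce to keeping track of the extra contribution produced by the time dependence of $m_t$ through $\partial_t(e^{-f_t})=-\partial_tf_t\,e^{-f_t}$.

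For the first assertion I would differentiate $t\mapsto\int_X\rho_t\,dm_t=\int_X\rho_t\,e^{-f_t}\,dm$, which is absolutely continuous since $m(X)<\infty$ and $\rho\in\F_{(0,T)}\subset\mathcal C([0,T]\to\mathcal H)$. This gives
\[
\frac{d}{dt}\int_X\rho_t\,dm_t=\langle\partial_t\rho_t,e^{-f_t}\rangle_{\F^*,\F}-\int_X\rho_t\,\partial_tf_t\,dm_t .
\]
Inserting the weak equation with $w=1$ and using $\E_t(\rho_t,1)=0$ (constants have vanishing relaxed gradient, and $1\in\Dom(\Ch)$ because $m$ is finite), the first term equals $\int_X\rho_t\,\partial_tf_t\,dm_t$, which cancels the measure-drift term exactly. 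Hence $t\mapsto\int_X\rho_t\,dm_t$ is constant, and evaluating at $t=0$ yields \eqref{masscons}.

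For the dissipation identity I would again differentiate in time, now combining the standard chain rule for the Hilbert triple $\F\subset\mathcal H\subset\F^*$ (applied to $\rho\in\F_{(0,T)}$) with the differentiation of the weight $e^{-f_t}$, to obtain
\[
\frac{d}{dt}\int_X e(\rho_t)\,dm_t=\langle\partial_t\rho_t,\,e'(\rho_t)\,e^{-f_t}\rangle_{\F^*,\F}-\int_X e(\rho_t)\,\partial_tf_t\,dm_t .
\]
Plugging the weak equation in with $w=e'(\rho_t)$, and applying the chain rule for the strongly local form $\E_t$ together with the chain rule for minimal relaxed gradients ($\nabla_t(e'(\rho_t))=e''(\rho_t)\nabla_t\rho_t$, see \cite[Lemma 4.3]{agscalc}),
\[
\E_t(\rho_t,e'(\rho_t))=\int_X \nabla_t\rho_t\cdot\nabla_t\big(e'(\rho_t)\big)\,dm_t=\int_X e''(\rho_t)\,|\nabla_t\rho_t|_*^2\,dm_t ,
\]
and collecting the two drift contributions into $\int_X\partial_tf_t\big(\rho_te'(\rho_t)-e(\rho_t)\big)\,dm_t$, gives precisely \eqref{dissipation}.

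The main obstacle is the rigorous justification of these chain rules when $e'$ and $e''$ are merely locally Lipschitz, hence possibly unbounded (as for $e(r)=r\log r$): one cannot directly admit $e'(\rho_t)$ as a test function in $\F_{(0,T)}$, nor is the Hilbert-triple chain rule available for non-Lipschitz $e'$. I would resolve this by a truncation and approximation argument in the spirit of \cite{agscalc}. The maximum principle \eqref{ddual}, transferred to the adjoint flow via the duality between $P_{t,s}$ and $P^*_{t,s}$, furnishes the requisite a priori bounds on $\rho_t$ (nonnegativity from $h\ge0$, together with an upper bound on the range of $\rho_t$), so that $e'$ and $e''$ need only be controlled on a compact set. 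One then replaces $e$ by a sequence $e_n$ with bounded Lipschitz derivative, for which both chain rules above are licit and \eqref{dissipation} holds, and passes to the limit using the uniform control of $\rho_t$ and dominated convergence. Establishing that $t\mapsto\int_X e(\rho_t)\,dm_t$ is genuinely absolutely continuous—so that the identity of derivatives integrates back to the claimed formula—is the most delicate point, and it is here that the regularity $\rho\in\F_{(0,T)}$ and the boundedness of $\partial_tf_t$ from \eqref{lipschitzinspace} are essential.
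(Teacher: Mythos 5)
Your overall skeleton matches the paper's proof: for \eqref{dissipation} both arguments test the weak forward adjoint equation against $e'(\rho_t)e^{-f_t}$, use the chain rule for the strongly local form to get $\E_t(\rho_t,e'(\rho_t))=\int e''(\rho_t)|\nabla_t\rho_t|^2_*\,dm_t$, and track the extra drift term coming from $\partial_t(e^{-f_t})=-\partial_tf_t\,e^{-f_t}$. For \eqref{masscons} you take a slightly longer route than the paper: the paper simply notes that $m(X)<\infty$ gives $1\in\mathcal H$ and invokes the duality $\int (P_{t,s}u)v\,dm_t=\int u\,P^*_{t,s}v\,dm_s$ together with $P_{t,s}1=1$ from \eqref{ddual}; your direct differentiation with test function $w=1$ and $\E_t(\rho_t,1)=0$ is a fine alternative, though it needs the absolute continuity of $t\mapsto\int\rho_t\,dm_t$, which the duality argument avoids entirely.

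There is, however, one genuine flaw in your treatment of the approximation step for \eqref{dissipation}. You claim that the maximum principle \eqref{ddual}, transferred by duality, furnishes ``an upper bound on the range of $\rho_t$,'' and you use this purported uniform control of $\rho_t$ to pass to the limit $e_n\to e$ by dominated convergence. No such bound exists here: the lemma assumes only $h\in\mathcal H=L^2$, not $h\in L^\infty$, and in fact not even $h\geq 0$ (nonnegativity of the datum only enters later, in Proposition \ref{deviatentropy}). Duality with \eqref{ddual} does transfer positivity when $h\geq0$, but the forward adjoint flow is mass-preserving rather than sub-Markovian — the drift $\rho_t\partial_tf_t$ spoils sup bounds even for bounded data, and for merely $L^2$ data there is nothing to control at all. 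The paper avoids this entirely: it reduces by a standard approximation (citing \cite[Theorem 4.16]{agscalc}) to the case where $e'$ is \emph{bounded and globally Lipschitz}, so that $e'(\rho_t)\in\F$ is an admissible test function with no pointwise information on $\rho_t$ whatsoever; it then establishes the absolute continuity of $t\mapsto\int e(\rho_t)\,dm_t$ not via a generic Hilbert-triple chain rule but via two one-sided estimates from the convexity inequality $e(b)-e(a)\geq e'(a)(b-a)$, bounding the dual pairing by $\frac12\|\partial_t\rho_t\|^2_{\F^*}+\frac12\|e'(\rho_{t_0})e^{-f_{t_1}}\|^2_{\F}$ and using $|\partial_tf_t|\leq L^*$ from \eqref{lipschitzinspace}, with the a.e.\ derivative computed by the mean value theorem (cf.\ \cite[Corollary 5.5]{lierl2015}). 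To repair your argument, drop the $L^\infty$ claim and run the limit $e_n\to e$ at the level of the integrated identity, using convexity and monotone convergence for the dissipation term rather than dominated convergence tied to a uniform bound on $\rho_t$; as written, that limit passage is the one step that would fail.
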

\begin{proof}
 Since the measure is finite, $1\in\mathcal H$, and hence by duality
 \begin{align*}
 \int\rho_t\, dm_t=\int h\, dm_0.
 \end{align*}

In order to prove \eqref{dissipation} we assume by a standard approximation that $e'$ is bounded and globally Lipschitz, cf. \cite[Theorem 4.16]{agscalc}.
Since $e$ is convex and $\rho\in\F_{0,T}$ we have for $t_0<t_1$
\begin{align*}
 &\int e(\rho_{t_1})\,d m_{t_1}-\int e(\rho_{t_0})\,dm_{t_0}\\
 \geq &\int e'(\rho_{t_0})(\rho_{t_1}-\rho_{t_0})\,dm_{t_1}
 +\int e(\rho_{t_0})\,d(m_{t_1}-m_{t_0})\\
 =&\int_{t_0}^{t_1}\langle \partial_t\rho_t,e'(\rho_{t_0})e^{-f_{t_1}}\rangle_{\F^*,\F}\,dt-\int_{t_0}^{t_1}\int e(\rho_{t_0})\partial_t f_t\,dm_t\,dt\\
 \geq &\int_{t_0}^{t_1}\Big(
 -\frac12||\partial_t\rho_t||_{\F^*}^2-\frac12||e'(\rho_{t_0})e^{-f_{t_1}}||^2_\F-\int e(\rho_{t_0})\partial_t f_t\,dm_t\Big)\,dt,
\end{align*}
which is integrable. Changing the roles of $t_0$ and $t_1$ shows that $s\mapsto\int e(\rho_t)\,dm_t$ is absolutely continuous. 
Then, since $\rho\in \F_{(0,T)}$, we deduce from the mean value theorem for a.e. $t$
\begin{align*}
 &\lim_{h\to0}\frac1h\Big(\int e(\rho_{t+h})\,dm_{t+h}-\int e(\rho_t)\,dm_t\Big)\\
 =&\lim_{h\to0}\frac1h\int (e(\rho_{t+h})-e(\rho_t))e^{-f_{t+h}}\,dm+\lim_{h\to0}\frac1h\int e(\rho_t)(e^{-f_{t+h}}-e^{-f_t})\,dm\\
 =&\lim_{h\to0}\int e'(\rho_t)\frac{\rho_{t+h}-\rho_t}h \,dm_t-\int e(\rho_t)\partial_tf_t\,dm_t\\
 =&\langle\partial_t\rho_t,e'(\rho_t)e^{-f_t}\rangle_{\F^*,\F}-\int e(\rho_t)\partial_tf_t\,dm_t,
 \end{align*}
 cf. \cite[Corollary 5.5]{lierl2015}, \cite[Lemma 12.3]{ams}.
 Since $\rho$ is a solution to the forward adjoint heat equation we have 
 \begin{align*}
 \langle\partial_s\rho_s,e'(\rho_s)e^{-f_s}\rangle_{\F^*,\F}
=& -\E_s(\rho_s,e'(\rho_s))+\int \rho_se'(\rho_s)\partial_s f_s\,dm_s\\
=&-\int e''(\rho_s)|\nabla_s\rho_s|_*^2dm_s+\int \rho_se'(\rho_s)\partial_s f_s\,dm_s,
\end{align*}
which proves \eqref{dissipation}.
\end{proof}

\begin{Prop}\label{deviatentropy}
Let $(\rho_t)_{0\leq t\leq T}$ be the solution of the forward adjoint heat equation with nonnegative initial datum
$h\in \mathcal H$.
Then it holds
\begin{equation}
\begin{aligned}\label{fisherest}
  \int_0^t\int_{\{\rho_r>0\}}\frac{|\nabla_r\rho_r|_*^2}{\rho_r}\,dm_r\,dr
  \leq \int h\log h\,dm_0+\int h\,dm_0\\
  -m_t(X)+\int_0^t\int(\partial_rf_r)\rho_r \,dm_r\,dr,
 \end{aligned}
 \end{equation}
 and the map $t\mapsto \int\rho_t\log\rho_t dm_t$ is locally absolutely continuous in $[0,T]$ and 
\begin{equation}\label{eq:deviatentropy}
 \frac{d}{d t}\int\rho_t\log\rho_t \,dm_t=-\int_{\{\rho_t>0\}}\frac{|\nabla_t\rho_t|_*^2}{\rho_t}\,dm_t+\int(\partial_t f_t) \rho_t \,dm_t
\end{equation}
for a.e. $t\in[0,T]$.
\end{Prop}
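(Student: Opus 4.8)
The plan is to apply the dissipation formula \eqref{dissipation} of the preceding lemma with the entropy density $e(z)=z\log z$, for which $e'(z)=\log z+1$, $e''(z)=1/z$ and $ze'(z)-e(z)=z$, so that \eqref{dissipation} reduces formally to exactly \eqref{eq:deviatentropy}. The obstruction is that $e'$ is not locally Lipschitz up to $z=0$, so the lemma does not apply directly to $z\log z$ and an approximation is unavoidable; the genuine content of \eqref{fisherest} is then the \emph{finiteness} of the time-integrated Fisher information, which is what makes the pointwise identity \eqref{eq:deviatentropy} rigorous.

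For \eqref{fisherest} I would introduce, for $N\in\mathbb N$, a convex function $e_N$ coinciding with $z\log z$ on $[1/N,\infty)$ and continued by its tangent line on $[0,1/N)$. Then $e_N$ has locally Lipschitz derivative with $e_N''$ bounded by $N$, so the lemma (which already incorporates a standard truncation to globally Lipschitz derivatives) applies; moreover $e_N\nearrow z\log z$ and $e_N''\nearrow\mathbf{1}_{\{z>0\}}/z$ pointwise, while a direct computation gives $ze_N'(z)-e_N(z)=\max\{z,1/N\}$. Applying \eqref{dissipation} to $e_N$ and integrating over $(0,t)$ yields
\begin{equation*}
\int_0^t\int e_N''(\rho_r)|\nabla_r\rho_r|_*^2\,dm_r\,dr=\int e_N(h)\,dm_0-\int e_N(\rho_t)\,dm_t+\int_0^t\int\partial_rf_r\max\{\rho_r,1/N\}\,dm_r\,dr.
\end{equation*}
Passing to the limit $N\to\infty$, monotone convergence sends the left-hand side up to $\int_0^t\int_{\{\rho_r>0\}}|\nabla_r\rho_r|_*^2/\rho_r\,dm_r\,dr$ and gives $\int e_N(h)\,dm_0\nearrow\int h\log h\,dm_0$; the drift term converges by dominated convergence, using $|\partial_rf_r|\le L^*$, the bound $\max\{\rho_r,1/N\}\le\rho_r+1$ and the mass identity \eqref{masscons}. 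The boundary term $-\int e_N(\rho_t)\,dm_t$ is controlled by the elementary convexity inequality $z\log z\ge z-1$ (so that $e_N(z)\ge z-1$), which combined with \eqref{masscons} bounds it by a finite quantity of the form appearing in \eqref{fisherest}; in particular the Fisher information is integrable in time.

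Finally, for the derivative statement \eqref{eq:deviatentropy} I would first establish that $t\mapsto\int\rho_t\log\rho_t\,dm_t$ is locally absolutely continuous, by repeating the two-sided difference-quotient estimate from the proof of the preceding lemma (convexity of $z\log z$, the regularity $\rho\in\F_{(0,T)}$, and lower semicontinuity). Then, for a.e. $t$, I would compute its derivative by applying \eqref{dissipation} to the approximants $e_N$ and letting $N\to\infty$; the finiteness of $\int_{\{\rho_t>0\}}|\nabla_t\rho_t|_*^2/\rho_t\,dm_t$ secured by \eqref{fisherest} legitimises dominated convergence in the Fisher term (again with $e_N''\nearrow\mathbf{1}_{\{z>0\}}/z$), yielding \eqref{eq:deviatentropy}. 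The main obstacle throughout is precisely the singularity of $z\mapsto z\log z$ and of its derivative at the origin: it forces the regularised scheme above, and it is the a priori finiteness of the Fisher information from \eqref{fisherest} — rather than any new identity — that is the crux, since without it the right-hand side of \eqref{eq:deviatentropy} need not be well defined.
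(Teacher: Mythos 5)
Your overall strategy is the paper's strategy: regularize $e(z)=z\log z$ near the origin so that the dissipation formula \eqref{dissipation} becomes applicable, integrate in time, pass to the limit by monotone and dominated convergence, derive \eqref{fisherest} from $z\log z\geq z-1$ together with mass conservation \eqref{masscons}, and then deduce absolute continuity and \eqref{eq:deviatentropy}. The only difference is the choice of regularization: the paper shifts the density, applying \eqref{dissipation} to $\rho_t+\varepsilon$ (equivalently, working with $e_\varepsilon(z)=(z+\varepsilon)\log(z+\varepsilon)$, for which the drift weight is really $\rho_t-\varepsilon\log(\rho_t+\varepsilon)$ rather than $\rho_t+\varepsilon$ — a harmless imprecision that vanishes as $\varepsilon\to0$), whereas you truncate $e$ by its tangent line at $1/N$. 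Your variant is perfectly serviceable, arguably tidier since $e_N$ literally satisfies the lemma's hypotheses applied to $\rho_t$ itself; the identities $ze_N'(z)-e_N(z)=\max\{z,1/N\}$, $e_N''\nearrow\mathbf{1}_{\{z>0\}}/z$, $e_N\nearrow z\log z$ and the bound $e_N(z)\geq z-1$ (check the affine piece at the endpoints $z=0$ and $z=1/N$) are all correct, and your limit passages for \eqref{fisherest} match the paper's. One small remark: carried out precisely, your boundary estimate produces the constant $\int h\log h\,dm_0-\int h\,dm_0+m_t(X)$; the signs of the last two terms in the display \eqref{fisherest} appear to be swapped in the paper, which is immaterial for finiteness since all measures involved are finite.

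The one step you should rephrase is the derivation of \eqref{eq:deviatentropy}. As written — apply \eqref{dissipation} to $e_N$ at a.e.\ fixed $t$ and let $N\to\infty$ — you are exchanging $\frac{d}{dt}$ with $\lim_N$: knowing $\frac{d}{dt}\int e_N(\rho_t)\,dm_t$ converges pointwise in $t$ to the desired right-hand side does not yield convergence of the derivatives of $\int e_N(\rho_t)\,dm_t$ to the derivative of $\int \rho_t\log\rho_t\,dm_t$. Likewise, re-running the two-sided difference-quotient estimate from the lemma's proof directly for $z\log z$ is problematic, because that estimate uses $\|e'(\rho_{t_0})e^{-f_{t_1}}\|_{\mathcal F}$, and $e'(z)=\log z+1$ is unbounded near $z=0$, so $e'(\rho_{t_0})$ need not lie in $\mathcal F$. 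The repair is already contained in your first part: pass to the limit $N\to\infty$ in your \emph{integrated} $e_N$-identity at each fixed $t$ (monotone convergence for the Fisher term and for $\int e_N(\rho_t)\,dm_t$, with the integrable lower bound $\rho_t-1$ on the finite measure $m_t$; dominated convergence for the drift, using $|\partial_rf_r|\leq L^*$). This gives the integrated identity \eqref{eq:deviatentropy1} for every $t$, and since \eqref{fisherest} together with the Lipschitz bound on $f$ puts the time-integrand in $L^1(0,T)$, the map $t\mapsto\int\rho_t\log\rho_t\,dm_t$ is locally absolutely continuous and Lebesgue differentiation yields \eqref{eq:deviatentropy} a.e. — which is exactly how the paper concludes.
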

\begin{proof}
By duality and \eqref{ddual}, we have $\rho_t\geq0$ for every $t\in(0,T)$.
Applying formula \eqref{dissipation} to $\rho_t+\varepsilon$ we get
 \begin{equation*}
  \frac{d}{dt}\int(\rho_t+\varepsilon)\log(\rho_t+\varepsilon)\,dm_t=-\int\frac{|\nabla_t\rho_t|^2_*}{\rho_t+\varepsilon}\,dm_t+\int\partial_t f_t(\rho_t+\varepsilon)\,dm_t.
 \end{equation*}
Integrating from $0$ to $t$ and letting $\varepsilon$ go to 0, we obtain by applying dominated and monotone convergence
\begin{equation}
\begin{aligned}\label{eq:deviatentropy1}
 &\int\rho_t\log\rho_t\,dm_t-\int\rho_0\log\rho_0\,dm_0\\
 =&\int_0^t-\int_{\{\rho_r>0\}}\frac{|\nabla_r\rho_r|^2_*}{\rho_r}\,dm_r+\int(\partial_r f_r)\rho_r\,dm_r\,dr.
\end{aligned}
\end{equation}
Using $\rho\log\rho\geq \rho-1$ and the conservation of total mass \eqref{masscons} leads to 
 \begin{align*}
  \int_0^t\int_{\{\rho_r>0\}}\frac{|\nabla_r\rho_r|^2_*}{\rho_r}\,dm_r\,dr\leq \int h\log h\,dm_0+\int h\,dm_0\\
  -m_t(X)+\int_0^t\int(\partial_r f_r)\rho_r\,dm_r\,dr,
 \end{align*}
which proves \eqref{fisherest}. 
As a consequence from \eqref{fisherest} and \eqref{eq:deviatentropy1} we get the local absolute continuity of $s\mapsto \int\rho_s\log\rho_s dm_s$ and 
\eqref{eq:deviatentropy}.
\end{proof}

The following two lemmas are crucial to conclude that the forward adjoint heat flow defines the EDE-gradient flow for the relative entropy. The first lemma 
gives an estimate of the squared slope of the entropy in terms of the Fisher information, which is an estimate in the static setting, while the second lemma
represents a dynamic version of Kuwada's Lemma, see e.g. \cite[Proposition 3.7]{gko}. The proof of Proposition \ref{kuwadaslemma} relies on the dual formula 
of the dynamic distance $W_{s,t}$ (recall Definition \ref{ddist}) in terms of subsolutions to a modified Hamilton-Jacobi equation, cf. \cite[Section 6]{sturm2016}.

\begin{Prop}\label{squaredslope}
Assume $(X,d_t,m_t)$ satisfies CD$(K,\infty)$.
For $\mu=\rho m_t\in \Dom(S)$ 
\begin{equation*}
|\nabla_tS_t|^2(\mu)\leq\int_{\{\rho>0\}}\frac{|\nabla_t\rho|_*^2}{\rho}dm_t.
\end{equation*}
\end{Prop}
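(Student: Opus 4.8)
The plan is to observe first that, although this statement sits inside the dynamic framework, it is a purely \emph{static} assertion: every quantity appearing in it---the metric $d_t$, the measure $m_t$, the entropy $S_t$, the slope $|\nabla_t S_t|$ and the minimal relaxed gradient $|\nabla_t\rho|_*$---is evaluated at the single frozen time $t$, at which $(X,d_t,m_t)$ is a static CD$(K,\infty)$ space. Hence the whole argument can be carried out for fixed $t$ with $m_t$ in the role of reference measure, and it ultimately coincides with the classical Fisher-information bound for the slope of the entropy in \cite{gigli2009}.

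First I would use the representation formula \eqref{repslope}. Dropping the nonnegative term $\frac{K^-}2 C_t(\gamma_\mu)$ from the numerator (which only enlarges it) gives
$$|\nabla_tS_t|(\mu)\le\sup_{\gamma\in GP}\frac{\big(S_t(\mu)-S_t(\nu_{\gamma,\mu})\big)^+}{\sqrt{C_t(\gamma_\mu)}},$$
so it suffices to establish, for every $\gamma\in GP$ with $C_t(\gamma_\mu)>0$, the single estimate
$$S_t(\mu)-S_t(\nu_{\gamma,\mu})\le\Big(\int_{\{\rho>0\}}\frac{|\nabla_t\rho|_*^2}{\rho}\,dm_t\Big)^{1/2}\sqrt{C_t(\gamma_\mu)};$$
taking the supremum over good plans then yields the claim.

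For this key estimate I would proceed as in the derivation of \eqref{dissipation}, regularising with $\rho+\varepsilon$ to avoid the singularity of $\log$ where $\rho$ vanishes. Writing $\nu=\nu_{\gamma,\mu}$ with density $g$ with respect to $m_t$, the convexity of $z\mapsto z\log z$ in the form $a\log a-b\log b\le(1+\log a)(a-b)$, together with the mass identity $\int(\rho-g)\,dm_t=0$ coming from $\pi^1_\#\gamma_\mu=\mu$, $\pi^2_\#\gamma_\mu=\nu$, gives
$$S_t(\mu)-S_t(\nu)\le\int\log\rho\,(\rho-g)\,dm_t=\int\big[\log\rho(x)-\log\rho(y)\big]\,d\gamma_\mu(x,y).$$
Now I invoke the chain rule for the minimal relaxed gradient, $|\nabla_t\log\rho|_*=|\nabla_t\rho|_*/\rho$ on $\{\rho>0\}$ (cf. \cite{agscalc}), which yields $\int|\nabla_t\log\rho|_*^2\,d\mu=\int_{\{\rho>0\}}|\nabla_t\rho|_*^2/\rho\,dm_t$. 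Since $|\nabla_t\log\rho|_*$ is a $2$-weak upper gradient, it controls the increments of $\log\rho$ along the transport encoded by $\gamma_\mu$, and a Cauchy--Schwarz step against $C_t(\gamma_\mu)=\int d_t^2\,d\gamma_\mu$ closes the estimate.

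The main obstacle is precisely this last step: bounding $\int[\log\rho(x)-\log\rho(y)]\,d\gamma_\mu$ by $\big(\int|\nabla_t\log\rho|_*^2\,d\mu\big)^{1/2}\sqrt{C_t(\gamma_\mu)}$. Since $\log\rho$ is neither Lipschitz nor bounded, this cannot be done pointwise, and I would handle it exactly as in \cite{gigli2009}: approximating $\nu_{\gamma,\mu}$ by the second marginals of good plans whose supports stay in a fixed bounded set (so that transport happens over finite $d_t$-distances), truncating $\log\rho$ at levels $\pm M$, exploiting the weak-upper-gradient property of the relaxed gradient along the induced test plans, and finally removing the regularisations by letting $\varepsilon\to0$ and $M\to\infty$ via monotone and dominated convergence. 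Because the entire computation is performed at the fixed time $t$, one may alternatively simply quote the corresponding static result of \cite{gigli2009}, the representation formula \eqref{repslope} being exactly the bridge that makes it applicable here.
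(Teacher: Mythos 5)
Your proposal is correct and ends up exactly where the paper does: the paper's entire proof is the one-line citation ``This is due to Theorem 9.3 in \cite{agscalc}'', i.e.\ precisely your closing observation that the statement is static at the frozen time $t$ and the classical Fisher-information bound for the slope of the entropy can simply be quoted. Your additional sketch via the representation formula \eqref{repslope} and good plans is the standard underlying argument of \cite{gigli2009} (which the paper itself has set up for Corollary \ref{lscsquaredslope}), and your deferral of the hard step---controlling the increments of $\log\rho$ along $\gamma_\mu$ by the relaxed gradient---to that reference is consistent with the paper's own level of detail.
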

\begin{proof}
This is due to Theorem 9.3 in \cite{agscalc}.
\end{proof}

\begin{Prop}\label{kuwadaslemma}
 Let $(\rho_t)_{0\leq t\leq T}$ be the solution to the forward adjoint heat equation with nonnegative initial datum $h\in \mathcal H$ such that
  $\int h\,d m_0 =1$.
Then the curve $t\mapsto\mu_t:=\rho_t m_t$ is locally absolutely continuous and satisfies
\begin{equation*}
 |\dot\mu_t|_t^2\leq\int_{\{\rho_t>0\}}\frac{|\nabla_t\rho_t|_*^2}{\rho_t}\,dm_t \quad\text{ for a.e. }t\in[0,T].
\end{equation*}

\end{Prop}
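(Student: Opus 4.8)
The plan is to estimate the metric speed through the dual representation of the dynamic Kantorovich distance. First I would recall from \cite[Section 6]{sturm2016} the Kantorovich--Rubinstein duality for $W_{s,t}$ (the analogue of Definition \ref{ddist} on $(\mathcal P_2(X),W_t)$): for $s<t$ one has
\begin{equation*}
 \frac12 W_{s,t}^2(\mu_s,\mu_t)=\sup\left\{\int\psi_1\,d\mu_t-\int\psi_0\,d\mu_s\right\},
\end{equation*}
where the supremum runs over all bounded Lipschitz families $(\psi_a)_{a\in[0,1]}$ that are subsolutions of the modified Hamilton--Jacobi equation
\begin{equation*}
 \partial_a\psi_a+\frac12|\nabla_{\vartheta(a)}\psi_a|_{\vartheta(a)}^2\leq0,\qquad \vartheta(a)=s+a(t-s).
\end{equation*}
Since $|\dot\mu|_t=\lim_{s\to t}W_t(\mu_s,\mu_t)/|s-t|$ for a.e.\ $t$ by Lemma \ref{metricspeed}, it suffices to bound $\int\psi_1\,d\mu_t-\int\psi_0\,d\mu_s$ uniformly over all such subsolutions and then pass to the limit $s\to t$.

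For the key estimate I would fix a subsolution $(\psi_a)$ and set $g(a):=\int\psi_a\,d\mu_{\vartheta(a)}$, so that $\int\psi_1\,d\mu_t-\int\psi_0\,d\mu_s=\int_0^1 g'(a)\,da$. Writing $\tau=\vartheta(a)$ with $d\tau/da=t-s$, the contribution coming from the motion of $\mu_\tau$ is computed exactly as in the dissipation identity \eqref{dissipation}: using the weak formulation of the forward adjoint heat equation together with $\mu_\tau=\rho_\tau m_\tau$, the two occurrences of $\partial_\tau f_\tau$ (one from the equation, one from differentiating $m_\tau=e^{-f_\tau}m$) cancel, leaving
\begin{equation*}
 \frac{d}{d\tau}\int\psi\,d\mu_\tau\Big|_{\psi\text{ fixed}}=-\E_\tau(\psi,\rho_\tau)=-\int\nabla_\tau\psi\cdot\nabla_\tau\rho_\tau\,dm_\tau.
\end{equation*}
Combining this with the Hamilton--Jacobi inequality $\partial_a\psi_a\leq-\frac12|\nabla_\tau\psi_a|_\tau^2$, integrated against the nonnegative density $\rho_\tau$, yields
\begin{equation*}
 g'(a)\leq-\frac12\int|\nabla_\tau\psi_a|_\tau^2\,d\mu_\tau-(t-s)\int\nabla_\tau\psi_a\cdot\nabla_\tau\rho_\tau\,dm_\tau,
\end{equation*}
and an application of Young's inequality on $\{\rho_\tau>0\}$ absorbs the first term and gives $g'(a)\leq\tfrac{(t-s)^2}{2}\int_{\{\rho_\tau>0\}}|\nabla_\tau\rho_\tau|_*^2/\rho_\tau\,dm_\tau$.

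Integrating over $a\in[0,1]$, substituting $\tau=\vartheta(a)$, and taking the supremum over subsolutions then gives
\begin{equation*}
 W_{s,t}^2(\mu_s,\mu_t)\leq(t-s)\int_s^t\int_{\{\rho_r>0\}}\frac{|\nabla_r\rho_r|_*^2}{\rho_r}\,dm_r\,dr.
\end{equation*}
Since $\vartheta(a)\in[s,t]$, the log-Lipschitz bound \eqref{loglip} gives $W_t^2(\mu_s,\mu_t)\leq e^{2L(t-s)}W_{s,t}^2(\mu_s,\mu_t)$; dividing by $(t-s)^2$ and letting $s\to t$, the factor $e^{2L(t-s)}\to1$ while the average of the Fisher information converges to its value at every Lebesgue point, which is a.e.\ $t$ since the integrand lies in $L^1(0,T)$ by \eqref{fisherest}. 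Invoking Lemma \ref{metricspeed} yields the claimed bound $|\dot\mu_t|_t^2\leq\int_{\{\rho_t>0\}}|\nabla_t\rho_t|_*^2/\rho_t\,dm_t$, and the same estimate shows that $t\mapsto\mu_t$ has square-integrable metric speed, hence is locally absolutely continuous. The main obstacle is the rigorous use of the duality formula: one must import the representation of $W_{s,t}^2$ through subsolutions of the modified Hamilton--Jacobi equation from \cite{sturm2016} with the correct scaling of $\vartheta$, and justify the differentiation of $g$. The latter requires $a\mapsto\psi_a$ to be absolutely continuous with $\psi_a\in\F$ and the Hamilton--Jacobi inequality to hold for a.e.\ $a$, so that $g'(a)$ may be obtained by combining the $a$-derivative of $\psi_a$ with the $\tau$-derivative furnished by the weak form of the adjoint heat equation; the nonnegativity of $\rho_\tau$ and the integrability \eqref{fisherest} of the Fisher information are used throughout.
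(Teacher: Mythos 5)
Your proposal is correct and follows essentially the same route as the paper's proof: the bound $W^2(\mu_s,\mu_t)\leq(t-s)\int_s^t\int_{\{\rho_r>0\}}|\nabla_r\rho_r|_*^2/\rho_r\,dm_r\,dr$ is obtained exactly as in the text, by importing the Hamilton--Jacobi subsolution control of the dynamic distance from \cite[Section 6]{sturm2016} (the paper only needs the one-sided comparison of Lemma 6.5 there, not the full duality equality you invoke), differentiating $a\mapsto\int\psi_a\,d\mu_{\vartheta(a)}$ via the weak adjoint heat equation with the $\partial_t f_t$ cancellation, applying Young's inequality, and then dividing by $(t-s)^2$ and using the log-Lipschitz comparison \eqref{loglip} together with Lebesgue points of the Fisher information. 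The only deviation is cosmetic: you parametrize $\vartheta$ on $[0,1]$ with the factor $(t-s)$ appearing in the estimate, while the paper parametrizes on $[0,\delta]$ and carries the factor $\delta$ in the distance comparison, and your differentiation of $g$ is justified in the paper by \cite[Lemma 4.3.4]{ags}, as you anticipate.
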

\begin{proof}
From \eqref{masscons} we know that $\int\rho_t\,dm_t=1$ for every $0\leq t\leq T$. Hence each $\mu_t=\rho_tm_t$ is a probability measure.

Let $s<t$ and set $\delta:=t-s$. Then, with $\vartheta(a)=s+a$, we define 
HLS$_\vartheta$ as in \cite[Section 6]{sturm2016} by
\begin{align*}
 \text{HLS}_{\vartheta}:=\bigg\{\varphi\in{\Lip}_b([0,\delta]\times X)\bigg|
 \ &\partial_a\varphi_a\leq -\frac12|\nabla_{\vartheta(a)}(\varphi_a)|_*^2\\
 &\mathcal L\times m\text{ a.e. in }(0,\delta)\times X\bigg\},
\end{align*}
and
\begin{equation*}
\tilde W^2_{\vartheta}(\mu_s,\mu_t):=2\sup_\varphi\left\{\int\varphi_{\delta}d\mu_t-\int\varphi_{0}d\mu_s\right\},
\end{equation*}
where the supremum runs over all maps $\varphi(a,x)=\varphi_a(x)\in$ HLS$_{\vartheta}$. Then we
have by Lemma 6.5 in \cite{sturm2016}
\begin{align*}
 W_s^2(\mu_s,\mu_t)\leq e^{2L\delta}\delta\tilde W^2_\vartheta(\mu_s,\mu_{t}).
\end{align*}
By applying \cite[Lemma 4.3.4]{ags} to the function $(a,b)\mapsto \int\rho_a\varphi_bdm_a$, where $\varphi\in$ HLS$_\vartheta$, we obtain
\begin{align*}
 &\int\varphi_\delta\,d\mu_t-\int\varphi_0\,d\mu_s=\int_0^\delta\partial_a\int\varphi_a\,d\mu_{s+a}\,da\\
 &\leq\int_0^\delta\int-\frac12|\nabla_{s+a}(\varphi_a)|_*^2\,d\mu_{s+a}-\mathcal E_{s+a}(\rho_{s+a},\varphi_a)\,da\\
 &\leq \int_0^\delta\int-\frac12|\nabla_{s+a}(\varphi_a)|_*^2\,d\mu_{s+a}\\
 &+\int\frac12|\nabla_{s+a}(\varphi_a)|_*^2\,d\mu_{s+a}+\frac12\int_{\{\rho_{s+a}>0\}}\frac{|\nabla_{s+a}(\rho_{s+a})|_*^2}{\rho_{s+a}}\,dm_{s+a}\,da\\
 &=\int_0^\delta\frac12\int_{\{\rho_{s+a}>0\}}\frac{|\nabla_{s+a}(\rho_{s+a})|_*^2}{\rho_{s+a}}\,dm_{s+a}\,da.
\end{align*}
Taking the supremum over all $\varphi$
\begin{align*}
 W_s^2(\mu_s,\mu_t)\leq e^{2L\delta}\delta\int_0^\delta\int_{\{\rho_{s+a}>0\}}\frac{|\nabla_{s+a}(\rho_{s+a})|_*^2}{\rho_{s+a}}\,dm_{s+a}\,da.
\end{align*}
Dividing by $\delta^2$ and letting $\delta\to0$ we conclude
\begin{align*}
 |\dot\mu_s|_s^2\leq \int_{\{\rho_{s}>0\}}\frac{|\nabla_s\rho_s|_*^2}{\rho_s}\,dm_s.
\end{align*}

\end{proof}
Now we are ready to prove our main result of this section.

\begin{thm}\label{identico}
Let $(X,d_t,m_t)_{t\in[0,T]}$ be a family of Polish spaces with complete geodesic distances $d_t$ satisfying \eqref{loglip} such that
$m_t=e^{-f_t}m$, where $m\in\mathcal{P}(X)$ and $f_t$ are bounded functions satisfying and \eqref{lipschitzinspace}.
Assume that each static space satisfies RCD$(K,\infty)$ for some finite number $K\in\mathbb R$.
Let $h\in \mathcal{H}$ nonnegative with $\bar\mu=hm_0$. 
 \begin{enumerate}
  \item Let $\rho_t$ solve the forward adjoint heat equation starting from $h$, then $\mu_t=\rho_tm_t$ is the dynamic EDE-gradient flow for the relative 
  entropy $S_t$ starting in $\bar\mu$.
  \item Conversely, let $\mu_t$ be the dynamic EDE-gradient flow for $S_t$, then $\mu_t=\rho_tm_t$ and $\rho_t$ is the solution to the  forward adjoint heat 
  equation.
 \end{enumerate}
 \end{thm}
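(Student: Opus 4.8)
The plan is to prove both directions at once by showing that $\mu_t := \rho_t m_t$ is a dynamic EDI-gradient flow for $S$ and then invoking the uniqueness already established in Theorem \ref{uniqueness}. The three preparatory results do all the analytic work, so what remains is to assemble them into the energy dissipation inequality \eqref{edi}; direction (2) will then be essentially free.

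For assertion (1) I would start from the entropy dissipation identity \eqref{eq:deviatentropy} of Proposition \ref{deviatentropy}. Writing the Fisher information as $I_r := \int_{\{\rho_r>0\}}\frac{|\nabla_r\rho_r|_*^2}{\rho_r}\,dm_r$ and recalling from Lemma \ref{ent} that $\int(\partial_r f_r)\rho_r\,dm_r=(\partial_r S_r)(\mu_r)$, integrating \eqref{eq:deviatentropy} from $0$ to $t$ gives the exact balance
\begin{equation*}
S_t(\mu_t)+\int_0^t I_r\,dr=S_0(\bar\mu)+\int_0^t(\partial_r S_r)(\mu_r)\,dr.
\end{equation*}
Next I would insert the two one-sided bounds that make the RCD assumption decisive: Proposition \ref{squaredslope} (which uses CD$(K,\infty)$) gives $|\nabla_r S_r|^2(\mu_r)\le I_r$, while the dynamic Kuwada Lemma \ref{kuwadaslemma} gives $|\dot\mu_r|_r^2\le I_r$, so that $\tfrac12|\nabla_r S_r|^2(\mu_r)+\tfrac12|\dot\mu_r|_r^2\le I_r$. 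Substituting this into the balance above yields precisely \eqref{edi}; here $\mu_t$ is locally absolutely continuous and a probability measure for every $t$ by Lemma \ref{kuwadaslemma} and the mass conservation \eqref{masscons}. Since RCD$(K,\infty)$ entails CD$(K,\infty)$, hence the $K$-convexity of $S$, Theorem \ref{uniqueness} applies and upgrades this EDI to the dynamic EDE \eqref{ede}, which proves (1).

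Assertion (2) then follows with no further analysis. Given any dynamic EDE-gradient flow $\mu_t$ for $S_t$ starting in $\bar\mu=hm_0$, it is in particular an EDI-gradient flow. By (1) the curve $\tilde\mu_t:=\rho_tm_t$, with $\rho_t$ the forward adjoint heat flow from $h$, is also an EDI-gradient flow starting in $\bar\mu$. The uniqueness clause of Theorem \ref{uniqueness} then forces $\mu_t=\tilde\mu_t=\rho_tm_t$ for every $t$, so $\rho_t$ solves the forward adjoint heat equation.

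The conceptual crux, and the only real obstacle, is that the two estimates must be \emph{sharply} matched against the single Fisher-information term $I_r$: any slack in Proposition \ref{squaredslope} or Lemma \ref{kuwadaslemma} would break the inequality \eqref{edi} and hence the argument, and it is exactly the tightness guaranteed by RCD$(K,\infty)$ that makes $\tfrac12|\nabla_r S_r|^2+\tfrac12|\dot\mu_r|_r^2\le I_r$ suffice. The remaining points are bookkeeping: one must check that $\mu_t\in\Dom(S)\subset\mathcal P_2(X)$ for every $t$ (finiteness of $S_t(\mu_t)$ from the lower bound on $S$ together with the a priori estimate \eqref{fisherest}, and finiteness of the second moments) so that every term entering \eqref{edi} is well defined.
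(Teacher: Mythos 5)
Your proposal is correct and follows essentially the same route as the paper: the exact entropy dissipation balance from Proposition \ref{deviatentropy}, the two Fisher-information bounds from Proposition \ref{squaredslope} and Proposition \ref{kuwadaslemma} each weighted by $\tfrac12$ to obtain the dynamic EDI, and uniqueness from Theorem \ref{uniqueness} for the converse. The only cosmetic difference is that you invoke the ``moreover'' clause of Theorem \ref{uniqueness} to upgrade EDI to EDE, whereas the paper re-runs the weak chain rule argument directly (noting via Proposition 2.8 in \cite{sturm2016} that the curve stays in a sublevel of the entropy) --- the underlying mechanism is identical.
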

\begin{proof}
Proposition \ref{deviatentropy} applied the forward flow $\rho_t$ yields
 \begin{equation*}
 \frac{d}{dt}\int\rho_t\log\rho_t \,dm_t=-\int_{\{\rho_t>0\}}\frac{|\nabla_t\rho_t|_*^2}{\rho_t}\,dm_t+\int(\partial_t f_t)\rho_t \,dm_t.
\end{equation*}

Integrating from 0 to $t$ and using Proposition \ref{kuwadaslemma} and Proposition \ref{squaredslope} we obtain
\begin{equation*}
  S_t(\mu_t)+\frac12\int_0^t|\dot{\mu}_r|^2_rdr+\frac12\int_0^t|\nabla_rS_r|^2(\mu_r)dr
  \leq S_0(\bar\mu)+\int_0^t(\partial_r S_r)(\mu_r)dr.
 \end{equation*}

Moreover, by virtue of Proposition 2.8 in \cite{sturm2016}, $(\mu_t)$ is 
contained in the sublevel set of the entropy and hence, similarly as in the proof of Theorem \ref{uniqueness}, we get for all $t$
\begin{align*}
  S_t(\mu_t)-S_0(\bar\mu)\geq\int_0^t(\partial_rS_r)(\mu_r)dr-\int_0^t|\dot{ \mu}|_r|\nabla_rS_r|(\mu_r)dr.
\end{align*}
Thus we have
\begin{equation*}
  S_t(\mu_t)+\frac12\int_0^t|\dot{\mu}_r|^2_rdr+\frac12\int_0^t|\nabla_rS_r|^2(\mu_r)dr
  = S_0(\bar\mu)+\int_0^t(\partial_r S_r)(\mu_r)dr.
 \end{equation*}
To show the converse implication, let 
$\tilde\rho_t$ be the solution to the adjoint heat equation parametrized forwards in time. 
From the previous argumentation we know that $\tilde \mu_t=\tilde\rho_tm_{t}$ is a dynamic
EDE-gradient flow of the entropy. 
From Theorem \ref{uniqueness} there is at most one gradient flow 
starting from $\bar\mu$, hence $\tilde\mu_t=\mu_t$ for every $t\in[0,T]$. 
\end{proof}
\begin{remark}\label{vr}
Let us recall the complete picture of forward and backward equation described in Section \ref{secheat}. The heat equation (forward in time) induces the adjoint heat equation (backward in time)
and vice versa. Then $\mu_s:=\rho_sm_s$, where $\rho_s$ denotes the adjoint heat flow (backward in time) is an \emph{upward dynamic EDI-gradient flow} in the sense that
\begin{align*}
 S_s(\mu_s)+\frac12\int_s^T|\dot\mu_r|_r^2\, dr+\frac12\int_s^T|\nabla_rS_r(\mu_r)|^2\, dr= S_T(\mu_T)+\int_s^T(\partial_rS_r)(\mu_r)\, dr.
\end{align*}
Equivalently, and this is what we showed, if $\mu_t=\rho_tm_t$, where $\rho_t$ solves the adjoint heat equation forward in time, then $\mu_t$ solves
\begin{align*}
 S_t(\mu_t)+\frac12\int_0^t|\dot\mu_r|_r^2\, dr+\frac12\int_0^t|\nabla_rS_r(\mu_r)|^2\, dr= S_0(\mu_0)+\int_0^t(\partial_rS_r)(\mu_r)\, dr.
\end{align*}
But then the heat equation is a backward equation.
\end{remark}
\begin{remark}
If each $(X,d_t,m_t)$ is supposed to be RCD$(K,N)$ space for finite numbers $K,N$ and $(X,d_t,m_t)_{t\in[0,T]}$ is a super-Ricci flow (see \cite{sturm2015,sturm2016}) then it is shown in \cite{sturm2016} that $\mu_s=\rho_sm_s$ is characterized as the unique backward EVI$(-2L,\infty)$-gradient flow of the relativ entropy, i.e.
\begin{align*}
 \frac12\partial_s^-W_{s,t}(\mu_s,t)^2\Big|_{s=t}+LW_t^2(\mu_t,\sigma)\geq S_t(\mu_t)-S_t(\sigma).
\end{align*}
Then Proposition \ref{propevi} already implies that $\mu_t=\rho_tm_t$ is a dynamic EDE-gradient flow.
\end{remark}

\subsection{Identification of the heat flow with the dynamic gradient flow for Cheeger's energy}

In the following let $(X,d_t,m_t)_{t\in[0,T]}$ be a family of Polish metric measure spaces. We suppose that $(d_t)$ satisfies \eqref{loglip} and $m_t=e^{-f_t}m$,
where $m$ is a $\sigma$-finite Borel measure on $X$ and $(f_t)$ are Borel functions satisfying
\begin{equation}\label{flipch}
 |f_t(x)-f_s(y)|\leq L^*|t-s|.
\end{equation}
We consider Cheeger's energy $\Ch_t\colon L^2(X,m_t)\to[0,\infty]$, defined by
\begin{equation*}
 \Ch_t(u)=\frac12\int_X |\nabla_t u|_*^2dm_t,
\end{equation*}
where $|\nabla_t u|_*$ denotes the minimal relaxed gradient of $u$. 
Since $L^2(X,m_t)$ is a separable Hilbert space and the assumptions on the energy functional from Section \ref{sechilbert} are satisfied by $(\Ch_t)_t$ we directly obtain existence of 
a gradient flow in the sense of Definition \ref{defgradflowhilbert}.

\begin{thm}\label{exi}
 Let $\bar u\in \Dom(\Ch)$. Then there exists a unique gradient flow for $\Ch$ starting in $\bar u$, i.e. an absolutely continuous curve $(u_t)_t$ solving
 \begin{align*}
  \partial_t u_t\in-D^-_t\Ch_t(u_t)\quad \text{ for a.e. }t\in(0,T)
 \end{align*}
 and $\lim_{t\to0}u_t=\bar u$.

\end{thm}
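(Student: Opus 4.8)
The plan is to recognize Theorem~\ref{exi} as a direct instance of the abstract existence-and-uniqueness result Theorem~\ref{exgradflow}, applied to the family $(\Ch_t)_t$. The entire task then reduces to checking that Cheeger's energy satisfies the two structural hypotheses imposed on the functional at the start of Section~\ref{sechilbert}, after which the conclusion is verbatim that of Theorem~\ref{exgradflow}. First I would record the easy points: each $\Ch_t$ is convex and lower semicontinuous on $L^2(X,m_t)$ by \cite[Theorem 4.5]{agscalc}, it is manifestly non-negative since it is the integral of a squared gradient, and the domain $\Dom(\Ch_t)$ is independent of $t$. The last assertion is exactly the content of Lemma~\ref{relgradient}, which gives $|\nabla_t u|_*\leq e^{L|t-s|}|\nabla_s u|_*$; membership in one domain therefore forces membership in all, and (together with the two-sided comparison of the measures below) the underlying Hilbert spaces $L^2(X,m_t)$ all coincide as sets with mutually equivalent norms.

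The one estimate requiring work is hypothesis (2) of Section~\ref{sechilbert}, namely the multiplicative Lipschitz bound $|\Ch_t(u)-\Ch_s(u)|\leq C_1\Ch_t(u)|t-s|$. To prove it I would express both energies against the common reference measure $m$, using $m_t=e^{-f_t}m$, so that
\begin{equation*}
 \Ch_t(u)=\tfrac12\int_X|\nabla_t u|_*^2\,e^{-f_t}\,dm.
\end{equation*}
Combining the pointwise comparison of relaxed gradients from Lemma~\ref{relgradient} with the bound $e^{-f_t}\leq e^{L^*|t-s|}e^{-f_s}$ coming from \eqref{flipch} yields the two-sided comparison $e^{-(2L+L^*)|t-s|}\Ch_s(u)\leq\Ch_t(u)\leq e^{(2L+L^*)|t-s|}\Ch_s(u)$. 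From this, using the elementary inequalities $1-e^{-\alpha}\leq\alpha$ and $e^{\alpha}-1\leq\alpha e^{\alpha}$ on the bounded interval $[0,T]$, one obtains in both cases $|\Ch_t(u)-\Ch_s(u)|\leq C_1\Ch_t(u)|t-s|$ with $C_1=(2L+L^*)e^{(2L+L^*)T}$, which is precisely the required hypothesis.

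With both hypotheses verified, Theorem~\ref{exgradflow} applies directly and furnishes a unique curve $(u_t)\in\AC^2([0,T];L^2(X,m))$ with $\lim_{t\searrow0}u_t=\bar u$ solving $\partial_t u_t\in-D^-_t\Ch_t(u_t)$ for a.e.\ $t\in(0,T)$, which is exactly the assertion. I do not expect any serious obstacle: the substance of the argument has already been carried out in the abstract setting of Section~\ref{sechilbert}, namely the compactness of the minimizing-movement interpolants in Proposition~\ref{converge} and the passage to the limit in the Euler--Lagrange inclusion in Theorem~\ref{exgradflow}. The only point demanding care is confirming the \emph{multiplicative} (rather than additive) form of the Lipschitz estimate, since it is the multiplicative form that feeds the a~priori bounds \eqref{est} through the discrete Gronwall argument; this is why the two-sided exponential comparison above, rather than a crude difference estimate, is the natural tool.
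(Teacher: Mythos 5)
Your proposal is correct and follows essentially the same route as the paper: verify that $(\Ch_t)_t$ satisfies the hypotheses of Section~\ref{sechilbert} (non-negativity, convexity and lower semicontinuity via \cite[Theorem 4.5]{agscalc}, time-independent domain, and the multiplicative Lipschitz bound from Lemma~\ref{relgradient} together with the Lipschitz control on $f_t$), then invoke Theorem~\ref{exgradflow}. The only cosmetic difference is that the paper estimates $|\Ch_t(u)-\Ch_s(u)|$ by an additive splitting into a gradient-change term and a measure-change term (arriving at the bound with $\Ch_s(u)$ on the right, equivalent to the stated hypothesis via the two-sided comparison), whereas you derive the same bound from a two-sided exponential comparison of the energies; both rest on exactly the same two ingredients.
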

\begin{proof}
Obviously $\Ch_t\geq0$ for every $t\in[0,T]$. Moreover $u\mapsto\Ch_t(u)$ is convex and lower semicontinuous by Theorem
4.5 in \cite{agscalc}. From Lemma \ref{relgradient} and \eqref{flipch} we obtain
\begin{align*}
 &|\Ch_t(u)-\Ch_s(u)|\leq |\int|\nabla_tu|^2_*-|\nabla_su|^2_*\,dm_t|+|\int|\nabla_su|^2_*\,d(m_t-m_s)|\\
 &\leq 2L|t-s|\int|\nabla_su|^2_*\,dm_t+L^*e^{L^*|t-s|}|t-s|\int|\nabla_su|^2_*\,dm_s\\
 &\leq 2L|t-s|e^{C|t-s|}\int|\nabla_su|^2_*\,dm_s+L^*e^{L^*|t-s|}|t-s|\int|\nabla_su|^2_*\,dm_s\\
 &\leq (2L+L^*)e^{L^*|t-s|}|t-s|\Ch_s(u).
\end{align*}
We get the result as a consequence of Theorem \ref{exgradflow}.
\end{proof}

In the case when the underlying spaces are infinitesimally Hilbertian we have by virtue of Theorem 2.2 in \cite{sturm2016} a unique solution to the heat equation $\partial_tu_t=\Delta_tu_t$ on $(0,T)\times X$ for each initial condition. Note that the solution is a priori `only' contained in $\F_{(0,T)}$ and hence $\Delta_tu_t$ is not an element in $L^2(X,m)$, or even an element in the subdifferential $D_t^-\Ch_t(u_t)$, which is at most single-valued since each $\Ch_t$ is a quadratic form.

Fortunately, by Theorem 2.12 in \cite{sturm2016} it turns out that at least for a.e. $t\in (0,T)$ $u_t\in\Dom(\Delta_t)$, thus $\Delta_tu_t\in \mathcal H$. Consequently
we may identify the gradient flow for Cheeger's energy with the heat flow. This is the statement of the following  theorem, for which we give an alternative proof using the dynamic EVI-property. In particular the heat flow can be constructed by the minimizing movement scheme from Section \ref{sechilbert}.
\begin{thm}\label{idi}
Let $(X,d_t,m_t)_{t\in[0,T]}$ be a family of Polish spaces with complete geodesic distances $d_t$ satisfying \eqref{loglip} such that
$m_t=e^{-f_t}m$, where $m\in\mathcal{P}(X)$ and $f_t$ are bounded functions satisfying and \eqref{lipschitzinspace}.
Assume that each static space is infinitesimally Hilbertian.
Let $\tilde u_t$ be the solution to the heat equation $\partial_t\tilde u_t=\Delta_t\tilde u_t$ on $(0,T)\times X$ starting in some
$\bar u \in \Dom(\Ch)$. Then $\tilde u_t$ satisfies
 \begin{align*}
  \partial_t {\tilde u}_t\in-D^-_t\Ch_t(\tilde u_t)\quad \text{ for a.e. }t\in(0,T),
 \end{align*}
 and can be constructed as the limit of a minimizing movement scheme. Conversely, let $u_t$ be the solution of the gradient flow
 of Cheeger's energy $\Ch_t$. Then $u_t$ solves the heat equation 
 \begin{align*}
  \partial_tu_t=\Delta_tu_t\, \text{ on }(0,T)\times X.
 \end{align*}
In particular $u_t=\tilde u_t$ in $L^2(X,m)$ for every $t\geq0$.
\end{thm}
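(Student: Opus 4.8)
The plan is to reduce the identification to the algebraic fact that, on an infinitesimally Hilbertian space, the subdifferential of Cheeger's energy is exactly the negative Laplacian, and then to invoke the uniqueness statements already at our disposal for both flows.

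First I would identify the subdifferential. Fix $t\in[0,T]$. Since $\Ch_t$ is a quadratic form with associated Dirichlet form $\E_t=2\Ch_t$ and generator $\Delta_t$, I claim that $D_t^-\Ch_t(u)\neq\emptyset$ if and only if $u\in\Dom(\Delta_t)$, in which case $D_t^-\Ch_t(u)=\{-\Delta_t u\}$. The inclusion follows by expanding the quadratic form: for every $w\in\Dom(\Ch)$,
\[
\Ch_t(w)-\Ch_t(u)=\E_t(u,w-u)+\Ch_t(w-u)\geq\E_t(u,w-u)=\langle-\Delta_t u,w-u\rangle_t,
\]
where the inequality uses $\Ch_t\geq0$ and the last equality is the defining relation of the generator. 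Conversely, if $v\in D_t^-\Ch_t(u)$, testing the subdifferential inequality with $w=u\pm\varepsilon\phi$ for $\phi\in\Dom(\Ch)$ and letting $\varepsilon\searrow0$ yields $\E_t(u,\phi)=-\langle v,\phi\rangle_t$ for all such $\phi$, which is precisely $u\in\Dom(\Delta_t)$ with $-\Delta_t u=v$; single-valuedness is then automatic. Consequently the gradient-flow relation $\partial_t u_t\in-D_t^-\Ch_t(u_t)$ is, pointwise in $t$, equivalent to $u_t\in\Dom(\Delta_t)$ together with $\partial_t u_t=\Delta_t u_t$.

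Next I would run the two implications. For the forward direction, let $\tilde u$ solve the heat equation starting in $\bar u$. By Theorem 2.12 in \cite{sturm2016} one has $\tilde u_t\in\Dom(\Delta_t)$ with $\Delta_t\tilde u_t\in\mathcal H$ for a.e. $t$, and the equation holds strongly in $\mathcal H$; together with the regularity provided there this shows $\tilde u\in\AC^2([0,T];H)$. The subdifferential identity then gives $\partial_t\tilde u_t=\Delta_t\tilde u_t\in-D_t^-\Ch_t(\tilde u_t)$ a.e., while the embedding $\F_{(0,T)}\subset\mathcal C([0,T]\to\mathcal H)$ yields $\lim_{t\to0}\tilde u_t=\bar u$, so $\tilde u$ is a gradient flow in the sense of Definition \ref{defgradflowhilbert}. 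For the converse, let $u$ be the gradient flow produced by Theorem \ref{exi} (hence the limit of the minimizing movement scheme of Section \ref{sechilbert}); by the subdifferential identity $u_t\in\Dom(\Delta_t)$ and $\partial_t u_t=\Delta_t u_t$ for a.e. $t$. Testing this strong identity against $w\in\F_{(0,T)}$ and replacing $\langle\Delta_t u_t,w_t\rangle_t$ by $-\E_t(u_t,w_t)$ recovers the weak formulation of Definition \ref{defheat}, once the bound $\sup_t\Ch_t(u_t)<\infty$ from \eqref{est} places $u$ in $L^2((0,T)\to\F)$ and $\partial_t u_t=\Delta_t u_t\in\mathcal H\subset\F^*$, together with \eqref{est2}, places it in $H^1((0,T)\to\F^*)$.

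Finally I would close by uniqueness: $u$ and $\tilde u$ both solve the heat equation with initial datum $\bar u$, so $u_t=\tilde u_t$ in $L^2(X,m)$ for every $t$ by Theorem 2.2 in \cite{sturm2016} (equivalently by the uniqueness in Theorem \ref{exi}). The main obstacle is not the algebra but the reconciliation of the two regularity frameworks: the gradient flow is a strong $\AC^2([0,T];H)$-curve with $L^2$-in-time derivative, whereas the heat flow is a priori only a weak $\F_{(0,T)}$-solution whose membership in $\Dom(\Delta_t)$ for a.e. $t$ rests on the nontrivial regularity result of \cite{sturm2016}. Controlling $\int_0^T\|\Delta_t u_t\|_{\mathcal H}^2\,dt$ and the $L^2((0,T)\to\F)$-norm so that each solution qualifies as the other type is where the real work sits; an alternative to this matching is to verify, via Proposition \ref{blabla}, that both curves are dynamic forward EVI$(-L/2,\infty)$-gradient flows and to invoke the contraction estimate \eqref{almostcontraction} for uniqueness.
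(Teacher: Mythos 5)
Your proof is correct in substance, but it is not the proof the paper gives: you carry out in detail the route that the paper only sketches in the remarks preceding Theorem \ref{idi}, whereas the paper deliberately takes a shortcut through the dynamic EVI property. Concretely, you identify $D_t^-\Ch_t(u)=\{-\Delta_t u\}$ for $u\in\Dom(\Delta_t)$ (and $D_t^-\Ch_t(u)=\emptyset$ otherwise) by expanding the quadratic form, then invoke the regularity result of Theorem 2.12 in \cite{sturm2016} (namely $\tilde u_t\in\Dom(\Delta_t)$ with $\Delta_t\tilde u_t\in\mathcal H$ for a.e.\ $t$) to upgrade the weak heat flow to a gradient flow, recover the weak formulation from the strong one for the converse, and close with the uniqueness statements on each side. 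The paper instead observes that both curves are dynamic forward EVI$(-L/2,\infty)$-gradient flows --- the minimizing-movement gradient flow by Proposition \ref{blabla}, the heat flow by Theorem 2.16 in \cite{sturm2016} --- and concludes via the contraction estimate \eqref{almostcontraction} that $||u_t-\tilde u_t||_t^2\leq e^{7L(t-s)}||u_s-\tilde u_s||_s^2\to0$ as $s\to0$; this is precisely the ``alternative'' you mention in your final sentence. What each approach buys: the EVI route sidesteps the regularity matching you rightly flag as the hard point (it never needs $\Delta_t\tilde u_t\in\mathcal H$, only the EVI for the a priori weak solution), while your route makes the pointwise inclusion $\partial_t\tilde u_t=\Delta_t\tilde u_t\in-D_t^-\Ch_t(\tilde u_t)$ explicit rather than inheriting it from the identity $u_t=\tilde u_t$ after the fact --- at the price of leaning on the nontrivial Theorem 2.12 of \cite{sturm2016} for the $\AC^2$-regularity and the bound on $\int_0^T||\Delta_t\tilde u_t||_{\mathcal H}^2\,dt$. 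One small slip: testing with $w=u\pm\varepsilon\phi$ yields $\E_t(u,\phi)=\langle v,\phi\rangle_t$, not $\E_t(u,\phi)=-\langle v,\phi\rangle_t$; combined with the generator relation $\E_t(u,\phi)=\langle-\Delta_t u,\phi\rangle_t$ this still gives $v=-\Delta_t u$, so your conclusion is unaffected.
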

\begin{proof}
 Both flows satisfy the dynamic EVI$(-L/2,\infty)$ gradient flow inequality almost everywhere by virtue of Proposition \ref{blabla} and Theorem 2.16 in 
 \cite{sturm2016}.
 Hence from the contraction estimate \eqref{almostcontraction}
 \begin{align*}
  ||u_t-\tilde u_t||_t^2\leq e^{7L(t-s)}||u_s-\tilde u_s||^2_s\quad \text{ for a.e. }t\geq s,
 \end{align*}
we obtain 
\begin{align*}
 ||u_t-\tilde u_t||^2_t\leq \lim_{s\to 0}e^{7L(t-s)}||u_s-\tilde u_s||_s=0\quad \text{ for a.e. }t,
\end{align*}
and hence by continuity $||u_t-\tilde u_t||_t=0$ for every $t$.
\end{proof}

\newpage
\bibliography{lit}

\end{document}